%
%
%
%
%
%
%
%
\documentclass[10pt,twoside]{amsart}
%
%
%
%
\def\YEAR{\year}\newcount\VOL\VOL=\YEAR\advance\VOL by-1995
\def\firstpage{1}\def\lastpage{1000}
\def\received{}\def\revised{}
\def\communicated{}

\makeatletter
\def\magnification{\afterassignment\m@g\count@}
\def\m@g{\mag=\count@\hsize6.5truein\vsize8.9truein\dimen\footins8truein}
\makeatother

\oddsidemargin1.91cm\evensidemargin1.91cm\voffset1.4cm

\textwidth12.0cm\textheight19.0cm

\font\eightrm=cmr8
\font\caps=cmcsc10                    
\font\Caps=cmcsc10 scaled \magstep1   

%


\pagestyle{myheadings}
\pagenumbering{arabic}
\setcounter{page}{\firstpage}

\makeatletter
\setlength\topmargin {14\p@}
\setlength\headsep   {15\p@}
\setlength\footskip  {25\p@}
\setlength\parindent {20\p@}
\@specialpagefalse\headheight=8.5pt
\def\DocMath{}
\renewcommand{\@evenhead}{%
    \ifnum\thepage>\lastpage\rlap{\thepage}\hfill%
    \else\rlap{\thepage}\slshape\leftmark\hfill{\caps\SAuthor}\hfill\fi}%
\renewcommand{\@oddhead}{%
    \ifnum\thepage=\firstpage{\DocMath\hfill\llap{\thepage}}%
    \else{\slshape\rightmark}\hfill{\caps\STitle}\hfill\llap{\thepage}\fi}%
\makeatother

\def\TSkip{\bigskip}
\newbox\TheTitle{\obeylines\gdef\GetTitle #1
\ShortTitle  #2
\SubTitle    #3
\Author      #4
\ShortAuthor #5
\EndTitle
{\setbox\TheTitle=\vbox{\baselineskip=20pt\let\par=\cr\obeylines%
\halign{\centerline{\Caps##}\cr\noalign{\medskip}\cr#1\cr}}%
	\copy\TheTitle\TSkip\TSkip%
\def\next{#2}\ifx\next\empty\gdef\STitle{#1}\else\gdef\STitle{#2}\fi%
\def\next{#3}\ifx\next\empty%
    \else\setbox\TheTitle=\vbox{\baselineskip=20pt\let\par=\cr\obeylines%
    \halign{\centerline{\caps##} #3\cr}}\copy\TheTitle\TSkip\TSkip\fi%
\centerline{\caps #4}\TSkip\TSkip%
\def\next{#5}\ifx\next\empty\gdef\SAuthor{#4}\else\gdef\SAuthor{#5}\fi%
\ifx\received\empty\relax
    \else\centerline{\eightrm Received: \received}\fi%
\ifx\revised\empty\TSkip%
    \else\centerline{\eightrm Revised: \revised}\TSkip\fi%
\ifx\communicated\empty\relax
    \else\centerline{\eightrm Communicated by \communicated}\fi\TSkip\TSkip%
\catcode'015=5}}\def\Title{\obeylines\GetTitle}
\def\Abstract{\begingroup\narrower
    \parskip=\medskipamount\parindent=0pt{\caps Abstract. }}
\def\EndAbstract{\par\endgroup\TSkip}

\long\def\MSC#1\EndMSC{\def\arg{#1}\ifx\arg\empty\relax\else
     {\par\narrower\noindent%
     2000 Mathematics Subject Classification: #1\par}\fi}

\long\def\KEY#1\EndKEY{\def\arg{#1}\ifx\arg\empty\relax\else
	{\par\narrower\noindent Keywords and Phrases: #1\par}\fi\TSkip}

\newbox\TheAdd\def\Addresses{\vfill\copy\TheAdd\vfill
    \ifodd\number\lastpage\vfill\eject\phantom{.}\vfill\eject\fi}
{\obeylines\gdef\GetAddress #1
\Address #2
\Address #3
\Address #4
\EndAddress
{\def\xs{4.3truecm}\parindent=0pt
\setbox0=\vtop{{\obeylines\hsize=\xs#1\par}}\def\next{#2}
\ifx\next\empty 
     \setbox\TheAdd=\hbox to\hsize{\hfill\copy0\hfill}
\else\setbox1=\vtop{{\obeylines\hsize=\xs#2\par}}\def\next{#3}
\ifx\next\empty 
     \setbox\TheAdd=\hbox to\hsize{\hfill\copy0\hfill\copy1\hfill}
\else\setbox2=\vtop{{\obeylines\hsize=\xs#3\par}}\def\next{#4}
\ifx\next\empty\ 
     \setbox\TheAdd=\vtop{\hbox to\hsize{\hfill\copy0\hfill\copy1\hfill}
                \vskip20pt\hbox to\hsize{\hfill\copy2\hfill}}
\else\setbox3=\vtop{{\obeylines\hsize=\xs#4\par}}
     \setbox\TheAdd=\vtop{\hbox to\hsize{\hfill\copy0\hfill\copy1\hfill}
	        \vskip20pt\hbox to\hsize{\hfill\copy2\hfill\copy3\hfill}}
\fi\fi\fi\catcode'015=5}}\gdef\Address{\obeylines\GetAddress}

\hfuzz=0.1pt\tolerance=2000\emergencystretch=20pt\overfullrule=5pt


\usepackage{amssymb}
\usepackage{amscd}
\usepackage{amsmath}
\usepackage{amsrefs}

\newtheorem{theorem}{Theorem}[section]
\newtheorem{prop}{Proposition}[section]
\newtheorem{lemma}{Lemma}
\newtheorem{corollary}{Corollary}
\newtheorem{conjecture}{Conjecture}
\newtheorem{definition}{Definition}

\theoremstyle{remark}
\newtheorem{remark}{Remark}
\newtheorem*{remarks}{Remarks}

\newcommand{\tr}{\tilde{\br}}

\newcommand{\bq}{\mathbb Q}
\newcommand{\bqpur}{\hat{\mathbb Q}_p^{ur}}
\newcommand{\bz}{\mathbb Z}
\newcommand{\br}{\mathbb R}
\newcommand{\bc}{{\mathbb C}}
\newcommand{\bg}{\mathbb G}

\newcommand{\bof}{\mathbb F}

\newcommand{\BC}{{\mathbb C}}

\newcommand{\F}{\mathcal F}
\newcommand{\T}{\mathcal T}
\newcommand{\G}{\mathcal G}

\newcommand{\co}{\mathcal O}

\newcommand{\et}{\mathrm{et}}

\newcommand{\E}{\mathcal E}
\newcommand{\X}{\mathcal X}
\newcommand{\Y}{\mathcal Y}
\newcommand{\p}{\mathfrak p}
\newcommand{\beq}{\begin{equation}}
\newcommand{\eeq}{\end{equation}}
\newcommand{\spe}{\mathrm sp}
\newcommand{\A}{\mathcal A}

\newcommand{\gr}{{\mathrm gr}}
\newcommand{\sbar}{{\bar{s}}}
\newcommand{\etabar}{{\bar{\eta}}}
\newcommand{\mydet}{\mathrm{det}}

 \DeclareMathOperator{\Spec}{Spec}
\DeclareMathOperator{\Pic}{Pic} \DeclareMathOperator{\Cl}{Cl}

\DeclareMathOperator{\Hom}{Hom}

\DeclareMathOperator{\slope}{slope}

\DeclareMathOperator{\Gal}{Gal}
\DeclareMathOperator{\ord}{ord}
\DeclareMathOperator{\Tot}{Tot}\DeclareMathOperator{\Frob}{Frob}


\newcommand{\fonc}[5]{
 \begin{array}{cccc}
 #1: & #2 & \longrightarrow & #3\\
     & #4 & \longmapsto & #5
 \end{array}
}
\newcommand{\appl}[4]{
 \begin{array}{cccc}
   #1 & \longrightarrow & #2\\
   #3 & \longmapsto & #4
 \end{array}
}


\begin{document}

\Title
On the Weil-\'etale topos of regular arithmetic schemes
\ShortTitle
\SubTitle
\Author
M. Flach and B. Morin
\ShortAuthor
\EndTitle
\Abstract
We define and study a Weil-\'etale topos for any regular, proper scheme $\X$ over $\Spec(\bz)$ which has some of the properties suggested by Lichtenbaum for such a topos. In particular, the cohomology with $\tr$-coefficients has the expected relation to $\zeta(\X,s)$ at $s=0$ if the Hasse-Weil L-functions $L(h^i(\X_\bq),s)$ have the expected meromorphic continuation and functional equation.  If $\X$ has characteristic $p$ the cohomology with $\bz$-coefficients also has the expected relation to $\zeta(\X,s)$ and our cohomology groups recover those previously studied by Lichtenbaum and Geisser.
\EndAbstract
\MSC
Primary: 14F20, 11S40, Secondary: 11G40, 18F10
\EndMSC
\KEY
\EndKEY
\Address
Department of Mathematics, Caltech, Pasadena CA 91125, USA
\Address
Department of Mathematics, Caltech, Pasadena CA 91125, USA
\Address
\Address
\EndAddress

\section{Introduction} In \cite{li04} Lichtenbaum suggested the
existence of Weil-\'etale cohomology groups for arithmetic schemes
$\X$ (i.e. separated schemes of finite type over $\Spec(\bz)$) which
are related to the zeta-function $\zeta(\X,s)$ of $\X$ as follows.
\begin{itemize}
\item[a)] The compact support cohomology groups $H^i_c(\X_W,\tr)$
are finite dimensional vector spaces over $\br$, vanish for almost
all $i$ and satisfy
\[ \sum_{i\in\bz}(-1)^i\dim_\br H^i_c(\X_W,\tr)=0.\]
\item[b)] The function $\zeta(\X,s)$ has a meromorphic continuation to $s=0$ and
\[ \ord_{s=0}\zeta(\X,s)=\sum_{i\in\bz}(-1)^i\cdot i\cdot\dim_\br
H^i_c(\X_W,\tr).\]
\item[c)] There exists a canonical class $\theta\in H^1(\X_W,\tr)$ so
that the sequence
\[
\cdots\xrightarrow{\cup\theta}H^i_c(\X_W,\tr)\xrightarrow{\cup\theta}H^{i+1}_c(\X_W,\tr)\xrightarrow{\cup\theta}\cdots\]
is exact.
\item[d)] The compact support cohomology groups $H^i_c(\X_W,\bz)$
are finitely generated over $\bz$ and vanish for almost all $i$.
\item[e)] The natural map from $\bz$ to $\tr$-coefficients induces an isomorphism
\[ H^i_c(\X_W,\bz)\otimes_\bz\br\xrightarrow{\sim}H^i_c(\X_W,\tr).\]
\item[f)]  If $\zeta^*(\X,0)$ denotes the leading Taylor-coefficient of $\zeta(\X,s)$ at $s=0$
and \[
\lambda:\br\cong\bigotimes_{i\in\bz}\mydet_{\br}H^i_c(\X_W,\tr)^{(-1)^i}\]
the isomorphism induced by c) then
\[
\bz\cdot\lambda(\zeta^*(\X,0))=\bigotimes_{i\in\bz}\mydet_{\bz}H^i_c(\X_W,\bz)^{(-1)^i}\]
where the determinant is understood in the sense of \cite{knumum}.
\end{itemize}

If $\X$ has finite characteristic these groups are well defined and
well understood by work of Lichtenbaum \cite{li01} and Geisser
\cites{geisser04,geisser05}. In particular all the above properties a)-f) hold
for $\dim(\X)\leq 2$ and in general under resolution of
singularities. Lichtenbaum also defined such groups
for $\X=\Spec(\co_F)$ where $F$ is a number field and showed that
a)-f) hold if one artificially redefines $H^i_c(\Spec(\co_F)_W,\bz)$
to be zero for $i\geq 4$. In \cite{flach06-2} it was then shown that
$H^i_c(\Spec(\co_F)_W,\bz)$ as defined by Lichtenbaum does indeed
vanish for odd $i\geq 5$ but is an abelian group of infinite rank
for even $i\geq 4$.

In any case, in Lichtenbaum's definition the groups
$H_c^i(\Spec(\co_F)_W,\bz)$ and $H_c^i(\Spec(\co_F)_W,\tr)$ are
defined via an Artin-Verdier type compactification
$\overline{\Spec(\co_F)}$ of $\Spec(\co_F)$ \cite{av}, where however
$H^i(\overline{\Spec(\co_F)}_W,\F)$ is not the cohomology group of a
topos but rather a direct limit of such. The first purpose of this
article is to give a definition of a topos $\overline{\Spec(\co_F)}_W$
which recovers Lichtenbaum's groups (see section \ref{wdef} below). This definition was proposed in
the second author's thesis \cite{morin} and is a natural
modification of Lichtenbaum's idea which is suggested by a closer
look at the \'etale topos $\Spec(\co_F)_\et$.

In \cite{av} Artin and Verdier defined a topos $\overline{\X}_\et$ for any arithmetic scheme $\X\to\Spec(\bz)$
so that there are complementary open and closed immersions
\[ \X_\et\to \overline{\X}_\et \leftarrow Sh(\X_\infty) \]
the sense of topos theory \cite{sga4}. Here $\X_\infty$ is the topological quotient space $\X(\bc)/G_\br$ where $\X(\bc)$ is the set of complex points with its standard Euclidean topology and $G_\br=\Gal(\bc/\br)$. If $\X$ is an arithmetic scheme and
$\Y$ denotes either $\X$ or $\overline{\X}$ we define the
Weil-\'etale topos of $\Y$ by
\[ \Y_W:=\Y_\et\times_{\overline{\Spec(\bz)}_\et}\overline{\Spec(\bz)}_W,\]
a fibre product in the 2-category of topoi. This definition is
suggested by the fact that the Weil-\'etale topos defined by Lichtenbaum for varieties over finite fields
is isomorphic to a similar fibre product, as was
shown in the second author's thesis \cite{morin} and will be recalled in section \ref{fibre} below. The work of Geisser \cite{geisser05} shows that Lichtenbaums's definition
is only reasonable (i.e. satisfies a)-f)) for smooth, proper varieties over finite fields. Correspondingly, one can only expect our fibre product definition to be reasonable for proper {\em regular} arithmetic schemes.

The second purpose of this article is to show that this is indeed the case as far as $\tr$-coefficients are concerned. Our main result is the
following

\begin{theorem} Let $\X$ be a regular scheme, proper over $\Spec(\bz)$.
\begin{itemize} \item[i)] For $\X=\Spec(\co_F)$ one has
\[ \overline{\Spec(\co_F)}_W\cong \overline{\Spec(\co_F)}_\et\times_{\overline{\Spec(\bz)}_\et}\overline{\Spec(\bz)}_W,\]
where $\overline{\Spec(\co_F)}_W$ is the topos defined in section \ref{wdef} below, based on Lichtenbaum's idea of replacing Galois groups by Weil groups.
\item[ii)] If $\X\to\Spec(\bof_p)$ has characteristic $p$ then our groups agree with those of
Lichtenbaum and Geisser and a)-f) hold for $\X$.
\item[iii)] If $\X$ is flat over $\Spec(\bz)$ and the Hasse-Weil
L-functions $L(h^i(\X_\bq),s)$ of all motives $h^i(\X_\bq)$ satisfy
the expected meromorphic continuation and functional equation. Then
a)-c) hold for $\X$.
\end{itemize}
\label{introtheo}\end{theorem}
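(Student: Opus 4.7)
The three statements decompose by case, with part (iii) carrying essentially all the substance. For (i), the topos $\overline{\Spec(\co_F)}_W$ constructed in section \ref{wdef} is modelled on the Artin-Verdier site of $\Spec(\co_F)$ with Weil groups replacing absolute Galois groups, and as such comes equipped with natural morphisms to $\overline{\Spec(\co_F)}_\et$ (replacing Weil groups by Galois groups) and to $\overline{\Spec(\bz)}_W$ (induced by the structure map), compatible over $\overline{\Spec(\bz)}_\et$. By the universal property of the fibre product this yields a canonical comparison morphism, which I would prove is an equivalence by comparing the two sites locally: the stalks at each closed point and at each archimedean place match the classifying topos of the appropriate local Weil group, and the gluing between the finite and archimedean parts is precisely what is recorded by the complementary immersions $\Spec(\co_F)_\et\hookrightarrow\overline{\Spec(\co_F)}_\et\hookleftarrow Sh((\Spec\co_F)_\infty)$.

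Part (ii) is a reduction to known results. In finite characteristic there are no archimedean places, so $\overline{\X}_\et=\X_\et$ and our fibre product definition collapses to $\X_\et\times_{\Spec(\bof_p)_\et}\Spec(\bof_p)_W$, which by the isomorphism recalled in section \ref{fibre} coincides with Lichtenbaum's Weil-\'etale topos for $\X/\bof_p$. Properties a)-f) then follow from \cite{li01} and \cites{geisser04,geisser05} (with the usual conventions on resolution in higher dimensions), using that a regular scheme proper over $\Spec(\bz)$ of characteristic $p$ is automatically smooth and proper over $\Spec(\bof_p)$.

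The heart of the theorem is (iii). My plan is first to compute $H^*_c(\X_W,\tr)$ via the Leray spectral sequence for the structural morphism $\X_W\to\overline{\Spec(\bz)}_W$, combined with the long exact sequence attached to the open/closed decomposition $\X_\et\hookrightarrow\overline{\X}_\et\hookleftarrow Sh(\X_\infty)$. The expected outcome is an identification of $H^i_c(\X_W,\tr)$ with a combination of Weil-group realizations of the motives $h^i(\X_\bq)$ and archimedean contributions computed through $H^*(\X_\infty,\br)$ with its $G_\br$-action. Next I would pull back the canonical class $\theta\in H^1(\overline{\Spec(\bz)}_W,\tr)$ to $\X_W$ and identify the $\cup\theta$-complex with a shifted form of Beilinson's regulator complex on the generic fibre $\X_\bq$. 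Its exactness then delivers a) and c), and the order of vanishing statement b) follows by combining the Euler-characteristic identity from a) with the assumed functional equation and the standard Deligne-Beilinson formula for $\ord_{s=0}L(h^i(\X_\bq),s)$ in terms of ranks of motivic and Deligne cohomology.

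The main obstacle I foresee is this last identification: matching the global $\cup\theta$-map to Beilinson's regulator at the level of actual complexes, not merely up to rational equivalence. This forces a careful treatment of the archimedean fibre, since the Weil-\'etale structure at infinity must encode precisely the $G_\br$-equivariant Hodge data that feeds into the regulator; it is here that both the regularity hypothesis on $\X$ and the compactification $\overline{\X}$ are used essentially. It is also here that the restriction to $\tr$-coefficients in (iii) becomes important: the integral counterpart of the comparison, which would be needed for d)-f), is precisely the conjectural content beyond the scope of the theorem, which explains why only a)-c) are asserted.
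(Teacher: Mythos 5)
Your overall decomposition by part is correct, and your sketch for (ii) is essentially the paper's argument (though you omit one subtlety: the fundamental class $\theta$ of Definition \ref{thetadef} differs from Geisser's class $e$ by a factor of $\log(p)$, which has to be accounted for when invoking his results). Your sketch for (i) is also roughly the right shape, though the hard step, which you pass over, is not stalkwise comparison but showing that the two gluing functors $\mathrm{i}^*\mathrm{j}_*$ and $i^*j_*$ arising from the open–closed decomposition along the étale locus agree; this requires a cofinality argument matching explicit objects $\mathcal{Y}(K/F,S,W_{k(v)}\times T)$ of the defining site of $\bar X_W$ with pullbacks of objects of the fibre-product site.

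For (iii) there is a genuine gap, and it is conceptual rather than technical. You propose to identify the $\cup\theta$-complex on $H^\bullet_c(\X_W,\tr)$ with a shifted Beilinson regulator complex and to extract a) and c) from that. But the Beilinson regulator and motivic cohomology never enter the proof of a)–c). The actual computation (Theorem \ref{thm-global-cohomology}, Theorem \ref{thm-basechange-cpctsupp}, Theorem \ref{ac-theo}) proceeds through the morphism $j_{\overline{\X}}:B_{W_{K(\X)}}\to\overline{\X}_W$ at the generic point and then the Leray spectral sequence for $\gamma_{\overline{\X}}:\overline{\X}_W\to\overline{\X}_\et$, yielding the very elementary answer
$H^i_c(\X_W,\tr)\cong H^i_c(\X_\et,\br)\oplus H^{i-1}_c(\X_\et,\br)$,
where by Proposition \ref{prop-cpctspp-etale-coh} the right-hand groups reduce to $H^{\bullet}(\X_\infty,\br)$. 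Under this identification $\cup\theta$ is simply the composite projection–inclusion $H^i_c(\X_W,\tr)\twoheadrightarrow H^i_c(\X_\et,\br)\hookrightarrow H^{i+1}_c(\X_W,\tr)$, and acyclicity is immediate, with no regulator and no Hodge filtration anywhere. This is exactly why a)–c) are provable while f) (where the regulator and motivic cohomology do appear) remains conjectural; reading a regulator complex into a)–c) inverts the logical structure of the theorem.

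You also omit the second essential ingredient for b). After using Lemma \ref{ordat0} (which is an unconditional consequence of the functional equation, not the Beilinson conjecture) to identify $\ord_{s=0}\prod_i L(h^i(\X_\bq),s)^{(-1)^i}$ with the archimedean Euler characteristic, one still needs to compare this with $\ord_{s=0}\zeta(\X,s)$, and the two differ by the bad Euler factors $E_p(s)$ of equation (\ref{zetamot}). Showing $\ord_{s=0}E_p(s)=0$ requires the (weak form of the) local theorem on invariant cycles, Theorem \ref{main} b), which is where regularity of $\X$ is used in an essential way. Without this input the vanishing-order statement b) does not follow.
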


The assumptions of iii) are satisfied, for example, if $\X$ is a regular
model of a Shimura curve, or of a self product $E\times\cdots \times
E$ where $E$ is an elliptic curve, over a totally real field $F$.

Unfortunately, properties d) and e) do not hold with our fibre
product definition, even in low degrees, and we also do not expect them
to hold with any similar definition (see the remarks in section \ref{rems}). The right definition of
Weil-\'etale cohomology with $\bz$-coefficients for schemes of
characteristic zero will require a key new idea, as is already
apparent for $\X=\Spec(\co_F)$.

We briefly describe the content of this article. In section \ref{prem} we recall preliminaries on sites, topoi and classifying topoi. Section \ref{fibre} contains the proof that Lichtenbaum's Weil-\'etale topos in characteristic $p$ is a fibre product via a method that is different from the one in the second author's thesis \cite{morin}. In section \ref{sect-AVetaletopos} we recall the definition of $\overline{\X}_\et$ and the corresponding compact support cohomology groups $H^i_c(\X_\et,\F)$. In section \ref{wdef} we define $\overline{\Spec(\co_F)}_W$ and give the proof of Theorem \ref{introtheo} i) (see Proposition \ref{prop-fiberproduct-makes-sense}). In section \ref{section-xwdef} we define $\overline{\X}_W$, describe its fibres above all places $p\leq\infty$ and its generic point. In section \ref{section-cohomology-XW} we compute the cohomology of $\overline{\X}_W$ with $\tr$-coefficients following Lichtenbaum's method of studying the Leray spectral sequence from the generic point. This section is the technical heart of this article. In section \ref{comp-support} we compute the compact support cohomology $H^i_c(\X_W,\tr)$ via the natural morphism $\overline{\X}_W\to\overline{\X}_\et$ and prove properties a) and c) (see Theorem \ref{ac-theo}). The class $\theta$ in c) is defined in subsection \ref{funclass}.

Section \ref{zeta} introduces Hasse-Weil L-functions of varieties over $\bq$ as well as Zeta-functions of arithmetic schemes and contains the proof of Theorem \ref{introtheo} ii) (see Theorem \ref{charp-theo}) and of property b) (see Theorem \ref{b-theo}), thereby concluding the proof Theorem \ref{introtheo} iii). In subsection \ref{tama} we show that property f) for $\zeta(\X,s)$ is compatible with the Tamagawa number conjecture of Bloch and Kato \cite{bk88} (or rather of Fontaine and Perrin-Riou \cite{fpr91}) for $\prod_{i\in \bz} L(h^i(\X_\bq),s)^{(-1)^i}$ at $s=0$. In order to do this we need to augment the list of properties a)-f) for Weil-\'etale cohomology with further natural assumptions g)-j) of which g) and h) hold in characteristic $p$, and we need to assume a number of conjectures which are preliminary to the formulation of the Tamagawa number conjecture.
Finally, in section \ref{loc-inv-cycles} we prove some results related to the so called local theorem of invariant cycles in $l$-adic cohomology, and we formulate analogous conjectures in $p$-adic cohomology. These results may be of some interest independently of Weil-\'etale cohomology, and are necessary to establish the equality of vanishing orders
\[\ord_{s=0}\zeta(\X,s)=\ord_{s=0}\prod_{i\in \bz} L(h^i(\X_\bq),s)^{(-1)^i}\]
for regular schemes $\X$ proper and flat over $\Spec(\bz)$.
\medskip

{\em Acknowledgements:} The first author is supported by grant DMS-0701029 from the National Science Foundation. He would also like to thank Spencer Bloch for a helpful discussion about the material in section \ref{loc-inv-cycles} and the MPI Bonn for its hospitality during the final preparation of this paper.

\section{Preliminaries}\label{prem}

In this paper, a topos is a Grothendieck topos over $\underline{Set}$, and a morphism of topoi is a geometric morphism. A pseudo-commutative diagram of topoi is said to be commutative. Finally, we suppress any mention of universes.

\subsection{Left exact sites}

Recall that a Grothendieck topology $\mathcal{J}$ on a category
$\mathcal{C}$ is said to be \emph{sub-canonical} if $\mathcal{J}$ is
coarser than the canonical topology, i.e. if any representable presheaf on $\mathcal{C}$ is a sheaf for the
topology $\mathcal{J}$. A category $\mathcal{C}$ is said to be
\emph{left exact} when finite projective limits exist in
$\mathcal{C}$, i.e. when $\mathcal{C}$ has a final object and fiber
products. A functor between left exact categories is said to be left exact
if it commutes with finite projective limits.
\begin{definition}
A Grothendieck site $(\mathcal{C},\mathcal{J})$ is said to be \emph{left
exact} if $\mathcal{C}$ is a left exact category endowed with a
subcanonical topology $\mathcal{J}$. A \emph{morphism of left exact sites}
$(\mathcal{C}',\mathcal{J}')\rightarrow (\mathcal{C},\mathcal{J})$
is a continuous left exact functor
$\mathcal{C}'\rightarrow\mathcal{C}$.
\end{definition}
Note that any Grothendieck topos, i.e. any category satisfying Giraud's axioms, is equivalent to the category of sheaves of sets on a left exact site. Note also that a Grothendieck site $(\mathcal{C},\mathcal{J})$ is left exact if and only if the canonical functor (given in general by Yoneda and sheafification)
$$y:\mathcal{C}\longrightarrow\widetilde{(\mathcal{C},\mathcal{J})}$$
identifies $\mathcal{C}$ with a left exact full subcategory of $\widetilde{(\mathcal{C},\mathcal{J})}$.
The following result is proven in \cite{sga4} IV.4.9.
\begin{lemma}
A \emph{morphism of left exact sites}
$f^*:(\mathcal{C}',\mathcal{J}')\rightarrow
(\mathcal{C},\mathcal{J})$ induces a morphism of topoi
$f:(\widetilde{\mathcal{C},\mathcal{J}})\rightarrow
(\widetilde{\mathcal{C}',\mathcal{J}'})$. Moreover we have a
commutative diagram
\begin{equation*}\begin{CD}
@. (\widetilde{\mathcal{C},\mathcal{J}}) @<{f^*}<<
(\widetilde{\mathcal{C}',\mathcal{J}'})\\
@. @AA{y_{\mathcal{C}}}A @AA{y_{\mathcal{C}'}}A @.\\
@. \mathcal{C} @<{f^*}<< \mathcal{C}' @. {}
\end{CD}\end{equation*}
where the vertical arrows are the fully faithful Yoneda functors.
\end{lemma}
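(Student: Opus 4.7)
The plan is to follow the standard construction from SGA 4, producing a geometric morphism by building both adjoints and invoking the left-exactness hypothesis precisely where finite-limit-preservation of the inverse image is needed. First I would define the direct image: for a sheaf $F$ on $(\mathcal{C},\mathcal{J})$, set $f_*F(X') := F(f^*X')$ for $X'\in\mathcal{C}'$. The continuity and left-exactness hypotheses on $f^*:\mathcal{C}'\to\mathcal{C}$ together ensure that this presheaf is a sheaf for $\mathcal{J}'$: covers in $\mathcal{J}'$ map to covering families in $\mathcal{J}$ by continuity, and preservation of fibered products by $f^*$ lets one rewrite the sheaf condition back in terms of $X'$. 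This defines $f_*:\widetilde{(\mathcal{C},\mathcal{J})}\to\widetilde{(\mathcal{C}',\mathcal{J}')}$.

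To produce $f^*$, I would first form, for a presheaf $G$ on $\mathcal{C}'$, the left Kan extension along $f^*$,
$$\varphi(G)(X) = \varinjlim_{(X',\,u:X\to f^*X')} G(X'),$$
a presheaf on $\mathcal{C}$, then sheafify to define $f^*G := a(\varphi(G))$. The formal adjunction on presheaf categories (via the universal property of Kan extension), combined with the universal property of sheafification, yields the adjunction $f^*\dashv f_*$ on sheaf categories. Thus $f$ is a geometric morphism provided one checks that $f^*$ preserves finite limits.

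This is the one substantive point and the main obstacle. Sheafification $a$ is always left exact, so it suffices to show $\varphi$ is. Here the hypothesis that $f^*:\mathcal{C}'\to\mathcal{C}$ preserves finite limits enters: it makes the appropriate orientation of the indexing comma category $(X\downarrow f^*)$ filtered, since non-emptiness uses preservation of the terminal object, binary upper bounds use preservation of products, and the parallel-pair condition uses preservation of equalizers. Filtered colimits commute with finite limits in $\mathbf{Set}$, giving left-exactness of $\varphi$. This argument is the content of SGA 4 IV.4.9. Finally, the commutativity of the displayed diagram is immediate: for $X'\in\mathcal{C}'$ and any sheaf $F$ on $\mathcal{C}$ one has natural isomorphisms
$$\Hom(f^*y_{\mathcal{C}'}(X'),F)\;\cong\;\Hom(y_{\mathcal{C}'}(X'),f_*F)\;=\;F(f^*X')\;\cong\;\Hom(y_{\mathcal{C}}(f^*X'),F),$$
in which the first isomorphism is the adjunction, the middle equality is the definition of $f_*$, the outer bijections are Yoneda, and sub-canonicity of the two topologies is what guarantees that $y_{\mathcal{C}}$ and $y_{\mathcal{C}'}$ land in sheaves rather than merely in presheaves. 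Yoneda's lemma then gives the required natural isomorphism $f^*\circ y_{\mathcal{C}'}\cong y_{\mathcal{C}}\circ f^*$.
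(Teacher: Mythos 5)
Your proposal is a correct expansion of the argument in SGA 4 IV.4.9, which is exactly the reference the paper cites in place of a proof. The construction of the direct image by precomposition, the inverse image as a sheafified left Kan extension whose left exactness follows from (co)filteredness of the comma category granted by left exactness of $f^*$ on sites, and the commutativity of the Yoneda square via the adjunction together with subcanonicity are all the standard ingredients of the cited argument.
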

\subsection{The topos $\T$}

We denote by $Top^{lc}$ (respectively by $Top^c$) the category of locally compact topological spaces (respectively of compact spaces). A locally compact space is assumed to be Hausdorff. The category $Top^{lc}$ is endowed with the open cover topology $\mathcal{J}_{op}$, which is subcanonical. We denote by $\T$ the topos of sheaves of sets on the site $(Top^{lc},\mathcal{J}_{op})$. The Yoneda functor $$y:Top^{lc}\longrightarrow\T$$ is fully faithful, and $Top^{lc}$ is viewed as a generating full subcategory of $\T$. For any object $T$ of $Top^{lc}$, $T$ is locally compact hence there exist morphisms
$$\coprod yU_i\rightarrow \coprod yK_i\rightarrow yT$$
where $\{U_i\subset T\}$ is an open covering, and $K_i$ is a compact subspace of $T$. It follows that
$\coprod yU_i\rightarrow yT$ is an epimorphism in $\T$, hence so is $\coprod yK_i\rightarrow yT$. This shows that the category of compact spaces $Top^c$ is a generating full subcategory of $\T$.

The unique morphism $t:\T\rightarrow\underline{Set}$ has a section $s:\underline{Set}\rightarrow\T$ such that $t_*=s^*$ hence we have three adjoint functors $t^*,\,t_*=s^*,\,s_*$. In particular $t_*$ is exact hence we have $H^n(\T,\mathcal{A})=H^n(\underline{Set},\mathcal{A}(*))=0$ for any $n\geq1$ and any abelian object $\mathcal{A}$.

\subsection{Classifying topoi}

\subsubsection{General case.}
For any topos $\mathcal{S}$ and any group object $G$ in $\mathcal{S}$, we denote by $B_G$ the category of left $G$-object in $\mathcal{S}$. Then $B_G$ is a topos, as it follows from Giraud's axioms, and $B_G$ is endowed with a canonical morphism $B_G\rightarrow\mathcal{S}$, whose inverse image functor sends an object $F$ of $\mathcal{S}$ to $F$ with trivial $G$-action. If there is a risk of ambiguity, the topos $B_G$ is denoted by $B_{\mathcal{S}}(G)$. The topos $B_G$ is said to be the classifying topos of $G$ since for any topos $f:\mathcal{E}\rightarrow\mathcal{S}$  over $\mathcal{S}$, the category $\underline{Homtop}_{\mathcal{S}}\,(\mathcal{E},B_G)$ is equivalent to the category of $f^*G$-torsors in $\mathcal{E}$ (see \cite{sga4} IV. Exercice 5.9).

\subsubsection{Examples.}

Let $G$ be a discrete group, i.e. a group object of the final topos $\underline{Set}$. Then $B_{\underline{Set}}G$ is the category of left $G$-sets, and the cohomology groups $H^*(B_{\underline{Set}}G,A)$, where $A$ is an abelian object of $B_G$ i.e. a $G$-module, is precisely the cohomology of the discrete group $G$. Here $B_{\underline{Set}}G$ is called the \emph{small classifying topos of the discrete group $G$} and is denoted by $B_G^{sm}$. If $G$ is the profinite group, the \emph{small classifying topos $B^{sm}_G$ of the profinite group $G$}  is the category of continuous $G$-sets.

Let $G$ be a locally compact topological group. Then $G$ represents a group object of $\T$, where $\T$ is defined above. Then $B_G$ is the classifying topos of the topological group $G$, and the cohomology groups $H^*(B_G,\mathcal{A})$, where $\mathcal{A}$ is an abelian object of $B_G$ (e.g. a topological $G$-module) is the cohomology of the topological group $G$. If $G$ is not locally compact, then we just need to replace $\T$ with the category of sheaves on $(Top,\mathcal{J}_{op})$.

Let $S$ be a scheme and let $G$ be a smooth group scheme over $S$. We denote by $S_{Et}$ the big \'etale topos of $S$. Then $G$ represents a group object of $S_{Et}$ and $B_G$ is the classifying topos of $G$. The cohomology groups $H^*(B_G,\mathcal{A})$, where $\mathcal{A}$ is an abelian object of $B_G$ (e.g. an abelian group scheme over $S$ endowed with a $G$-action) is the \'etale cohomology of the $S$-group scheme $G$.

\subsubsection{The local section site.}

For $G$ any locally compact topological group, we denote by $B_{Top^{lc}}G$ the category
of $G$-equivariant locally compact topological spaces endowed with the local section topology $\mathcal{J}_{ls}$ (see
\cite{li04} section 1). The Yoneda functor yields a canonical fully faithful functor
$$B_{Top^{lc}}G\longrightarrow B_G.$$
Then one can show that the local section topology $\mathcal{J}_{ls}$ on $B_{Top^{lc}}G$ is the topology induced by the canonical topology of $B_G$. Moreover $B_{Top^{lc}}G$ is a generating family of $B_G$. It follows that the morphism
$$B_G\longrightarrow\widetilde{(B_{Top^{lc}}G,\mathcal{J}_{ls})}$$
is an equivalence. In other words the site $(B_{Top^{lc}}G,\mathcal{J}_{ls})$ is a site for the classifying topos $B_G$ (see \cite{flach06-2} for more details).

\subsubsection{The classifying topos of a strict topological pro-group.}

A \emph{locally compact topological pro-group} $\underline{G}$ is a pro-object in the category of locally compact topological groups, i.e. a functor $I^{op}\rightarrow Gr(Top^{lc})$, where $I$ is a filtered category and $Gr(Top^{lc})$ is the category of locally compact topological groups. A locally compact topological pro-group $\underline{G}$ is said to be \emph{strict} if the transition maps $G_j\rightarrow G_i$ have local sections. We define the limit of $\underline{G}$ in the 2-category of topoi as follows.
\begin{definition}
The classifying topos of a strict topological pro-group $\underline{G}$ is defined as
$$B_{\underline{G}}:=\underleftarrow{lim}_{I}\, B_{G_i},$$
where the the projective limit is computed in the 2-category of topoi.
\end{definition}

\subsubsection{}In order to ease the notations, we will simply denote by $Top$ the category of locally compact spaces. For any locally compact group $G$, we denote by $B_{Top}G$ the category of locally compact spaces endowed with a continuous $G$-action.

\subsection{Fiber products of topoi}
The class of topoi forms a 2-category. In particular, $\underline{Homtop}\,(\mathcal{E},\mathcal{F})$ is a category for any of topoi $\mathcal{E}$ and $\mathcal{F}$. If $f,g:\mathcal{E}\rightrightarrows\mathcal{F}$ are two objects of $\underline{Homtop}\,(\mathcal{E},\mathcal{F})$, then a morphism $\sigma:f\rightarrow g$ is a natural transformation $\sigma:f_*\rightarrow g_*$. Consider now two morphisms of topoi with the same target $f:\mathcal{E}\rightarrow\mathcal{S}$ and $g:\mathcal{F}\rightarrow\mathcal{S}$. For any topos $\mathcal{G}$, we define the category $$\underline{Homtop}\,(\mathcal{G},\mathcal{E})\times_{\underline{Homtop}\,(\mathcal{G},\mathcal{S})}
\underline{Homtop}\,(\mathcal{G},\mathcal{F})$$
whose objects are given by triples of the form $(a,b,\alpha)$, where $a$ and $b$ are objects of $\underline{Homtop}\,(\mathcal{G},\mathcal{E})$ and $\underline{Homtop}\,(\mathcal{G},\mathcal{F})$ respectively, and
$$\alpha:f\circ a\cong g\circ b$$
is an isomorphism in the category $\underline{Homtop}\,(\mathcal{G},\mathcal{S})$.

A fiber product  $\mathcal{E}\times_\mathcal{S}\mathcal{F}$ in the 2-category of topoi is a topos endowed with canonical projections $p_1:\mathcal{E}\times_\mathcal{S}\mathcal{F}\rightarrow\mathcal{E}$, $p_2:\mathcal{E}\times_\mathcal{S}\mathcal{F}\rightarrow\mathcal{F}$ and an isomorphism $\alpha:f\circ p_1\cong g\circ p_2$ satisfying the following universal condition.
For any topos $\mathcal{G}$ the natural functor
$$\label{equi-fiber-prod-topoi}
\appl{\underline{Homtop}\,(\mathcal{G},\mathcal{E}\times_\mathcal{S}\mathcal{F})}
{\underline{Homtop}\,(\mathcal{G},\mathcal{E})\times_{\underline{Homtop}\,(\mathcal{G},\mathcal{S})}
\underline{Homtop}\,(\mathcal{G},\mathcal{F})}{d}{(p_1\circ d,p_2\circ d,\alpha\circ d_*)}$$
is an equivalence. It is known that fiber products of topoi always exist (see \cite{illusie09} for example). The universal condition implies that such a fiber product is unique up to equivalence.
A product of topoi is a fiber product over the final topos $$\mathcal{E}\times\mathcal{F}=\mathcal{E}\times_{\underline{Set}}\mathcal{F}.$$ A square of topoi
\begin{equation*}\begin{CD}
@.\mathcal{E'} @>>> \mathcal{S}'\\
@. @VVV @VVV @.\\
@.\mathcal{E} @>>>\mathcal{S} @.
\end{CD}\end{equation*}
is said to be a \emph{pull-back} if it is commutative and if the morphism
$$\mathcal{E}'\longrightarrow\mathcal{E}\times_\mathcal{S}\mathcal{S}',$$
given by the universal condition for the fiber product, is an equivalence.
The following examples will be used in this paper. Let $f:\mathcal{E}\rightarrow\mathcal{S}$ be a morphism of topoi. For any object $X$ of $\mathcal{S}$, the commutative diagram
\begin{equation}\begin{CD}\label{pull-back-localization}
@. \mathcal{E}/f^*X@>>> \mathcal{S}/X\\
@. @VVV @VVV @.\\
@.\mathcal{E} @>f>>\mathcal{S} @.
\end{CD}\end{equation}
is a pull-back  (see \cite{sga4} IV Proposition 5.11). For any group-object $G$ in $\mathcal{S}$, the commutative diagram
\begin{equation}\begin{CD}\label{pull-back-classifying-topos}
@. B_{\mathcal{E}}(f^*G)@>>> B_{\mathcal{S}}(G)\\
@. @VVV @VVV @.\\
@.\mathcal{E} @>f>>\mathcal{S} @.
\end{CD}\end{equation}
is a pull-back. This follows from the fact that $B_{\mathcal{S}}(G)$ classifies $G$-torsors.

\section{The Weil-\'etale topos in characteristic p is a fiber product}\label{fibre}

For any scheme $Y$, we denote by $Y_{et}$ the (small) \'etale topos of $Y$, i.e. the category of sheaves of sets on the \'etale site on $Y$. Let $G$ be a discrete group acting on a scheme $Y$. An \'etale sheaf $\mathcal{F}$ on $Y$ is $G$-equivariant if $\mathcal{F}$ is endowed with a family of morphisms
$\{\varphi_g:g_*\mathcal{F}\rightarrow \mathcal{F};\, g\in G \}$
satisfying $\varphi_{1_G}=Id_{\mathcal{F}}$ and
$\varphi_{gh}=\varphi_g\circ g_*(\varphi_h)$, for any $g,h\in G$.
The category $\mathcal{S}(G;Y_{et})$ of $G$-equivariant \'etale
sheaves on $Y$ is a topos, as it follows from Giraud's axioms. The cohomology $H^*(\mathcal{S}(G;Y_{et}),\mathcal{A})$, for any $G$-equivariant abelian \'etale sheaf on $Y$, is the equivariant \'etale cohomology for the action $(G,Y)$.

An equivariant map of $G$-schemes $u:X\rightarrow Y$ induces a morphism
of topoi $\mathcal{S}(G;X_\et)\rightarrow\mathcal{S}(G;Y_\et)$.
Let $Y$ be a scheme separated and of finite type over a field $k$, let $\overline{k}/k$ be a separable closure and let $\mathcal{F}$ be an \'etale sheaf on $Y\otimes_k\overline{k}$. An action of the
Galois group $G_k$ on $\mathcal{F}$ is said to be \emph{continuous}
when the induced action of the profinite group $G_k$ on the discrete set
$\mathcal{F}(U\times_k\overline{k})$ is continuous, for any $U$
\'etale and quasi-compact over $Y$. It is well known that
the \'etale topos $Y_{\et}$ is equivalent to the category
$\mathcal{S}(G_k,\overline{Y}_{\et})$ of \'etale sheaves on
$\overline{Y}:=Y\otimes_k\overline{k}$ endowed with a continuous
action of the Galois group $G_k$.

Let $Y$ be a separated scheme of finite type over a finite field
$k=\mathbb{F}_q$. Let $\overline{k}/k$ be an algebraic closure. Let
$W_k$ and $G_k$ be the Weil group and the Galois group of $k$
respectively. The small classifying topos $B^{sm}_{W_{k}}$ is
defined as the category of $W_{k}$-sets, while $B^{sm}_{G_k}$ is the
category of continuous $G_{k}$-sets. We denote by $Y^{sm}_W$ the Weil-\'etale
topos of the scheme $Y$, which is defined as follows. We consider
the scheme $\overline{Y}=Y\otimes_k\overline{k}$ endowed with the
action of $W_k$. Then the Weil-\'etale topos $Y^{sm}_W$ is the topos  of
$W_k$-equivariant sheaves of sets on $\overline{Y}$. We have a
morphism
$$\gamma_Y:Y^{sm}_W:=\mathcal{S}(W_k,\overline{Y}_{\et})\longrightarrow\mathcal{S}(G_k,\overline{Y}_{\et})\cong Y_{et}.$$
Indeed, consider the functor $\gamma_Y^*$ which takes an \'etale
sheaf $\mathcal{F}$ on $\overline{Y}$ endowed with a continuous
$G_k$-action to the sheaf $\mathcal{F}$ endowed with the induced
$W_k$-action via the canonical map $W_k\rightarrow G_k$. Then
$\gamma_Y^*$ commutes with arbitrary inductive limits and with
projective limits. Hence $\gamma_Y^*$ is the inverse image of a
morphism of topoi $\gamma_Y$. This morphism has been defined and
studied by T. Geisser in \cite{geisser04}. Note that the Weil-\'etale topos of
$\Spec(k)$ is precisely $B^{sm}_{W_k}$ and that the \'etale topos
$\Spec(k)_{\et}$ is equivalent to $B^{sm}_{G_k}$. In this case the morphism $\gamma_k:=\alpha:B^{sm}_{W_{k}}\rightarrow B^{sm}_{G_k}$, from the Weil-\'etale topos of $\Spec(k)$ to its \'etale topos is the morphism induced by the canonical map $W_{k}\rightarrow G_k$.
The structure map $Y\rightarrow\Spec(k)$ gives a $W_k$-equivariant morphism of schemes
$\overline{Y}\rightarrow\Spec(\overline{k})$,
inducing in turn a morphism $Y^{sm}_W\rightarrow
B^{sm}_{W_k}$. This structure map also induces a morphism of \'etale topos $Y_\et\rightarrow B^{sm}_{G_k}$.
The diagram
\begin{equation}\begin{CD}\label{comm-square-PB}
@.Y^{sm}_W  @>\gamma_Y>>Y_{\et}\\
@. @VVV @VVV @.\\
@.B_{W_k} @>\alpha>>B_{G_k} @. {}
\end{CD}\end{equation}
is commutative, where $\alpha$ is induced by the morphism
$W_k\rightarrow G_k$. The aim of this section is to prove that the previous diagram is a pull-back of topoi. Our proof is based on a descent argument. We need some basic facts concerning truncated simplicial topoi. A truncated simplicial topos $\mathcal{S}_{\bullet}$ is given by the usual diagram
$$\mathcal{S}_2\rightrightarrows\rightarrow\mathcal{S}_1\rightrightarrows\leftarrow\mathcal{S}_0$$
Given such truncated simplicial topos $\mathcal{S}_{\bullet}$, we define the category $Desc(\mathcal{S}_{\bullet})$ of objects of $S_0$ endowed with a descent data. By \cite{moerdijk88}, the category $Desc(\mathcal{S}_{\bullet})$ is a topos. More precisely, $Desc(\mathcal{S}_{\bullet})$ is the inductive limit of the diagram $\mathcal{S}_{\bullet}$ in the 2-category of topoi.
The most simple example is the following. Let $\mathcal{S}$ be a topos and let $X$ be an object of $\mathcal{S}$. We consider the truncated simplicial topos
$$(\mathcal{S},X)_{\bullet}:\,\,\,\mathcal{S}/(X\times X\times X)\rightrightarrows\rightarrow\mathcal{S}/(X\times X)\rightrightarrows\leftarrow\mathcal{S}/X$$
where these morphisms of topoi are induced by the projections maps (of the form $X\times X\times X\rightarrow X\times X$ and $X\times X\rightarrow X$) and by the diagonal map $X\rightarrow X\times X$. It is well known that, if $X$ covers the final object of $\mathcal{S}$ (i.e. $X\rightarrow e_{\mathcal{S}}$ is epimorphic where $e_{\mathcal{S}}$ is the final object of $\mathcal{S}$), then the natural morphism
$$Desc(\mathcal{S},X)_{\bullet}\longrightarrow\mathcal{S}$$
is an equivalence (see \cite{fgiknv05} Chapter 4 Example 4.1). In other words $\mathcal{S}/X\rightarrow\mathcal{S}$ is an effective descent morphism for any $X$ covering the final object of $\mathcal{S}$.
\begin{lemma}\label{lem-descent}
Let $f:\mathcal{E}\rightarrow\mathcal{S}$ be a morphism of topoi and let $X$ be an object of $\mathcal{S}$ covering the final object. The morphism $f$ is an equivalence if and only if the induced morphism
$$f/X:\mathcal{E}/f^*X\longrightarrow\mathcal{S}/X$$
is an equivalence.
\end{lemma}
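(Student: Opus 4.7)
The ``only if'' direction is immediate: if $f$ is an equivalence, then the pull-back square (\ref{pull-back-localization}) gives $\mathcal{E}/f^*X\simeq\mathcal{E}\times_\mathcal{S}\mathcal{S}/X\simeq\mathcal{S}/X$. The content lies in the converse, and the strategy I would use is descent along the localization $\mathcal{S}/X\to\mathcal{S}$.

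First, I would observe that $f^*X$ covers the final object of $\mathcal{E}$. This is because $f^*$ preserves finite limits (so $f^*e_\mathcal{S}=e_\mathcal{E}$) and preserves colimits, hence epimorphisms; applying $f^*$ to the epimorphism $X\to e_\mathcal{S}$ yields the epimorphism $f^*X\to e_\mathcal{E}$. Consequently both $\mathcal{S}/X\to\mathcal{S}$ and $\mathcal{E}/f^*X\to\mathcal{E}$ are effective descent morphisms, and the recollection just before the lemma gives the equivalences
$$\mathcal{S}\simeq\mathrm{Desc}\,(\mathcal{S},X)_\bullet,\qquad\mathcal{E}\simeq\mathrm{Desc}\,(\mathcal{E},f^*X)_\bullet.$$

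Next, I would construct the morphism of truncated simplicial topoi $(\mathcal{E},f^*X)_\bullet\to(\mathcal{S},X)_\bullet$ induced by $f$. Since $f^*$ preserves finite products, $f^*(X^{\times n})\cong(f^*X)^{\times n}$, so the pull-back (\ref{pull-back-localization}) applied to $X^{\times n}$ identifies the level-$n$ morphism with
$$f/X^{\times n}:\mathcal{E}\times_\mathcal{S}\mathcal{S}/X^{\times n}\longrightarrow\mathcal{S}/X^{\times n}.$$
Equivalently, iterating the standard identification $\mathcal{S}/(U\times V)\simeq(\mathcal{S}/U)/q_U^*V$ on both sides exhibits the $n$-th level as an iterated localization of $f/X$ along the canonical pull-back objects. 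Since equivalences of topoi are stable under localization, the hypothesis that $f/X$ is an equivalence implies that $f/X^{\times n}$ is an equivalence for every $n\geq 1$.

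A levelwise equivalence of truncated simplicial topoi induces an equivalence on descent categories, since $\mathrm{Desc}$ is the $2$-colimit in the $2$-category of topoi. Tracking the compatibility of this equivalence with the augmentations to $\mathcal{S}$ and $\mathcal{E}$ yields precisely $f:\mathcal{E}\simeq\mathcal{S}$, which is the desired conclusion.

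The main technical obstacle is the bookkeeping in the third step: one must verify that the morphism of truncated simplicial topoi constructed from $f$ really does coincide, up to canonical $2$-isomorphism, with the iterated localization described above, and that the resulting equivalence on $\mathrm{Desc}$ is compatible with $f$. Everything else is a formal consequence of the pull-back property (\ref{pull-back-localization}) and of the effectiveness of descent along covers of the final object.
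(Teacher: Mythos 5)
Your proposal is correct and follows essentially the same route as the paper: both proofs use that the levelwise morphisms of the truncated simplicial topoi are iterated localizations of $f/X$ (hence equivalences), pass to descent topoi, and invoke effectiveness of descent along the cover $X$ to recover $f$. The only difference is that you spell out a couple of points the paper leaves implicit, such as why $f^*X$ covers the final object of $\mathcal{E}$.
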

\begin{proof}
The condition is clearly necessary. Assume that $f/X$ is an equivalence.
We have $\mathcal{S}/(X\times X)=(\mathcal{S}/X)/(X\times X)$ and $\mathcal{S}/(X\times X\times X)=(\mathcal{S}/X)/(X\times X\times X)$, for any projection maps $X\times X\rightarrow X$ and  $X\times X\times X\rightarrow X$. Hence the triple of morphisms
$(f/X\times X\times X,f/X\times X,f/X)$ yields an equivalence of truncated simplicial topoi
$$f/:(\mathcal{E},f^*X)_{\bullet}\longrightarrow (\mathcal{S},X)_{\bullet}$$
This equivalence induces an equivalence of descent topoi
$$Desc(f/):Desc(\mathcal{E},f^*X)_{\bullet}\longrightarrow Desc(\mathcal{S},X)_{\bullet}$$
such that the following square is commutative
\begin{equation*}\begin{CD}
@.Desc(\mathcal{E},f^*X)_{\bullet}  @>Desc(f/)>>Desc(\mathcal{S},X)_{\bullet}\\
@. @VVV @VVV @.\\
@.\mathcal{E} @>f>>\mathcal{S} @. {}
\end{CD}\end{equation*}
This shows that $f$ is an equivalence since the vertical maps are equivalences.
\end{proof}

\begin{theorem}
Let $Y$ be a  scheme separated and of finite type over a finite field $k$. The canonical morphism
$$Y^{sm}_W\longrightarrow Y_{et}\times_{B^{sm}_{G_k}}B^{sm}_{W_k}$$
is an equivalence.
\end{theorem}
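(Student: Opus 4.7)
The plan is to apply Lemma \ref{lem-descent} to the canonical morphism $f: Y^{sm}_W \to F$, where $F := Y_\et \times_{B^{sm}_{G_k}} B^{sm}_{W_k}$, after producing a cover $X$ of the final object of $F$ for which the localized morphism $f/X$ is manifestly an equivalence. A natural candidate arises from the free $W_k$-set $W_k$ (with left translation), which is a $W_k$-torsor and hence covers the final object of $B^{sm}_{W_k}$. Letting $p_2: F \to B^{sm}_{W_k}$ be the second projection, I would take $X := p_2^* W_k$, which covers the final object of $F$ because inverse image functors preserve epimorphisms. By Lemma \ref{lem-descent} it then suffices to show that $f/X: (Y^{sm}_W)/f^*X \to F/X$ is an equivalence.

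The next step is to identify both localizations with $\overline{Y}_\et$. For the right-hand side, I combine the pull-back (\ref{pull-back-localization}) with associativity of fiber products and the standard equivalence $B^{sm}_{W_k}/W_k \simeq \underline{Set}$ (``fiber over the identity'') to obtain
$$F/X \simeq Y_\et \times_{B^{sm}_{G_k}} (B^{sm}_{W_k}/W_k) \simeq Y_\et \times_{B^{sm}_{G_k}} \underline{Set}.$$
The composite $\underline{Set} \simeq B^{sm}_{W_k}/W_k \to B^{sm}_{W_k} \to B^{sm}_{G_k}$ is then the canonical geometric point of $\Spec(k)_\et \simeq B^{sm}_{G_k}$ associated to $\Spec(\overline{k})$, since its inverse image is the forgetful functor from continuous $G_k$-sets to sets. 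Applying the standard fact that the geometric fiber of the structural morphism $Y_\et \to \Spec(k)_\et$ at this point is $\overline{Y}_\et$ yields $F/X \simeq \overline{Y}_\et$.

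For the left-hand side, using $Y^{sm}_W \simeq \mathcal{S}(W_k, \overline{Y}_\et)$, the object $f^*X$ is the constant sheaf $W_k$ on $\overline{Y}$ with the left translation action, a $W_k$-torsor internal to $Y^{sm}_W$. The same ``fiber over $e$'' argument gives $(Y^{sm}_W)/f^*X \simeq \overline{Y}_\et$. Finally, I would chase the definition of $f$, which is given by the universal property of $F$ applied to the commutative square (\ref{comm-square-PB}), to see that under these identifications $f/X$ is canonically the identity on $\overline{Y}_\et$; Lemma \ref{lem-descent} then concludes the proof.

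The main technical obstacle will be this last compatibility check: once $F/X$ and $(Y^{sm}_W)/f^*X$ have been independently identified with $\overline{Y}_\et$, one must verify that $f/X$ really corresponds to the identity and not merely to some auto-equivalence of $\overline{Y}_\et$. This amounts to unwinding the universal property of the fiber product together with the pull-back formula (\ref{pull-back-localization}) for the localization functor, but since both sides share the same ``trivialize the $W_k$-torsor and take the fiber at $e$'' procedure, the identification should be natural.
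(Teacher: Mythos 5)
Your proposal follows the same strategy as the paper's proof: localize along the pullback of the free $W_k$-set $EW_k$, identify both localized topoi with $\overline{Y}_\et$, and conclude with Lemma \ref{lem-descent}. The one place you invoke as a ``standard fact'' (that the geometric fiber of $Y_\et\to\Spec(k)_\et$ is $\overline{Y}_\et$, i.e.\ that $Y_\et\times_{B^{sm}_{G_k}}\underline{Set}\cong\overline{Y}_\et$) is precisely what the paper spends the chain of equivalences $(\ref{equi3})$--$(\ref{equi6})$ verifying, by writing $B^{sm}_{G_k}$ as the inverse limit of finite quotient topoi $B^{sm}_{G(k'/k)}$, identifying the inverse image of $EG(k'/k)$ with the finite cover $Y'=Y\otimes_k k'$, and then using $\underleftarrow{\lim}\,Y'_{\et}\cong(\underleftarrow{\lim}\,Y')_{\et}=\overline{Y}_\et$ (which requires $Y$ quasi-compact and quasi-separated); so your outline is sound but should cite or reproduce that argument rather than treat it as folklore.
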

\begin{proof}
The morphism
$$f:Y^{sm}_W\longrightarrow Y_{et}\times_{B^{sm}_{G_k}}B^{sm}_{W_k}$$
is defined by the commutative square (\ref{comm-square-PB}). Let $p:Y_{et}\times_{B^{sm}_{G_k}}B^{sm}_{W_k}\rightarrow B^{sm}_{W_k}$ be the second projection. Consider the object $EW_k$ of $B^{sm}_{W_k}$ defined by the action of $W_k$ on itself by multiplication, and let $p^*EW_k$ be its pull-back in $Y_{et}\times_{B^{sm}_{G_k}}B^{sm}_{W_k}$. It is enough to show that the morphism
$$f/p^*EW_k:Y^{sm}_W/f^*p^*EW_k\longrightarrow (Y_{et}\times_{B^{sm}_{G_k}}B^{sm}_{W_k})/p^*EW_k$$
is an equivalence.

Recall that $Y^{sm}_W:=\mathcal{S}_{et}(W_k,\overline{Y})$ is the topos of $W_k$-equivariant \'etale sheaves on $\overline{Y}$. The object $f^*p^*EW_k$ is represented by the $W_k$-equivariant \'etale $\overline{Y}$-scheme $\coprod_{W_k}\overline{Y}\rightarrow\overline{Y}$. One has the following equivalences
$$Y^{sm}_W/f^*p^*EW_k=\mathcal{S}_{et}(W_k,\overline{Y})/\coprod_{W_k}y\overline{Y}\cong
\mathcal{S}_{et}(W_k,\coprod_{W_k}\overline{Y})\cong \overline{Y}_{et}.$$
Consider now the localization $(Y_{et}\times_{B^{sm}_{G_k}}B^{sm}_{W_k})/p^*EW_k$.
We have the following canonical equivalences:
\begin{align}
(Y_{et}\times_{B^{sm}_{G_k}}B^{sm}_{W_k})/p^*EW_k
\label{equi1}&\cong Y_{et}\times_{B^{sm}_{G_k}}B^{sm}_{W_k}\times_{B^{sm}_{W_k}}\underline{Set}\\
\label{equi2}&\cong Y_{et}\times_{B^{sm}_{G_k}}\underline{Set}\\
\label{equi3}&\cong \underleftarrow{lim}\,(Y_{et}\times_{B^{sm}_{G_{k'/k}}}\underline{Set})\\
\label{equi3.5}&\cong \underleftarrow{lim}\,(Y_{et}\times_{B^{sm}_{G_{k'/k}}}(B^{sm}_{G_{k'/k}}/EG_{k'/k}))\\
\label{equi4}&\cong \underleftarrow{lim}\,(Y_{et}/Y')\\
\label{equi5}&\cong \underleftarrow{lim}\,Y'_{et}\\
\label{equi6}&\cong (\underleftarrow{lim}\,Y')_{et}= \overline{Y}_{et}
\end{align}
Indeed, (\ref{equi1}) follows from the canonical equivalence $B^{sm}_{W_k}/EW_k\cong\underline{Set}$.  The inverse limit in  (\ref{equi3}) is taken over the Galois extensions $k'/k$. Using the natural equivalence $$B^{sm}_{G_k}\cong\underleftarrow{lim}\,B^{sm}_{G(k'/k)}$$
(\ref{equi3}) follow from the universal property of limits of topoi. For (\ref{equi3.5}) we use again $$B^{sm}_{G(k'/k)}/EG(k'/k)\cong\underline{Set}.$$ Then (\ref{equi4}) follows from the fact that the inverse image of $EG(k'/k)$ in the \'etale topos $Y_{et}$ is the sheaf represented by the \'etale $Y$-scheme $Y':=Y\times_kk'$. Then (\ref{equi5}) is given by (\cite{sga4} III Proposition 5.4), and (\ref{equi6}) is given by (\cite{morin} Lemma 8.3), since the schemes $Y'$ are all quasi-compact and quasi-separated. We obtain a commutative square
\begin{equation}\begin{CD}
@.\overline{Y}_{et}  @>Id>>\overline{Y}_{et}\\
@. @VVV @VVV @.\\
@.Y^{sm}_W/f^*p^*EW_k@>f/p^*EW_k>>(Y_{et}\times_{B^{sm}_{G_k}}B^{sm}_{W_k})/p^*EW_k @. {}
\end{CD}\end{equation}
where the vertical maps are the equivalences defined above. It follows that $f/p^*EW_k$ is an equivalence, and so is $f$ by Lemma \ref{lem-descent}.
\end{proof}

\begin{corollary}\label{cor-fiber-product}
There is a canonical equivalence
$$Y_{et}\times_{B^{sm}_{G_k}}B_{W_k}\cong Y^{sm}_W\times\T$$
\end{corollary}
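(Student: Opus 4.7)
\medskip
\noindent\textbf{Proof proposal.} The plan is to combine the preceding theorem with a general decomposition of the classifying topos of a discrete group viewed as a topological group. Since the Weil group $W_k$ of a finite field is generated by Frobenius, it is canonically isomorphic to the discrete group $\bz$; in particular its underlying locally compact space is discrete.

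First I would establish the auxiliary identification $B_{W_k}\cong B^{sm}_{W_k}\times\T$. For this, apply the pull-back square (\ref{pull-back-classifying-topos}) to the unique morphism $t:\T\to\underline{Set}$ and to $W_k$ viewed as a group object of $\underline{Set}$, which yields
\[ B_{\T}(t^*W_k)\;\cong\;\T\times_{\underline{Set}}B_{\underline{Set}}(W_k)\;=\;\T\times B^{sm}_{W_k}. \]
The point is then to check that the ``big'' classifying topos $B_{W_k}$, defined via the local-section site on $B_{Top^{lc}}W_k$, coincides with $B_\T(t^*W_k)$. Since $W_k$ is discrete, the representable sheaf on $Top^{lc}$ attached to $W_k$ is precisely $t^*W_k$, and the universal property of $B_G$ as classifying $G$-torsors in topoi over $\T$ identifies $B_{W_k}$ with $B_{\T}(t^*W_k)$.

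Next I would exploit that products in the 2-category of topoi are fiber products over the final topos $\underline{Set}$, so fiber products and products associate. Combining the identification above with the theorem just proved gives
\[ Y_{et}\times_{B^{sm}_{G_k}}B_{W_k}\;\cong\;Y_{et}\times_{B^{sm}_{G_k}}\bigl(B^{sm}_{W_k}\times\T\bigr)\;\cong\;\bigl(Y_{et}\times_{B^{sm}_{G_k}}B^{sm}_{W_k}\bigr)\times\T\;\cong\;Y^{sm}_W\times\T, \]
which is the desired equivalence.

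The main obstacle is the identification step one: one must check carefully that when the topological group is discrete, the cohomology-theoretic/torsor-theoretic description of the ``big'' classifying topos really reduces to the pull-back square (\ref{pull-back-classifying-topos}). The rest is purely formal manipulation of fiber products in the 2-category of topoi.
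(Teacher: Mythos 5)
Your proposal is correct and follows essentially the same route as the paper: identify $B_{W_k}\cong B^{sm}_{W_k}\times\T$ via the pull-back square~(\ref{pull-back-classifying-topos}) applied to $W_k$ viewed as a group object of $\underline{Set}$ (the paper simply notes that $u^*W_k=yW_k$ is the discrete group object of $\T$, so $B_{W_k}:=B_\T(yW_k)\cong B_{\underline{Set}}(W_k)\times\T$ by definition), then apply the preceding theorem and associativity of fiber products. The ``main obstacle'' you flag is not really one, since in this paper $B_{W_k}$ is \emph{defined} to be $B_\T(yW_k)$, with the local-section site merely furnishing a convenient defining site for that topos.
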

\begin{proof}
The Weil group $W_k$ is a group of the final topos $\underline{Set}$. If
$u:\T\rightarrow \underline{Set}$ denotes the unique map, then
$u^*W_k$ is the group object of $\T$ represented by the discrete group
$W_k$. Hence one has (see the pull-back diagram (\ref{pull-back-classifying-topos})):
$$B^{sm}_{W_k}\times\T:=B_{\underline{Set}}(W_k)\times\T\cong B_{\T}(yW_k)=:B_{W_k}.$$
The previous theorem therefore yields
$$Y_{et}\times_{B^{sm}_{G_k}}B_{W_k}\cong Y_{et}\times_{B^{sm}_{G_k}}B^{sm}_{W_k}\times\T
\cong Y^{sm}_W\times\T.$$
\end{proof}
\begin{definition}
We define the \emph{big Weil-\'etale topos} of $Y$ as the fiber product
$$Y_W:=Y_{et}\times_{B^{sm}_{G_k}}B_{W_k}\cong Y^{sm}_{W}\times\T.$$
\end{definition}

\begin{corollary}\label{big-small}
Let $p_1:Y_W\rightarrow Y_W^{sm}$ and $p_2:Y_W\rightarrow\T$ be the projections. Then
for any abelian object $\mathcal{A}'$ of $Y_W$, one has
$$H^n(Y_W,\mathcal{A}')\cong H^n(Y^{sm}_W,p_{1*}\mathcal{A}').$$
If $\mathcal{A}$ is an abelian object of $\mathcal{T}$, then
$$H^n(Y_W,p_2^*\mathcal{A})\cong H^n(Y_W^{sm},\mathcal{A}(*)).$$
\end{corollary}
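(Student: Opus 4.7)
The plan is to reduce both isomorphisms to the key fact from Section \ref{prem} that the morphism $t: \T \to \underline{Set}$ admits a section $s$ with $t_* = s^*$, and then to transport this property along the Cartesian square defining $Y_W = Y^{sm}_W \times \T$ (with projections $p_1, p_2$ and structural map $u: Y^{sm}_W \to \underline{Set}$). The identity $t_* = s^*$ is equivalent, via the adjunctions $t^* \dashv t_*$ and $s^* \dashv s_*$, to a $2$-adjunction $t \dashv s$ in the 2-category of topoi (with the paper's convention that $2$-cells are natural transformations between direct image functors).

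First, I construct a section $\tilde{s}: Y^{sm}_W \to Y_W$ of $p_1$ by applying the universal property of the $2$-pullback to the pair $(\mathrm{id}_{Y^{sm}_W}, s \circ u)$, the coherence isomorphism being supplied by $t \circ s \circ u = u$. Second, I transport $t \dashv s$ to an adjunction $p_1 \dashv \tilde{s}$: the counit is the identity $p_1 \tilde{s} = \mathrm{id}$, while the unit $\mathrm{id}_{Y_W} \to \tilde{s}\circ p_1$ is built componentwise from $\mathrm{id}_{p_1}$ and from the unit $\eta: \mathrm{id}_\T \to s\circ t$ whiskered by $p_2$; compatibility over $\underline{Set}$ reduces to the triangle identity $t\eta = \mathrm{id}$, and the two triangle identities for $(p_1, \tilde{s})$ are inherited from those for $(t,s)$. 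This yields the equality $p_{1*} = \tilde{s}^*$, hence $p_{1*}$ is exact and $R^q p_{1*}\mathcal{A}' = 0$ for every $q \geq 1$ and every abelian object $\mathcal{A}'$ on $Y_W$.

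The Leray spectral sequence for $p_1$ then degenerates at $E_2$ and delivers the first isomorphism
\[H^n(Y_W, \mathcal{A}') \;\cong\; H^n(Y^{sm}_W, p_{1*}\mathcal{A}').\]
For the second isomorphism I specialize to $\mathcal{A}' = p_2^*\mathcal{A}$ and compute
\[p_{1*} p_2^* \mathcal{A} \;=\; \tilde{s}^* p_2^* \mathcal{A} \;=\; (p_2 \circ \tilde{s})^*\mathcal{A} \;=\; (s \circ u)^* \mathcal{A} \;=\; u^* s^* \mathcal{A} \;=\; u^*\mathcal{A}(*),\]
i.e.\ the constant sheaf on $Y^{sm}_W$ with value $\mathcal{A}(*) := s^*\mathcal{A}$; combining with the first isomorphism gives $H^n(Y_W, p_2^*\mathcal{A}) \cong H^n(Y^{sm}_W, \mathcal{A}(*))$.

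The main technical step is the second paragraph: verifying that the $2$-adjunction $t \dashv s$ actually survives base change so that $p_{1*} = \tilde{s}^*$ on the nose. Once this is in hand, the vanishing of the higher direct images, the collapse of Leray, and the identification of $p_{1*}p_2^*$ with the constant-sheaf functor are purely formal.
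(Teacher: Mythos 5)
Your proof is correct, and it shares the governing idea with the paper's: exhibit a section $\tilde{s}$ of the projection $p_1:Y_W\to Y^{sm}_W$ satisfying $p_{1*}\cong\tilde{s}^*$, conclude that $p_{1*}$ is exact, and collapse the Leray spectral sequence. The paper reaches this (via the chain Corollary \ref{big-small} $\Rightarrow$ Corollary \ref{cohomology-basechange-overT} $\Rightarrow$ Corollary \ref{cor-section} $\Rightarrow$ Lemma \ref{lemma-section}) by constructing the section and verifying $s^*\cong f_*$ explicitly at the level of the defining sites, using the $\mathrm{Prol}\dashv\mathrm{Res}$ adjunction between big and small sheaves; for the second isomorphism it invokes tidiness of $e_\T$ and the Beck--Chevalley isomorphism to identify $p_{1*}p_2^*\mathcal{A}$ with the constant sheaf on value $\mathcal{A}(*)$. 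You instead work entirely in the 2-category of topoi: you reformulate $t_*=s^*$ as an internal adjunction $t\dashv s$ (which is legitimate, since with the paper's convention a 2-cell $f\to g$ is $f_*\to g_*$, so $t\dashv s$ is exactly $t_*\dashv s_*$, and uniqueness of left adjoints gives $t_*\cong s^*$ and back), transport this adjunction along the 2-pullback $Y_W\cong Y^{sm}_W\times\T$ to obtain $p_1\dashv\tilde{s}$ with $\tilde{s}=(\mathrm{id},su)$, and then read off $p_{1*}p_2^*\mathcal{A}=\tilde{s}^*p_2^*\mathcal{A}=(p_2\tilde{s})^*\mathcal{A}=(su)^*\mathcal{A}=u^*(\mathcal{A}(*))$ without any base-change theorem. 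Your route is more formal and avoids both the explicit site computation and the tidiness/Beck--Chevalley machinery, at the cost of having to verify that adjunctions in a 2-category of topoi are stable under 2-pullback (which is true, and your componentwise check of the unit, counit, and triangle identities is the right way to see it). Both arguments are sound; yours is the tidier conceptual packaging, while the paper's is more self-contained given that Lemma \ref{lemma-section} and Corollary \ref{cohomology-basechange-overT} were already needed elsewhere.
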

\begin{proof}
This follows from Corollary \ref{cohomology-basechange-overT}, using the equivalence $Y_W\cong Y^{sm}_{W}\times\T$.
\end{proof}

Define the sheaf $\tr$ on $Y_W$ as $p_2^*(y\br)$, where $y\br$ is the object of $\T$ represented by the standard topological group $\br$. Then we have canonical isomorphisms
$$H^n(Y_W,\tr)\cong H^n(Y_W^{sm},\br)\mbox{ and }H^n(Y_W,\bz)\cong H^n(Y_W^{sm},\bz)$$
as it follows from the previous corollary.

\begin{corollary}\label{cor-fiberproduct-classtopoi}
Let $\alpha:\mathcal{G}\rightarrow\mathcal{H}$ and  $\beta:\mathcal{G}'\rightarrow\mathcal{H}$ be two homomorphisms of group objects in a topos $\mathcal{S}$. If $\alpha$ is an epimorphism then the natural morphism
$$f:B_{\mathcal{G}\times_{\mathcal{H}}\mathcal{G}'}\longrightarrow B_{\mathcal{G}}\times_{B_{\mathcal{H}}}B_{\mathcal{G}'}$$
is an equivalence.
\end{corollary}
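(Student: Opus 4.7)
The plan is to apply the descent criterion of Lemma \ref{lem-descent} to the morphism $f$. Let $p_2 : B_{\mathcal{G}} \times_{B_{\mathcal{H}}} B_{\mathcal{G}'} \to B_{\mathcal{G}'}$ be the second projection, and set $X := p_2^* E\mathcal{G}'$, where $E\mathcal{G}'$ denotes $\mathcal{G}'$ with its left translation action. Since $E\mathcal{G}' \to \ast$ is epimorphic in $B_{\mathcal{G}'}$ and inverse image functors preserve epimorphisms, $X \to \ast$ is epimorphic in $B_{\mathcal{G}} \times_{B_{\mathcal{H}}} B_{\mathcal{G}'}$. By Lemma \ref{lem-descent} it then suffices to show that $f/X$ is an equivalence.

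Combining the pull-back diagram (\ref{pull-back-localization}) with the canonical equivalences $B_{\mathcal{G}'}/E\mathcal{G}' \cong \mathcal{S}$ and $\mathcal{S} \cong B_{\mathcal{H}}/E\mathcal{H}$, the right-hand localization simplifies in two steps as
\[
(B_{\mathcal{G}} \times_{B_{\mathcal{H}}} B_{\mathcal{G}'})/X \;\cong\; B_{\mathcal{G}} \times_{B_{\mathcal{H}}} \mathcal{S} \;\cong\; B_{\mathcal{G}}/\alpha^* E\mathcal{H},
\]
where $\alpha^* E\mathcal{H}$ denotes $\mathcal{H}$ viewed as a $\mathcal{G}$-object via $\alpha$. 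On the left-hand side, $f^*X$ coincides with $\pi^*E\mathcal{G}'$ for the second projection $\pi : \mathcal{G} \times_{\mathcal{H}} \mathcal{G}' \to \mathcal{G}'$, so the left localization is $B_{\mathcal{G}\times_{\mathcal{H}}\mathcal{G}'}/\pi^*E\mathcal{G}'$.

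The core step is an auxiliary lemma: if $\varphi : \mathcal{L} \to \mathcal{M}$ is an epimorphism of group objects of $\mathcal{S}$ with kernel $\mathcal{N}$, then there is a canonical equivalence
\[
B_{\mathcal{L}}/\varphi^* E\mathcal{M} \;\cong\; B_{\mathcal{N}},
\]
sending an $\mathcal{N}$-object $F_0$ to the induced $\mathcal{L}$-object $\mathcal{L} \times^{\mathcal{N}} F_0$ equipped with its tautological $\mathcal{L}$-equivariant map to $\mathcal{M}$. The hypothesis that $\alpha$ is an epimorphism enters precisely here: it ensures that $\pi$ is also epimorphic (epimorphisms are stable under pull-back) with kernel canonically isomorphic to $\mathcal{K} := \ker(\alpha)$. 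Applying the lemma to both $\alpha$ and $\pi$ identifies the two localizations with $B_{\mathcal{K}}$, and unwinding the definitions of the equivalences shows that they intertwine $f/X$ with the identity of $B_{\mathcal{K}}$. The conclusion then follows from Lemma \ref{lem-descent}.

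The main obstacle I anticipate is the auxiliary lemma $B_{\mathcal{L}}/\varphi^* E\mathcal{M} \cong B_{\mathcal{N}}$, which must be proved for an arbitrary topos $\mathcal{S}$. Intrinsically, the argument amounts to showing that an $\mathcal{L}$-equivariant morphism from an $\mathcal{L}$-torsor $P$ to $\varphi^* E\mathcal{M}$ is the same data as a trivialization of the induced $\mathcal{M}$-torsor $P \times^{\mathcal{L}} \mathcal{M}$, and hence a reduction of the structure group from $\mathcal{L}$ to $\mathcal{N}$; it is exactly at this descent step that the epimorphism hypothesis on $\varphi$ is needed in order to pass between the two structures.
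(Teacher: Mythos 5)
Your proof follows essentially the same route as the paper's: apply Lemma \ref{lem-descent} after localizing both sides at $p_2^*E\mathcal{G}'$, simplify the right-hand localization through the chain $B_{\mathcal{G}}\times_{B_{\mathcal{H}}}\mathcal{S}\cong B_{\mathcal{G}}/\alpha^*E\mathcal{H}$, and recognize both localizations as $B_{\mathcal{K}}$ using the exact sequence $1\to\mathcal{K}\to\mathcal{G}\times_{\mathcal{H}}\mathcal{G}'\to\mathcal{G}'\to 1$. The auxiliary lemma you flag as the main obstacle is in fact exactly what the paper relies on: for an epimorphism $\varphi\colon\mathcal{L}\to\mathcal{M}$ with kernel $\mathcal{N}$ one has $\varphi^*E\mathcal{M}\cong\mathcal{L}/\mathcal{N}$ as $\mathcal{L}$-objects, after which $B_{\mathcal{L}}/(\mathcal{L}/\mathcal{N})\cong B_{\mathcal{N}}$ is the standard statement of (\cite{sga4} IV.5.8), so your argument is complete.
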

\begin{proof}
Let $e_{\mathcal{S}}$ be the final object in $\mathcal{S}$. The unique map $\mathcal{G}'\rightarrow e_{\mathcal{S}}$ is epimorphic, since the unit of $\mathcal{G}'$ yields a section $e_{\mathcal{S}}\rightarrow\mathcal{G}'$.
Therefore, the morphism $E\mathcal{G}'\rightarrow e_{\mathcal{G}'}$ in $B_{\mathcal{G}'}$, where $e_{\mathcal{G}'}$ is the final object of $B_{\mathcal{G}'}$, is epimorphic.  We denote the second projection by
$$p:B_{\mathcal{G}}\times_{B_{\mathcal{H}}}B_{\mathcal{G}'}\longrightarrow B_{\mathcal{G}'}$$
Let $\mathcal{K}$ be the kernel of $\alpha$, so that $\mathcal{G}/\mathcal{K}\cong\mathcal{H}$. On the one hand, we have the following canonical equivalences:
\begin{align*}
(B_{\mathcal{G}}\times_{B_{\mathcal{H}}}B_{\mathcal{G}'})/p^*E\mathcal{G}'
&\cong B_{\mathcal{G}}\times_{B_{\mathcal{H}}}(B_{\mathcal{G}'}/p^*E\mathcal{G}')\\
&\cong B_{\mathcal{G}}\times_{B_{\mathcal{H}}}\mathcal{S}\\
&\cong B_{\mathcal{G}}\times_{B_{\mathcal{H}}}(B_{\mathcal{H}}/E\mathcal{H})\\
&\cong B_{\mathcal{G}}/\alpha^*E\mathcal{H}\\
&\cong B_{\mathcal{G}}/(\mathcal{G}/\mathcal{K})\\
&\cong B_{\mathcal{K}}
\end{align*}
Here $\mathcal{G}/\mathcal{K}$ is endowed with its natural $\mathcal{G}$-action. The second, the third and the last equivalences are given by (\cite{sga4} IV.5.8), and the fourth equivalence is given by the pull-back diagram (\ref{pull-back-localization}).

On the other hand, we have an exact sequence of group objects in $\mathcal{S}$
$$1\rightarrow\mathcal{K}\rightarrow\mathcal{G}\times_{\mathcal{H}}\mathcal{G}'\rightarrow\mathcal{G}'\rightarrow1.$$
Indeed, the kernel of $\mathcal{G}\times_{\mathcal{H}}\mathcal{G}'\rightarrow\mathcal{G}'$ is given by
$$\mathcal{G}\times_{\mathcal{H}}\mathcal{G}'\times_{\mathcal{G}'}e_{\mathcal{S}}
=\mathcal{G}\times_{\mathcal{H}}e_{\mathcal{S}}=\mathcal{K}.$$
Moreover, $\mathcal{G}\times_{\mathcal{H}}\mathcal{G}'\rightarrow\mathcal{G}'$ is epimorphic, since epimorphisms are universal in a topos. We obtain
\begin{align*}
B_{\mathcal{G}\times_{\mathcal{H}}\mathcal{G}'}/f^*p^*E\mathcal{G}'
&=B_{\mathcal{G}\times_{\mathcal{H}}\mathcal{G}'}/(\mathcal{G}\times_{\mathcal{H}}\mathcal{G}'/\mathcal{K})\\
&= B_{\mathcal{K}}
\end{align*}
and we have a commutative square
\begin{equation}\begin{CD}
@. B_{\mathcal{K}}  @>Id>> B_{\mathcal{K}}\\
@. @VVV @VVV @.\\
@.B_{\mathcal{G}\times_{\mathcal{H}}\mathcal{G}'}/f^*p^*E\mathcal{G}'@>f/p^*E\mathcal{G}'>>(B_{\mathcal{G}}\times_{B_{\mathcal{H}}}B_{\mathcal{G}'})/p^*E\mathcal{G}' @. {}
\end{CD}\end{equation}
where the vertical maps are the equivalences defined above. Hence $f/p^*E\mathcal{G}'$ is an equivalence. By Lemma \ref{lem-descent}, $f$ is an equivalence as well, since $E\mathcal{G}'\rightarrow e_{\mathcal{G}'}$ is epimorphic.
\end{proof}

\begin{corollary}\label{cor-fiberproduct-classtopoi-topgrps}
Let $\alpha:G\rightarrow{H}$ and  $\beta:G'\rightarrow{H}$ be two morphisms of locally compact topological groups. If $\alpha$ has local sections then the natural morphism
$$f:B_{G\times_{H}G'}\longrightarrow B_{G}\times_{B_{H}}B_{G'}$$
is an equivalence.
\end{corollary}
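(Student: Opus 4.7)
The plan is to reduce this to the previous Corollary \ref{cor-fiberproduct-classtopoi} applied inside the topos $\mathcal{T}$ of sheaves on $(Top^{lc}, \mathcal{J}_{op})$. Each of $G$, $G'$, $H$ is a locally compact topological group, hence via the Yoneda embedding $y:Top^{lc}\hookrightarrow\mathcal{T}$ gives a group object $yG$, $yG'$, $yH$ of $\mathcal{T}$, and $\alpha$, $\beta$ give morphisms of group objects. Recall from the Preliminaries that by definition $B_{G} = B_{\mathcal{T}}(yG)$, and similarly for $G'$ and $H$; so the target $B_{G}\times_{B_{H}}B_{G'}$ coincides with the fiber product of classifying topoi in $\mathcal{T}$ considered in Corollary \ref{cor-fiberproduct-classtopoi}.

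The first step is to identify the source. Since $\mathcal{J}_{op}$ is subcanonical, the Yoneda functor $y:Top^{lc}\to\mathcal{T}$ is fully faithful and left exact; in particular it preserves the fibre product of topological groups, so
\[ y(G\times_H G')\;\cong\; yG\times_{yH} yG' \]
as group objects of $\mathcal{T}$. Consequently $B_{G\times_H G'}=B_{\mathcal{T}}\bigl(y(G\times_H G')\bigr)\cong B_{\mathcal{T}}(yG\times_{yH}yG')$, which is precisely the source appearing in the statement of Corollary \ref{cor-fiberproduct-classtopoi}.

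The second step, which I expect to be the only real point to verify, is that the hypothesis that $\alpha$ has local sections is equivalent to $y\alpha:yG\to yH$ being an epimorphism of group objects in $\mathcal{T}$. The topology $\mathcal{J}_{op}$ is generated by open covers, so a morphism $yX\to yY$ of representables is an epimorphism in $\mathcal{T}$ if and only if there exists an open cover $\{U_i\subset Y\}$ such that each inclusion $U_i\hookrightarrow Y$ factors through $X\to Y$; that is, if and only if the map admits local continuous sections on an open cover. Applied to $\alpha:G\to H$, this is exactly the hypothesis that $\alpha$ has local sections.

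With these two identifications in place, the result follows immediately by applying Corollary \ref{cor-fiberproduct-classtopoi} to the group objects $yG,\,yG',\,yH$ of $\mathcal{S}=\mathcal{T}$, the epimorphism being $y\alpha$. The main (and essentially only) obstacle is the verification in the previous paragraph that \emph{local sections} is the correct topological incarnation of \emph{epimorphism of representable sheaves}; once this is in hand, no further topos-theoretic work is needed.
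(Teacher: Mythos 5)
Your proposal is correct and matches the paper's own proof, which likewise translates the topological hypothesis into the statement that $y\alpha:yG\to yH$ is an epimorphism in $\mathcal{T}$ and then invokes Corollary \ref{cor-fiberproduct-classtopoi}. You simply spell out the two implicit identifications (that $y$ preserves the fibre product of group objects, and that local sections correspond to epimorphisms of representables in $\mathcal{T}$) which the paper leaves tacit.
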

\begin{proof}
Since $\alpha:G\rightarrow{H}$ has local sections, the induced morphism $y(G)\rightarrow y(H)$ is an epimorphism in $\mathcal{T}$. Hence the result follows from Corollary \ref{cor-fiberproduct-classtopoi}.
\end{proof}

\section{Artin-Verdier \'etale topos of an arithmetic scheme}\label{sect-AVetaletopos}

Let $\X$ be a scheme separated and of finite type over $\Spec(\bz)$.
We denote by $\X^{an}$ the complex analytic variety associated to
$\X\otimes_{\bz}{\bc}$, endowed with the standard complex topology.
The Galois group $G_{\br}$ of $\br$ acts on $\X^{an}$. The quotient
space $\X_{\infty}:=\X^{an}/G_{\br}$ is endowed with the quotient
topology. We consider the pair
$$\overline{\X}:=(\X,\X_{\infty}).$$
As a set, $\overline{\X}$ is the disjoint union $\X\coprod\X_{\infty}$. The Zariski topology on $\overline{\X}$ is defined as follows. An open subset $(\mathcal{U},D)$ of $\overline{\X}$ is given by a Zariski open subscheme $\mathcal{U}\subset\X$ and an open subspace $D\subset\mathcal{U}_{\infty}$ for the complex topology. We define the category $Et_{\overline{\X}}$ of \'etale
$\overline{\X}$-schemes as follows. An \'etale
$\overline{\X}$-scheme is an arrow
$f:(\mathcal{U},D)\rightarrow(\X,\X_{\infty})$, where
$\mathcal{U}\rightarrow\X$ is an \'etale morphism in the usual sense and $D$ is
an open subset of $\mathcal{U}_{\infty}$. The map
$f_{\infty}:D\rightarrow\X_{\infty}$ is supposed to be unramified in
the  sense that $f_{\infty}(d)\in\mathcal{X}(\br)$ if and only if
$d\in D\cap\mathcal{U}(\br)$. An \'etale
$\overline{\X}$-scheme $\overline{\mathcal{U}}$ is said to be connected (respectively irreducible) if it is connected (respectively irreducible) as a topological space. A morphism $(\mathcal{U},D)\rightarrow
(\mathcal{U}',D')$ in the category $Et_{\overline{\X}}$ is given by
a morphism of \'etale $\X$-schemes
$\mathcal{U}\rightarrow\mathcal{U}'$ inducing a map $D\rightarrow
D'$. The \'etale topology $\mathcal{J}_{et}$ on the category
$Et_{\overline{\X}}$ is the topology generated by the pretopology
for which a covering family is a surjective family. The
Artin-Verdier \'etale site is left exact.

\begin{definition}
The \emph{Artin-Verdier \'etale topos} of $\overline{\X}$ is the category
of sheaves of sets on the Artin-Verdier \'etale site:
$$\overline{\X}_{et}:=(\widetilde{Et_{\overline{\X}},\mathcal{J}_{et}}).$$
\end{definition}

The object $y\X:=y(\X,\emptyset)$ is a subobject of the final object
$y\overline{\X}$ of $\overline{\X}_{et}$. This yields an open
subtopos
$$\overline{\X}_{et}/y(\X,\emptyset)\hookrightarrow \overline{\X}_{et}.$$
We have the following canonical identifications (see \cite{sga4} III Proposition 5.4):
$$\overline{\X}_{et}/y(\X,\emptyset)\cong\widetilde{(Et_{\overline{\mathcal{X}}}/_{(\X,\emptyset)},\mathcal{J}_{ind})}
\cong(\widetilde{Et_{\X},\mathcal{J}_{et}})=\X_{et}$$ where
$\X_{et}$ is the usual \'etale topos of $\X$, and
$\mathcal{J}_{ind}$ is the topology on
$Et_{\overline{\mathcal{X}}}/(\X,\emptyset)$ induced by
$\mathcal{J}_{et}$ on $Et_{\overline{\mathcal{X}}}$ via the
forgetful functor
$Et_{\overline{\mathcal{X}}}/(\X,\emptyset)\rightarrow
Et_{\overline{\mathcal{X}}}$. We thus obtain an open embedding
$$\phi:{\X}_{et}\hookrightarrow\overline{\X}_{et}.$$
Let $Sh(\X_{\infty})$ be the topos of sheaves of sets on the
topological space $\X_{\infty}$, i.e. the category of \'etal\'e
spaces on $\X_{\infty}$. We consider $Sh(\X_{\infty})$ as a site
endowed with the canonical topology $\mathcal{J}_{can}$. There is a
morphism of left exact sites
$$
\fonc{u_{\infty}^*}{(Et_{\overline{\mathcal{X}}},\mathcal{J}_{et})}{(Sh(\X_{\infty}),\mathcal{J}_{can})}{(\mathcal{U},D)}{D\rightarrow\X_{\infty}}
$$
The resulting morphism of topoi
$$u_{\infty}:Sh(\X_{\infty})\longrightarrow\overline{\X}_{et}$$
is precisely the closed complement of the open subtopos
${\X}_{et}\hookrightarrow\overline{\X}_{et}$ defined above, i.e. we have the following result.
\begin{prop}\label{closed/open-decomp-etale}
There is an open-closed decomposition of topoi
$$\varphi:\X_{et}\longrightarrow\overline{\X}_{et}\longleftarrow Sh(\X_{\infty}):u_{\infty}$$
\end{prop}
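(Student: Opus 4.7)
The open subtopos $\varphi:\X_{et}\hookrightarrow\overline{\X}_{et}$ has been identified as the localization at the subterminal object $y(\X,\emptyset)$. By general topos theory (\cite{sga4} IV.9.3), the complementary closed subtopos is
\[
\mathcal{E}_\infty \;=\; \{\mathcal{F}\in\overline{\X}_{et}\;:\;\mathcal{F}\times y(\X,\emptyset)\cong y(\X,\emptyset)\}
\;=\; \{\mathcal{F}\;:\;\mathcal{F}(\mathcal{U},\emptyset)=\ast\text{ for every }\mathcal{U}\in Et_\X\},
\]
the second description resulting from the calculation that $\Hom((\mathcal{U},D),(\X,\emptyset))$ is a singleton when $D=\emptyset$ and empty otherwise. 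The plan is therefore to show that $u_\infty$ realizes $Sh(\X_\infty)$ as this closed subtopos, equivalently that $u_{\infty *}$ is fully faithful with essential image exactly $\mathcal{E}_\infty$.

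First I verify that $u_\infty^*:(Et_{\overline{\X}},\mathcal{J}_{et})\to(Sh(\X_\infty),\mathcal{J}_{can})$, $(\mathcal{U},D)\mapsto(D\to\X_\infty)$, is a morphism of left exact sites. Since $\mathcal{U}\to\X$ is \'etale, $\mathcal{U}^{an}\to\X^{an}$ is a local homeomorphism, and the unramifiedness condition at real points ensures this descends to a local homeomorphism $\mathcal{U}_\infty\to\X_\infty$, so the composition $D\hookrightarrow\mathcal{U}_\infty\to\X_\infty$ is an \'etal\'e space over $\X_\infty$. Left exactness is routine (the terminal $(\X,\X_\infty)$ maps to the terminal $\X_\infty\to\X_\infty$, and fibered products are computed on underlying topological spaces), as is continuity. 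Lemma 2 of Section 2.1 produces the geometric morphism $u_\infty$.

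The direct image is given by the standard site formula $(u_{\infty *}G)(\mathcal{U},D)=G(D)$, and in particular $(u_{\infty *}G)(\mathcal{U},\emptyset)=G(\emptyset)=\ast$, so $u_{\infty *}$ lands in $\mathcal{E}_\infty$. Fully faithfulness of $u_{\infty *}$ amounts to the counit $u_\infty^* u_{\infty *}\Rightarrow\mathrm{id}_{Sh(\X_\infty)}$ being an isomorphism; evaluating on an open $D\subseteq\X_\infty$, the Yoneda compatibility of Lemma 2 gives $u_\infty^* y(\X,D)=yD$, whence $(u_\infty^* u_{\infty *}G)(D)=(u_{\infty *}G)(\X,D)=G(D)$ functorially. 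This isomorphism extends to all \'etal\'e spaces since opens generate $Sh(\X_\infty)$.

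The main obstacle is essential surjectivity: for $\mathcal{F}\in\mathcal{E}_\infty$ the adjunction unit $\eta_\mathcal{F}:\mathcal{F}\to u_{\infty *}u_\infty^*\mathcal{F}$ must be an isomorphism, i.e.\ $\mathcal{F}(\mathcal{U},D)\cong(u_\infty^*\mathcal{F})(D)$ for every $(\mathcal{U},D)$. My plan is a local descent argument: cover $D\subseteq\mathcal{U}_\infty$ by small opens $D_i$ each mapping homeomorphically, via $\mathcal{U}_\infty\to\X_\infty$, onto an open $D_i'\subseteq\X_\infty$ (available since $\mathcal{U}_\infty\to\X_\infty$ is a local homeomorphism), producing an \'etale cover $\{(\mathcal{U},D_i)\to(\mathcal{U},D)\}$ in $\mathcal{J}_{et}$. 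For each piece the structure morphism $(\mathcal{U},D_i)\to(\X,D_i')$ induces a restriction $\mathcal{F}(\X,D_i')\to\mathcal{F}(\mathcal{U},D_i)$, and the crux is to show this is an isomorphism; this is forced by the closed-subtopos condition $\mathcal{F}(-,\emptyset)=\ast$, which renders $\mathcal{F}$ insensitive to the purely \'etale direction $\mathcal{U}\to\X$ relative to the archimedean piece. The sheaf property of $\mathcal{F}$ on $\overline{\X}_{et}$ then assembles the local isomorphisms $\mathcal{F}(\mathcal{U},D_i)\cong\mathcal{F}(\X,D_i')=(u_\infty^*\mathcal{F})(D_i')$ into the global identification $\mathcal{F}(\mathcal{U},D)\cong(u_\infty^*\mathcal{F})(D)$, completing the proof.
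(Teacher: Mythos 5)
Your plan is sensible and for the most part follows the standard recollement pattern: identify the open embedding as localization at the subterminal $y(\X,\emptyset)$, characterize the closed complement as $\mathcal{E}_\infty=\{\mathcal{F}:\mathcal{F}(\mathcal{U},\emptyset)=\ast\}$, and show $u_{\infty}$ is an embedding with that essential image. The paper itself offers no proof — it records the statement as a direct consequence of the Artin--Verdier construction — so an explicit argument has independent value. But your proof contains one minor slip and one genuine gap.

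The minor slip: $\mathcal{U}_\infty\to\X_\infty$ is \emph{not} in general a local homeomorphism. At a point of $\mathcal{U}_\infty$ lying over $\X(\br)$ that is not itself in $\mathcal{U}(\br)$, the map is a "fold" (locally of the form full disc $\to$ half disc). What the unramifiedness condition in the definition of $Et_{\overline{\X}}$ buys is precisely that the \emph{restriction} $D\to\X_\infty$ is a local homeomorphism. Your chosen small opens $D_i$ mapping homeomorphically to $D_i'\subseteq\X_\infty$ do exist — but because $D\to\X_\infty$ is étale, not because $\mathcal{U}_\infty\to\X_\infty$ is.

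The genuine gap is the "crux" itself: you assert that for $\mathcal{F}\in\mathcal{E}_\infty$ the restriction $\mathcal{F}(\X,D_i')\to\mathcal{F}(\mathcal{U},D_i)$ is forced to be an isomorphism because $\mathcal{F}$ is "insensitive to the purely étale direction," but this is the entire content of the statement and is not self-evident. The correct justification is a double covering argument. First, $\{(\mathcal{U},D_i)\to(\X,D_i'),\ (\X,\emptyset)\to(\X,D_i')\}$ is a covering in $\mathcal{J}_{et}$ (jointly surjective on underlying spaces, since $(\X,\emptyset)$ already covers the scheme part). In the sheaf condition for this cover, the terms $\mathcal{F}(\X,\emptyset)$, $\mathcal{F}(\mathcal{U},\emptyset)$ and $\mathcal{F}(\X,\emptyset)$ arising from $(\X,\emptyset)$ and its overlaps all collapse to $\ast$ by the hypothesis on $\mathcal{F}$, so the sheaf condition reduces to
\[
\mathcal{F}(\X,D_i')=\mathrm{eq}\bigl(\mathcal{F}(\mathcal{U},D_i)\rightrightarrows \mathcal{F}(\mathcal{U}\times_\X\mathcal{U},\,D_i)\bigr),
\]
where $D_i$ sits in $(\mathcal{U}\times_\X\mathcal{U})_\infty$ via the diagonal and the two maps are $p_1^*,p_2^*$. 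You must then show these agree. Because $\mathcal{U}\to\X$ is étale, the diagonal $\Delta:\mathcal{U}\to\mathcal{U}\times_\X\mathcal{U}$ is an \emph{open} immersion; hence $\{(\Delta(\mathcal{U}),D_i),\ (\mathcal{U}\times_\X\mathcal{U},\emptyset)\}$ covers $(\mathcal{U}\times_\X\mathcal{U},D_i)$, and the same collapsing argument shows restriction to $(\Delta(\mathcal{U}),D_i)$ is injective on $\mathcal{F}(\mathcal{U}\times_\X\mathcal{U},D_i)$. But $p_1\circ\Delta=p_2\circ\Delta=\mathrm{id}_\mathcal{U}$, so $p_1^*(s)$ and $p_2^*(s)$ agree after this restriction and therefore agree on the nose. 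Hence the equalizer is all of $\mathcal{F}(\mathcal{U},D_i)$, giving the desired isomorphism. Without some version of this argument your proof does not close; "the closed-subtopos condition renders $\mathcal{F}$ insensitive to $\mathcal{U}\to\X$" is a conclusion, not a premise.

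Finally, a smaller remark on fully faithfulness: you compute $(u_\infty^*u_{\infty*}G)(D)=(u_{\infty*}G)(\X,D)$ and ascribe this to "Yoneda compatibility," but the equality $u_\infty^*y(\X,D)=yD$ alone does not yield it, since $u_\infty^*$ is a left adjoint and does not preserve internal homs. The correct reason is that the comma category computing the left Kan extension $(u_\infty^*\mathcal{F})(D)=\mathrm{colim}_{((\mathcal{U},D''),\,\phi:D''\to D)}\mathcal{F}(\mathcal{U},D'')$ has $((\X,D),\mathrm{id})$ as a terminal object, so the colimit is $\mathcal{F}(\X,D)$, already a sheaf in $D$. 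This is the same mechanism as in the essential surjectivity step, so it is worth spelling out once.
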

The gluing functor $u_{\infty}^*\phi_*$ can be made more explicit as follows. There
is a canonical morphism of topoi
$$\alpha:Sh(G_{\br},\X^{an})\longrightarrow\X_{et}$$
where $Sh(G_{\br},\X^{an})$ is the topos of $G_{\br}$-equivariant
sheaves on the topological space $\X^{an}$, i.e. the category of
$G_{\br}$-equivariant \'etal\'e spaces on $\X^{an}$. The map
$\alpha$ is defined by the morphism of left exact sites which takes
an \'etale $\X$-scheme $\mathcal{U}$ to the $G_{\br}$-equivariant
\'etal\'e space $\mathcal{U}^{an}$ over $\X^{an}$ (note that
$\mathcal{U}^{an}\rightarrow\mathcal{X}^{an}$ is a
$G_{\br}$-equivariant local homeomorphism since the morphism
$\mathcal{U}\otimes_{\bz}{\bc}\rightarrow\mathcal{X}\otimes_{\bz}{\bc}$
is \'etale and compatible with complex conjugation).

The quotient map $\X^{an}\rightarrow \X^{an}/G_{\br}$ yields another
morphism of topoi
$$(\pi^*,\pi_*^{G_{\br}}):Sh(G_{\br},\X^{an})\longrightarrow
Sh(\X_{\infty}).$$ Here $\pi:\X^{an}\rightarrow\X_{\infty}$ is the
quotient map, $\pi^*$ is the usual inverse image and
$\pi_*^{G_{\br}}\mathcal{F}$ is the $G_{\br}$-invariant subsheaf of
the the direct image $\pi_*\mathcal{F}$, i.e. for any open $U\subset\X_{\infty}$ one has $$\pi_*^{G_{\br}}\mathcal{F}(U):=\mathcal{F}(\pi^{-1}U)^{G_{\br}}.$$
Then we have an
identification of functors
$$
u_{\infty}^*\varphi_*\cong\pi_*^{G_{\br}}\alpha^*:\X_{et}\longrightarrow
Sh(\X_{\infty})
$$
Let us consider the category
$(Sh(\X_{\infty})\,,\X_{et},\pi_*^{G_{\br}}\alpha^*)$ defined in
(\cite{sga4} IV.9.5.1) by Artin gluing. Recall that an object of this category is a triple $(F,E,\sigma)$, where $F$ is an object of $Sh(\X_{\infty})$, $E$ is an object of $\X_{et}$ and $\sigma$ is a map
$\sigma:{F}\rightarrow\pi_*^{G_{\br}}\alpha^*{E}$.

\begin{corollary}\label{Cor-Artin-Gluing}
The category $\overline{\X}_{et}$ is canonically equivalent to
$(Sh(\X_{\infty})\,,\X_{et},\pi_*^{G_{\br}}\alpha^*)$.
\end{corollary}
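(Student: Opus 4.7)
The plan is to invoke the general Artin gluing theorem from \cite{sga4} IV.9.5 applied to the open--closed decomposition of topoi already established in Proposition \ref{closed/open-decomp-etale}, and then identify the gluing functor with $\pi_*^{G_{\br}}\alpha^*$ using the formula displayed just above the corollary. Concretely, the general theorem says that for any open--closed decomposition $j:\mathcal{U}\hookrightarrow\mathcal{X}\hookleftarrow\mathcal{Z}:i$ of topoi, the ambient topos $\mathcal{X}$ is canonically equivalent to the gluing category $(\mathcal{Z},\mathcal{U},i^*j_*)$, whose objects are triples $(F,E,\sigma)$ with $\sigma:F\to i^*j_*E$, and the equivalence sends a sheaf $\mathcal{F}$ on $\mathcal{X}$ to the triple $(i^*\mathcal{F},j^*\mathcal{F},\sigma_{\mathcal{F}})$, where $\sigma_{\mathcal{F}}:i^*\mathcal{F}\to i^*j_*j^*\mathcal{F}$ is obtained from the adjunction unit $\mathcal{F}\to j_*j^*\mathcal{F}$.

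First I would apply this theorem verbatim to the decomposition $\varphi:\X_{et}\hookrightarrow\overline{\X}_{et}\hookleftarrow Sh(\X_\infty):u_\infty$ of Proposition \ref{closed/open-decomp-etale}, obtaining a canonical equivalence $\overline{\X}_{et}\simeq (Sh(\X_\infty),\X_{et},u_\infty^*\varphi_*)$. Second, I would substitute the identification $u_\infty^*\varphi_*\cong \pi_*^{G_{\br}}\alpha^*$ stated in the paragraph preceding the corollary, which yields the claimed equivalence. This identification itself is justified by chasing through the two sites: an object $\overline{\mathcal{U}}=(\mathcal{U},D)$ over $(\X,\emptyset)$ is sent by $\varphi_*$ to the presheaf whose sections over $(\mathcal{V},E)\in Et_{\overline{\X}}$ are $\X$-morphisms $\mathcal{V}\to\mathcal{U}$, and evaluating along $u_\infty^*$ by taking the étalé fiber over $\X_\infty$ gives exactly the $G_{\br}$-invariant pushforward along $\pi:\X^{an}\to\X_\infty$ of the pullback $\mathcal{U}^{an}$, i.e.\ $\pi_*^{G_{\br}}\alpha^*$ evaluated on the representable sheaf of $\mathcal{U}$.

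The main obstacle is purely of a verification nature: one must check that the equivalence produced by the general gluing theorem is indeed \emph{canonical} and agrees with the concrete description in terms of $(\X,\X_\infty)$, which amounts to showing that the two descriptions of the gluing functor (the abstract $u_\infty^*\varphi_*$ and the explicit $\pi_*^{G_{\br}}\alpha^*$) coincide not only on representables but as functors $\X_{et}\to Sh(\X_\infty)$. Since both functors are left exact and commute with arbitrary projective limits of the relevant kind, and $Et_{\X}$ is a generating family of $\X_{et}$, it suffices to compare them on representables, where the identification is tautological from the definition of the unramified condition $f_\infty(d)\in\mathcal{X}(\br)\Leftrightarrow d\in D\cap\mathcal{U}(\br)$ imposed on objects of $Et_{\overline{\X}}$. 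With this compatibility in hand, the corollary follows immediately from the abstract Artin gluing equivalence.
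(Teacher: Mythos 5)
Your proposal is correct and follows essentially the same route as the paper: apply the Artin gluing equivalence of \cite{sga4}~IV.9.5 to the open--closed decomposition of Proposition~\ref{closed/open-decomp-etale}, then substitute the identification $u_\infty^*\varphi_*\cong\pi_*^{G_\br}\alpha^*$ established immediately before the corollary. The only difference is cosmetic: you spend an extra paragraph re-deriving that identification on representables, whereas the paper simply cites it as already in hand.
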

\begin{proof}
There is a canonical functor
$$
\fonc{\Phi}{\overline{\X}_{et}}{(Sh(\X_{\infty})\,,\X_{et},u_{\infty}^*\varphi_*)}
{\mathcal{F}}{(u_{\infty}^*\mathcal{F},\varphi^*\mathcal{F},\sigma)}
$$
where the morphism $$\sigma:u_{\infty}^*\mathcal{F}\longrightarrow
u_{\infty}^*\varphi_*(\varphi^*\mathcal{F})$$
is induced by the adjunction transformation $Id\rightarrow\varphi_*\varphi^*$. By (\cite{sga4} IV.9.5.4.a) the functor
$\Phi$ is an equivalence of categories, since
$u_{\infty}:Sh(\X_{\infty})\hookrightarrow\overline{\X}_{et}$ is the closed
complement of the open embedding $\phi:\X_{et}\hookrightarrow
\overline{\X}_{et}$. Hence the result follows from the isomorphism
$$u_{\infty}^*\varphi_*\cong\pi_*^{G_{\br}}\alpha^*.$$
\end{proof}

\begin{corollary}\label{pull-ba}
We denote by $\infty$ the archimedean place of $\bq$. The commutative square
\begin{equation}\begin{CD}
@. Sh(\X_{\infty})@>>>Sh(\infty)\\
@. @VVV @VVV @.\\
@.\overline{\X}_{et} @>f>>\overline{\Spec(\bz)}_{et} @. {}
\end{CD}\end{equation}
is a pull-back, where $Sh(\infty)=\underline{Set}$ is the category of sheaves on the one point space.
\end{corollary}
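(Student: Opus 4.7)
The plan is to verify directly the universal property of the fiber product in the 2-category of topoi. Let $\mathcal{G}$ be any topos equipped with morphisms $a:\mathcal{G}\to\overline{\X}_{et}$ and $b:\mathcal{G}\to Sh(\infty)$ together with an isomorphism $\alpha:f\circ a\cong u_{\infty,\bz}\circ b$, where $u_{\infty,\bz}:Sh(\infty)\hookrightarrow\overline{\Spec(\bz)}_{et}$ is the closed immersion complementary to the open $\Spec(\bz)_{et}\hookrightarrow\overline{\Spec(\bz)}_{et}$. I must construct a morphism $c:\mathcal{G}\to Sh(\X_{\infty})$, unique up to canonical isomorphism, such that $u_{\infty}\circ c\cong a$ (compatibility with $b$ will then be automatic, as explained below).

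The key input is the observation that the open subtopos $\Spec(\bz)_{et}\hookrightarrow\overline{\Spec(\bz)}_{et}$ is the localization at the subterminal object $U_{0}:=y(\Spec(\bz),\emptyset)$, and that its complementary closed immersion satisfies $u_{\infty,\bz}^{*}(U_{0})=\emptyset_{Sh(\infty)}$, which is simply the defining property of the open-closed decomposition. Since $f$ is induced by base change along $\X\to\Spec(\bz)$, one has $f^{*}(U_{0})=y(\X,\emptyset)$. Using $\alpha$ and the fact that inverse image functors preserve initial objects, I compute:
$$a^{*}\bigl(y(\X,\emptyset)\bigr)\cong a^{*}f^{*}(U_{0})\cong b^{*}u_{\infty,\bz}^{*}(U_{0})=b^{*}\bigl(\emptyset_{Sh(\infty)}\bigr)=\emptyset_{\mathcal{G}}.$$

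Now I invoke the universal property of the closed complement (cf.\ \cite{sga4} IV.9): in an open-closed decomposition determined by a subterminal object $U$, a morphism factors through the closed subtopos — uniquely up to canonical isomorphism — if and only if its inverse image sends $U$ to the initial object. Applied to the decomposition of Proposition~\ref{closed/open-decomp-etale}, the vanishing just computed produces the desired $c:\mathcal{G}\to Sh(\X_{\infty})$ with $u_{\infty}\circ c\cong a$. The remaining compatibility $g\circ c\cong b$, where $g:Sh(\X_{\infty})\to Sh(\infty)$ denotes the top horizontal arrow, comes for free because $Sh(\infty)=\underline{Set}$ is terminal in the 2-category of topoi, so any two morphisms from $\mathcal{G}$ to it are canonically isomorphic.

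The main obstacle is largely bookkeeping of the 2-categorical universal property — checking that the natural isomorphisms produced by the factorization and by the terminal-object argument for $Sh(\infty)$ assemble coherently with $\alpha$. The substantive input is the universal property of closed subtoposes, which is standard SGA~4 material, together with the already established pull-back square (\ref{pull-back-localization}) for the complementary open parts.
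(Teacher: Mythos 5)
Your proof is correct and takes essentially the same approach as the paper: the paper simply cites Proposition~\ref{closed/open-decomp-etale} together with (\cite{sga4} IV Corollaire 9.4.3), which is precisely the universal property of closed complements that you verify directly, using the observation that the open subtopos $\Spec(\bz)_{et}$ pulls back to $\X_{et}$ under $f$. Your use of the terminality of $Sh(\infty)=\underline{Set}$ to dispense with the compatibility of $b$ is a harmless shortcut specific to this case; the general SGA~4 argument instead uses that the closed immersion is an embedding, but the substance is the same.
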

\begin{proof}
The map $\overline{\X}\rightarrow \overline{\Spec(\bz)}$ induces a morphism of \'etale topos $f$.
Consider the open embedding $\Spec(\bz)_{et}\hookrightarrow\overline{\Spec(\bz)}_{et}$. Its inverse image under the map $f$ is
$\X_{et}\hookrightarrow\overline{\X}_{et}$. The result therefore follows from Proposition \ref{closed/open-decomp-etale} and (\cite{sga4} IV Corollaire 9.4.3).
\end{proof}
\begin{prop}\label{pull-bu}
For any prime number $p$, we have a pull-back
\begin{equation}\begin{CD}
@.(\X\otimes_{\bz}\mathbb{F}_p)_{et}@>>>\Spec(\mathbb{F}_p)_{et}\\
@. @VVV @VVV @.\\
@.\overline{\X}_{et} @>f>>\overline{\Spec(\bz)}_{et} @. {}
\end{CD}\end{equation}
\end{prop}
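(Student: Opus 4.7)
The plan is to mimic the archimedean proof of Corollary \ref{pull-ba}, replacing the role of $\{\infty\}$ by the closed point $(p) \in \Spec(\bz)$. The key observation is that $\Spec(\mathbb{F}_p)_{et}$ sits naturally as a \emph{closed} subtopos of $\overline{\Spec(\bz)}_{et}$, whose complementary open subtopos is the Artin-Verdier \'etale topos of $\Spec(\bz[1/p])$. Once this is set up, the statement will follow from (\cite{sga4} IV Corollaire 9.4.3), which guarantees that pull-backs of open-closed decompositions are open-closed decompositions.

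First I would verify the open-closed decomposition at the base. Since $\Spec(\bz[1/p])$ and $\Spec(\bz)$ have the same archimedean place, the pair $(\Spec(\bz[1/p]),\Spec(\bz)_{\infty})$ is an open of the Artin-Verdier site $Et_{\overline{\Spec(\bz)}}$, and it represents a subobject $U$ of the final object of $\overline{\Spec(\bz)}_{et}$ whose set-theoretic complement in $\overline{\Spec(\bz)}$ is exactly the closed point $\Spec(\mathbb{F}_p)$. Arguing as in Proposition \ref{closed/open-decomp-etale}, this yields an open-closed decomposition
\[
\overline{\Spec(\bz[1/p])}_{et}\hookrightarrow \overline{\Spec(\bz)}_{et} \hookleftarrow \Spec(\mathbb{F}_p)_{et}.
\]
The same reasoning applied to $\overline{\X}$, using the Zariski open $(\X[1/p],\X_{\infty})$ with closed complement $\X\otimes_{\bz}\mathbb{F}_p$, gives
\[
\overline{\X[1/p]}_{et}\hookrightarrow \overline{\X}_{et} \hookleftarrow (\X\otimes_{\bz}\mathbb{F}_p)_{et}.
\]

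Next I would check that the open subtopos on the base pulls back under $f$ to the corresponding open subtopos on $\overline{\X}$. Using the pull-back diagram (\ref{pull-back-localization}) for localizations, this reduces to computing $f^*U$, which is represented by the fiber product $(\X,\X_{\infty})\times_{(\Spec(\bz),\Spec(\bz)_{\infty})}(\Spec(\bz[1/p]),\Spec(\bz)_{\infty})=(\X[1/p],\X_{\infty})$ in $Et_{\overline{\X}}$. Hence the inverse image of $\overline{\Spec(\bz[1/p])}_{et}\hookrightarrow\overline{\Spec(\bz)}_{et}$ under $f$ is exactly $\overline{\X[1/p]}_{et}\hookrightarrow\overline{\X}_{et}$, as desired.

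Applying (\cite{sga4} IV Corollaire 9.4.3) to the complementary closed immersion then identifies the pull-back of $\Spec(\mathbb{F}_p)_{et}\hookrightarrow\overline{\Spec(\bz)}_{et}$ along $f$ with $(\X\otimes_{\bz}\mathbb{F}_p)_{et}\hookrightarrow\overline{\X}_{et}$, producing the required pull-back square. The only genuine point to watch is the identification of archimedean parts: because $\Spec(\bz[1/p])$ and $\Spec(\bz)$ (and likewise $\X[1/p]$ and $\X$) share the same archimedean fiber, the Artin-Verdier "open" complement of $(p)$ is just $\overline{\Spec(\bz[1/p])}$, so nothing happens at infinity, and the formal apparatus of SGA 4 IV applies exactly as in Corollary \ref{pull-ba}.
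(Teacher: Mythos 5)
Your proof is correct, but it is organized differently from the paper's, and the paper's route is noticeably shorter. The paper first observes that $\Spec(\mathbb{F}_p)_{et}\to\overline{\Spec(\bz)}_{et}$ factors through the open subtopos $\Spec(\bz)_{et}\hookrightarrow\overline{\Spec(\bz)}_{et}$ already set up in Proposition \ref{closed/open-decomp-etale}; since the pull-back of this open along $f$ is $\X_{et}\hookrightarrow\overline{\X}_{et}$, the question immediately reduces to the purely scheme-theoretic assertion $(\X\otimes_\bz\mathbb F_p)_{et}\cong\X_{et}\times_{\Spec(\bz)_{et}}\Spec(\mathbb F_p)_{et}$, for which the citation of \cite{sga4} IV 9.4.3 (closed immersion of schemes) is a one-liner. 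You instead stay at the Artin--Verdier level and construct a \emph{new} open--closed decomposition $\overline{\Spec(\bz[1/p])}_{et}\hookrightarrow\overline{\Spec(\bz)}_{et}\hookleftarrow\Spec(\mathbb F_p)_{et}$ (and likewise for $\overline{\X}$), then pull that back. This works, but it forces you to re-verify, in the Artin--Verdier setting, that the closed complement of $\overline{\Spec(\bz[1/p])}_{et}$ really is $\Spec(\mathbb F_p)_{et}$ with the intended embedding -- a fact that you attribute to ``arguing as in Proposition \ref{closed/open-decomp-etale}'' but which is not literally the same argument (that proposition handles the archimedean fibre via the site morphism $u_\infty^*$, whereas here you need the classical identification of the closed complement for the finite place). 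In short, both proofs rest on \cite{sga4} IV Corollaire 9.4.3 and the pull-back of open--closed decompositions, but the paper's factorisation through $\Spec(\bz)_{et}$ lets it offload the closed-complement identification to the already classical usual-\'etale case, whereas your proof duplicates that work at the Artin--Verdier level. Your computation of $f^*U=(\X[1/p],\X_\infty)$ is correct, and the rest of the argument goes through.
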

\begin{proof}
The morphism $\Spec(\mathbb{F}_p)_{et}\rightarrow\overline{\Spec(\bz)}_{et}$ factors through $\Spec(\bz)_{et}$, hence one is reduced to show that $$(\X\otimes_{\bz}\mathbb{F}_p)_{et}\cong{\X}_{et}\times_{{\Spec(\bz)}_{et}}\Spec(\mathbb{F}_p)_{et}.$$
This follows from (\cite{sga4} IV Corollaire 9.4.3) since $\Spec(\mathbb{F}_p)\rightarrow\Spec(\bz)$ is a closed embedding.
\end{proof}

\subsection{\'Etale cohomology with compact support}
It follows from Corollary \ref{Cor-Artin-Gluing} that we have the usual sequences of adjoint functors (see \cite{sga4} IV.14)
$$\varphi_!,\,\varphi^*,\,\varphi_* \mbox{ and } u_{\infty}^*,\,u_{\infty*},\,u_{\infty}^!.$$
between the categories of abelian sheaves on $\overline{\X}_{et}$,
$\X_{et}$ and $Sh(\X_{\infty})$. In particular $u_{\infty*}$
is exact and $\varphi^*$ preserves injective objects since
$\varphi_!$ is exact. For any abelian sheaf $\mathcal{A}$ on
$\overline{\X}_{et}$, one has the exact sequence
\begin{equation}
0\rightarrow\varphi_!\varphi^*\mathcal{A}\rightarrow\mathcal{A}\rightarrow
u_{\infty*}u_{\infty}^*\mathcal{A}\rightarrow0,
\end{equation}
where the morphisms are given by adjunction.

\begin{definition}
Assume that $\X$ is proper over $\Spec(\mathbb{Z})$ and let $\mathcal{A}$ be an
abelian sheaf on $\X_{et}$. The \'etale cohomology with compact
support is defined by
$$H_c^n(\X_{et},\mathcal{A}):=H^n(\overline{\X}_{et},\varphi_!\mathcal{A}).$$
\end{definition}

\begin{prop}\label{prop-cpctspp-etale-coh}
Let $\X$ be a flat proper scheme over $\Spec(\bz)$. Assume that $\X$ is normal and connected. Then the $\br$-vector space $H_c^n(\X_{et},\mathbb{R})$ is finite dimensional, zero for $n$ large, and we have
\begin{align*}
H_c^n(\X_{et},\mathbb{R})&=0\mbox{ for $n=0$}\\
&= H^0(\X_{\infty},\mathbb{R})/\br \mbox{ for $n=1$}\\
&= H^{n-1}(\X_{\infty},\mathbb{R}) \mbox{ for $n\geq2$}\\
\end{align*}
\end{prop}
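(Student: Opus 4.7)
The plan is to derive a long exact sequence from the short exact sequence
\[
0 \to \varphi_! \br_\X \to \br_{\overline{\X}} \to u_{\infty*}\br_{\X_\infty} \to 0
\]
supplied by the open-closed decomposition of Proposition \ref{closed/open-decomp-etale}. Applying $\Gamma(\overline{\X}_\et, -)$ and using exactness of $u_{\infty*}$ to identify $H^n(\overline{\X}_\et, u_{\infty*}\br_{\X_\infty}) = H^n(\X_\infty, \br)$, I obtain
\[
\cdots \to H^n_c(\X_\et, \br) \to H^n(\overline{\X}_\et, \br) \to H^n(\X_\infty, \br) \to H^{n+1}_c(\X_\et, \br) \to \cdots.
\]
The proposition will then follow from two inputs: (i) $H^0(\overline{\X}_\et, \br) = \br$ and $H^n(\overline{\X}_\et, \br) = 0$ for $n \geq 1$, and (ii) the edge map $\br = H^0(\overline{\X}_\et, \br) \to H^0(\X_\infty, \br)$ is the inclusion of constants, hence injective (since $\X_\infty \neq \emptyset$ by flatness of $\X$ over $\Spec \bz$). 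Finite dimensionality and vanishing for large $n$ then follow because $\X_\infty = \X^{an}/G_\br$ is a compact topological space of bounded real dimension (properness of $\X$ gives compactness of $\X^{an} = \X(\bc)$).

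For input (i), I would first use Corollary \ref{Cor-Artin-Gluing} to identify $\br_{\overline{\X}}$ with $\varphi_* \br_\X$: both correspond to the Artin-gluing triple $(\br_{\X_\infty}, \br_\X, \mathrm{id})$, since $\alpha^* \br_\X = \br_{\X^{an}}$ is the constant sheaf with trivial $G_\br$-action and $\pi_*^{G_\br} \br_{\X^{an}} = \br_{\X_\infty}$. I would then exploit the Leray spectral sequence
\[
E_2^{p,q} = H^p(\overline{\X}_\et, R^q \varphi_* \br_\X) \Rightarrow H^{p+q}(\X_\et, \br)
\]
to reduce input (i) to two vanishings: $R^q \varphi_* \br_\X = 0$ for $q \geq 1$, and $H^n(\X_\et, \br) = 0$ for $n \geq 1$. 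The higher direct images are supported on $Sh(\X_\infty)$ because $\varphi^*$ is exact with $\varphi^* \varphi_* \cong \mathrm{id}$, and exactness of $u_\infty^*$ yields
\[
u_\infty^* R^q \varphi_* \br_\X \;=\; R^q(u_\infty^* \varphi_*)\br_\X \;=\; R^q(\pi_*^{G_\br})\,\br_{\X^{an}}.
\]
A further spectral sequence $H^p(G_\br, R^q \pi_* \br_{\X^{an}}) \Rightarrow R^{p+q}(\pi_*^{G_\br})\br_{\X^{an}}$ collapses because $\pi$ has fibers of size at most $2$, and the stalks of $\pi_* \br_{\X^{an}}$ on $\X_\infty$ are either $\br$ with trivial $G_\br$-action at points of $\X(\br)$, or $\mathrm{Ind}_1^{G_\br}\br$ with swap action elsewhere; both are $G_\br$-cohomologically trivial in positive degrees, using that $2$ is invertible in $\br$ and Shapiro's lemma respectively.

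The remaining input, the vanishing $H^n(\X_\et, \br) = 0$ for $n \geq 1$ on a connected normal noetherian scheme, I would deduce from the standard facts that $H^1(\X_\et, \bz) = \Hom_{\mathrm{cont}}(\pi_1^\et(\X), \bz) = 0$ (profinite fundamental group, discrete target) and that $H^n(\X_\et, \bz)$ is torsion for $n \geq 2$ under normality, so $H^n(\X_\et, \br) \cong H^n(\X_\et, \bz) \otimes_\bz \br$ vanishes. Substituting into the long exact sequence then yields $H^0_c = 0$, $H^1_c = H^0(\X_\infty, \br)/\br$, and $H^n_c = H^{n-1}(\X_\infty, \br)$ for $n \geq 2$. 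The hardest step will be the Leray computation of $R^q \varphi_* \br_\X$ inside the Artin-gluing formalism, in particular the rigorous identification $u_\infty^* R^q \varphi_* = R^q(\pi_*^{G_\br}) \alpha^*$ of derived functors; once that is set up, the stalk-wise $G_\br$-cohomology calculation is essentially routine.
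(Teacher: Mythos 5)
Your proposal is correct and the overall skeleton — extract the long exact sequence from $0 \to \varphi_!\br \to \br \to u_{\infty*}\br \to 0$, reduce to the vanishing $H^n(\overline\X_\et,\br)=0$ for $n\geq 1$ and $H^0(\overline\X_\et,\br)=\br$, and read off the answer — is exactly the paper's. Where you diverge from the paper is in how you establish that vanishing. The paper pulls it off in one shot: it uses the morphism $\eta:(\Spec K(\X))_\et\to\overline\X_\et$ from the generic point of the Artin--Verdier compactification, observes $R^q\eta_*\br=0$ for $q\geq 1$ because the stalks are Galois cohomology of profinite groups with uniquely divisible coefficients, uses normality to get $\eta_*\br=\br$ (via $\pi_0(\mathcal U\times_\X\Spec K(\X))=\pi_0(\overline{\mathcal U})$), and concludes $H^n(\overline\X_\et,\br)\cong H^n(G_{K(\X)},\br)$, which vanishes for $n\geq 1$.

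Your route is a two-step version of this. You first invoke $H^n(\X_\et,\br)=0$ for $n\geq 1$ as a ``standard fact'' — note that the usual proof of that fact (vanishing of $H^1(\X_\et,\bz)$ and torsion of $H^n(\X_\et,\bz)$ for $n\geq 2$ on a normal connected scheme) is itself the generic-point Leray argument applied to $\X$, so you are implicitly doing the same computation the paper does, just over $\X$ rather than over $\overline\X$. You then bridge from $\X_\et$ to $\overline\X_\et$ via the Leray spectral sequence for $\varphi$, which requires the two additional inputs $\varphi_*\br_\X\cong\br_{\overline\X}$ (via the Artin-gluing description — this also secretly uses $\pi_0(\mathcal U)=\pi_0(\overline{\mathcal U})$, the same topological input the paper needs for $\eta_*\br=\br$) and $R^q\varphi_*\br=0$ for $q\geq 1$, which you compute by identifying the stalks at archimedean points with $G_\br$- or trivial-group cohomology of $\br$ and killing it by unique $2$-divisibility and Shapiro's lemma. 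Both approaches work. The paper's is more economical because it avoids the $R^q\varphi_*$ stalk analysis and the derived-functor identity $u_\infty^*R^q\varphi_*\cong R^q(\pi_*^{G_\br})\alpha^*$, which you correctly flag as the delicate point in your version. What your route buys is a more explicit picture of how the archimedean fibre contributes: the vanishing of $R^q\varphi_*\br$ is precisely the statement that the ``local Galois group'' at a real point of $\overline\X$ is $G_\br$ (and trivial at a complex point), which is illuminating but not needed if one is willing to work with the generic point of $\overline\X$ directly. One small plus in your write-up: you make explicit that the edge map $\br\to H^0(\X_\infty,\br)$ is injective because $\X_\infty\neq\emptyset$ (flatness), a point the paper leaves implicit in ``the result follows.''
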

\begin{proof}
The exact sequence
$$0\rightarrow\varphi_!\mathbb{R}\rightarrow\mathbb{R}\rightarrow
u_{\infty*}\mathbb{R}\rightarrow0$$
and the fact that $u_{\infty*}$ is exact give a long exact sequence
$$0\rightarrow H_c^0(\X_{et},\mathbb{R})\rightarrow H^0(\overline{\X}_{et},\mathbb{R})\rightarrow H^0(\X_{\infty},\mathbb{R}) \rightarrow H_c^1(\X_{et},\mathbb{R})\rightarrow H^1(\overline{\X}_{et},\mathbb{R})\rightarrow$$
The inclusion of the generic point of $\X$ yields a morphism of topoi
$$\eta:(\Spec\,K(\X))_{et}\longrightarrow\overline{\X}_{et}.$$
We have immediately $R^n\eta_*\mathbb{R}=0$ for any $n\geq1$ since Galois cohomology is torsion and $\mathbb{R}$ is uniquely divisible. Moreover, we have $\eta_*\mathbb{R}=\mathbb{R}$. Indeed, the scheme $\X$ is normal hence the set of connected components of an \'etale $\overline{\X}$-scheme $\overline{\mathcal{U}}$ is in 1-1 correspondence with the set of connected components of $\mathcal{U}\times_{\X}\Spec\,K(\X)\,$, i.e. one has
$$\pi_0(\mathcal{U}\times_{\X}\Spec\,K(\X))=\pi_0(\mathcal{U})=\pi_0(\overline{\mathcal{U}}).$$
Therefore the Leray spectral sequence associated to the morphism $\eta$ gives
$$H^n(\overline{\X}_{et},\mathbb{R})=H^n(G_{K(\X)},\mathbb{R}).$$
We obtain $H^0(\overline{\X}_{et},\mathbb{R})=\mathbb{R}$ and $H^n(\overline{\X}_{et},\mathbb{R})=0$ for $n\geq1$, and the result follows.
\end{proof}

\section{The definition of $\overline{\Spec(\co_F)}_W$}\label{wdef}
Let $F$ be a number field. We consider the Arakelov compactification $\bar{X}=(\Spec\,\mathcal{O}_F,X_{\infty})$ of $X=\Spec\,\mathcal{O}_F$, where $X_{\infty}$ is the finite set of
archimedean places of $F$. Note that this is a special case of the previous section, since $X_{\infty}$ is the quotient of $X\otimes\mathbb{C}$ by complex conjugation. We endow $\bar{X}$ with the Zariski topology described previously.

If $\bar{F}/F$ is an algebraic closure and $\bar{F}/K/F$ a finite Galois extension then the relative Weil group $W_{K/F}$ is defined by the extension of topological groups
$$1\rightarrow C_K \rightarrow W_{K/F}\rightarrow G_{K/F}\rightarrow 1$$
corresponding to the fundamental class in $H^2(G_{K/F},C_K)$ given by class field theory, where $C_K$ is the id\`ele class group of $K$. A Weil group of $F$ is then defined as the projective limit $W_F:=\underleftarrow{lim}\,W_{K/F}$, computed in the category of topological groups. Alternatively, let $\bar{F}/K/F$ be a finite Galois extension and let $S$ be a finite set of places of $F$ containing all the places which ramify in $K$. Then the fundamental class in $$H^2(G_{K/F},C_K)\cong H^2(G_{K/F},C_{K,S})$$
yields a group extension
$$1\rightarrow C_{K,S} \rightarrow W_{K/F,S}\rightarrow G_{K/F}\rightarrow 1$$
where $C_{K,S}$ is the $S$-id\`ele class group of $K$. Then one has (see \cite{li04})
$$W_F:=\underleftarrow{lim}\,W_{K/F}=\underleftarrow{lim}\,W_{K/F,S}$$

\subsection{The Weil-\'etale topos}\label{section-Lichtenbaum-topos}
We choose an algebraic closure $\bar{F}/F$ and a Weil group $W_F$.
For any place $v$ of $F$, we choose an algebraic closure $\bar{F_v}/F_v$  and an embedding $\bar{F}\rightarrow \bar{F_v}$ over $F$.
Then we choose a local Weil group $W_{F_v}$ and a Weil map $\theta_v:W_{F_v}\rightarrow W_F$ compatible with $\bar{F}\rightarrow \bar{F_v}$.

Let $W^1_{F_v}$ be the maximal compact subgroup of $W_{F_v}$. For any closed point $v\in\bar{X}$ (ultrametric or archimedean), we define the Weil group of "the residue field at $v$" as follows
$$W_{k(v)}:=W_{F_v}/W^1_{F_v},$$
while the Galois group of the residue field at $v$ can be defined as
$G_{k(v)}:=G_{F_v}/{I_v}$. Note that $G_{k(v)}$ is the trivial group for $v$ archimedean. For any $v$, the Weil map $W_{F_v}\rightarrow G_{F_v}$ chosen above induces a morphism $W_{k(v)}\rightarrow G_{k(v)}$.
Finally, we denote by
$$q_v:W_{F_v}\longrightarrow W_{F_v}/W^1_{F_v}=:W_{k(v)}$$ the map from
the local Weil group $W_{F_v}$ to the Weil group of the residue
field at $v\in\bar{X}$.

\begin{definition}
Let $T_{\bar{X}}$ be the category of objects $(Z_0,Z_v,f_v)$ defined
as follows. The topological space $Z_0$ is endowed with a continuous
$W_F$-action. For any place $v$ of $F$, $Z_v$ is a topological space
endowed with a continuous $W_{k(v)}$-action. The continuous map
$f_v:Z_v\rightarrow Z_0$ is $W_{F_v}$-equivariant, when $Z_v$ and
$Z_0$ are seen as $W_{{F_v}}$-spaces via the maps
$\theta_v:W_{F_v}\rightarrow W_{F}$ and $q_v:W_{F_v}\rightarrow
W_{k(v)}$. Moreover, we require the following facts.
\begin{itemize}
\item The spaces $Z_v$ are \emph{locally compact}.
\item The map $f_v$ is an \emph{homeomorphism for almost all
places $v$ of $F$ and a continuous injective map for all places}.
\item The action of $W_F$ on $Z_0$ factors through $W_{K/F}$, for
some finite Galois subextension $\bar{F}/K/F$.
\end{itemize}
A \emph{morphism} $$\phi:(Z_0,Z_v,f_v)\rightarrow(Z'_0,Z'_v,f'_v)$$
in the category $T_{\bar{X}}$ is a continuous $W_F$-equivariant map
$\phi:Z_0\rightarrow Z'_0$ \emph{inducing} a continuous map
$\phi_v:Z_v\rightarrow Z_v$ for any place $v$. Then $\phi_v$ is $W_{k(v)}$-equivariant.

The category $T_{\bar{X}}$ is endowed with the local section
topology $\mathcal{J}_{ls}$, i.e. the topology generated by the
pretopology for which a family
$$\{\varphi_i:(Z_{i,0},Z_{i,v},f_{i,v})\rightarrow
(Z_0,Z_v,f_v),\,i\in I\}$$ is a covering family if
$\coprod_{i\in I} Z_{i,v}\rightarrow Z_v$ has local continuous sections, for any
place $v$.
\end{definition}

\begin{lemma}
The site $(T_{\bar{X}},\mathcal{J}_{ls})$ is left exact.
\end{lemma}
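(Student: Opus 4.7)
The claim breaks into two verifications: that $T_{\bar X}$ has all finite projective limits, and that the local section topology $\mathcal{J}_{ls}$ is subcanonical. I would handle them in that order.

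For the existence of finite limits, the terminal object is evidently the triple $(\mathrm{pt},\mathrm{pt},\mathrm{id})$ with trivial actions everywhere. For binary fiber products of two morphisms with common target $(Z_0',Z_v',f_v')$, I would form componentwise fiber products in $\mathrm{Top}$, giving $Z_0\times_{Z_0'}Z_0''$ with the diagonal $W_F$-action and $Z_v\times_{Z_v'}Z_v''$ with the diagonal $W_{k(v)}$-action, together with the obvious map $f_v\times f_v''$. The conditions in the definition of $T_{\bar X}$ must then be checked as follows: $W_F$-action factoring through some $W_{K/F}$ is preserved by passing to the compositum of the two relevant Galois extensions; equivariance of $f_v\times f_v''$ under $\theta_v$ and $q_v$ is formal; local compactness of $Z_v\times_{Z_v'}Z_v''$ follows because $Z_v'$ is Hausdorff, so the fiber product is closed in the locally compact space $Z_v\times Z_v''$; injectivity of $f_v\times f_v''$ is preserved under pullback of injections; and being a homeomorphism for almost all $v$ survives because the pullback of homeomorphisms is a homeomorphism, and the union of two cofinite sets of places is cofinite. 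Equalizers can be constructed in the same componentwise fashion using that closed subspaces of locally compact Hausdorff spaces are locally compact.

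For subcanonicity, I need to show that for each $Y=(Z_0',Z_v',f_v')$ the presheaf $\mathrm{Hom}_{T_{\bar X}}(-,Y)$ is a sheaf for $\mathcal{J}_{ls}$. A morphism in $T_{\bar X}$ is uniquely specified by its $Z_0$-component (since $f_v$ is injective the $Z_v$-component is forced), so given a cover $\{\varphi_i:(Z_{i,0},Z_{i,v},f_{i,v})\to(Z_0,Z_v,f_v)\}$ and a matching family of morphisms $\psi_i$ to $Y$, uniqueness of any glueing is immediate and the content is existence and continuity of the glued maps at each component. At each place $v$ the hypothesis that $\coprod_i Z_{i,v}\to Z_v$ admits local continuous sections is exactly the situation in which continuous $W_{k(v)}$-equivariant maps descend, and this is the subcanonicity of the local section topology on $B_{Top^{lc}}W_{k(v)}$ already recalled in the preliminaries. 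To glue at the $Z_0$-level one uses that $f_v$ is a homeomorphism for all but finitely many places $v$, so the local sections at those $v$ supply local sections of $\coprod_i Z_{i,0}\to Z_0$ over the open set $\bigcup_{v\text{ good}} f_v(Z_v)$; away from this open set one invokes the fact that the remaining finitely many $Z_v$ are closed in $Z_0$ and handles them by the same descent argument, combining with the $W_F$-equivariance coming from the factorization through some $W_{K/F}$. Compatibility between the $Z_0$-glueing and the $Z_v$-glueings is automatic because both are computed from the same matching family $\psi_i$.

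The main technical obstacle will be the descent step at the $Z_0$-level: a priori the local section hypothesis is only imposed place by place, so one has to argue carefully that the glued continuous map $Z_0\to Z_0'$ exists and is $W_F$-equivariant. I expect this to reduce, by the cofiniteness of places where $f_v$ fails to be a homeomorphism, to combining finitely many descent arguments of the type used for $B_{Top^{lc}}G$ in Lichtenbaum's paper, and every other verification (local compactness, injectivity, equivariance, factorization through $W_{K/F}$) is a routine stability check of the defining conditions of $T_{\bar X}$ under the standard componentwise constructions.
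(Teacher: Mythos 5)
Your overall strategy matches the paper's: verify finite limits by componentwise constructions and reduce subcanonicity to the already-recalled fact that the local section topology on $B_{Top}G$ is subcanonical. The paper itself gives only a two-sentence proof asserting both points, so your filling in of the limit checks (local compactness via closedness of the fiber product of Hausdorff spaces, injectivity and the almost-everywhere homeomorphism condition passing to pullbacks, compositum of Galois extensions) is the kind of routine verification the paper leaves implicit, and it is correct.

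Your subcanonicity argument, however, contains a confused step. You say the good places supply local sections of $\coprod_i Z_{i,0}\to Z_0$ ``over the open set $\bigcup_{v\text{ good}}f_v(Z_v)$'' and propose to treat the complement separately by noting the remaining $Z_v$ are ``closed in $Z_0$.'' But when $f_v$ is a homeomorphism one has $f_v(Z_v)=Z_0$, so the open set in question is already all of $Z_0$ and the complement is empty; moreover the definition of $T_{\bar X}$ nowhere asserts that $f_v(Z_v)$ is closed for the finitely many bad $v$, so that part of your argument would not be available even if it were needed. The correct (and simpler) observation you are reaching for is this: pick a single place $v$ at which $f_v\colon Z_v\to Z_0$ is a homeomorphism; a local section of $\coprod_i Z_{i,v}\to Z_v$ followed by the inclusions $Z_{i,v}\hookrightarrow Z_{i,0}$ already gives a local section of $\coprod_i Z_{i,0}\to Z_0$, so $\{\varphi_{i,0}\colon Z_{i,0}\to Z_0\}$ is a cover in the local section topology on $B_{Top}W_F$ and the $W_F$-equivariant maps $\psi_i\colon Z_{i,0}\to Z'_0$ glue. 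What then still needs a sentence, and is not ``automatic'' as you claim, is that the glued $\phi\colon Z_0\to Z'_0$ induces a continuous map $Z_w\to Z'_w$ at every place $w$: one checks $\phi(f_w(Z_w))\subseteq f'_w(Z'_w)$ using surjectivity of $\coprod_i Z_{i,w}\to Z_w$ and the compatibility $\psi_i\circ f_{i,w}=f'_w\circ\psi_{i,w}$, and one gets continuity of the restriction $\phi_w$ by composing with the local sections of $\coprod_i Z_{i,w}\to Z_w$.
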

\begin{proof}
The category $T_{\bar{X}}$ has fiber products and a final object, hence finite projective limits are representable in $T_{\bar{X}}$. It
remains to show that $\mathcal{J}_{ls}$ is subcanonical. This follows easily from the fact
that, for any topological group $G$, the local section topology
$\mathcal{J}_{ls}$ on $B_{Top}{G}$ coincides with the open cover topology $\mathcal{J}_{op}$, which is
subcanonical.
\end{proof}
\begin{definition}\label{ofwdef}
We define the \emph{Weil-\'etale topos} $\bar{X}_{W}$ as the topos
of sheaves of sets on the site defined above:
$$\bar{X}_{W}:=\widetilde{(T_{\bar{X}},\mathcal{J}_{ls})}.$$
\end{definition}

\begin{prop}
We have a morphism of topoi
$$j:B_{W_{F}}\longrightarrow\bar{X}_W.$$
\end{prop}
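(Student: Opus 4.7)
The plan is to invoke the lemma on morphisms of left exact sites from Section~\ref{prem}, which asserts that a continuous left exact functor between left exact sites induces a morphism of topoi in the opposite direction. I will thus construct such a functor $j^{*}$ from the defining site $(T_{\bar X},\mathcal{J}_{ls})$ of $\bar X_W$ into a left exact site for $B_{W_F}$. Since $W_F=\varprojlim_K W_{K/F}$ is a strict topological pro-group and $B_{W_F}=\varprojlim_K B_{W_{K/F}}$, such a target site is furnished by the family $(B_{Top^{lc}}W_{K/F},\mathcal{J}_{ls})$ at the finite Galois levels; an object of $T_{\bar X}$ lands in the $K$-th such site as soon as its $W_F$-action factors through $W_{K/F}$, which happens by the third axiom in the definition of $T_{\bar X}$.

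On objects, set $j^{*}(Z_0,Z_v,f_v):=Z_0$ equipped with its continuous $W_F$-action; on morphisms, $j^{*}$ is the restriction to the $Z_0$-component, which is a $W_F$-equivariant continuous map by the definition of morphisms in $T_{\bar X}$. Finite projective limits in $T_{\bar X}$ are computed componentwise on $Z_0$ and on each $Z_v$ (together with the induced structure maps), so $j^{*}$, being the projection to the $Z_0$ component, is left exact.

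The main work, and the step I expect to be the principal obstacle, is verifying continuity of $j^{*}$ for the local-section topologies. Given a covering $\{(Z_{i,0},Z_{i,v},f_{i,v})\to(Z_0,Z_v,f_v)\}_{i\in I}$ in $\mathcal{J}_{ls}$, meaning $\coprod_{i}Z_{i,v}\to Z_v$ admits local continuous sections at every place $v$, I must show that $\coprod_i Z_{i,0}\to Z_0$ admits local continuous sections. Fix $z\in Z_0$: since $f_v$ is a homeomorphism for all but finitely many places, choose $v$ for which this holds, and apply the local-section property at $v$ to $f_v^{-1}(z)$ to obtain an open neighborhood $V\subset Z_v$ and a continuous section $s_v:V\to Z_{i_0,v}$ for some $i_0\in I$. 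The composition
$$
U:=f_v(V)\xrightarrow{f_v^{-1}}V\xrightarrow{s_v}Z_{i_0,v}\xrightarrow{f_{i_0,v}}Z_{i_0,0}
$$
is then a local continuous section of $\coprod_i Z_{i,0}\to Z_0$ over the open neighborhood $U\ni z$, as follows from the commutativity of the square relating $f_v$, $f_{i_0,v}$ and the morphism $(Z_{i_0,0},Z_{i_0,v},f_{i_0,v})\to(Z_0,Z_v,f_v)$ on its $Z_0$-component. A secondary subtlety is that $Z_0$ is not required to be locally compact in the definition of $T_{\bar X}$; one still makes sense of it as an object of $B_{W_F}$ via its Yoneda-style functor of points on locally compact $W_{K/F}$-spaces, which is automatically a $\mathcal{J}_{ls}$-sheaf. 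Once continuity is established, the lemma of Section~\ref{prem} delivers the desired morphism of topoi $j:B_{W_F}\to \bar X_W$.
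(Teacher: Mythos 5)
Your proposal is correct and follows essentially the same route as the paper: the paper also takes the projection $(Z_0,Z_v,f_v)\mapsto Z_0$ as a morphism of left exact sites $(T_{\bar X},\mathcal{J}_{ls})\to(B_{Top}W_F,\mathcal{J}_{ls})$, citing \cite{flach06-2} for the fact that $(B_{Top}W_F,\mathcal{J}_{ls})$ is a site for $B_{W_F}$, and invokes the lemma on left exact sites to produce $j$; the continuity check you spell out is the one left implicit there. Two small remarks: the detour through the finite Galois levels $B_{Top^{lc}}W_{K/F}$ is unnecessary, since one can target $(B_{Top}W_F,\mathcal{J}_{ls})$ directly; and your ``secondary subtlety'' about $Z_0$ possibly failing to be locally compact does not arise, because $f_v:Z_v\to Z_0$ is a homeomorphism for almost all $v$ and the $Z_v$ are required to be locally compact, so $Z_0$ is automatically locally compact and lands in $B_{Top}W_F$ without any Yoneda workaround.
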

\begin{proof}
By \cite{flach06-2} Corollary 2, the site
$(B_{Top}{W_{F}},\mathcal{J}_{ls})$ is a site for the classifying
topos $B_{W_{F}}$ is defined as the topos of
$y(W_{F})$-objects of $\mathcal{T}$. By \cite{flach06-2} Corollary 2, the site
$(B_{Top}{W_{F}},\mathcal{J}_{ls})$ is a site for $B_{W_{F}}$. The morphism of left
exact sites
$$\fonc{j^*}{(T_{\bar{X}},\mathcal{J}_{ls})}{(B_{Top}{W_{F}},\mathcal{J}_{ls})}{(Z_0,Z_v,f_v)}{Z_0}$$
induces the morphism of topoi $j$.
\end{proof}

\begin{prop}\label{generic-XW}
The morphism of topoi $j:B_{W_{F}}\rightarrow\bar{X}_W$ factors
through
$$B_{\underline{W}_{K/F,S}}:=\underleftarrow{lim}B_{{W}_{K/F,S}}.$$
The induced morphism
$i_0:B_{\underline{W}_{K/F,S}}\rightarrow\bar{X}_W$ is an embedding.
\end{prop}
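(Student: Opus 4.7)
The plan is to build the factorization explicitly at the site level, then verify the embedding property via a section argument.

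First I would establish the factorization. Let $(Z_0, Z_v, f_v) \in T_{\bar{X}}$. By the third defining axiom the $W_F$-action on $Z_0$ factors through $W_{K/F}$ for some finite Galois $K/F$. Put $S$ equal to the union of the places ramified in $K/F$ and the finite set of places where $f_v$ fails to be a homeomorphism. For $v \notin S$, the $W_{F_v}$-action on $Z_v$ factors through $W_{k(v)} = W_{F_v}/W_{F_v}^1$ by construction; transporting via the homeomorphism $f_v$ shows $\theta_v(W_{F_v}^1)$ acts trivially on $Z_0$. Class field theory identifies the kernel of $W_{K/F} \twoheadrightarrow W_{K/F,S}$ with the closed normal subgroup topologically generated by the $\theta_v(W_{F_v}^1)$ for $v \notin S$, so the $W_F$-action on $Z_0$ descends to $W_{K/F,S}$. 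Thus the site functor $j^*\colon T_{\bar{X}} \to B_{Top}W_F$ factors through the directed union $\bigcup_{(K,S)} B_{Top}W_{K/F,S}$.

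Next I would construct $i_0$. The directed union $\bigcup_{(K,S)} B_{Top}W_{K/F,S}$, endowed with the local section topology, is a site for $B_{\underline{W}_{K/F,S}}$ because the 2-limit of topoi corresponds to the 2-colimit of sites and each $(B_{Top}W_{K/F,S}, \mathcal{J}_{ls})$ is a site for $B_{W_{K/F,S}}$. The refined functor $T_{\bar{X}} \to \bigcup_{(K,S)} B_{Top}W_{K/F,S}$ is left exact and continuous, inducing the morphism $i_0\colon B_{\underline{W}_{K/F,S}} \to \bar{X}_W$. That $j$ equals $i_0 \circ p$, where $p\colon B_{W_F} \to B_{\underline{W}_{K/F,S}}$ is the canonical morphism coming from $W_F = \underleftarrow{lim}\, W_{K/F,S}$, is immediate from the construction.

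To show $i_0$ is an embedding, I would construct a right inverse of $i_0^*$ at the site level. For each $(K, S)$, define $\beta_{K,S}\colon B_{Top}W_{K/F,S} \to T_{\bar{X}}$ sending a $W_{K/F,S}$-space $Z$ to the triple $(Z, Z_v, f_v)$ where $Z_v = Z$ (with $W_{k(v)}$-action via $W_{k(v)} \to W_{K/F,S}$) and $f_v = \mathrm{id}$ for $v \notin S$, while $Z_v = Z^{\theta_v(W_{F_v}^1)}$ (a closed, hence locally compact, subspace of $Z$ on which $W_{F_v}^1$ acts trivially) with $f_v$ the inclusion for $v \in S$. This functor is left exact and continuous; the composition $i_0^* \circ \beta_{K,S}$ is canonically isomorphic to the identity; and the $\beta_{K,S}$ are compatible as $(K, S)$ varies, so they pass to a site-level section of $i_0^*$. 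The induced natural isomorphism $i_0^* \circ i_{0*} \cong \mathrm{id}$ of endofunctors of $B_{\underline{W}_{K/F,S}}$ shows $i_{0*}$ is fully faithful, i.e., $i_0$ is an embedding.

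The principal technical difficulty is the class field theory input identifying the kernel of $W_{K/F} \twoheadrightarrow W_{K/F,S}$ as topologically generated by the local unit images. A secondary difficulty is checking that the $\beta_{K,S}$ assemble coherently across the directed system — including verifying that the resulting site-level section actually produces the direct image $i_{0*}$ at the topos level — but this should reduce to a standard if delicate verification once the factorization has been set up.
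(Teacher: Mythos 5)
Your factorization argument matches the paper's: the only input is that for $(Z_0,Z_v,f_v)$, with $f_v$ a homeomorphism outside a finite set $S$ containing the ramified places, the images $\theta_v(W^1_{F_v})$ for $v\notin S$ act trivially on $Z_0$ (transporting triviality through $f_v$), and the closed normal subgroup they generate in $W_{K/F}$ is $\prod_{w\mid v,\,v\notin S}\mathcal{O}_{K_w}^\times\subseteq C_K$, so the action descends to $W_{K/F,S}$. The identification of the direct limit site is also as in the paper.

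The embedding argument, however, has a genuine gap, and this is where your route diverges from the paper's. A section $\beta$ of $i_0^*$ at the site level would at best give a geometric morphism $b\colon\bar{X}_W\to B_{\underline{W}_{K/F,S}}$ with $b\circ i_0\cong\mathrm{id}$, i.e.\ $i_0$ would be a split monomorphism of topoi. But split monomorphism does \emph{not} imply embedding: the relation $b\circ i_0\cong\mathrm{id}$ gives $b_*\,i_{0*}\cong\mathrm{id}$, so $i_{0*}$ is faithful, but fullness does not follow (any point $\underline{\mathrm{Set}}\to\mathcal{E}$ is split by the unique map $\mathcal{E}\to\underline{\mathrm{Set}}$, yet most points are not embeddings). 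Thus the inference ``site-level section $\Rightarrow i_0^*\circ i_{0*}\cong\mathrm{id}$'' is unjustified. What one actually has to show is that the colimit $\varinjlim_{Z\to i_0^*(Y_0,Y_v,f_v)}\mathcal{F}(Y_0)$ computing the pullback presheaf equals $\mathcal{F}(Z)$, and for that one needs a \emph{cofinal} subfamily of the comma category, not a single section. Your object $\beta_{K,S}(Z)$ does not admit a morphism to a general $(Y_0,Y_v,f_v)$ with a given $W_{K/F,S}$-map $Z\to Y_0$: for $v\in S$ the $v$-component $Z^{\theta_v(W^1_{F_v})}$ must land (via $g$) inside $f_v^Y(Y_v)$, but one only knows $f_v^Y(Y_v)\subseteq Y_0^{\theta_v(W^1_{F_v})}$ and this inclusion can be strict (e.g.\ $Y_v=\emptyset$ with $Z^{\theta_v(W^1_{F_v})}\neq\emptyset$). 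The paper instead takes $Z_v=\emptyset$ for $v\in S$ and lets $S$ grow; with empty $v$-components the required morphisms always exist once $S$ absorbs the bad places of $Y$, which is exactly what makes the family cofinal. Two further problems with $\beta_{K,S}$: continuity for $\mathcal{J}_{ls}$ fails, because a local section $s\colon U\to Z_i$ of $\coprod Z_i\to Z$ need not carry $U\cap Z^{\theta_v(W^1_{F_v})}$ into $Z_i^{\theta_v(W^1_{F_v})}$ (local sections are not equivariant); and the $\beta_{K,S}$ do not glue to a single functor on $\underrightarrow{\lim}\,B_{Top}W_{K/F,S}$, since the $v$-components for $v\in S$ change as $S$ grows, giving a pro-system rather than a section.
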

\begin{proof}
Let $(Z_0,Z_v,f_v)$ be an object of $T_{\bar{X}}$. The action of
$W_F$ on $Z_0$ factors through $W_{K/F}$, for some finite Galois
sub-extension $\bar{F}/K/F$. Since $W_{K/F}$ and $Z_0$ are both locally compact,
this action is given by a continuous morphism
$$\rho:W_{K/F}\longrightarrow Aut(Z_0)$$
where $Aut(Z_0)$ is the homeomorphism group of $Z_0$ endowed with
the compact-open topology. The kernel of $\rho$ is a closed
normal subgroup of $W_{K/F}$ since $Aut(Z_0)$ is Hausdorff. Moreover, there exists an open subset $V$
of $\bar{X}$ such that $f_v:Z_v\rightarrow Z_0$ is an isomorphism of
$W_{F_v}$-spaces for any $v\in V$. Let $\widetilde{W}^1_{F_v}$ denotes
the image of the continuous morphism
$$W^1_{F_v}\longrightarrow W_{F_v}\longrightarrow W_{K/F},$$ endowed with the induced topology.
Then $\widetilde{W}^1_{F_v}$ is
in the kernel of $\rho$ for any $v\in V$. Let $N_V$ be the closed
normal subgroup of $W_{K/F}$ generated by the subgroups
$\widetilde{W}^1_{F_v}$ for any $v\in V$. Then $\rho$ induces a
continuous morphism
$$W_{K/F}/N_V\longrightarrow Aut(Z_0).$$
We choose $V$ small enough so that $K/F$ is unramified above $V$ and
we set $S:=\bar{X}-V$. Then we have $$N_V=\prod_{w\mid v,\,v\in
V}\mathcal{O}_{K_w}^{\times}\subseteq C_K \subseteq W_{K/F}\mbox{
and }W_{K/F}/N_V= W_{K/F,S}.$$
Hence the action of $W_F$ on $Z_0$ factors through $W_{K/F,S}$, for
some finite Galois sub-extension $\bar{F}/K/F$ and some finite set
$S$ of places of $F$ containing all the places which ramify in $K$.
The morphism of left exact sites
$$\fonc{j^*}{(T_{\bar{X}},\mathcal{J}_{ls})}{(B_{Top}{W_{F}},\mathcal{J}_{ls})}{(Z_0,Z_v,f_v)}{Z_0}.$$
therefore induces a morphism
$$\fonc{i_0^*}{(T_{\bar{X}},\mathcal{J}_{ls})}{(\underrightarrow{lim}\,B_{Top}{W_{K/F,S}},\mathcal{J}_{ls})}{(Z_0,Z_v,f_v)}{Z_0}$$
where $(\underrightarrow{lim}\,B_{Top}{W_{K/F,S}},\mathcal{J}_{ls})$ is the direct limit site. More precisely, $\underrightarrow{lim}\,B_{Top}{W_{K/F,S}}$ is the direct limit category endowed with the coarsest topology $\mathcal{J}$ such that the functors $B_{Top}{W_{K/F,S}}\rightarrow \underrightarrow{lim}\,B_{Top}{W_{K/F,S}}$ are all continuous, when $B_{Top}{W_{K/F,S}}$ is endowed with the local section topology. One can identify $\underrightarrow{lim}\,B_{Top}{W_{K/F,S}}$ with a full subcategory of $B_{Top}{W_{F}}$ and $\mathcal{J}$ with the local section topology $\mathcal{J}_{ls}$. By (\cite{sga4} VI.8.2.3), the direct limit site $(\underrightarrow{lim}\,B_{Top}{W_{K/F,S}},\mathcal{J}_{ls})$ is a site for the projective limit topos $B_{\underline{W}_{K/F,S}}$. We obtain a morphism of topoi
$$i_0:B_{\underline{W}_{K/F,S}}\longrightarrow\bar{X}_W.$$
It remains to show that this morphism is an embedding. Let
$\mathcal{F}$ be an object of $B_{\underline{W}_{K/F,S}}$. Then
$i_0^*i_{0*}\mathcal{F}$ is the sheaf associated with the presheaf
$$
\fonc{i_{0}^pi_{0*}\mathcal{F}}{\underrightarrow{lim}\,B_{Top}{W_{K/F,S}}}{\underline{Set}}
{Z}{\displaystyle{\lim_{Z\rightarrow
i_{0}^*(Y_0,Y_v,f_v)}}i_{0*}\mathcal{F}(Y_0,Y_v,f_v)}
$$
where the direct limit is taken over the category of arrows
$Z\rightarrow i_{0}^*(Y_0,Y_v,f_v)$. For any object $Z$ of $\underrightarrow{lim}\,B_{Top}{W_{K/F,S}}$, there exist a finite Galois extension $K_Z/F$ and a finite set $S_Z$ such that $Z$ is an object of $B_{Top}{W_{K_Z/F,S_Z}}$.
Consider the cofinal subcategory $I_Z$ of the category of arrows defined above, where $I_Z$ consists of the following objects. For any finite set $S$ of places of $F$ such that $S_Z\subseteq S$, we consider the map
$Z\rightarrow i_{0}^*(Z_0,Z_v,f_v)$ with $Z_0=Z$ as a $W_F$-space,
$Z_v=Z$ as a $W_{k(v)}$-space for any place $v$ not in  $S$ and $Z_v=\emptyset$ for any $v\in S$. We thus have
$$\displaystyle{\lim_{Z\rightarrow
i_{0}^*(Y_0,Y_v,f_v)}}i_{0*}\mathcal{F}(Y_0,Y_v,f_v)=\displaystyle{\lim_{I_Z}}\,\,i_{0*}\mathcal{F}(Z_0,Z_v,f_v)=\mathcal{F}(Z).$$
Hence $i_{0}^pi_{0*}\mathcal{F}$ is already a sheaf and we have
$$i_{0}^*i_{0*}\mathcal{F}=i_{0}^pi_{0*}\mathcal{F}=\mathcal{F}.$$
This shows that $i_{0*}$ is fully faithful, i.e. $i_{0}$ is an
embedding.

\end{proof}

\begin{prop}\label{map-to-BR}
There is canonical morphism of topoi
$$\mathfrak{f}:\bar{X}_{W}\longrightarrow B_{\mathbb{R}}.$$
\end{prop}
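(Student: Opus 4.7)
The plan is to construct $\mathfrak{f}$ from a morphism of left exact sites
\[ \mathfrak{f}^{*}:(B_{Top}\,\br,\,\mathcal{J}_{ls})\longrightarrow (T_{\bar{X}},\,\mathcal{J}_{ls}) \]
and then apply the site-functoriality lemma of Section~\ref{prem}. The starting ingredient is a continuous homomorphism $\|\cdot\|:W_F\to\br$ obtained as the composite
\[ W_F \twoheadrightarrow W_F^{ab}\cong C_F \xrightarrow{\;\prod_v|\cdot|_v\;} \br_{>0}\xrightarrow{\;\log\;}\br, \]
where one identifies the abelianisation of $W_F$ with the id\`ele class group via class field theory and uses the product formula to descend the id\`ele norm from $\ba_F^\times$ to $C_F$. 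Local analogues $\|\cdot\|_v:W_{F_v}\to\br$ are defined as $w\mapsto -\ord_v(w)\cdot\log q_v$ for $v$ ultrametric and as $w\mapsto\log|w|_v$ for $v$ archimedean. These local norms are trivial on the maximal compact subgroup $W_{F_v}^1$, so they factor through $W_{k(v)}=W_{F_v}/W_{F_v}^1$ to give continuous homomorphisms $W_{k(v)}\to\br$; the compatibility of local with global reciprocity then yields a commutative square whose horizontal arrows are $\theta_v:W_{F_v}\to W_F$ and $q_v:W_{F_v}\to W_{k(v)}$ and whose vertical arrows are $\|\cdot\|$ and its local factor.

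Given these homomorphisms, I define $\mathfrak{f}^{*}$ on a locally compact $\br$-space $Y$ by
\[ \mathfrak{f}^{*}(Y):=(Y,\;Y,\;\mathrm{id}_Y), \]
where the global component $Z_0=Y$ carries the $W_F$-action pulled back through $\|\cdot\|$ and each local component $Z_v=Y$ carries the $W_{k(v)}$-action pulled back through $W_{k(v)}\to\br$. The square above makes $\mathrm{id}_Y:Z_v\to Z_0$ a $W_{F_v}$-equivariant map for every $v$. The three conditions defining membership in $T_{\bar{X}}$ are all immediate: each $Z_v$ is locally compact since $Y$ is, each $f_v$ is in fact a homeomorphism (so certainly one for almost all $v$), and the $W_F$-action factors through $W_{F/F}\cong C_F$, so one takes $K=F$ as the required finite Galois subextension. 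A morphism of $\br$-spaces is sent to the evident morphism of triples, giving a functor.

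It remains to check that $\mathfrak{f}^{*}$ is a morphism of left exact sites. Left exactness is automatic: finite projective limits in both $B_{Top}\,\br$ and $T_{\bar{X}}$ are computed at the level of underlying topological spaces endowed with the induced actions, and the assignment $Y\mapsto(Y,Y,\mathrm{id})$ visibly commutes with such limits. Continuity is equally direct: a $\mathcal{J}_{ls}$-cover $\{Y_i\to Y\}$ in $B_{Top}\,\br$ is a family for which $\coprod Y_i\to Y$ admits local continuous sections, and this property is inherited component-wise by $\{(Y_i,Y_i,\mathrm{id})\to(Y,Y,\mathrm{id})\}$ at every place $v$, so one obtains a $\mathcal{J}_{ls}$-cover in $T_{\bar{X}}$. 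The site-functoriality lemma then supplies the morphism of topoi $\mathfrak{f}:\bar{X}_W\to B_{\br}$.

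The most delicate point in this argument is the commutativity of the local--global square constructed in the first paragraph: it rests on the fact that the global reciprocity isomorphism $W_F^{ab}\cong C_F$ restricts on each decomposition subgroup to the local reciprocity isomorphism $W_{F_v}^{ab}\cong F_v^\times$, together with the product formula on id\`eles. Once this compatibility is in hand everything else is essentially formal, and the construction is manifestly functorial in $Y$.
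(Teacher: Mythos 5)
Your construction coincides with the paper's: both pass through the same morphism of left exact sites $(B_{Top}\,\br,\mathcal{J}_{ls})\to(T_{\bar X},\mathcal{J}_{ls})$ sending $Z\mapsto(Z,Z,\mathrm{Id}_Z)$, with the actions pulled back along $W_F\twoheadrightarrow W_F^{ab}\cong C_F\to\br$ and its local factors $W_{k(v)}\to\br$. You simply spell out the routine verifications (left exactness, continuity, the local--global compatibility square) that the paper leaves implicit.
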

\begin{proof}
We have a commutative diagram of topological groups
\begin{equation}\begin{CD}
@. W_{F_v} @>>> W_{k(v)}\\
@. @VVV @VVV @.\\
@.W_F @>>>\mathbb{R} @. {}
\end{CD}\end{equation}
where $W_F\rightarrow\mathbb{R}$ is defined as the composition
$$W_{F}\rightarrow W_F^{ab}\cong{C_F}\rightarrow\mathbb{R}_+^{\times}\cong\mathbb{R}.$$
Hence there is a morphism of left exact sites
\begin{equation}\label{morphism-sites-toT}
\fonc{\mathfrak{f}^*}{(B_{Top}\mathbb{R},\mathcal{J}_{ls})}{(T_{\bar{X}},\mathcal{J}_{ls})}{Z}{(Z,Z,Id_Z)}
\end{equation}
where $Z$ is seen as $W_F$-space (respectively a $W_{k(v)}$-space) via the canonical morphism $W_F\rightarrow\mathbb{R}$ (respectively via $W_{k(v)}\rightarrow\mathbb{R}$). The result follows.
\end{proof}

\subsection{The morphism from the Weil-\'etale topos to the Artin-Verdier \'etale topos}

Let $\bar{X}$ be the Arakelov compactification of the number ring
$\mathcal{O}_F$. We consider below the Artin-Verdier \'etale site $(Et_{\bar{X}};\mathcal{J}_{et})$ and the Artin-Verdier \'etale topos $\bar{X}_{et}$ of the arithmetic curve $\bar{X}$.

\begin{prop}\label{prop-morph-sites-etale-loc-sections}
There exists a morphism of left exact sites
$$\fonc{\gamma^*}{(Et_{\bar{X}};\mathcal{J}_{et})}{(T_{\bar{X}};\mathcal{J}_{ls})}{\bar{U}}{(U_0,U_v,f_v)}.$$
The underlying functor $\gamma^*$ is fully faithful and its essential image consists exactly of objects
$(U_0,U_v,f_v)$ of $T_{\bar{X}}$ where $U_0$ is a finite $W_F$-set.
\end{prop}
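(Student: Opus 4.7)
The plan is to define $\gamma^*$ explicitly on objects and morphisms, verify it is a morphism of left exact sites, and then invert the construction to obtain full faithfulness together with the characterization of the essential image.

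\textbf{Construction.} I would send an étale $\bar{X}$-scheme $(\mathcal{U},D)$ to the triple $(U_0,U_v,f_v)$ defined as follows. The space $U_0 := \mathcal{U}(\bar{F})$ is the finite set of geometric generic points with its discrete topology and $W_F$-action through $W_F\to G_F$. For each finite place $v$, $U_v := \mathcal{U}(\bar{k(v)})$ is finite discrete with $W_{k(v)}$-action through $G_{k(v)}$, and $f_v$ is the canonical inclusion $\mathcal{U}(\bar{k(v)})\cong\mathcal{U}(\bar{F})^{I_v}\hookrightarrow\mathcal{U}(\bar{F})$ arising from the étale-henselian identification and the chosen embedding $\bar{F}\hookrightarrow\bar{F_v}$. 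For each archimedean place $v$, the embedding $\bar{F}\hookrightarrow\bc$ gives a section of $\mathcal{U}(\bar{F})\twoheadrightarrow(\mathcal{U}_\infty)_v$ allowing me to view $D_v := D\cap(\mathcal{U}_\infty)_v$ as a subset of $U_0$, and I would set $U_v := D_v$ with its (necessarily trivial) $W_{k(v)}$-action. The axioms of $T_{\bar{X}}$ --- local compactness of $U_v$, continuous injectivity of $f_v$, the homeomorphism condition almost everywhere, and the factorization of the $W_F$-action through a finite quotient --- all follow from the finiteness of $\mathcal{U}(\bar{F})$, the étale identification at finite $v$, and unramifiedness of a finite étale $F$-scheme at almost every place.

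\textbf{Left exactness and continuity.} The functor sends the final object $\bar{X}$ to the singleton triple since $\mathcal{X}(\bar{F})$ and each $\mathcal{X}(\bar{k(v)})$ are singletons. Fibre products are preserved because $\mathcal{U}\mapsto\mathcal{U}(\bar{F})$, $\mathcal{U}\mapsto\mathcal{U}(\bar{k(v)})$, and the archimedean components each commute with fibre products of étale $X$-schemes by base change. A covering in $Et_{\bar{X}}$ is a jointly surjective family of étale morphisms, which induces jointly surjective families of finite discrete spaces at each level; such families are evidently local-section covers in $T_{\bar{X}}$.

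\textbf{Full faithfulness and essential image.} Faithfulness is immediate since an étale $X$-scheme is determined by its finite $G_F$-set of $\bar{F}$-points together with the integral and archimedean data, all of which is recorded by $\gamma^*$. For fullness, a morphism of triples supplies a $G_F$-equivariant map $U_0\to U_0'$, which by the Galois correspondence for finite étale $F$-schemes defines a morphism of generic fibres $\mathcal{U}_F\to\mathcal{U}_F'$; compatibility with the $U_v\to U_v'$ at every $v$ then forces a unique extension to the integral and archimedean parts. For the essential image, given $(U_0,U_v,f_v)$ with $U_0$ finite, continuity of the action forces the $W_F$-action to factor through some $G_{K/F}$, so $U_0$ is a finite $G_F$-set corresponding to a finite étale $F$-scheme $\mathcal{U}_F$. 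At each finite $v$, the $W_{F_v}$-equivariance of $f_v$ combined with $W_{F_v}$ acting on $U_v$ through $W_{k(v)}=W_{F_v}/I_v$ pins the image of $f_v$ inside $U_0^{I_v}$; via the henselian-étale correspondence this $W_{k(v)}$-stable subset specifies exactly which primes above $v$ to include in an étale integral model $\mathcal{U}\to X$. The archimedean data similarly recovers $D\subset\mathcal{U}_\infty$, with the unramifiedness condition at real places guaranteed by $U_v\subset U_0^{G_{F_v}}$. The hardest part will be this essential image step, specifically checking that the local reconstructions glue into a globally étale model $\mathcal{U}$ and that the archimedean piece respects the precise unramifiedness condition built into the definition of $Et_{\bar{X}}$.
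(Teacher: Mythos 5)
Your construction of $\gamma^*$ and your verification of left exactness, continuity, and full faithfulness follow essentially the same route as the paper's sketch (which defers details to the second author's thesis): take $\bar F$-points for $U_0$, $\overline{k(v)}$-points at ultrametric $v$, the archimedean fibre for infinite $v$, and invoke Galois theory for the essential image. One small imprecision: the identification $\mathcal{U}(\overline{k(v)})\cong\mathcal{U}(\bar F)^{I_v}$ is not an isomorphism in general --- e.g.\ for $\mathcal{U}=\Spec\mathcal{O}_{F,S}$ with $v\in S$ the left side is empty --- only a canonical injection, bijective for almost all $v$; this is exactly why the axioms of $T_{\bar X}$ require $f_v$ to be a homeomorphism only almost everywhere, and your later reconstruction step (choosing ``which primes above $v$ to include'') implicitly acknowledges this, so it is a slip of notation rather than a gap.
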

This result is a reformation of \cite{morin} Proposition 4.61
and \cite{morin} Proposition 4.62. We give below a sketch of the
proof.
\begin{proof}
For any \'etale $\bar{X}$-scheme $\bar{U}$, we define an object
$\gamma^*(\bar{U})=(U_0,U_v,f_v)$ of $T_{\bar{X}}$ as follows.  The scheme $\bar{U}\times_{\bar{X}}\Spec\,F$
is the spectrum of an \'etale $F$-algebra and the
Grothendieck-Galois theory shows that this $F$-algebra is uniquely
determined by the finite $G_F$-set
$$U_0:=Hom_{\Spec\, F}(\Spec\, \bar{F},\bar{U}\times_{\bar{X}}\Spec\,F)=Hom_{\bar{X}}(\Spec\,\overline{F},\bar{U}).$$
Let $v$ be an ultrametric place of $F$. The maximal unramified
sub-extension of the algebraic closure $\bar{F_v}/F_v$ yields an algebraic closure of the residue
field $\overline{k(v)}/k(v)$. The scheme
$\bar{U}\times_{\bar{X}}\Spec\,k(v)$ is the spectrum of an \'etale
$k(v)$-algebra, corresponding to the finite $G_{k(v)}$-set
$$U_v:=Hom_{\Spec\, k(v)}(\Spec\, \overline{k(v)},\bar{U}\times_{\bar{X}}\Spec\,k(v))=Hom_{\bar{X}}(\Spec\,\overline{k(v)},\bar{U})$$
The chosen $F$-embedding $\bar{F}\rightarrow\bar{F_v}$ induces a $G_{F_v}$-equivariant map
$$f_v:U_v\longrightarrow U_0.$$
Consider now an archimedean place $v$ of $F$. Define
$$U_v:=Hom_{\bar{X}}(v,\bar{U})=\bar{U}\times_{\bar{X}}v$$
where the map $v\rightarrow\bar{X}$ is the closed embedding corresponding to the archimedean place $v$ of $F$. As above, the $F$-embedding $\bar{F}\rightarrow\bar{F_v}$ induces a $G_{F_v}$-equivariant map
$$f_v:U_v\longrightarrow U_0.$$
For any place $v$ of $F$, the set $U_v$ is viewed as a
$W_{k(v)}$-topological space via the morphism $W_{k(v)}\rightarrow
G_{k(v)}$. Respectively, $U_0$ is viewed as a $W_{F}$-topological
space via $W_{F}\rightarrow G_{F}$. Then the map $f_v$ defined above
is $W_{F_v}$-equivariant. We check that the map
$f_v$ is bijective for almost all valuations and injective for all
valuations (see \cite{morin} Proposition 4.62). We obtain a functor
$$\gamma^*:Et_{\bar{X}}\longrightarrow T_{\bar{X}}.$$
This functor is left exact by construction (i.e. it preserves the final objects and
fiber product) and continuous (i.e. it preserves covering families) since a surjective map of discrete sets is a local section cover.
The last claim of the proposition follows from Galois theory.
\end{proof}

\begin{corollary}
There is a morphism of topoi
$\gamma:\bar{X}_W\rightarrow\bar{X}_{et}$.
\end{corollary}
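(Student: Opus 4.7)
The plan is to invoke the machinery of morphisms of left exact sites that was already set up in the Preliminaries. Concretely, Proposition \ref{prop-morph-sites-etale-loc-sections} has just given us a continuous left exact functor
\[
\gamma^*:(Et_{\bar{X}},\mathcal{J}_{et})\longrightarrow (T_{\bar{X}},\mathcal{J}_{ls}),
\]
i.e. exactly a morphism of left exact sites. By the Lemma in the Preliminaries (the statement referencing \cite{sga4} IV.4.9), any such morphism of left exact sites induces a morphism of topoi on the associated sheaf categories, going in the opposite direction.

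Therefore the plan is simply to apply that lemma: one takes the induced geometric morphism between the sheaf topoi $\widetilde{(T_{\bar{X}},\mathcal{J}_{ls})}$ and $\widetilde{(Et_{\bar{X}},\mathcal{J}_{et})}$, and recalls that by definition these are $\bar{X}_W$ (Definition \ref{ofwdef}) and $\bar{X}_{et}$ (the Artin-Verdier \'etale topos) respectively. The direction is correct because the inverse image functor of the resulting geometric morphism extends $\gamma^*$ along the Yoneda embeddings, so the geometric morphism points from $\bar{X}_W$ to $\bar{X}_{et}$, as desired.

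There is no substantive obstacle: the genuine content sits in Proposition \ref{prop-morph-sites-etale-loc-sections}, where one had to verify that $\gamma^*$ is well-defined (the compatibilities $f_v$ being bijective for almost all $v$ and injective for all $v$), left exact, and continuous. Once those properties are in hand, the corollary is a formal consequence of the site-to-topos dictionary, and nothing further needs to be checked.
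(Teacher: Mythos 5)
Your argument is correct and is exactly the paper's: Proposition \ref{prop-morph-sites-etale-loc-sections} supplies a morphism of left exact sites $\gamma^*:(Et_{\bar{X}},\mathcal{J}_{et})\to(T_{\bar{X}},\mathcal{J}_{ls})$, and the general lemma from the Preliminaries (citing \cite{sga4} IV.4.9) then yields the geometric morphism $\gamma:\bar{X}_W\to\bar{X}_{et}$ on sheaf topoi. Nothing further is needed.
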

\begin{proof}
This follows from the fact that a morphism
of left exact sites induces a morphism of topoi.
\end{proof}

\subsection{Structure of $\bar{X}_W$ at the closed points.}

Let $v$ be a place of $F$. We consider the Weil group $W_{k(v)}$ and
the Galois group $G_{k(v)}$ of the residue field $k(v)$ at
$v\in\bar{X}$. Note that for $v$ archimedean one has
$W_{k(v)}\cong\mathbb{R}$ and $G_{k(v)}=\{1\}$. Consider the big
classifying topos $B_{W_{k(v)}}$, i.e. the category of
$y(W_{k(v)})$-objects in $\mathcal{T}$. We consider also the small
classifying topos $B^{sm}_{G_{k(v)}}$, which is defined as the
category of continuous $G_{k(v)}$-sets. The category of
locally compact $W_{k(v)}$-spaces $B_{Top}{W_{k(v)}}$ is endowed with
the local section topology $\mathcal{J}_{ls}$. Recall that the site
$(B_{Top}{W_{k(v)}},\mathcal{J}_{ls})$ is a site for the classifying
topos $B_{W_{k(v)}}$. We denote by $B_{fSets}G_{k(v)}$ the
category of finite $G_{k(v)}$-sets endowed with the canonical
topology $\mathcal{J}_{can}$. The site $(B_{fSets}G_{k(v)},\mathcal{J}_{can})$ is a site for the small
classifying topos $B^{sm}_{G_{k(v)}}$.

For any place $v$ of $F$, we have a morphism of left exact sites
$$\fonc{i_v^*}{(T_{\bar{X}},\mathcal{J}_{ls})}{(B_{Top}{W_{k(v)}},\mathcal{J}_{ls})}{(Z_0,Z_v,f_v)}{Z_v}$$
hence a morphism of topoi
$$i_v:B_{W_{k(v)}}\longrightarrow\bar{X}_W.$$
On the other hand one has morphism of topoi
$$u_v:B^{sm}_{G_{k(v)}}\longrightarrow\bar{X}_{et}$$
for any closed point $v$ of $\bar{X}$. For $v$ ultrametric, this morphism is induced by the closed embedding of schemes
$$\Spec\,k(v)\longrightarrow\bar{X}$$
since the \'etale topos of $\Spec\,k(v)$ is equivalent to the
category $B^{sm}_{G_{k(v)}}$ of continuous $G_{k(v)}$-sets. Note
that this equivalence is induced by the choice of an algebraic
closure of ${k(v)}$ made at the beginning of section \ref{section-Lichtenbaum-topos}. By Corollary \ref{closed/open-decomp-etale}, there is a closed embedding $$Sh(X_{\infty})=\coprod_{X_{\infty}}\underline{Set}\longrightarrow \bar{X}_{et}$$
which yields the closed embedding
$$u_v:B^{sm}_{G_{k(v)}}=\underline{Set}\longrightarrow\bar{X}_{et}$$
for any archimedean valuation $v$ of $F$. In both cases, we have a commutative diagram of left exact sites
\begin{equation*}\begin{CD}
@. (B_{Top}W_{k(v)},\mathcal{J}_{ls}) @<{\alpha^*_v}<<
(B_{fSets}{G_{k(v)}},\mathcal{J}_{can})\\
@. @AA{i^*_v}A @AA{u^*_v}A @.\\
@. (T_{\bar{X}},\mathcal{J}_{ls}) @<{\gamma^*}<<
(Et_{\bar{X}},\mathcal{J}_{et}) @. {}
\end{CD}\end{equation*}
where $u_v^*(\bar{U})$ is the finite $G_{k(v)}$-set
$Hom_{\bar{X}}(\Spec\,\overline{k(v)},\bar{U})$ (respectively $Hom_{\bar{X}}(v,\bar{U})$) for $v$ ultrametric (respectively archimedean). This commutative diagram of sites induces a commutative diagram of topoi.
\begin{theorem}\label{thm-pull-back}
For any closed point $v$ of $\bar{X}$, the following diagram is a
pull-back of topoi.
\begin{equation*}\begin{CD}
@.  B_{W_{k(v)}} @>{\alpha_v}>>
B^{sm}_{G_{k(v)}}\\
@. @VV{i_v}V @VV{u_v}V @.\\
@. \bar{X}_W @>{\gamma}>> \bar{X}_{et} @. {}
\end{CD}\end{equation*}
In particular, the morphism $i_v$ is a closed embedding.
\end{theorem}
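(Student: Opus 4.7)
The strategy is to apply the descent criterion of Lemma \ref{lem-descent} to the canonical morphism
$$f: B_{W_{k(v)}} \longrightarrow \bar{X}_W \times_{\bar{X}_{et}} B^{sm}_{G_{k(v)}}$$
obtained from the commutative square of left exact sites preceding the theorem statement, following the template used in Section \ref{fibre} and in Corollary \ref{cor-fiberproduct-classtopoi}. For $v$ ultrametric, a natural choice of cover of the final object on the right is $p_2^* E G_{k(v)}$, where $E G_{k(v)}$ is $G_{k(v)}$ with its translation action and $p_2$ is the second projection; this covers the final object in $B^{sm}_{G_{k(v)}}$, hence pulls back to a cover in the fiber product. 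For $v$ archimedean, $G_{k(v)}=1$ and $B^{sm}_{G_{k(v)}}=\underline{Set}$ is already the final topos, so one uses a direct argument with the local data $(Z_v, W_{k(v)}=\mathbb{R})$ appearing in the definition of $T_{\bar{X}}$.

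On the left, $B_{W_{k(v)}}/\alpha_v^* E G_{k(v)}$ is analyzed using the fact that $\alpha_v^* E G_{k(v)}$ is represented by $G_{k(v)}$ with $W_{k(v)}$-action through the canonical map $W_{k(v)} \to G_{k(v)}$. On the right, I would unwind the localized fiber product by writing
$$B^{sm}_{G_{k(v)}} \cong \varprojlim_{k'/k(v)} B^{sm}_{G(k'/k(v))}$$
over finite Galois extensions and then proceeding through a chain of equivalences analogous to (\ref{equi1})–(\ref{equi6}) in the proof of Proposition \ref{generic-XW}: commute localization and fiber product with the inverse limit, and at each finite level exploit the pull-back squares (\ref{pull-back-localization}) and (\ref{pull-back-classifying-topos}) together with Proposition \ref{pull-bu} (for $v$ ultrametric) or Corollary \ref{pull-ba} (for $v$ archimedean) to identify each stage with an étale base change of $\bar{X}$ along an extension unramified at $v$. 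The inverse limit of these base changes collapses to an object whose associated topos matches the localization of $B_{W_{k(v)}}$.

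The main obstacle is precisely this limit identification: we must match the localized fiber product with the local classifying topos at $v$ in the presence of ramification. The key point is that passage to the limit over unramified extensions produces the strict henselization of $\bar{X}$ at $v$, whose Weil-étale fiber is captured exactly by $B_{W_{k(v)}}$ through the definition of the site $T_{\bar{X}}$ (where the local data $(Z_v, f_v)$ prescribes precisely a $W_{k(v)}$-action compatible with $W_{F_v} \to W_F$). Once both sides of $f/p_2^* E G_{k(v)}$ are identified with the same limit topos and the comparison functor is recognized as the identity on the generating system, Lemma \ref{lem-descent} yields that $f$ is an equivalence.

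Once the pull-back property is established, the closed embedding assertion for $i_v$ follows formally. Indeed $u_v$ is itself a closed embedding of topoi: for $v$ ultrametric this is because $\Spec(k(v)) \to \bar{X}$ is a closed immersion of schemes, inducing a closed embedding of étale topoi; for $v$ archimedean because $u_v$ factors as $\underline{Set} \cong Sh(\{v\}) \hookrightarrow Sh(X_\infty) \hookrightarrow \bar{X}_{et}$, a composition of closed embeddings (the first because $X_\infty$ is a finite discrete space, the second by Proposition \ref{closed/open-decomp-etale}). Since closed embeddings are stable under pullback in the 2-category of topoi, $i_v$ is also a closed embedding.
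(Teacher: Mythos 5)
Your plan reuses the descent machinery of Lemma \ref{lem-descent} and Corollary \ref{cor-fiberproduct-classtopoi}, but the very first step fails: the proposed covering object $p_2^*EG_{k(v)}$ does not exist when $v$ is ultrametric. In that case $G_{k(v)}\cong\widehat{\bz}$ is an infinite profinite group, and the left translation action of $G_{k(v)}$ on the underlying discrete set $G_{k(v)}$ is not continuous, so $EG_{k(v)}$ is not an object of $B^{sm}_{G_{k(v)}}$ (the category of continuous discrete $G_{k(v)}$-sets). This is precisely why the characteristic-$p$ argument in Section \ref{fibre} localizes at $EW_k$ for the \emph{discrete} Weil group $W_k\cong\bz$, not at $EG_k$; and why the proof of Proposition \ref{generic-XW} only uses $EG(k'/k)$ at finite levels. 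A single finite cover $G(k'/k)$ pulled back would localize to $B^{sm}_{G_{k'}}$, not to $\underline{\mathrm{Set}}$, so the descent step never trivializes the profinite group. For $v$ archimedean the cover degenerates to the final object and the lemma says nothing; your ``direct argument'' is not supplied.

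Beyond the covering issue, the step you flag as the ``main obstacle'' — identifying the localized fiber product $(\bar{X}_W\times_{\bar{X}_\et}B^{sm}_{G_{k(v)}})/X$ with a localization of $B_{W_{k(v)}}$ through a chain of equivalences like (\ref{equi1})--(\ref{equi6}) — is in fact the entire content of the theorem and is left unaddressed. The citations of Proposition \ref{pull-bu} and Corollary \ref{pull-ba} do not apply as you intend: those describe the fibers of $\overline{\X}_\et$ over closed points of $\overline{\Spec(\bz)}$, not over a single closed point $v$ of $\bar{X}$ (for instance, the fiber over $p$ contains all places above $p$, not just $v$). Also note that at this stage of the paper $\bar{X}_W$ is defined concretely via the site $T_{\bar{X}}$, not as a fiber product over $\overline{\Spec(\bz)}_\et$; that identification (Proposition \ref{prop-fiberproduct-makes-sense}) is proved \emph{later} and itself relies on Theorem \ref{thm-pull-back}, so any argument that tacitly uses the fiber-product description of $\bar{X}_W$ would be circular. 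The paper instead first proves that $i_v$ is an embedding by an explicit site-theoretic computation (constructing the cofinal family $\mathcal{Y}(K,S,Z)$ of objects of $T_{\bar{X}}$, giving $i_v^*i_{v*}\cong\mathrm{id}$), then shows the stalk functors $\{i_{\bar{w}}^*\}_{w\in\bar{X}^0}$ form a conservative family, and finally deduces that every object of the fiber-product subtopos $\gamma^{-1}(B^{sm}_{G_{k(v)}})$ lies in the essential image of $i_{v*}$. Your final paragraph — deducing that $i_v$ is a closed embedding from $u_v$ being one, once the pull-back square is established — is correct.
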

\begin{proof}
We first prove a partial result.
\begin{lemma}\label{lemma-iv-embedding}
The morphism $i_v$ is an embedding, i.e. $i_{v*}$ is fully faithful.
\end{lemma}
\begin{proof}
We use below the fact that the \emph{full}
subcategory
$$W_{k(v)}\times Top\hookrightarrow B_{Top}W_{k(v)}$$
is a topologically generating subcategory of the site
$(B_{Top}W_{k(v)},\mathcal{J}_{ls})$. Here $W_{k(v)}\times Top$
consists in locally compact topological spaces of the form
$Z=W_{k(v)}\times T$ on which $W_{k(v)}$ acts by left
multiplication on the first factor. In particular, a sheaf
$\mathcal{F}$ of
$$B_{W_{k(v)}}=\widetilde{(B_{Top}W_{k(v)},\mathcal{J}_{ls})}$$
is completely determined by its values $\mathcal{F}(W_{k(v)}\times
T)$ on objects of $W_{k(v)}\times Top$.

Let $\mathcal{F}$ be an object of $B_{W_{k(v)}}$. Consider the
adjunction map
\begin{equation}\label{adjunction-iv}
i_v^*\circ i_{v*}\mathcal{F}\longrightarrow\mathcal{F}.
\end{equation}
The sheaf $i_v^*\circ i_{v*}\mathcal{F}$ is the sheaf on $(B_{Top}W_{k(v)},\mathcal{J}_{ls})$ associated to
the presheaf
$$Z\rightarrow \displaystyle{\lim_{Z\rightarrow i_v^*(Y_0,Y_w,f_w)}}i_{v*}\mathcal{F}(Y_0,Y_w,f_w)
=\displaystyle{\lim_{Z\rightarrow
i_v^*(Y_0,Y_w,f_w)}}\mathcal{F}(Y_v)$$ where the direct limit is
taken over the category of arrows $Z\rightarrow i_v^*(Y_0,Y_w,f_w)$
with $(Y_0,Y_w,f_w)$ an object of $T_{\bar{X}}$.

Let $\bar{F}/K/F$ be a finite Galois sub-extension, let $S$ be a
finite set of closed points of $\bar{X}$ such that $v\in S$ and let
$Z=W_{k(v)}\times T$ be an object of $W_{k(v)}\times Top$. Consider
the object of $T_{\bar{X}}$
$$\mathcal{Y}(K,S,Z)=(T_0,T_w,f_w)$$
defined as follows. We first define the topological space
$$T_0=W_{K/F,S}\times^{W_{F_v}}Z:=(W_{K/F,S}\times W_{k(v)}\times T)/{W_{F_v}}\cong(W_{K/F,S}/{{W}^1_{F_v}})\times T$$
endowed with its natural $W_F$-action. For any $w$ not in $S$, we consider
$T_w=W_{K/F,S}\times^{W_{F_v}}Z$ on which $W_{k(w)}$ acts via the
map
$$W_{k(w)}=W_{F_w}/W^1_{F_w}\longrightarrow W_{K/F,S}.$$
For any $w\in S$ such that $w\neq v$, we set $T_w=\emptyset$, and we
define $T_v=Z$. The map $f_w$ is the identity for any $w$ not in $S$
and $$f_v:Z\longrightarrow W_{K/F,S}\times^{W_{F_v}}Z$$ is the
canonical map. This map $f_v$ is continuous and injective. The image of ${W}^1_{F_v}$ in
$W_{K/F,S}$ is compact, and the spaces $T_0$ and $T_w$ are locally compact for any place $w$
so that $\mathcal{Y}(K,S,Z)$ is an object of $T_{\bar{X}}$.

On the one hand, the functor $Z\mapsto W_{K/F,S}\times^{W_{F_v}}Z$
is left adjoint to the forgetful functor
$B_{Top}W_{K/F,S}\rightarrow B_{Top}W_{F_v}$. On the other hand, for
any object $(Y_0,Y_w,f_w)$ of $T_{\bar{X}}$, the action of $W_F$ on
$Z_0$ factors through $W_{K/F,S}$ for some finite Galois extension
$K/F$ and some finite set $S$ of places of $F$. It follows that
$$\{\mathcal{Y}(K,S,Z),\mbox{ for $K/F$ Galois, }S\mbox{ finite }\}$$ yields a cofinal system in the category of arrows $Z\rightarrow i_v^*(Y_0,Y_w,f_w)$ considered above, for any fixed object $Z$ of $W_{k(v)}\times Top$. Hence $i_v^*\circ i_{v*}\mathcal{F}$ is the sheaf on $(B_{Top}W_{k(v)},\mathcal{J}_{ls})$
associated to the presheaf
$$
\appl{W_{k(v)}\times
Top}{\underline{Set}}{Z}{\displaystyle{\lim_{Z\rightarrow
i_v^*(Y_0,Y_w,f_w)}}\mathcal{F}(Y_v)=
\displaystyle{\lim_{Z\rightarrow
i_v^*\mathcal{Y}(K,S,Z)}}\mathcal{F}(Z)=\mathcal{F}(Z)}
$$
Since $W_{k(v)}\times Top$ is a topologically generating subcategory of $(B_{Top}W_{k(v)},\mathcal{J}_{ls})$, the sheaf on $B_{W_{k(v)}}$
associated to this presheaf is $\mathcal{F}$, and the adjunction morphism (\ref{adjunction-iv}) is an isomorphism. This shows that
$i_{v*}$ is fully faithful, i.e. $i_{v}$ is an embedding.
\end{proof}

\begin{definition}
Let $v$ be a closed point of $\bar{X}$. We consider the morphism $p_v:\T\rightarrow B_{W_{k(v)}}$ whose inverse image $p_v^*$ is the forgetful functor, and we denote by $i_{\overline{v}}$ the composite morphism
$$i_{\overline{v}}:=i_v\circ p_v:\T\longrightarrow B_{W_{k(v)}}\longrightarrow \bar{X}_W.$$
\end{definition}
For any object $Z=W_{k(v)}\times T$ of the full subcategory
$W_{k(v)}\times Top\hookrightarrow B_{Top}W_{k(v)}$ and for any sheaf $\mathcal{F}$ of $\bar{X}_W$, we have
\begin{equation}\label{i^p_v}
i^p_{v}\mathcal{F}(Z)=\displaystyle{\lim_{Z\rightarrow
i_v^*(Y_0,Y_w,f_w)}}\mathcal{F}(Y_0,Y_w,f_w)=
\displaystyle{\lim_{Z\rightarrow i_v^*\mathcal{Y}(K,S,Z)}}\mathcal{F}(\mathcal{Y}(K,S,Z))
\end{equation}
where we consider the pull-back presheaf $i^p_{v}\mathcal{F}$ on $B_{Top}W_{k(v)}$.
The morphism $p_v:\T\rightarrow B_{W_{k(v)}}$ is induced by the morphism of left exact sites given by the forgetful functor
$B_{Top}W_{k(v)}\rightarrow Top$. By adjunction, for any space $T$ of $Top$ and any presheaf $\mathcal{P}$ on $B_{Top}W_{k(v)}$ we have $$p_v^p\mathcal{P}(T)=\mathcal{P}(W_{k(v)}\times T).$$
Hence the isomorphism $i^p_{\overline{v}}\cong p^p_v\circ i^p_v$ gives
\begin{equation}\label{i^pbarv}
i^p_{\overline{v}}\mathcal{F}(T)=i^p_{v}\mathcal{F}(Z)
=\displaystyle{\lim_{Z\rightarrow
i_v^*\mathcal{Y}(K,S,Z)}}\mathcal{F}(\mathcal{Y}(K,S,Z))
\end{equation}
where $Z:=W_{k(v)}\times T$. We consider the category of compact spaces $Top^c$. The morphism of sites $(Top^c,\mathcal{J}_{op})\rightarrow (Top,\mathcal{J}_{op})$ induces an equivalence of topoi, hence one can restrict our attention to compact spaces. Let us show that $i^p_{\overline{v}}\mathcal{F}$ restricts to a sheaf on $(Top^c,\mathcal{J}_{op})$. Let $\{T_i\rightarrow T,\,i\in I\}$ be a covering family of $(Top^c,\mathcal{J}_{op})$, i.e. a local section cover of compact spaces. One can assume that $I$ is finite, since any covering family of $(Top^c,\mathcal{J}_{op})$ can be refined by a finite covering family.
For any $K/F$ and any $S$,
$$\{\mathcal{Y}(K,S,W_{k(v)}\times T_i)\rightarrow \mathcal{Y}(K,S,W_{k(v)}\times T)\}$$ is a covering family of
$(T_{\bar{X}},\mathcal{J}_{ls})$. Moreover the fiber product
$$\mathcal{Y}(K,S,W_{k(v)}\times T_i)\times_{ \mathcal{Y}(K,S,W_{k(v)}\times T)}\mathcal{Y}(K,S,W_{k(v)}\times T_j)$$
computed in the category $T_{\bar{X}}$, is isomorphic to $\mathcal{Y}(K,S,W_{k(v)}\times T_{ij})$, where $T_{ij}$ denotes $T_i\times_TT_i$. It follows that the diagram of sets
$$\mathcal{F}(\mathcal{Y}(K,S,W_{k(v)}\times T))\rightarrow \prod_i\mathcal{F}(\mathcal{Y}(K,S,W_{k(v)}\times T_i))
\rightrightarrows \prod_{i,j}\mathcal{F}(\mathcal{Y}(K,S,W_{k(v)}\times T_{ij}))$$
is exact. Passing to the inductive limit over $K$ and $S$, and using left exactness of filtered inductive limits (i.e. using the fact that filtered inductive limits commute with finite products and equalizers), we obtain an exact diagram of sets
$$i^p_{\overline{v}}\mathcal{F}(T)\rightarrow \prod_ii^p_{\overline{v}}\mathcal{F}(T_i)
\rightrightarrows \prod_{i,j}i^p_{\overline{v}}\mathcal{F}(T_{ij}),$$
as it follows from (\ref{i^pbarv}).
Hence $i^p_{\overline{v}}\mathcal{F}$ is a sheaf on $(Top^c,\mathcal{J}_{op})$. Therefore, for any compact space $T$, one has
\begin{equation}\label{i^*barv}
i^*_{\overline{v}}\mathcal{F}(T)=i^p_{\overline{v}}\mathcal{F}(T)=\displaystyle{\lim_{Z\rightarrow
i_v^*\mathcal{Y}(K,S,Z)}}\mathcal{F}(\mathcal{Y}(K,S,Z))
\end{equation}
where $Z=W_{k(v)}\times T$.

\begin{lemma}
The family of functors
$$\{i_{\overline{v}}^*:\bar{X}_W\rightarrow \T,\,v\in\bar{X}^0\}$$ is
conservative, where $\bar{X}^0$ is the set of closed points of $\bar{X}$.
\end{lemma}
\begin{proof}
Let $\mathcal{F}$ be an object of $\bar{X}_W$. We need to show that the adjunction map
\begin{equation}\label{inj-adj-map}
\mathcal{F}\longrightarrow \prod_{v\in\bar{X}^0}i_{\overline{v}*}i_{\overline{v}}^*\mathcal{F}.
\end{equation}
is injective. For any $(Z_0,Z_w,f_w)$ of $T_{\bar{X}}$, we have
$$\prod_{v\in\bar{X}^0}(i_{\overline{v}*}i_{\overline{v}}^*\mathcal{F})(Z_0,Z_w,f_w)=\prod_{v\in\bar{X}^0}i_{\overline{v}}^*\mathcal{F}(Z_v).$$
Note that, in the term on the right hand side of the equality above, $Z_v$ is considered as a topological space without any action. For any $v$, we choose a local section cover of the space $Z_v$:
$$\{T_{v,l}\hookrightarrow Z_v,\,l\in\Lambda_v\}$$ such that $T_{v,l}$ is a compact subspace of $Z_v$ for any index $l$. Such a local section cover exists since $Z_v$ is locally compact. The map
$$i_{\overline{v}}^*\mathcal{F}(Z_v)\longrightarrow\prod_{l\in\Lambda_v} i_{\overline{v}}^*\mathcal{F}(T_{v,l}).$$
is injective since $i_{\overline{v}}^*\mathcal{F}$ is a sheaf. It is therefore enough to show that the composite map
$$\kappa:\mathcal{F}(Z_0,Z_w,f_w)\longrightarrow\prod_{v\in\bar{X}^0}i_{\overline{v}}^*\mathcal{F}(Z_v)\longrightarrow \prod_{v\in\bar{X}^0,\,l\in\Lambda_v} i_{\overline{v}}^*\mathcal{F}(T_{v,l})$$
is injective. Let $\alpha,\beta\in\mathcal{F}(Z_0,Z_w,f_w)$ be two sections such that $\kappa(\alpha)=\kappa(\beta)$. For any pair $(v,l)$, we consider
$$\kappa_{v,l}:\mathcal{F}(Z_0,Z_w,f_w)\longrightarrow\prod_{v\in\bar{X}^0,\,l\in\Lambda_v} i_{\overline{v}}^*\mathcal{F}(T_{v,l})
\longrightarrow i_{\overline{v}}^*\mathcal{F}(T_{v,l}).$$
For any $(v,l)$, we have $\kappa_{v,l}(\alpha)=\kappa_{v,l}(\beta)$ and by (\ref{i^*barv})
$$i_{\overline{v}}^*\mathcal{F}(T_{v,l})=\underrightarrow{lim}\mathcal{F}(\mathcal{Y}(K,S,W_{k(v)}\times T_{v,l}))$$
where the direct limit is taken over the category of arrows
$$W_{k(v)}\times T_{v,l}\longrightarrow
i_v^*\mathcal{Y}(K,S,W_{k(v)}\times T_{v,l}).$$
The inclusion $T_{v,l}\subseteq Z_v$ gives a $W_{k(v)}$-equivariant continuous map $$W_{k(v)}\times T_{v,l}\longrightarrow
i_v^*(Z_0,Z_w,f_w)=Z_v.$$
Thus for any pair $(v,l)$, there is an object $\mathcal{Y}(K,S,W_{k(v)}\times T_{v,l})$ and a morphism $$\mathcal{Y}(K,S,W_{k(v)}\times T_{v,l})\longrightarrow (Z_0,Z_w,f_w)$$ in the category $T_{\bar{X}}$ inducing the previous map
$$W_{k(v)}\times T_{v,l}= i_v^*\mathcal{Y}(K,S,W_{k(v)}\times T_{v,l})\longrightarrow
i_v^*(Z_0,Z_w,f_w)=Z_v$$
and such that $\alpha_{\mid(v,l)}=\beta_{\mid(v,l)}$, where $\alpha_{\mid(v,l)}$ (respectively $\beta_{\mid(v,l)}$) denotes the restriction of $\alpha$ (respectively of $\beta$) to $\mathcal{Y}(K,S,W_{k(v)}\times T_{v,l})$. We obtain a local section cover
$$\{\mathcal{Y}(K,S,W_{k(v)}\times T_{v,l})\rightarrow (Z_0,Z_w,f_w),\,v\in\bar{X}^0,\,l\in\Lambda_v)\}$$
in the site $(T_{\bar{X}},\mathcal{J}_{ls})$ such that $\alpha_{\mid(v,l)}=\beta_{\mid(v,l)}$ for any $(v,l)$.
It follows that $\alpha=\beta$ since $\mathcal{F}$ is a sheaf. Hence $\kappa$ is injective and so is the adjunction map (\ref{inj-adj-map}).

\end{proof}
A morphism of topoi $f$ is said to be surjective if its inverse image functor $f^*$ is faithful.
\begin{corollary}
The following morphism is surjective:
$$(i_{v})_{v\in\bar{X}^0}:\,\coprod_{v\in\bar{X}^0}B_{W_{k(v)}}\longrightarrow\bar{X}_W.$$
\end{corollary}
\begin{proof}
The morphism of topoi
$$(i_{\overline{v}})_{v\in\bar{X}^0}:\,\coprod_{v\in\bar{X}^0}\T\longrightarrow\bar{X}_W$$
is surjective since its inverse image is faithful by the previous result. But $(i_{\overline{v}})_{v\in\bar{X}^0}$ factors through $(i_{v})_{v\in\bar{X}^0}$, hence $(i_{v})_{v\in\bar{X}^0}$ is surjective as well.
\end{proof}
\hspace{-0.4cm}\emph{Proof of Theorem \ref{thm-pull-back}.} Since the morphism $i_v$ is an embedding, we have in fact two embeddings of topoi
$$B_{W_{k(v)}}\longrightarrow\bar{X}_W\times_{\bar{X}_{et}}B^{sm}_{G_{k(v)}}\longrightarrow\bar{X}_{W}$$
where the fiber product
$\bar{X}_W\times_{\bar{X}_{et}}B^{sm}_{G_{k(v)}}$ is defined as the inverse image $\gamma^{-1}(B^{sm}_{G_{k(v)}})$ of the
closed sub-topos $B^{sm}_{G_{k(v)}}\hookrightarrow\bar{X}_{et}$ under the morphism $\gamma$ (see \cite{sga4} IV. Corollaire 9.4.3). Therefore $B_{W_{k(v)}}$ is equivalent
to a full subcategory of
$\bar{X}_W\times_{\bar{X}_{et}}B^{sm}_{G_{k(v)}}$. This
fiber product is the closed complement of the open subtopos
$\bar{Y}_W\hookrightarrow\bar{X}_W$ where $\bar{Y}:=\bar{X}-v$ (see
the next section for the definition of $\bar{Y}_W$). In other words, the strictly full subcategory
$\bar{X}_W\times_{\bar{X}_{et}}B^{sm}_{G_{k(v)}}$ of $\bar{X}_W$
consists in objects $\mathcal{G}$ such that $\mathcal{G}\times\gamma^*\bar{Y}$ is the final object of
$\bar{Y}_W$. It follows that
$$i_{v*}\mathcal{F}\times\gamma^*\bar{Y}$$
is the final object of $\bar{Y}_W$, for any object $\mathcal{F}$ of
$B_{W_{k(v)}}$.

We have to prove that $B_{W_{k(v)}}$ is in fact
equivalent to $\bar{X}_W\times_{\bar{X}_{et}}B^{sm}_{G_{k(v)}}$. Let $\mathcal{G}$ be an object of this fiber product, i.e. an object
of $\bar{X}_W$ such that $\mathcal{G}\times\gamma^*\bar{Y}$ is the
final object. Consider the adjunction map
$$\mathcal{G}\longrightarrow i_{v*}i_{v}^*\mathcal{G}.$$
If $w$ is a closed point of $\bar{X}$ such that $w\neq v$, then the
morphism $i_{w}$ factors through $\bar{Y}_W$:
$$i_w:B_{W_{k(w)}}\longrightarrow\bar{Y}_W\longrightarrow\bar{X}_W.$$
We denote by $i_{\bar{Y},w}:B_{W_{k(v)}}\rightarrow\bar{Y}_W$ the
induced map. Hence
$$i_{w}^*\mathcal{G}=i_{\bar{Y},w}^*(\mathcal{G}\times\bar{Y})$$ is the
final object of $B_{W_{k(w)}}$, since $\mathcal{G}\times\bar{Y}$ is
the final object of $\bar{Y}_W$ and $i_{\bar{Y},w}^*$ is left exact.
On the other hand
$$i_{w}^*i_{v*}i_{v}^*\mathcal{G}=i_{\bar{Y},w}^*(i_{v*}i_{v}^*\mathcal{G}\times\bar{Y})$$
is the final object of $B_{W_{k(w)}}$, since
$i_{v*}i_{v}^*\mathcal{G}\times\bar{Y}$ is the final object of
$\bar{Y}_W$. Hence the map
$$i_w^*(\mathcal{G})\longrightarrow i_w^*(i_{v*}i_{v}^*\mathcal{G})$$
is an isomorphism for any closed point $w\neq v$ of $\bar{X}$. Suppose now that
$w=v$. Then the map
$$i_v^*(\mathcal{G})\longrightarrow i_v^*(i_{v*}i_{v}^*\mathcal{G})=(i_v^*i_{v*})i_{v}^*\mathcal{G}=i_{v}^*\mathcal{G}$$
is an isomorphism by Lemma \ref{lemma-iv-embedding}. Hence the morphism
$$i_w^*(\mathcal{G})\longrightarrow
i_w^*(i_{v*}i_{v}^*\mathcal{G})$$ induced by the adjunction map
$\mathcal{G}\rightarrow i_{v*}i_{v}^*\mathcal{G}$ is an isomorphism
for any closed point $w$ of $\bar{X}$. Since the family of functors
$$\{i_w^*:\bar{X}_W\rightarrow B_{W_{k(w)}},\,w\in\bar{X}\}$$ is
conservative, the adjunction map $\mathcal{G}\rightarrow
i_{v*}i_{v}^*\mathcal{G}$ is an isomorphism for any object
$\mathcal{G}$ of $\gamma^{-1}(B^{sm}_{G_{k(v)}})$. Hence any object
of $\gamma^{-1}(B^{sm}_{G_{k(v)}})$ is in the essential image of
$i_{v*}$. This shows that the morphism
$$B_{W_{k(v)}}\longrightarrow\bar{X}_W\times_{\bar{X}_{et}}B^{sm}_{G_{k(v)}}$$
is an equivalence (this is a connected embedding). Theorem
\ref{thm-pull-back} follows.
\end{proof}

We consider the morphism
$$\bar{X}_W=\overline{\Spec(\mathcal{O}_F)}_W\longrightarrow\overline{\Spec(\bz)}_W$$
induced by the map $\overline{\Spec(\mathcal{O}_F)}\rightarrow\overline{\Spec(\bz)}$.
\begin{prop}\label{prop-fiberproduct-makes-sense}
The canonical morphism
$$\delta_{\bar{X}}:\bar{X}_W\longrightarrow \bar{X}_{et}\times_{\overline{\Spec(\bz)}_\et}\overline{\Spec(\bz)}_W$$
is an equivalence.
\end{prop}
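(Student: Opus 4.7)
Write $\mathcal{S}:=\bar{X}_{et}\times_{\overline{\Spec(\bz)}_{et}}\overline{\Spec(\bz)}_W$ and $\delta:=\delta_{\bar{X}}$. The plan is to reduce $\delta$ to a fiber-wise computation at every closed point of $\bar{X}$ by combining Theorem \ref{thm-pull-back} applied to $\bar{X}$ with its instance applied to $\overline{\Spec(\bz)}$ (the case $F=\bq$), and then to descend along the surjective family of closed points established in the corollary that precedes Theorem \ref{thm-pull-back}.

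First I would fix a closed point $v$ of $\bar{X}$ lying over a place $p$ of $\bq$ and pull back $\delta$ along $u_v:B^{sm}_{G_{k(v)}}\hookrightarrow \bar{X}_{et}$. By Theorem \ref{thm-pull-back} for $\bar{X}$ one has $\bar{X}_W\times_{\bar{X}_{et}}B^{sm}_{G_{k(v)}}\simeq B_{W_{k(v)}}$. Associativity of fiber products combined with Theorem \ref{thm-pull-back} for $\overline{\Spec(\bz)}$ (which identifies $\overline{\Spec(\bz)}_W\times_{\overline{\Spec(\bz)}_{et}}B^{sm}_{G_{k(p)}}$ with $B_{W_{k(p)}}$) and the factorization of $B^{sm}_{G_{k(v)}}\to\overline{\Spec(\bz)}_{et}$ through $B^{sm}_{G_{k(p)}}$ give
$$\mathcal{S}\times_{\bar{X}_{et}}B^{sm}_{G_{k(v)}}\;\simeq\; B^{sm}_{G_{k(v)}}\times_{B^{sm}_{G_{k(p)}}}B_{W_{k(p)}}.$$
So the base-change of $\delta$ along $u_v$ is a canonical morphism
$$\delta_v:B_{W_{k(v)}}\longrightarrow B^{sm}_{G_{k(v)}}\times_{B^{sm}_{G_{k(p)}}}B_{W_{k(p)}}.$$

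The second step is to show that every $\delta_v$ is an equivalence. For $v$ archimedean (so $p=\infty$) both Galois groups are trivial and both sides collapse to $B_{\br}=B_{W_{k(v)}}$. For ultrametric $v/p$, $G_{k(v)}\hookrightarrow G_{k(p)}$ is an open subgroup of finite index $n=[k(v):k(p)]$, so $B^{sm}_{G_{k(v)}}\simeq B^{sm}_{G_{k(p)}}/y(G_{k(p)}/G_{k(v)})$ is a localization. The pull-back square \eqref{pull-back-localization} then identifies the target of $\delta_v$ with $B_{W_{k(p)}}/\iota_p^*\,y(G_{k(p)}/G_{k(v)})$, where $\iota_p:B_{W_{k(p)}}\to B^{sm}_{G_{k(p)}}$ is the canonical morphism; since $W_{k(v)}=W_{k(p)}\cap G_{k(v)}$ inside $G_{k(p)}$, one has $\iota_p^*\,y(G_{k(p)}/G_{k(v)})=y(W_{k(p)}/W_{k(v)})$. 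The final identification $B_{W_{k(p)}}/y(W_{k(p)}/W_{k(v)})\simeq B_{W_{k(v)}}$ is the standard equivalence between restriction to a finite-index subgroup and the corresponding localization of the classifying topos; under these identifications $\delta_v$ becomes the identity.

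The third step is descent from the $\delta_v$ back to $\delta$ itself. The family $\coprod_{v\in\bar{X}^0}i_v:\coprod_v B_{W_{k(v)}}\to\bar{X}_W$ is surjective, so the functors $\{i_v^*\}$ are jointly conservative. Since $i_v^*\delta^*$ factors as $\delta_v^*$ composed with the pull-back from $\mathcal{S}$ to $\mathcal{S}\times_{\bar{X}_{et}}B^{sm}_{G_{k(v)}}$, and every $\delta_v$ is an equivalence, full faithfulness of $\delta^*$ follows at once, while essential surjectivity is obtained by gluing the local preimages under the $\delta_v^{-1}$. The main technical obstacle is precisely this descent step: since $\{u_v\}_{v\in\bar{X}^0}$ is \emph{not} surjective in $\bar{X}_{et}$ (the generic point is missing), Lemma \ref{lem-descent} cannot be applied to any single covering object of $\mathcal{S}$ built from the $u_v$. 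One therefore has to argue via conservativeness of $\{i_v^*\}$ on $\bar{X}_W$ directly, the key remaining input being that the compatible family $\{\delta_v\}$ glues uniquely — a fact that ultimately reflects the compatibility of the Artin-Verdier open-closed decomposition of $\bar{X}_{et}$ with the analogous decomposition of $\overline{\Spec(\bz)}_W$.
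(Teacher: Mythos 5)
Your fiberwise computation at the closed points (steps one and two) is correct and is essentially what the paper does in step (i) of its proof for the finite ``bad'' closed set $Y$; in particular the reduction $B^{sm}_{G_{k(v)}}\times_{B^{sm}_{G_{k(p)}}}B_{W_{k(p)}}\simeq B_{W_{k(v)}}$ via the localization pullback square is exactly how the paper treats the fibers over the non-\'etale locus. However, your third step --- descent from the $\delta_v$ back to $\delta$ --- contains a genuine gap that your own last paragraph concedes but does not fill. Conservativity of the family $\{i_v^*\}_{v\in\bar{X}^0}$ on $\bar{X}_W$ is a statement about \emph{reflecting isomorphisms of morphisms in $\bar{X}_W$}; it does not by itself yield full faithfulness of $\delta^*$ (that claim would need the composites $\mathrm{i}_v^*=i_v^*\delta^*$ to be jointly faithful \emph{on the target} $\mathcal{S}$, which fails because the $\mathrm{i}_v$ are closed embeddings missing the generic point), and it says nothing at all about essential surjectivity. ``Gluing the local preimages under $\delta_v^{-1}$'' is precisely what cannot be done from closed points alone, as you yourself note when observing that Lemma \ref{lem-descent} does not apply.

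What is actually needed, and what the paper supplies, is a two-piece Artin gluing argument along a decomposition of $\bar{X}$ that \emph{does} cover: the open subscheme $\bar{X}'$ on which $\bar{X}\to\overline{\Spec(\bz)}$ is \'etale, and the finite reduced closed complement $Y$. Over $Y$ one argues pointwise just as you do; over the open piece the equivalence $\bar{X}'_{et}\times_{\overline{\Spec(\bz)}_\et}\overline{\Spec(\bz)}_W\simeq \overline{\Spec(\bz)}_W/\gamma^*\bar{X}'\simeq \bar{X}'_W$ is obtained by exploiting \'etaleness to turn the fiber product into a localization. The genuine technical content --- what your ``the compatible family $\{\delta_v\}$ glues uniquely'' hides --- is then the verification that the two gluing functors $\mathrm{i}^*\mathrm{j}_*$ and $i^*j_*$ for the two Artin gluings are canonically isomorphic, and this is established in the paper by an explicit cofinality computation comparing presheaf colimits on the sites $T_{\bar{X}}$ and $\mathcal{C}_{\bar{X}}$ (the identity (\ref{key}) in the paper). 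Without that verification, or some substitute for it, the proposed proof does not go through.
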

\begin{proof}
Let $\bar{X}'$ be the open subscheme of $\bar{X}$ consisting of the points of $\bar{X}$ where the map $\bar{X}\rightarrow \overline{\Spec(\bz)}$ is \'etale. Let $Y\rightarrow\bar{X}$ be the complementary reduced closed subscheme.\\

\textbf{(i) The morphism $\delta_{\bar{X}}$ is an equivalence over $\bar{X}'$ and over $Y$.}
The canonical morphism
$$\bar{X}'_W\rightarrow \bar{X}'_{et}\times_{\overline{\Spec(\bz)}_\et}\overline{\Spec(\bz)}_W$$
is an equivalence. Indeed, the morphism $\bar{X}'\rightarrow \overline{\Spec(\bz)}$ is \'etale hence we have
\begin{align*}
\bar{X'}_{et}\times_{\overline{\Spec(\bz)}_\et}\overline{\Spec(\bz)}_W
&\cong(\overline{\Spec(\bz)}_\et/\bar{X}')\times_{\overline{\Spec(\bz)}_\et}\overline{\Spec(\bz)}_W\\
&\cong\overline{\Spec(\bz)}_\et\times_{\overline{\Spec(\bz)}_\et}(\overline{\Spec(\bz)}_W/\gamma^*\bar{X}')\\
&\cong\overline{\Spec(\bz)}_W/\gamma^*\bar{X}'\\
&\cong\bar{X}'_W
\end{align*}
Let $Y'$ be the image of $Y$ in $\overline{\Spec(\bz)}$, such that $Y'\times_{\overline{\Spec(\bz)}}\Spec(\bz)$ is given with a structure of reduced closed subscheme of $\Spec(\bz)$. The morphism of \'etale topoi
$Y_{et}\rightarrow\overline{\Spec(\bz)}_\et$ factors through $Y'_{et}$. It follows from Theorem \ref{thm-pull-back}
that one has
\begin{align*}
Y_{et}\times_{\overline{\Spec(\bz)}_\et}\overline{\Spec(\bz)}_W&\cong Y_{et}\times_{Y'_\et}Y'_{et}\times_{\overline{\Spec(\bz)}_\et}\overline{\Spec(\bz)}_W\\
&\cong
Y_{et}\times_{Y'_\et}Y'_{W}.
\end{align*}
We have the following equivalences $Y_{et}\cong\coprod_{v\in Y}B^{sm}_{G_{k(v)}}$,
$Y'_\et\cong\coprod_{p\in Y'}B^{sm}_{G_{k(p)}}$ and $Y'_W:=\coprod_{p\in Y'}B_{W_{k(p)}}$. We obtain
\begin{align*}
Y_{et}\times_{\overline{\Spec(\bz)}_\et}\overline{\Spec(\bz)}_W&\cong Y_{et}\times_{Y'_\et}Y'_W\\
&\cong
\coprod_{v\in Y}B^{sm}_{G_{k(v)}}\times_{(\coprod_{p\in Y'}B^{sm}_{G_{k(p)}})}\coprod_{p\in Y'}B_{W_{k(p)}}\\
&\cong
\coprod_{v\in Y}(B^{sm}_{G_{k(v)}}\times_{B^{sm}_{G_{k(p)}}}B_{W_{k(p)}})\\
&\cong
\coprod_{v\in Y}B_{W_{k(v)}}=Y_{W}
\end{align*}
In view of the pull-back squarre (\ref{pull-back-localization}), the last equivalence above follows from the fact that
$$B^{sm}_{G_{k(v)}}\cong  B^{sm}_{G_{k(p)}}/(G_{k(p)}/G_{k(v)}) \longrightarrow B^{sm}_{G_{k(p)}}$$
is a localization morphism.

\textbf{(ii) The natural transformation $t$ between the glueing functors.}
The previous step (i) shows that there is an open-closed decomposition of topoi
$$\mathrm{j}:\bar{X}'_W\rightarrow\bar{X}_{et}\times_{\overline{\Spec(\bz)}_\et}\overline{\Spec(\bz)}_W\leftarrow Y_W:\mathrm{i}$$
By Theorem \ref{thm-pull-back}, we have another open-closed decomposition
$$j:\bar{X}'_W\rightarrow\bar{X}_{W}\leftarrow Y_W:i$$
The glueing functors associated to these open-closed decompositions are given by
$\mathrm{i}^*\mathrm{j}_*$ and $i^*j_*$. The map  $\bar{X}_W\rightarrow \bar{X}_{et}\times_{\overline{\Spec(\bz)}_\et}\overline{\Spec(\bz)}_W$ induces a natural transformation
\begin{equation}\label{nat-transformation}
t:\mathrm{i}^*\mathrm{j}_*\longrightarrow i^*j_*.
\end{equation}
Indeed, the following commutative diagram
\begin{equation}\begin{CD}\label{diagram-for-fiberproduct}
@.\bar{X}'_W @>j>> \bar{X}_W @<i<<  Y_W \\
@. @VVId V @VV\delta_{\bar{X}}V @VVId V @.\\
@.\bar{X}'_W @>\mathrm{j}>> \bar{X}_{et}\times_{\overline{\Spec(\bz)}_\et}\overline{\Spec(\bz)}_W @ <\mathrm{i}<< Y_W @. {}
\end{CD}\end{equation}
gives $\delta_{\bar{X}}\circ i=\mathrm{i}$ and $\delta_{\bar{X}}\circ j=\mathrm{j}$. Then the natural transformation (\ref{nat-transformation}) is induced by the adjunction transformation
$\delta_{\bar{X}}^*\delta_{\bar{X}*}\rightarrow Id$ as follows :
$$\mathrm{i}^*\mathrm{j}_*\cong i^*\delta_{\bar{X}}^*\delta_{\bar{X}*}j_*\longrightarrow i^*j_*.$$\\

\textbf{(iii) The glueing functors are naturally isomorphic.} Since the disjoint sum topos
$Y_{W}=\coprod_{v\in S}B_{W_{k(v)}}$ is given by the direct product of the categories $B_{W_{k(v)}}$, it is enough to show that the natural transformation
\begin{equation}\label{nat-transformation-at-v}
\mathrm{i}_{v}^*\mathrm{j}_*\longrightarrow i_{v}^*j_*
\end{equation}
is an isomorphism for any $v\in Y$.

Let $\mathcal{F}$ be an object of $\bar{X}'_W$. The sheaf $i_v^*\circ j_*\mathcal{F}$ (respectively $\mathrm{i}_v^*\circ \mathrm{j}_*\mathcal{F}$) is the sheaf on $(W_{k(v)}\times
Top,\mathcal{J}_{ls})$
associated to the presheaf $i_v^p\circ j_*\mathcal{F}$ (respectively to the presheaf $\mathrm{i}_v^p\circ \mathrm{j}_*\mathcal{F}$). Recall that $W_{k(v)}\times Top$ is a topologically generating subcategory of $(B_{Top}W_{k(v)},\mathcal{J}_{ls})$. It is therefore enough to show that the natural map
\begin{equation}\label{iso-presheaves}
\mathrm{i}_v^p\circ \mathrm{j}_*\mathcal{F}\rightarrow i_v^p\circ j_*\mathcal{F},
\end{equation} of
presheaves on $W_{k(v)}\times
Top$, is an isomorphism.

On the one hand, for any object $\mathcal{F}$ of $\bar{X}'_W$ we have
\begin{align}\label{first-ind-lim}
i_{v}^pj_*\mathcal{F}(W_{k(v)}\times T)&=\displaystyle{\lim_{W_{k(v)}\times T\rightarrow
i_v^*(Y_0,Y_w,f_w)}}\mathcal{F}((Y_0,Y_w,f_w)\times\bar{X}')\\
\label{presk-first-ind-lim}&= \underrightarrow{lim}_{_{K/F,S}}\mathcal{F}(\mathcal{Y}(K/F,S,W_{k(v)}\times T)\times \bar{X})
\end{align}
where (\ref{presk-first-ind-lim}) is given by (\ref{i^p_v}). See the proof of Lemma \ref{lemma-iv-embedding} for the definition of $\mathcal{Y}(K/F,S,W_{k(v)}\times T)$. On the other hand, for any object $\mathcal{F}$ of $\bar{X}'_W$ we have
\begin{align*}
\mathrm{i}_{v}^p\mathrm{j}_*\mathcal{F}(W_{k(v)}\times T)&=\underrightarrow{lim}\,\mathrm{j}_*\mathcal{F}((Z_0,Z_w,f_w)\rightarrow V\leftarrow U)\\
&=\underrightarrow{lim}\,\mathcal{F}((Z_0,Z_w,f_w)\times_{V}U\times\bar{X}')
\end{align*}
where the direct limit is taken over the category of arrows
\begin{equation}\label{one-arrow}
W_{k(v)}\times T\rightarrow
\mathrm{i}_v^*((Z_0,Z_w,f_w)\rightarrow V\leftarrow U)=Z_p\times_{V_p}U_v.
\end{equation}
Here, $((Z_0,Z_w,f_w)\rightarrow V\leftarrow U)$ is an object of the fiber product site $\mathcal{C}_{\bar{X}}$, i.e. $(Z_0,Z_w,f_w)$, $V$ and $U$ are objects of the sites $T_{\overline{\Spec(\bz)}}$, $Et_{\overline{\Spec(\bz)}}$ and $Et_{\bar{X}}$ respectively. Then $(Z_0,Z_w,f_w)\times_{V}U$ is seen as an object of $T_{\bar{X}}$. Finally, the place $p$ is defined as the image of $v\in\bar{X}$ in $\overline{\Spec(\bz)}$. We refer to \cite{illusie09} and section \ref{section-cohomology-XW} for the definition of the site $\mathcal{C}_{\bar{X}}$.

There is a natural functor from the category of arrows of the form (\ref{one-arrow}) to the category of arrows $(W_{k(v)}\times T)\rightarrow
i_v^*(Y_0,Y_w,f_w)$ sending $((Z_0,Z_w,f_w)\rightarrow V\leftarrow V')$ to $(Z_0,Z_w,f_w)\times_{V}V'$. This provides us with the natural map
\begin{equation}\label{one-iso}
\mathrm{i}_{v}^p\mathrm{j}_*\mathcal{F}(W_{k(v)}\times T)\longrightarrow i_{v}^pj_*\mathcal{F}(W_{k(v)}\times T).
\end{equation}
In order to show that (\ref{one-iso}) is an isomorphism, we have to show that the system
$$W_{k(v)}\times T\longrightarrow
i_v^*((Z_0,Z_w,f_w)\times_V U),$$
where  $(Z_0,Z_w,f_w)\rightarrow V\leftarrow U)$ runs over the class of objects in $C_{\bar{X}}$, is cofinal in the category of arrows $\mathcal{A}_{v,T}$ :
$$W_{k(v)}\times T\rightarrow i_v^*(Y_0,Y_w,f_w).$$
We know that the system given by the $\mathcal{Y}(K,S,W_{k(v)}\times T)$'s is cofinal in $\mathcal{A}_{v,T}$. Here $v\in S$ and $K/F$ is unramified outside $S$. One can choose $S$ large enough so that $S$ contains $Y$. Let $S'$ be the image of $S$ in $\overline{\Spec(\bz)}$. Then $K/\bq$ is unramified outside $S'$. If we denote by $L/\mathbb{Q}$ the Galois closure of $K/\mathbb{Q}$ (in the fixed algebraic closure $\overline{\bq}/\bq$), then $L/\mathbb{Q}$ remains unramified outside $S'$, and $L/F$ is Galois and unramified outside $S$. Moreover, we have a morphism
$$\mathcal{Y}(L/F,S,W_{k(v)}\times T)\rightarrow \mathcal{Y}(K/F,S,W_{k(v)}\times T)\mbox{ in } \mathcal{A}_{v,T}.$$
Hence one can restrict our attention to the objects of $\mathcal{A}_{v,T}$ of the form
$$W_{k(v)}\times T\rightarrow i_v^*\mathcal{Y}(K/F,S,W_{k(v)}\times T)$$
where $K/\bq$ is a Galois extension unramified outside $S'$. We denote again by $p$ the image of $v$ in $\overline{\Spec(\bz)}$ and we consider the object
$$\mathcal{Y}(K/\bq,S',W_{k(p)}\times T)\rightarrow V\leftarrow U\mbox{ in }\mathcal{C}_{\bar{X}},$$
where $V$ and $U$ are defined as follows. Using Proposition \ref{prop-morph-sites-etale-loc-sections}, the \'etale $\overline{\Spec(\bz)}$-scheme $V$ is given by the $G_{\bq}$-set $G_{K/\bq}/I_p$, with  no point over $S'-\{p\}$, and exactly one point over the place $p$ corresponding to the distinguished $G_{\bq_p}$-orbit of $G_{K/\bq}/I_p$ on which the inertia group $I_p$ acts trivially. The \'etale $\bar{X}$-scheme $U$ is given by the $G_{F}$-set $G_{K/F}/I_v$, with  no point over $S-\{v\}$, and exactly one point over the place $v$ corresponding to the distinguished $G_{F_v}$-orbit of $G_{K/F}/I_v$ on which the inertia group $I_v$ acts trivially. Finally, we enlarge $S$ so that $S$ is the inverse image of $S'$ (which is the image of $S$) along the map $\bar{X}\rightarrow\overline{\Spec(\bz)}$. Then the map $U\rightarrow V$ is well defined.

Assume that one has an identification
\begin{equation}\label{key}
\mathcal{Y}(K/F,S,W_{k(v)}\times T)=\mathcal{Y}(K/\bq,S',W_{k(p)}\times T)\times_V U
\end{equation}
in the category $T_{\bar{X}}$. It would follow that the system of objects
$$W_{k(v)}\times T\rightarrow
i_v^*((Z_0,Z_w,f_w)\times_V U)$$
is cofinal in the category $\mathcal{A}_{v,T}$. The map (\ref{one-iso}) would be an isomorphism for any $T$ and any $\mathcal{F}$, hence (\ref{iso-presheaves}) would be an isomorphism of presheaves for any $\mathcal{F}$. This would show that the transformation (\ref{nat-transformation-at-v}) is an isomorphism. Hence the transformation (\ref{nat-transformation}) would be an isomorphism as well.

It is therefore enough to show (\ref{key}). One has
$$\mathcal{Y}(K/F,S,W_{k(v)}\times T)=\mathcal{Y}(K/F,S,W_{k(v)})\times (T,T,Id_T)$$  and
$$\mathcal{Y}(K/\bq,S',W_{k(p)}\times T)=\mathcal{Y}(K/\bq,S',W_{k(p)})\times (T,T,Id_T)$$ in the category $T_{\bar{X}}$, hence one can assume that $T=*$ is the point.
We have a map in $T_{\bar{X}}$
\begin{equation}\label{lastmapfor(v)}
\mathcal{Y}(K/F,S,W_{k(v)})\longrightarrow\mathcal{Y}(K/\bq,S',W_{k(p)})\times_V U.
\end{equation}
and we need to show that it is an isomorphism. Let $w$ be a point of $\bar{X}$. If $w\in S$ and $w\neq v$, then the $w$-component of both the right hand side and the left hand side in $(\ref{lastmapfor(v)})$ are empty. Assume that $w$ is not in $S$. Then the $w$-components of $\mathcal{Y}(K/F,S,W_{k(v)})$, $\mathcal{Y}(K/\bq,S',W_{k(p)})$, $V$ and $U$ are the $W_{k(w)}$-spaces $W_{K/F,S}/W_{F_v}^1$, $W_{K/\bq,S'}/W_{\bq_p}^1$, $G_{K/\bq}/I_p$ and $G_{K/F}/I_v$ respectively. But we have an $W_{k(w)}$-equivariant homemorphism
$$W_{K/F,S}/W_{F_v}^1\cong (W_{K/\bq,S'}/W_{\bq_p}^1)\times_{(G_{K/\bq}/I_p)}(G_{K/F}/I_v).$$
Moreover, the $v$-component of $\mathcal{Y}(K/F,S,W_{k(v)})$, $\mathcal{Y}(K/\bq,S',W_{k(p)})$, $V$ and $U$ are the $W_{k(v)}$-spaces
$W_{k(v)}$, $W_{k(p)}$, $G_{k(p)}/G_{k(u)}$ and $G_{k(v)}/G_{k(u)}$, where $u$ the unique point of $U$ lying over $v$. But we have
an $W_{k(v)}$-equivariant homemorphism
$$W_{k(v)}\cong W_{k(p)}\times_{(G_{k(p)}/G_{k(u)})}(G_{k(v)}/G_{k(u)}).$$
This shows that (\ref{lastmapfor(v)}) is an isomorphism in $T_{\bar{X}}$, and (\ref{key}) follows.\\

\textbf{(v) The morphism $\delta_{\bar{X}}$ is an equivalence.} We consider the glued topoi $(Y_W,\bar{X}'_W,{i}^*{j}_*)$ and $(Y_W,\bar{X}'_W,\mathrm{i}^*\mathrm{j}_*)$. Recall that an object of $(Y_W,\bar{X}'_W,{i}^*{j}_*)$ is a triple $(E,F,\sigma)$ with $E\in Y_W$, $F\in\bar{X}'_W$ and $\sigma:E\rightarrow{i}^*{j}_*F$ (see \cite{sga4} IV.9.5.3). There is a canonical functor
$$
\appl{\bar{X}_W}{(Y_W,\bar{X}'_W,{i}^*{j}_*)}{\mathcal{F}}{(i^*\mathcal{F},j^*\mathcal{F},i^*\mathcal{F}\rightarrow i^*j_*j^*\mathcal{F})}
$$
where $i^*\mathcal{F}\rightarrow i^*j_*j^*\mathcal{F}$ is given by adjunction. By (\cite{sga4} IV Theorem 9.5.4), this functor is an equivalence, and the same is true for the canonical functor
$$\bar{X}_{et}\times_{\overline{\Spec(\bz)}_\et}\overline{\Spec(\bz)}_W\longrightarrow(Y_W,\bar{X}'_W,\mathrm{i}^*\mathrm{j}_*)$$
Under these identifications, the inverse image functor $\delta^*_{\bar{X}}$ is given by (see diagram (\ref{diagram-for-fiberproduct}))
$$
\fonc{\delta^*_{\bar{X}}} {(Y_W,\bar{X}'_W,\mathrm{i}^*\mathrm{j}_*)} {(Y_W,\bar{X}'_W,{i}^*{j}_*)}{(E,F,\tau)}{(E,F,t_F\circ\tau)}
$$
Here $t$ is the transformation defined in step (ii), and $t_F\circ\tau$ denotes the following composition :
$$t_F\circ\tau: E\rightarrow\mathrm{i}^*\mathrm{j}_*F\rightarrow{i}^*{j}_*F.$$ Since $t$ is an isomorphism of functors, the inverse image functor $\delta_{\bar{X}}^*$ is an equivalence, hence so is the morphism $\delta_{\bar{X}}$.
\end{proof}

\section{The definition of $\overline{\X}_W$}\label{section-xwdef}

\subsection{}
Let $\X$ be a scheme separated and of finite type over $\Spec(\bz)$. Recall the defining site $Et_{\overline{\X}}$ of the Artin-Verdier \'etale topos $\overline{\X}_{\et}$ from section \ref{sect-AVetaletopos}. For any object $\overline{\mathcal{U}}$ of $Et_{\overline{\X}}$ one has the induced topos
\[ \overline{\mathcal{U}}_\et=\overline{\X}_{\et}/\overline{\mathcal{U}}\cong \widetilde{(Et_{\overline{\X}}/\overline{\mathcal{U}},\mathcal{J}_{ind})}.\]

\begin{definition} For any object $\overline{\mathcal{U}}$ of $Et_{\overline{\X}}$ we define the Weil-\'etale topos of $\overline{\mathcal{U}}$ as the fiber product
\[\overline{\mathcal{U}}_W:=\overline{\mathcal{U}}_\et\times_{\overline{\Spec(\bz)}_\et}\overline{\Spec(\bz)}_W.\]
\label{xwdef}\end{definition}
This topos is defined by a universal property in the 2-category of
topoi. As a consequence, it is well defined up to a canonical
equivalence. We point out two special cases. If $\overline{\mathcal{U}}=(\X,\X_\infty)=\overline{\X}$ is the final object we obtain the definition of $\overline{\X}_W$ and if $\overline{\mathcal{U}}=(\X,\emptyset)$ we obtain the definition of $\X_W$. The topos $\X_W$ will play no role in this paper but $\overline{\X}_W$ is our central object of study in case $\X$ is proper and regular.

Note also that for $\X=\Spec(\co_F)$ Definition \ref{xwdef} is consistent with Definition \ref{ofwdef} by Proposition \ref{prop-fiberproduct-makes-sense}.

\begin{prop}
The first projection yields a canonical morphism
$$\gamma_{\overline{\X}}:\overline{\X}_W\longrightarrow\overline{\X}_\et.$$
\end{prop}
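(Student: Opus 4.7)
The statement is essentially a direct consequence of the construction of fiber products of topoi recalled in Section \ref{prem}. My plan is as follows.

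The fiber product $\overline{\X}_W := \overline{\X}_\et \times_{\overline{\Spec(\bz)}_\et} \overline{\Spec(\bz)}_W$ is defined in the 2-category of topoi by the universal property stated in Section \ref{prem}. By that universal property (applied to $\mathcal{G} = \overline{\X}_W$ together with the identity triple), any such fiber product is equipped with two canonical projection morphisms
\[ p_1:\overline{\X}_W\longrightarrow\overline{\X}_\et,\qquad p_2:\overline{\X}_W\longrightarrow\overline{\Spec(\bz)}_W, \]
together with a canonical isomorphism $\alpha: f\circ p_1 \cong g\circ p_2$, where $f:\overline{\X}_\et\to\overline{\Spec(\bz)}_\et$ is the structural morphism induced by $\X\to\Spec(\bz)$ (together with the map of Arakelov compactifications at infinity) and $g:\overline{\Spec(\bz)}_W\to\overline{\Spec(\bz)}_\et$ is the morphism introduced in Section \ref{wdef} (via Proposition \ref{prop-fiberproduct-makes-sense} applied to $F=\bq$, or directly from the morphism of sites of Proposition \ref{prop-morph-sites-etale-loc-sections}).

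The proof consists simply in defining $\gamma_{\overline{\X}} := p_1$. Since the existence of such fiber products in the 2-category of topoi is known (\cite{illusie09}), and since they are unique up to canonical equivalence, the morphism $\gamma_{\overline{\X}}$ is canonical. There is essentially no obstacle: the only content is to observe that the structural morphism $f:\overline{\X}_\et\to\overline{\Spec(\bz)}_\et$ is well defined, which was recorded already in Corollary \ref{pull-ba}, so that the fiber product indeed makes sense. Everything else is formal from the universal property.
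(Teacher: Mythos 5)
Your argument is correct and is exactly what the paper has in mind: the paper states this proposition without proof because it is immediate from Definition \ref{xwdef}, since a fiber product in the 2-category of topoi comes equipped with canonical projection morphisms, and $\gamma_{\overline{\X}}$ is simply the first of these.
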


\begin{prop}
There is a canonical morphism
$$\mathfrak{f}_{\overline{\X}}:\overline{\X}_W\longrightarrow B_{\mathbb{R}}.$$
\end{prop}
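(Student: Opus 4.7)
The plan is to exhibit $\mathfrak{f}_{\overline{\X}}$ as a composition of two morphisms that are already available. By Definition \ref{xwdef}, the Weil-\'etale topos
\[ \overline{\X}_W = \overline{\X}_\et\times_{\overline{\Spec(\bz)}_\et}\overline{\Spec(\bz)}_W \]
comes equipped with a canonical second projection
\[ p_2:\overline{\X}_W\longrightarrow \overline{\Spec(\bz)}_W, \]
which is part of the universal data of the fiber product in the 2-category of topoi. This projection is canonical up to the canonical equivalence determined by the universal property.

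Second, Proposition \ref{prop-fiberproduct-makes-sense} identifies $\overline{\Spec(\bz)}_W$ (which in the notation of section \ref{wdef} applied to $F=\bq$ is $\bar X_W$ for $\bar X=\overline{\Spec(\bz)}$) with the Weil-\'etale topos constructed directly from the site $(T_{\bar X},\mathcal{J}_{ls})$. For that topos, Proposition \ref{map-to-BR} produces, via the morphism of left exact sites
\[ \mathfrak{f}^*:(B_{Top}\mathbb{R},\mathcal{J}_{ls})\longrightarrow (T_{\overline{\Spec(\bz)}},\mathcal{J}_{ls}),\qquad Z\longmapsto (Z,Z,Id_Z), \]
a canonical morphism of topoi
\[ \mathfrak{f}:\overline{\Spec(\bz)}_W\longrightarrow B_{\mathbb{R}}. \]

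I would then simply set
\[ \mathfrak{f}_{\overline{\X}}:=\mathfrak{f}\circ p_2:\overline{\X}_W\longrightarrow\overline{\Spec(\bz)}_W\longrightarrow B_{\mathbb{R}}. \]
Since both $p_2$ and $\mathfrak{f}$ are canonical (the first by the universal property of the fiber product, the second by Proposition \ref{map-to-BR}), so is the composition. There is no real obstacle here: everything reduces to combining the canonical projection out of the fiber product with the previously constructed map on the base $\overline{\Spec(\bz)}_W$, and no additional choices enter. The only point worth flagging is that one is implicitly using the identification of $\overline{\Spec(\bz)}_W$ afforded by Proposition \ref{prop-fiberproduct-makes-sense} to make sense of $\mathfrak{f}$ in this context, but this identification is itself canonical.
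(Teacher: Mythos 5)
Your proof is correct and matches the paper's proof essentially verbatim: the paper also defines $\mathfrak{f}_{\overline{\X}}$ as the composition of the projection $\overline{\X}_W\to\overline{\Spec(\bz)}_W$ (from the fiber product) with the morphism $\mathfrak{f}:\overline{\Spec(\bz)}_W\to B_{\mathbb{R}}$ of Proposition \ref{map-to-BR}. One small remark: since $\overline{\Spec(\bz)}_W$ in Definition \ref{xwdef} already denotes the topos of Definition \ref{ofwdef} (built from the site $(T_{\overline{\Spec(\bz)}},\mathcal{J}_{ls})$), Proposition \ref{map-to-BR} applies to it directly and the appeal to Proposition \ref{prop-fiberproduct-makes-sense} at the end is unnecessary, though harmless.
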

\begin{proof}
The morphism $\mathfrak{f}_{\overline{\X}}$ is defined as the composition
$$\overline{\X}_W\longrightarrow \overline{\Spec(\bz)}_W\longrightarrow B_{\mathbb{R}}$$
where the first arrow is the projection and the second is the
morphism of Proposition \ref{map-to-BR}.
\end{proof}
The structure of the topos $\overline{\X}_W$
over any closed point of $\overline{\Spec(\bz)}$ is made explicit below. Note that $\X\otimes_{\mathbb{Z}}\mathbb{F}_p$ is not assumed to be regular.
\begin{prop}\label{prop-pullback-at-p}
Let $\Spec(\mathbb{F}_p)$ be a closed point of $\Spec(\bz)$. Then
$$\overline{\X}_W\times_{\overline{\Spec(\bz)}_\et}{\Spec(\mathbb{F}_p)_\et}\cong (\X\otimes_{\mathbb{Z}}\mathbb{F}_p)_W\cong (\X\otimes_{\mathbb{Z}}\mathbb{F}_p)^{sm}_W\times\mathcal{T}\\
$$
where $(\X\otimes_{\mathbb{Z}}\mathbb{F}_p)_W$ denotes the big Weil-\'etale topos of $\X\otimes_{\mathbb{Z}}\mathbb{F}_p$.
\end{prop}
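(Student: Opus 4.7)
The plan is to prove both equivalences by purely formal manipulations of 2-fiber products, using the three inputs already established earlier in the paper: the definition $\overline{\X}_W := \overline{\X}_\et\times_{\overline{\Spec(\bz)}_\et}\overline{\Spec(\bz)}_W$, the pullback square of Proposition~\ref{pull-bu} at the closed point $\Spec(\mathbb{F}_p)\hookrightarrow\Spec(\bz)$, Theorem~\ref{thm-pull-back} applied to the closed point $p$ of $\overline{\Spec(\bz)}$, and Corollary~\ref{cor-fiber-product} for the passage from the big fiber product to $(\cdot)^{sm}_W\times\T$. There is no real obstacle here; the only point of care is keeping track of how the morphisms to $\overline{\Spec(\bz)}_\et$ factor.

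First I would unwind the left-hand side using associativity of 2-fiber products:
\begin{align*}
\overline{\X}_W\times_{\overline{\Spec(\bz)}_\et}\Spec(\mathbb{F}_p)_\et
&\cong \overline{\X}_\et\times_{\overline{\Spec(\bz)}_\et}\overline{\Spec(\bz)}_W\times_{\overline{\Spec(\bz)}_\et}\Spec(\mathbb{F}_p)_\et\\
&\cong \overline{\X}_\et\times_{\overline{\Spec(\bz)}_\et}\bigl(\overline{\Spec(\bz)}_W\times_{\overline{\Spec(\bz)}_\et}\Spec(\mathbb{F}_p)_\et\bigr).
\end{align*}
Then I would apply Theorem~\ref{thm-pull-back} to $\bar{X}=\overline{\Spec(\bz)}$ at the closed point $v=p$ (where $k(v)=\mathbb{F}_p$) to identify the inner fiber product with $B_{W_{\mathbb{F}_p}}$. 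The resulting structure morphism $B_{W_{\mathbb{F}_p}}\to\overline{\Spec(\bz)}_\et$ factors through $\alpha_p:B_{W_{\mathbb{F}_p}}\to B^{sm}_{G_{\mathbb{F}_p}}=\Spec(\mathbb{F}_p)_\et$.

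Next I would insert this factorization into the remaining fiber product and split it using associativity once more:
\[
\overline{\X}_\et\times_{\overline{\Spec(\bz)}_\et}B_{W_{\mathbb{F}_p}}
\cong \bigl(\overline{\X}_\et\times_{\overline{\Spec(\bz)}_\et}\Spec(\mathbb{F}_p)_\et\bigr)\times_{\Spec(\mathbb{F}_p)_\et}B_{W_{\mathbb{F}_p}}.
\]
Proposition~\ref{pull-bu} identifies the first factor with $(\X\otimes_{\bz}\mathbb{F}_p)_\et$, so this becomes
\[
(\X\otimes_{\bz}\mathbb{F}_p)_\et\times_{B^{sm}_{G_{\mathbb{F}_p}}}B_{W_{\mathbb{F}_p}},
\]
which is exactly the defining fiber product of the big Weil-\'etale topos $(\X\otimes_{\bz}\mathbb{F}_p)_W$ from section~\ref{fibre}. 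This proves the first claimed equivalence.

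Finally, for the second equivalence I would simply invoke Corollary~\ref{cor-fiber-product} applied to $Y=\X\otimes_{\bz}\mathbb{F}_p$ over $k=\mathbb{F}_p$, which gives
\[
(\X\otimes_{\bz}\mathbb{F}_p)_W=(\X\otimes_{\bz}\mathbb{F}_p)_\et\times_{B^{sm}_{G_{\mathbb{F}_p}}}B_{W_{\mathbb{F}_p}}\cong(\X\otimes_{\bz}\mathbb{F}_p)^{sm}_W\times\T,
\]
completing the proof. The argument is entirely formal; the substantive content lies in the earlier results being cited, in particular Theorem~\ref{thm-pull-back}, which supplies the local description of $\overline{\Spec(\bz)}_W$ over the closed point $p$.
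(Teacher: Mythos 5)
Your proof is correct and follows essentially the same chain of equivalences as the paper's own argument: unwind the definition of $\overline{\X}_W$, apply Theorem~\ref{thm-pull-back} at the closed point $p$ to identify $\overline{\Spec(\bz)}_W\times_{\overline{\Spec(\bz)}_\et}\Spec(\mathbb{F}_p)_\et$ with $B_{W_{\mathbb{F}_p}}$, regroup using the factorization through $\Spec(\mathbb{F}_p)_\et$, apply Proposition~\ref{pull-bu}, and finish with Corollary~\ref{cor-fiber-product}. The only difference is that you spell out the associativity/regrouping steps a little more explicitly than the paper does.
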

\begin{proof}
The result follows from the following equivalences.
\begin{align*}
\overline{\X}_W\times_{\overline{\Spec(\bz)}_\et}{\Spec(\mathbb{F}_p)_\et}&\cong
\overline{\X}_\et\times_{\overline{\Spec(\bz)}_\et}\overline{\Spec(\bz)}_W
\times_{\overline{\Spec(\bz)}_\et}{\Spec(\mathbb{F}_p)_\et}\\
&\cong
\overline{\X}_\et\times_{\overline{\Spec(\bz)}_\et}B_{W_{\mathbb{F}_p}}\\
&\cong
\overline{\X}_\et\times_{\overline{\Spec(\bz)}_\et}{\Spec(\mathbb{F}_p)_{et}}\times_{\Spec(\mathbb{F}_p)_{et}}B_{W_{\mathbb{F}_p}}\\
&\cong
(\X\otimes_{\mathbb{Z}}\mathbb{F}_p)_\et\times_{B^{sm}_{G_{\mathbb{F}_p}}}B_{W_{\mathbb{F}_p}}\\
&\cong (\X\otimes_{\mathbb{Z}}\mathbb{F}_p)_W
\end{align*}
The second equivalence, the fourth and the last one are given by Theorem \ref{thm-pull-back}, Proposition \ref{pull-bu} and Corollary \ref{cor-fiber-product} respectively.
\end{proof}
\begin{corollary}
The closed immersion of schemes
$(\X\otimes_{\mathbb{Z}}\mathbb{F}_p)\rightarrow\X$
induces a closed embedding of topoi
$$(\X\otimes_{\mathbb{Z}}\mathbb{F}_p)_W\longrightarrow \X_W.$$
\end{corollary}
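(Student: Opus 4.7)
The plan is to deduce this corollary directly from Proposition \ref{prop-pullback-at-p} together with the fact that closed embeddings of topoi are stable under pullback in the 2-category of topoi. I would read the stated map $(\X\otimes\mathbb{F}_p)_W\to\X_W$ as the morphism obtained from Proposition \ref{prop-pullback-at-p} (applied to $\overline{\X}$, whose relevant projection to $\overline{\Spec(\bz)}_\et$ restricts to the analogous projection from $\X_W$ by the definition of $\X_W$ as the fiber product $\X_\et\times_{\overline{\Spec(\bz)}_\et}\overline{\Spec(\bz)}_W$).

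First I would check that the natural morphism
\[
\Spec(\mathbb{F}_p)_\et\longrightarrow \overline{\Spec(\bz)}_\et
\]
is a closed embedding of topoi. This is because the point $p$ is closed in $\overline{\Spec(\bz)}$, whose open complement is the open subscheme $\overline{\Spec(\bz[1/p])}$; the corresponding open subtopos $\overline{\Spec(\bz[1/p])}_\et\hookrightarrow\overline{\Spec(\bz)}_\et$ has $\Spec(\mathbb{F}_p)_\et$ as its closed complement, as follows from the gluing description (Corollary \ref{Cor-Artin-Gluing}) together with Proposition \ref{pull-bu}.

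Next I would invoke the general fact (\cite{sga4} IV Corollaire 9.4.3) that the closed/open decomposition of a target is pulled back to a closed/open decomposition of a fiber product. Applied to the projection $\overline{\X}_W\to\overline{\Spec(\bz)}_\et$ (or, equivalently, to the projection $\X_W\to\overline{\Spec(\bz)}_\et$) this yields that the morphism
\[
\overline{\X}_W\times_{\overline{\Spec(\bz)}_\et}\Spec(\mathbb{F}_p)_\et\longrightarrow \overline{\X}_W
\]
is again a closed embedding, with open complement $\overline{\X}_W\times_{\overline{\Spec(\bz)}_\et}\overline{\Spec(\bz[1/p])}_\et$. By Proposition \ref{prop-pullback-at-p} the left-hand side is canonically equivalent to $(\X\otimes_{\bz}\mathbb{F}_p)_W$, and under this identification the morphism is the one induced by the closed immersion $\X\otimes_{\bz}\mathbb{F}_p\hookrightarrow\X$.

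There is essentially no obstacle here: once Proposition \ref{prop-pullback-at-p} is in hand, the only thing to verify is the elementary assertion that $\{p\}$ is closed in $\overline{\Spec(\bz)}$, so that the stability statement for open/closed complements under pullback of topoi applies. If one wishes to write the statement for $\X_W$ rather than $\overline{\X}_W$, the same argument goes through verbatim after replacing the reference to Proposition \ref{prop-pullback-at-p} with the analogous computation using Definition \ref{xwdef} together with Proposition \ref{pull-bu} and Corollary \ref{cor-fiber-product}.
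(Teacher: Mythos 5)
Your argument is correct and is exactly the intended one: the paper states this corollary without proof immediately after Proposition \ref{prop-pullback-at-p}, and the natural deduction is precisely to pull back the closed/open decomposition of $\overline{\Spec(\bz)}_\et$ at $p$ along the projection to obtain a closed embedding, then identify the fiber product using that proposition. You also correctly note the small mismatch that Proposition \ref{prop-pullback-at-p} is stated for $\overline{\X}_W$ while the corollary refers to $\X_W$, and rightly observe that the same chain of equivalences, with $\X_\et$ in place of $\overline{\X}_\et$ and using the proof of Proposition \ref{pull-bu}, gives $\X_W\times_{\overline{\Spec(\bz)}_\et}\Spec(\bof_p)_\et\cong(\X\otimes_\bz\bof_p)_W$ directly.
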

We denote by $\infty$ the closed point of $\overline{\Spec(\bz)}$
corresponding to the archimedean place of $\bq$. This point yields a
closed embedding of topoi 
$$\underline{Set}=Sh(\infty)\longrightarrow\overline{\Spec(\bz)}_{et}.$$
This paper suggests the following definition.
\begin{definition}\label{xinftydef}
We define the Weil-\'etale topos of $\X_{\infty}$ as follows:
$$\X_{\infty,W}:=Sh(\X_{\infty})\times B_{\mathbb{R}}.$$
\end{definition}
The argument of Proposition \ref{prop-pullback-at-p} is also valid for the archimedean fiber.
\begin{prop}\label{prop-pullback-archW}
We have a pull-back square of topoi:
\begin{equation}\begin{CD}
@. \X_{\infty,W} @>>>\underline{Set}\\
@. @VV{i_{\infty}}V @VVV @.\\
@. \overline{\X}_W @>>>\overline{\Spec(\bz)}_{et} @. {}
\end{CD}\end{equation}
In particular $i_{\infty}$ is a closed embedding.
\end{prop}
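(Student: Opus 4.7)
The plan is to mirror the calculation in Proposition \ref{prop-pullback-at-p}, substituting the archimedean closed point of $\overline{\Spec(\bz)}$ for a finite prime. The key algebraic identity I will exploit is associativity of fiber products of topoi: since $\overline{\X}_W$ is defined as $\overline{\X}_\et \times_{\overline{\Spec(\bz)}_\et} \overline{\Spec(\bz)}_W$, I can rewrite
\[
\overline{\X}_W \times_{\overline{\Spec(\bz)}_\et} Sh(\infty) \cong \overline{\X}_\et \times_{\overline{\Spec(\bz)}_\et} \bigl(\overline{\Spec(\bz)}_W \times_{\overline{\Spec(\bz)}_\et} Sh(\infty)\bigr).
\]

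For the inner fiber product I will invoke Theorem \ref{thm-pull-back} applied to $\bar{X} = \overline{\Spec(\bz)}$ at the closed point $v = \infty$. Since $W_{k(\infty)} = \mathbb{R}$ and $G_{k(\infty)}$ is trivial, we have $B^{sm}_{G_{k(\infty)}} = \underline{Set} = Sh(\infty)$, and the theorem yields
\[
\overline{\Spec(\bz)}_W \times_{\overline{\Spec(\bz)}_\et} Sh(\infty) \cong B_{\mathbb{R}}.
\]
Next, the composite $B_{\mathbb{R}} \to \overline{\Spec(\bz)}_W \to \overline{\Spec(\bz)}_\et$ factors through $Sh(\infty)$, so a second application of associativity gives
\[
\overline{\X}_\et \times_{\overline{\Spec(\bz)}_\et} B_{\mathbb{R}} \cong \bigl(\overline{\X}_\et \times_{\overline{\Spec(\bz)}_\et} Sh(\infty)\bigr) \times_{Sh(\infty)} B_{\mathbb{R}}.
\]
The first factor on the right is $Sh(\X_\infty)$ by Corollary \ref{pull-ba}, and since $Sh(\infty) = \underline{Set}$ is the final topos, a fiber product over it is by definition just the Cartesian product. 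Hence the whole expression collapses to $Sh(\X_\infty) \times B_{\mathbb{R}}$, which by Definition \ref{xinftydef} is exactly $\X_{\infty,W}$, establishing that the displayed square is a pull-back.

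Finally, to see that $i_{\infty}$ is a closed embedding, I will use that $u_\infty: Sh(\infty) \to \overline{\Spec(\bz)}_\et$ is itself a closed embedding (it is the closed complement of the open subtopos $\Spec(\bz)_\et$ in the Artin-Verdier decomposition of Proposition \ref{closed/open-decomp-etale}, specialized to $\X = \Spec(\bz)$). Since closed embeddings are stable under pullback in the 2-category of topoi (\cite{sga4} IV.9.4.3), the pulled-back morphism $i_\infty$ is likewise a closed embedding. No single step is genuinely difficult; the whole argument is essentially bookkeeping with fiber products, and the one substantive ingredient is Theorem \ref{thm-pull-back} at $v = \infty$, which is precisely what forces the archimedean fibre of $\overline{\X}_W$ to involve $B_{\mathbb{R}}$ rather than a purely Galois-theoretic object.
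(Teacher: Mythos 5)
Your proof is correct and follows essentially the same route as the paper's: both rely on Theorem \ref{thm-pull-back} applied to $\overline{\Spec(\bz)}$ at $v=\infty$ to identify the archimedean fibre of $\overline{\Spec(\bz)}_W$ with $B_{\mathbb{R}}$, then on Corollary \ref{pull-ba} and associativity of fibre products to collapse $\overline{\X}_\et\times_{\overline{\Spec(\bz)}_\et}B_{\mathbb{R}}$ to $Sh(\X_\infty)\times B_{\mathbb{R}}$. Your reorganization of the associativity steps and the explicit justification that $i_\infty$ is a closed embedding (stability of closed embeddings under pullback, via \cite{sga4} IV.9.4.3) are slightly more explicit than the paper's terse chain of equivalences, but the substance is identical.
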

\begin{proof}
The result follows from the following equivalences.
\begin{align*}
\overline{\X}_W\times_{\overline{\Spec(\bz)}_\et}{\underline{Set}}&\cong
\overline{\X}_\et\times_{\overline{\Spec(\bz)}_\et}\overline{\Spec(\bz)}_W
\times_{\overline{\Spec(\bz)}_\et}{\underline{Set}}\\
&\cong
\overline{\X}_\et\times_{\overline{\Spec(\bz)}_\et}B_{\mathbb{R}}\\
&\cong
\overline{\X}_\et\times_{\overline{\Spec(\bz)}_\et}{\underline{Set}}\times_{\underline{Set}}B_{\br}\\
&\cong
Sh(\X_{\infty})\times_{\underline{Set}}B_{\br}\\
&\cong Sh(\X_{\infty})\times B_{\br}\\
&=\X_{\infty,W}
\end{align*}
Indeed, the second (respectively the fourth) equivalence above is given by Theorem \ref{thm-pull-back} (respectively by Corollary \ref{pull-ba}).
\end{proof}

\subsection{} We assume here that $\X$ is irreducible and flat over $Spec(\bz)$. Let us study the structure of $\overline{\X}_W$ at the generic point of $\X$. We denote by $K(\X)$ the function field of
the irreducible scheme $\X$. Let $\overline{K(\X)}/K(\X)$ be an
algebraic closure. The algebraic closure $\overline{\bq}/\bq$ is
taken as a sub-extension of $\overline{K(\X)}/\bq$. Then we have a
continuous morphism $G_{K(\X)}\rightarrow G_{\bq}$.
\begin{definition}Let $\X$ be an irreducible scheme wich is flat, separated and of finite type over $Spec(\bz)$.
We consider the locally compact topological group
$$W_{K(\X)}:=G_{K(\X)}\times_{G_{\bq}}W_{\bq}$$
defined as a fiber product in the category of topological
groups.
\end{definition}
\begin{prop}\label{prop-morphism-j}
Let $\X$ be an irreducible scheme wich is flat, separated and of finite type over $Spec(\bz)$. There is a canonical morphism
$j_{\overline{\X}}:B_{W_{K(\X)}}\rightarrow\overline{\X}_W$.
\end{prop}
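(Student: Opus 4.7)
The strategy is to apply the universal property of the $2$-categorical fiber product
$$\overline{\X}_W=\overline{\X}_\et\times_{\overline{\Spec(\bz)}_\et}\overline{\Spec(\bz)}_W,$$
which requires producing morphisms $a:B_{W_{K(\X)}}\to\overline{\X}_\et$ and $b:B_{W_{K(\X)}}\to\overline{\Spec(\bz)}_W$ together with a $2$-isomorphism between their composites to $\overline{\Spec(\bz)}_\et$.

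For $a$, the continuous projection $W_{K(\X)}\to G_{K(\X)}$ yields a morphism $B_{W_{K(\X)}}\to B^{sm}_{G_{K(\X)}}$ whose inverse image views a continuous $G_{K(\X)}$-set as a discrete locally compact $W_{K(\X)}$-space; this functor is evidently left exact and sends jointly surjective families to local section covers. Composing with $B^{sm}_{G_{K(\X)}}\cong\Spec(K(\X))_\et\to\overline{\X}_\et$, where the second arrow is induced by the morphism of left exact sites $\overline{U}\mapsto\Hom_{\overline{\X}}(\Spec\overline{K(\X)},\overline{U})$ analogous to $\gamma^*$ in Proposition \ref{prop-morph-sites-etale-loc-sections}, one obtains $a$. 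For $b$, the projection $W_{K(\X)}\to W_\bq$ is a continuous homomorphism of locally compact groups, and composing the induced morphism $B_{W_{K(\X)}}\to B_{W_\bq}$ with the morphism $B_{W_\bq}\to\overline{\Spec(\bz)}_W$ of Proposition \ref{generic-XW} (applied to $F=\bq$) gives $b$.

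For the required $2$-isomorphism I would show that both composites $B_{W_{K(\X)}}\to\overline{\Spec(\bz)}_\et$ factor through the canonical morphism $\Spec(\bq)_\et\cong B^{sm}_{G_\bq}\to\overline{\Spec(\bz)}_\et$. The first composite factors so because the generic point $\Spec(K(\X))\to\overline{\X}\to\overline{\Spec(\bz)}$ factors through $\Spec(\bq)$ by flatness of $\X$, and the resulting map $B_{W_{K(\X)}}\to B^{sm}_{G_\bq}$ is induced by the composition $W_{K(\X)}\to G_{K(\X)}\to G_\bq$. For the second composite, unwinding the site-theoretic definitions of $j^*$ and $\gamma^*$ reveals that $\gamma\circ j:B_{W_\bq}\to\overline{\Spec(\bz)}_\et$ sends $\overline{U}\in Et_{\overline{\Spec(\bz)}}$ to $\Hom_{\overline{\Spec(\bz)}}(\Spec\overline{\bq},\overline{U})$ viewed as a $W_\bq$-space via $W_\bq\to G_\bq$, whence it factors through $\Spec(\bq)_\et$ via the morphism $B_{W_\bq}\to B^{sm}_{G_\bq}$ induced by $W_\bq\to G_\bq$; the resulting map $B_{W_{K(\X)}}\to B^{sm}_{G_\bq}$ is then induced by $W_{K(\X)}\to W_\bq\to G_\bq$. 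By the defining property $W_{K(\X)}=G_{K(\X)}\times_{G_\bq}W_\bq$, these two composites of topological groups agree, so the two morphisms of topoi coincide up to a canonical isomorphism, completing the construction.

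The main obstacle is the second factorization: verifying rigorously that $\gamma\circ j$ factors through the generic point $\Spec(\bq)_\et$ via the natural morphism $B_{W_\bq}\to B^{sm}_{G_\bq}$ requires chasing through the explicit site morphisms in Propositions \ref{generic-XW} and \ref{prop-morph-sites-etale-loc-sections} and checking a nontrivial commutativity at the level of sites. Conceptually, once this compatibility is established, the universal property of the fiber product, together with the universal property of $W_{K(\X)}$ itself, delivers $j_{\overline{\X}}$ essentially for free.
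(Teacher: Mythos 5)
Your proof is correct and takes essentially the same approach as the paper's: construct the two legs to $\overline{\X}_\et$ (via $W_{K(\X)}\to G_{K(\X)}$ and the generic point) and to $\overline{\Spec(\bz)}_W$ (via $W_{K(\X)}\to W_\bq$ and Proposition \ref{generic-XW}), check they agree over $\overline{\Spec(\bz)}_\et$, then invoke the universal property of the fiber product. You actually supply more detail than the paper on why the square commutes — namely that both composites factor through $B^{sm}_{G_\bq}$ and agree there by the fiber-product definition $W_{K(\X)}=G_{K(\X)}\times_{G_\bq}W_\bq$ — whereas the paper merely asserts commutativity of the diagram.
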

\begin{proof}
The continuous morphism $W_{K(\X)}\rightarrow G_{K(\X)}$ induces a
morphism $$B_{W_{K(\X)}}\rightarrow B_{G_{K(\X)}}\rightarrow
B^{sm}_{G_{K(\X)}}.$$ Here the second map is the canonical morphism
from the big classifying topos of $G_{K(\X)}$ to its small
classifying topos, whose inverse image sends a continuous $G_{K(\X)}$-set $E$ to the sheaf represented by the discrete $G_{K(\X)}$-space $E$ (see \cite{flach06-2} Section 7). The generic point of the irreducible scheme $\X$
and the previous choice of the algebraic closure $\overline{K(\X)}/K(\X)$ yield an
embedding $B^{sm}_{G_{K(\X)}}\hookrightarrow\overline{\X}_\et$. We
obtain a morphism
$$B_{W_{K(\X)}}\longrightarrow\overline{\X}_\et.$$
On the other hand we have maps
$$B_{W_{K(\X)}}\longrightarrow B_{W_{\bq}}\longrightarrow\overline{\Spec(\bz)}_W$$
and a commutative diagram
\begin{equation*}\begin{CD}
@.B_{W_{K(\X)}} @>>>\overline{\Spec(\bz)}_W\\
@. @VVV @VVV @.\\
@. \overline{\X}_\et @>>>\overline{\Spec(\bz)}_{et} @. {}
\end{CD}\end{equation*}
The result therefore follows from the very definition of
$\overline{\X}_W$.
\end{proof}
Unfortunately the morphism $j_{\overline{\X}}$ is not an embedding. The structure of $\overline{\X}_W$ at the generic point is more subtle, as it is shown below.
We assume again that $\X$ is irreducible, flat, separated and of finite type over $Spec(\bz)$. The generic point
$\Spec(\bq)\rightarrow\overline{\Spec(\bz)}$ and
$\overline{\bq}/\bq$ induce an embedding
$$B^{sm}_{G_{\bq}}\cong\Spec(\bq)_\et\hookrightarrow\overline{\Spec(\bz)}_\et.$$
The corresponding subtopos of $\overline{\Spec(\bz)}_W$ is the
classifying topos of the topological pro-group (see Proposition
\ref{generic-XW})
$$\underline{W}_{K/\bq,S}:=\{W_{K/\bq,S},\,\mbox{for $\overline{\bq}/K/\bq$ finite Galois and $S$ finite} \}.$$ Recall that
we have
$$B_{\underline{W}_{K/\bq,S}}:=\underleftarrow{lim}\,B_{W_{K/\bq,S}}$$
where the projective limit is understood in the 2-category of topoi.
In other words, there is a pull-back
\begin{equation*}\begin{CD}
@.B_{\underline{W}_{K/\bq,S}}  @>>>B^{sm}_{G_{\bq}}\\
@. @VV{i_{0}}V @VV{u_{0}}V @.\\
@.\overline{\Spec(\bz)}_W @>>>\overline{\Spec(\bz)}_{et} @. {}
\end{CD}\end{equation*}
The generic point of the irreducible scheme $\X$ and an algebraic
closure $\overline{K(\X)}/K(\X)$ yield an embedding
$B^{sm}_{G_{K(\X)}}\hookrightarrow\overline{\X}_\et$. 
We obtain
\begin{align*}
\overline{\X}_W\times_{\overline{\X}_\et}B^{sm}_{G_{K(\X)}}&=
\overline{\Spec(\bz)}_W\times_{\overline{\Spec(\bz)}_\et}\overline{\X}_\et\times_{\overline{\X}_\et}B^{sm}_{G_{K(\X)}}
\\
&\cong
\overline{\Spec(\bz)}_W\times_{\overline{\Spec(\bz)}_\et}B^{sm}_{G_{K(\X)}}\\
&\cong
\overline{\Spec(\bz)}_W\times_{\overline{\Spec(\bz)}_\et}B^{sm}_{G_{\bq}}\times_{B^{sm}_{G_{\bq}}}B^{sm}_{G_{K(\X)}}\\
&\cong
B_{\underline{W}_{K/\bq,S}}\times_{B^{sm}_{G_{\bq}}}B^{sm}_{G_{K(\X)}}
\end{align*}
The small classifying topos $B^{sm}_{G_{K(\X)}}$ is the projective
limit $\underleftarrow{lim}\,B^{sm}_{G_{L/K(\X)}}$ where $L/K(\X)$
runs over the finite Galois sub-extension of
$\overline{K(\X)}/K(\X)$. For such $L$ we set
$L':=L\cap\overline{\bq}$. Then the same is true for
$B^{sm}_{G_{\bq}}$, i.e. we have
$B^{sm}_{G_{\bq}}=\underleftarrow{lim}\,B^{sm}_{G_{L'/\bq}}$. Since
projective limits commute between themselves, we have
\begin{align*}
B^{sm}_{G_{K(\X)}}\times_{B^{sm}_{G_{\bq}}}B_{\underline{W}_{K/\bq,S}}&=
\underleftarrow{lim}\,B^{sm}_{G_{L/K(\X)}}\times_{\underleftarrow{lim}\,B^{sm}_{G_{L'/\bq}}}\underleftarrow{lim}\,B_{W_{L'/\bq,S}}\\
&=\underleftarrow{lim}_{_{L,S}}\,\,(B^{sm}_{G_{L/K(\X)}}\times_{B^{sm}_{G_{L'/\bq}}}B_{W_{L'/\bq,S}})\\
\end{align*}
By Corollary \ref{cor-fiberproduct-classtopoi-topgrps}, the fiber product
$B^{sm}_{G_{L/K(\X)}}\times_{B^{sm}_{G_{L'/\bq}}}B_{W_{L'/\bq,S}}$
is equivalent to the classifying topos of the topological group
$G_{L/K(\X)}\times_{G_{L'/\bq}}W_{L'/\bq,S}$ where the fiber product
is in turn computed in the category of topological groups. Note that $W_{L'/\bq,S}\rightarrow G_{L'/\bq}$ has local sections since $G_{L'/\bq}$ is profinite (see \cite{flach06-2} Proposition 2.1).

\begin{definition}
Let $\overline{K(\X)}/L/K(\X)$ be a finite Galois sub-extension and
let $S$ be a finite set of places of $\bq$ containing all the places
which ramify in $L'=L\cap\overline{\bq}$. We consider
the locally compact topological group
$$W_{L/K(\X),S}:=G_{L/K(\X)}\times_{G_{L'/\bq}}W_{L'/\bq,S}$$
defined as a fiber product in the category of topological groups.
\end{definition}
We have obtained the following result.
\begin{prop}Let $\X$ be an irreducible scheme wich is flat, separated and of finite type over $Spec(\bz)$. We have a pull-back square of topoi
\begin{equation*}\begin{CD}
@.\underleftarrow{lim}_{_{L,S}}\,B_{W_{L/K(\X),S}} @>>>B^{sm}_{G_{K(\X)}}\\
@. @VVV @VVV @.\\
@.\overline{\X}_W @>>>\overline{\X}_{et} @. {}
\end{CD}\end{equation*}
where the vertical arrows are embedding.
\end{prop}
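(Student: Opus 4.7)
The plan is essentially to assemble the calculation that has already been carried out in the three paragraphs immediately preceding the statement; only minor additional work is needed.

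First I would unwind the definition of $\overline{\X}_W$ and invoke Proposition \ref{generic-XW}, which identifies the pullback of the generic point $B^{sm}_{G_\bq} \hookrightarrow \overline{\Spec(\bz)}_\et$ along $\overline{\Spec(\bz)}_W \to \overline{\Spec(\bz)}_\et$ with $B_{\underline{W}_{K/\bq,S}}$; transitivity of fiber products in the $2$-category of topoi then yields
$$\overline{\X}_W \times_{\overline{\X}_\et} B^{sm}_{G_{K(\X)}} \;\cong\; B_{\underline{W}_{K/\bq,S}} \times_{B^{sm}_{G_{\bq}}} B^{sm}_{G_{K(\X)}}.$$
Next I would write both $B^{sm}_{G_{K(\X)}}$ and $B^{sm}_{G_\bq}$ as projective limits over the finite Galois sub-extensions $L/K(\X)$ (with $L'=L\cap\overline{\bq}$), and use that fiber products commute with filtered projective limits in the $2$-category of topoi (\cite{sga4} VI.8) to pull the limit outside. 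Then I would apply Corollary \ref{cor-fiberproduct-classtopoi-topgrps} on each finite level: since $G_{L'/\bq}$ is a finite discrete group, the surjection $W_{L'/\bq,S}\to G_{L'/\bq}$ trivially admits continuous local sections, so the corollary identifies $B^{sm}_{G_{L/K(\X)}}\times_{B^{sm}_{G_{L'/\bq}}}B_{W_{L'/\bq,S}}$ with $B_{W_{L/K(\X),S}}$, where $W_{L/K(\X),S}=G_{L/K(\X)}\times_{G_{L'/\bq}}W_{L'/\bq,S}$. Combining the three steps gives
$$\overline{\X}_W \times_{\overline{\X}_\et} B^{sm}_{G_{K(\X)}} \;\cong\; \underleftarrow{lim}_{L,S}\,B_{W_{L/K(\X),S}},$$
which is precisely the assertion that the displayed square is a pullback.

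For the embedding claims, the right-hand vertical map is the generic-point embedding $B^{sm}_{G_{K(\X)}}\hookrightarrow\overline{\X}_\et$ already recalled above the statement, and the left-hand vertical map is then obtained by base change of this embedding along $\gamma_{\overline{\X}}:\overline{\X}_W\to\overline{\X}_\et$; since embeddings of topoi are stable under pullback (\cite{sga4} IV.9), the left vertical is still an embedding. There is no real obstacle here: the only mildly technical ingredient is the commutation of filtered projective limits with fiber products of topoi, which is standard and has already been invoked implicitly in the preceding computation.
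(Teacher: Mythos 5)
Your proposal is correct and follows essentially the same route as the paper: identify $\overline{\X}_W\times_{\overline{\X}_\et}B^{sm}_{G_{K(\X)}}$ with $B_{\underline{W}_{K/\bq,S}}\times_{B^{sm}_{G_\bq}}B^{sm}_{G_{K(\X)}}$ by transitivity of fiber products and Proposition \ref{generic-XW}, commute the filtered projective limits with the fiber product, and apply Corollary \ref{cor-fiberproduct-classtopoi-topgrps} on each finite level. The only cosmetic difference is that you justify the local-sections hypothesis for $W_{L'/\bq,S}\to G_{L'/\bq}$ by observing that the target is finite discrete, whereas the paper cites the profinite case of \cite{flach06-2} Proposition 2.1; both are valid, and your version is the more elementary one for this situation.
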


\section{Cohomology of $\overline{\X}_W$ with $\tilde{\br}$-coefficients}\label{section-cohomology-XW}

The fiber product topos $\overline{\X}_W$, as defined in section \ref{section-xwdef}, is equivalent to the category of sheaves on a site $(\mathcal{C}_{\overline{\X}},\mathcal{J}_{\overline{\X}})$ lying in
a \emph{non-commutative} diagram of sites
\begin{equation*}\begin{CD}
@.(\mathcal{C}_{\overline{\X}},\mathcal{J}_{\overline{\X}}) @<<< (T_{\overline{\Spec(\bz)}},\mathcal{J}_{ls})\\
@. @AAA @AA{\gamma^*}A @.\\
@.(Et_{\overline{\X}},\mathcal{J}_\et) @<{f^*}<<
(Et_{\overline{\Spec(\bz)}},\mathcal{J}_\et) @. {}
\end{CD}\end{equation*}
The site $(\mathcal{C}_{\overline{\X}},\mathcal{J}_{\overline{\X}})$
is defined as follows (see \cite{illusie09}). The category $\mathcal{C}_{\overline{\X}}$ is the category
of pairs of morphisms $\mathcal{U}\rightarrow V\leftarrow
\mathcal{Z}$, where $\mathcal{U}$ is an object of
$Et_{\overline{\X}}$, $V$ is an object of
$Et_{\overline{\Spec(\bz)}}$ and $\mathcal{Z}$ is an object of
$T_{\overline{\Spec(\bz)}}$. The map $\mathcal{U}\rightarrow V$
(respectively $\mathcal{Z}\rightarrow V$) is understood as a
morphism $\mathcal{U}\rightarrow f^*V$ in $Et_{\overline{\X}}$
(respectively as a morphism $\mathcal{Z}\rightarrow \gamma^*V$ in
$T_{\overline{X}}$).

The topology $\mathcal{J}_{\overline{\X}}$ is generated by the
covering families $$\{(\mathcal{U}_i\rightarrow V_i\leftarrow
\mathcal{Z}_i)\rightarrow (\mathcal{U}\rightarrow V\leftarrow
\mathcal{Z}),\,i\in I\}$$ of the following types:

(a) $\mathcal{U}_i=\mathcal{U}$, $V_i=V$ and
$\{\mathcal{Z}_i\rightarrow \mathcal{Z}\}$ is a covering family.

(b) $\mathcal{Z}_i=\mathcal{Z}$, $V_i=V$ and
$\{\mathcal{U}_i\rightarrow \mathcal{U}\}$ is a covering family.

(c) $\{(\mathcal{U}'\rightarrow V'\leftarrow
\mathcal{Z}')\rightarrow (\mathcal{U}\rightarrow V\leftarrow
\mathcal{Z})\}$ with $\mathcal{U}'=\mathcal{U}$, and
$\mathcal{Z}'\rightarrow \mathcal{Z}$ is obtained by base change
from the map $V'\rightarrow V$ of $Et_{\overline{\Spec(\bz)}}$.

(d) $\{(\mathcal{U}'\rightarrow V'\leftarrow
\mathcal{Z}')\rightarrow (\mathcal{U}\rightarrow V\leftarrow
\mathcal{Z})\}$ with $\mathcal{Z}'=\mathcal{Z}$, and
$\mathcal{U}'\rightarrow \mathcal{U}$ is obtained by base change
from the map $V'\rightarrow V$ of $Et_{\overline{\Spec(\bz)}}$.

Then $(\mathcal{C}_{\overline{\X}},\mathcal{J}_{\overline{\X}})$ is a
defining site for the fiber product topos $\overline{\X}_W$. The topology $\mathcal{J}_{\overline{\X}}$ \emph{is not subcanonical}.

\begin{definition}
For any $\T$-topos $t:\mathcal{E}\rightarrow\T$, we define the sheaf of \emph{continuous real valued functions on $\mathcal{E}$} as follows:
$$\tr:=t^*(y\mathbb{R})$$
Here $y\mathbb{R}$ is the abelian object of $\T$ represented by the standard topological group $\mathbb{R}$.
\end{definition}
For an irreducible scheme $\X$ which is flat, separated and of finite type over $\Spec(\bz)$, we consider the morphism
$j_{\overline{\X}}:B_{W_{K(\X)}}\rightarrow\overline{\X}_W$ defined in Proposition \ref{prop-morphism-j}.
\begin{prop}\label{prop-acyclic-inclusion}
Let $\X$ be an irreducible scheme wich is flat, separated and of finite type over $Spec(\bz)$. We have $R^nj_{\overline{\X},*}\tr=0$ for any
$n\geq1$.
\end{prop}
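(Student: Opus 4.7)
The plan is to verify $R^n j_{\overline{\X},*}\tr=0$ locally on the defining site $(\mathcal{C}_{\overline{\X}},\mathcal{J}_{\overline{\X}})$ of $\overline{\X}_W$: it suffices to show that for every generator $X=(\mathcal{U}\to V\leftarrow \mathcal{Z})$ and every class $\alpha\in H^n((B_{W_{K(\X)}})_{/j_{\overline{\X}}^* X},\tr)$, $n\geq 1$, there exists a $\mathcal{J}_{\overline{\X}}$-cover of $X$ on which $\alpha$ restricts to zero.

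The first step is to make $j_{\overline{\X}}^* X$ explicit. Using Proposition \ref{prop-morphism-j} together with Corollary \ref{cor-fiberproduct-classtopoi-topgrps} (applicable since $W_\bq\to G_\bq$ has local sections by profiniteness of $G_\bq$), the pullback is the $W_{K(\X)}$-equivariant locally compact space $U_0\times_{V_0}Z_0$, where $U_0=\Hom_{\overline{\X}}(\Spec\overline{K(\X)},\mathcal{U})$ and $V_0=\Hom_{\overline{\Spec(\bz)}}(\Spec\overline{\bq},V)$ are the Galois sets at the generic points, and $Z_0$ is the underlying $W_\bq$-space of $\mathcal{Z}=(Z_0,Z_v,f_v)$, on which $W_{K(\X)}$ acts via its projections to $G_{K(\X)}$ and $W_\bq$.

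The second step is to reduce to a purely topological computation on a cover. Covers of type (c) in $\mathcal{J}_{\overline{\X}}$ base change $\mathcal{Z}$ along étale $V'\to V$; passing to a cofinal family corresponding to the finite Galois sub-extensions of $\overline{\bq}/\bq$, the finite-image Galois contribution to the $W_{K(\X)}$-action on the pullback trivializes (in analogy with the vanishing of profinite cohomology with uniquely divisible $\br$-coefficients used in Proposition \ref{prop-cpctspp-etale-coh}). Covers of type (a) simultaneously refine $\mathcal{Z}$ by local-section covers of locally compact spaces into contractible open pieces. Combining these, on a suitably chosen cover of $X$, the localization $(B_{W_{K(\X)}})_{/j_{\overline{\X}}^* X}$ becomes piece-wise equivalent to $\mathcal{T}/T$ for a contractible locally compact $T$, and $H^n(\mathcal{T}/T,y\br)=H^n(T,\br)=0$ for $n\geq 1$.

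The main obstacle is coordinating the type (a) and type (c) refinements into a single cover, and verifying that cohomology classes which capture the archimedean weight character $W_{K(\X)}\to\br$ (a nontrivial global element of $H^1(B_{W_{K(\X)}},\tr)$) become locally exact once both refinements are carried out sufficiently far. This is the Weil-étale analogue of the vanishing $R^n\eta_*\br=0$ in Proposition \ref{prop-cpctspp-etale-coh}, with the torsion/divisibility dichotomy for profinite cohomology now augmented by the topological direction from $\mathcal{T}$ needed to handle the continuous (non-profinite) part of $W_{K(\X)}$.
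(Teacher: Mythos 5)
Your overall structure (compute the fibre of $j_{\overline{\X}}$, then verify vanishing on a defining site) is right, and your identification of $j_{\overline{\X}}^*(\mathcal{U}\to V\leftarrow\mathcal{Z})$ with the $W_{K(\X)}$-space $a^*\mathcal{U}\times_{a^*f^*V}b^*\mathcal{Z}$ matches the paper. But the second half of your argument has a genuine gap: you cannot make the localization $(B_{W_{K(\X)}})/j_{\overline{\X}}^*X$ ``piece-wise equivalent to $\mathcal{T}/T$ for a contractible locally compact $T$'' by refining along $\mathcal{J}_{\overline{\X}}$-covers of type (a) and (c). No amount of étale base change in $V$ or local-section refinement of $\mathcal{Z}$ eliminates the action of the compact subgroup $W^1_{K(\X)}=\ker(W_{K(\X)}\to\br)$; this group is not profinite (it fibers over $G_{K(\X)}$ with kernel the compact connected part of the idele class group), so the ``torsion cohomology against uniquely divisible $\br$'' vanishing you invoke does not apply to it, and a slice $B_{W^1_{K(\X)}}/yZ$ is simply not of the form $\mathcal{T}/T$ unless the $W^1_{K(\X)}$-action on $Z$ admits local slices, which you have not arranged and in general cannot arrange by passing to a cover.

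The paper's route through this obstruction is different and is the actual content of the proof. One first covers the final object of $\mathcal{C}_{\overline{\X}}$ by $\mathfrak{f}^*_{\overline{\X}}E\mathbb{R}$, which corresponds on the classifying-topos side to the fact that $W_{K(\X)}\to W_{K(\X)}/W^1_{K(\X)}\cong\br$ has a global continuous section; this yields $B_{W_{K(\X)}}/y(W_{K(\X)}/W^1_{K(\X)})\cong B_{W^1_{K(\X)}}$ and reduces the claim to the vanishing of $H^n(B_{W^1_{K(\X)}}/yZ,\tr\times yZ)$ for $n\geq 1$ and $Z$ an arbitrary locally compact $W^1_{K(\X)}$-space. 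The remaining step is not a contractibility argument but a computation of the direct image $l_*(\tr\times yZ)$ as the internal Hom $\underline{\Hom}_{Top}(Z,\br)$, a complete Hausdorff locally convex topological vector space, followed by an appeal to the theorem of \cite{flach06-2} (Corollary 8) that a compact group has vanishing higher cohomology with coefficients in such a space. Your proposal has no substitute for this compact-group vanishing theorem, and without it the reduction cannot be completed.
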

\begin{proof}
Recall that the morphism $j_{\overline{\X}}$ is defined by the
following commutative diagram of topoi.
\begin{equation*}\begin{CD}
@.B_{W_{K(\X)}} @>b>>\overline{\Spec(\bz)}_W\\
@. @VaVV @V\gamma VV @.\\
@. \overline{\X}_\et @>f>>\overline{\Spec(\bz)}_{et} @. {}
\end{CD}\end{equation*}
The site $(B_{Top}W_{K(\X)},\mathcal{J}_{ls})$ is a defining site
for $B_{W_{K(\X)}}$, and we denote by $a^*$, $b^*$, $\gamma^*$ and
$f^*$ the morphism of sites inducing the morphism of topos $a$, $b$,
$\gamma$ and $f$. The morphism
$j_{\overline{\X}}:B_{W_{K(\X)}}\rightarrow \overline{\X}_W$ is
induced by the morphism of sites :
$$
\appl{\mathcal{C}_{\overline{\X}}}{B_{Top}W_{K(\X)}}{(\mathcal{U}\rightarrow
V\leftarrow\mathcal{Z})}{a^*\mathcal{U}\times_{a^*f^*V}
b^*\mathcal{Z}}
$$
Note that one has an identification $a^*\mathcal{U}\times_{a^*f^*V}
b^*\mathcal{Z}= a^*\mathcal{U}\times_{b^*\gamma^*V} b^*\mathcal{Z}$.
Consider the object of $T_{\overline{\Spec(\bz)}}$ whose components are all
given by the action of $W_{\bq}$ on $W_{\bq}/W_{\bq}^1\cong\mathbb{R}$:
$$(\mathbb{R},\mathbb{R},Id_{\mathbb{R}})=\mathfrak{f}^*E\mathbb{R}$$
This object $\mathfrak{f}^*E\mathbb{R}$ is a
covering of the final object in $T_{\overline{\Spec(\bz)}}$ for the
local section topology, hence
$$\mathfrak{f}^*_{\overline{\X}}E\mathbb{R}=(*\rightarrow *\leftarrow \mathfrak{f}^*E\mathbb{R})\longrightarrow(*\rightarrow*\leftarrow*)$$ is
a covering of the final object of $\mathcal{C}_{\overline{\X}}$ for
the topology $\mathcal{J}_{\overline{\X}}$.

The sheaf $R^nj_{\overline{\X},*}\tr$ is the
sheaf on $(\mathcal{C}_\X,\mathcal{J}_\X)$ associated to the
presheaf
$$
\fonc{P^nj_{\overline{\X},*}\tr}{\mathcal{C}_\X}{Ab}{(\mathcal{U}\rightarrow
V\leftarrow
\mathcal{Z})}{H^n(B_{W_{K(\X)}},a^*\mathcal{U}\times_{a^*f^*V}
b^*\mathcal{Z},\tr)}
$$
Since the object $\mathfrak{f}^*_{\overline{\X}}E\mathbb{R}$ defined above
covers the final object of $\mathcal{C}_{\X}$, we can restrict our
attention to the slice category
$\mathcal{C}_{\X}/\mathfrak{f}^*_{\overline{\X}}E\mathbb{R}$. Let
$(\mathcal{U}\rightarrow V\leftarrow \mathcal{Z})$ be an object of
$\mathcal{C}_{\X}/\mathfrak{f}^*_{\overline{\X}}E\mathbb{R}$, i.e. $(\mathcal{U}\rightarrow V\leftarrow \mathcal{Z})$ is given with a map $\mathcal{Z}\rightarrow\mathfrak{f}^*_{\overline{\X}}E\mathbb{R}$
in $T_{\overline{\Spec(\bz)}}$. We obtain a morphism
$$a^*\mathcal{U}\times_{a^*f^*V}
b^*\mathcal{Z}\longrightarrow W_{K(\X)}/W^1_{K(\X)}$$ in the category
$B_{Top}{W_{K(\X)}}$, where the homogeneous space
$(W_{K(\X)}/W^1_{K(\X)})\cong \mathbb{R}$ is seen as an object of
$B_{Top}{W_{K(\X)}}$.

On the other hand the continuous morphism
$$W_{K(\X)}\longrightarrow W_{K(\X)}/W^1_{K(\X)}=\mathbb{R}$$
has a global continuous section.
This gives an isomorphism in $\mathcal{T}$
$$y(W_{K(\X)}/W^1_{K(\X)})=yW_{K(\X)}/yW^1_{K(\X)}$$ and a canonical
equivalence
$$B_{W_{K(\X)}}/y(W_{K(\X)}/W^1_{K(\X)})\cong B_{W_{K(\X)}}/(yW_{K(\X)}/yW^1_{K(\X)})\cong B_{W^1_{K(\X)}}.$$
Under this equivalence the object represented by
$$\alpha:a^*\mathcal{U}\times_{a^*f^*V} b^*\mathcal{Z}\longrightarrow
(W_{K(\X)}/W^1_{K(\X)})\cong\mathbb{R}$$ corresponds to the object of $B_{W^1_{K(\X)}}$ represented by the subspace
$$\alpha^{-1}(0)\subset a^*\mathcal{U}\times_{a^*f^*V} b^*\mathcal{Z}$$
endowed with the induced continuous action of $W^1_{K(\X)}$. Thus
one has $$H^n(B_{W_{K(\X)}},a^*\mathcal{U}\times_{a^*f^*V}
b^*\mathcal{Z},\tr)=H^n(B_{W^1_{K(\X)}},\alpha^{-1}(0),\tr).$$
Therefore it is enough to prove
\begin{equation}\label{vanishing-RgR}
H^n(B_{W^1_{K(\X)}},Z,\tr):=H^n(B_{W^1_{K(\X)}}/yZ,\tr\times
yZ)=0,
\end{equation}
for any object $Z$ of $B_{Top}W^1_{K(\X)}$ and any $n\geq1$. We have
two canonical equivalences
$$B_{W^1_{K(\X)}}/EW^1_{K(\X)}\cong\mathcal{T}\mbox{ and } (B_{W^1_{K(\X)}}/yZ)/(EW^1_{K(\X)}\times yZ)\cong\mathcal{T}/yZ.$$
We obtain a pull-back square
\begin{equation*}\begin{CD}
@.\mathcal{T}/yZ @>l'>>\mathcal{T}\\
@. @Vh'VV @VhVV @.\\
@. B_{W^1_{K(\X)}}/yZ @>l>>B_{W^1_{K(\X)}} @. {}
\end{CD}\end{equation*}
where all the maps are localization morphisms (local homeomorphisms
of topoi in the modern language). It follows easilly that this pull-back square satisfies the Beck-Chevalley condition
$$h^*l_*\cong l'_*h'^{*}.$$
Moreover the functor $h'^{*}$, being a localization functor, preserves injective abelian objects. We
obtain
\begin{equation}\label{une-beck-che}
h^*R^n(l_*)\mathcal{A}\cong R^n(l'_*)h'^{*}\mathcal{A}
\end{equation}
for any abelian object $\A$ of $B_{W^1_{K(\X)}}/yZ $ and any
$n\geq0$. The forgetful functor $h^*$ takes an object $\mathcal{F}$
of $\mathcal{T}$ endowed with an action of $yW^1_{K(\X)}$ to
$\mathcal{F}$. Hence $R^n(l_*)\mathcal{A}$ is the object
$R^n(l'_*)\mathcal{A}$ endowed with the induced
$yW^1_{K(\X)}$-action.
\begin{lemma}
We have $R^n(l'_*)(\tr\times yZ)=0$ for any
$n\geq1$.
\end{lemma}
\begin{proof}
We consider the morphism $l':\mathcal{T}/yZ\rightarrow\mathcal{T}$.
The sheaf $R^n(l'_*)(\tr\times yZ)$ on
$\mathcal{T}=(Top^{lc},\mathcal{J}_{ls})$ is the sheaf associated to the
presheaf
$$
\fonc{P^n(l'_*)(\tr\times
yZ)}{Top^{lc}}{Ab}{T}{H^n(\mathcal{T}/y(Z\times T),\tr\times
y(Z\times T))}
$$
It is enough to show that
$H^n(\mathcal{T}/yT',\tr\times yT')=0$ for any
locally compact topological space $T'=Z\times T$. By (\cite{sga4} IV.4.10.5) we have a canonical isomorphism
$$H^n(\mathcal{T}/yT',\tr\times yT')=H^n(T',\mathcal{C}^0(T',\mathbb{R}))$$
where the right hand side is the usual sheaf cohomology of the
\emph{paracompact} space $T'$ with values in the sheaf
$\mathcal{C}^0(T',\mathbb{R})$ of continuous real valued functions on
$T'$. It is well known that the sheaf $\mathcal{C}^0(T',\mathbb{R})$
is fine, hence acyclic for the global section functor. The Lemma
follows.
\end{proof}
Therefore the sheaf $$h^*R^n(l_*)(\tr\times
yZ)\cong R^n(l'_*)h'^{*}(\tr\times yZ)$$
vanishes for any $n\geq1$, hence so does
$R^n(l_*)(\tr\times yZ)$. The spectral sequence
$$H^p(B_{W^1_{K(\X)}},R^q(l_*)(\tr\times yZ))\Rightarrow H^{p+q}(B_{W^1_{K(\X)}}/yZ,\tr\times yZ)$$
degenerates and yields an isomorphism
$$H^n(B_{W^1_{K(\X)}},l_*(\tr\times yZ))\cong
H^{n}(B_{W^1_{K(\X)}}/yZ,\tr\times yZ)$$ for any
$n\geq0$. The sheaf $l_*(\tr\times yZ)$ is given
by the object $l'_*(\tr\times yZ)$ of $\T$
endowed with the induced action of $yW^1_{K(\X)}$, as it follows from (\ref{une-beck-che}). Furthermore, one
has
$$l'_*(\tr\times
yZ)=l'_*l'^*(\tr)=\underline{Hom}_{\mathcal{T}}(yZ,\tr)$$ where
the right hand side is the internal Hom-object in $\mathcal{T}$ (see \cite{sga4} IV Corollaire 10.8). The
sheaf $\underline{Hom}_{\mathcal{T}}(yZ,\tr)$ is
represented by the abelian topological group
$\underline{Hom}_{Top}(Z,\mathbb{R})$ of continuous maps from $Z$ to
$\mathbb{R}$ endowed with the compact-open topology, since $Z$ is
locally compact. The compact-open topology on
$\underline{Hom}_{Top}(Z,\mathbb{R})$ is the topology of uniform
convergence on compact sets, since $\mathbb{R}$ is a metric space.
The real vector space $\underline{Hom}_{Top}(Z,\mathbb{R})$ is
locally convex, Hausdorff and complete (see \cite{bourbaki} X.16. Corollaire 3). Note that the action of
$W^1_{K(\X)}$ on $\underline{Hom}_{Top}(Z,\mathbb{R})$ is induced by
the action on $Z$, and that the group $W^1_{K(\X)}$ is compact. By
\cite{flach06-2} Corollary 8, one has
$$H^n(B_{W^1_{K(\X)}},\underline{Hom}_{Top}(Z,\mathbb{R}))=0.$$
In summary, for any locally compact topological space $Z$ with a
continuous action of $W^1_{K(\X)}$ and any $n\geq1$, one has
\begin{align*}
H^{n}(B_{W^1_{K(\X)}}/yZ,\tr\times yZ)&=
H^n(B_{W^1_{K(\X)}},l_*(\tr\times yZ))\\
&=H^n(B_{W^1_{K(\X)}},\underline{Hom}_{Top}(Z,\mathbb{R}))\\
&=0\\
\end{align*}
Hence (\ref{vanishing-RgR}) holds and
$R^n(j_{\overline{\X},*})\tr=0$ for any $n\geq1$.
\end{proof}

\begin{lemma}\label{lem-use-normal}
Let $\X$ be an irreducible scheme which is flat, separated and of finite type over $Spec(\bz)$. If $\X$ is normal, then the adjunction map $$\mathfrak{f}_{\overline{\X}}^*\,\tr\longrightarrow
j_{\overline{\X}*}\,j_{\overline{\X}}^*\,\mathfrak{f}_{\overline{\X}}^*\,\tr\cong
j_{\overline{\X}*}\,\tr$$ is an isomorphism.
\end{lemma}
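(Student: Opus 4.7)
The plan is to verify the adjunction map on a generating family of the site $(\mathcal{C}_{\overline{\X}},\mathcal{J}_{\overline{\X}})$ defining $\overline{\X}_W$, following the strategy employed in the proof of Proposition \ref{prop-acyclic-inclusion}. First, I would observe that, by the transitivity of pullback, the sheaf $\mathfrak{f}_{\overline{\X}}^*\tr$ on $\overline{\X}_W$ agrees with the canonical $\tr=t^*_{\overline{\X}_W}(y\br)$ (since $t_{\overline{\X}_W}=t\circ\mathfrak{f}_{\overline{\X}}$), and similarly $j_{\overline{\X}}^*\mathfrak{f}_{\overline{\X}}^*\tr$ agrees with $\tr$ on $B_{W_{K(\X)}}$. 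The isomorphism $j_{\overline{\X}*}j_{\overline{\X}}^*\mathfrak{f}_{\overline{\X}}^*\tr\cong j_{\overline{\X}*}\tr$ is therefore automatic, and the statement reduces to showing that the adjunction $\tr\to j_{\overline{\X}*}\tr$ on $\overline{\X}_W$ is an isomorphism.

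Next, I would evaluate both sides on site objects $(\mathcal{U}\to V\leftarrow\mathcal{Z})$, localized over the cover $\mathfrak{f}_{\overline{\X}}^*E\br\to e_{\overline{\X}_W}$ exactly as in the proof of Proposition \ref{prop-acyclic-inclusion}. On such a localized object, the section of $j_{\overline{\X}*}\tr$ is computed by means of the $W_{K(\X)}$-equivariant map $\alpha:a^*\mathcal{U}\times_{b^*\gamma^*V}b^*\mathcal{Z}\to\br$, yielding cohomology over $B_{W^1_{K(\X)}}/\alpha^{-1}(0)$ with coefficients in $\tr$. By the same compactness and paracompactness arguments used in Proposition \ref{prop-acyclic-inclusion}, this section reduces to the $\br$-vector space of continuous real-valued functions on the quotient space $\alpha^{-1}(0)/W^1_{K(\X)}$. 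On the other hand, the section of $\tr$ at the same localized object unwinds (via the projection $\overline{\X}_W\to\overline{\Spec(\bz)}_W\to\T$ and the structure of the fiber product site) to continuous real-valued functions on $\mathcal{Z}$ with a copy indexed by each connected component of $\mathcal{U}$.

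The crux, and main obstacle, is an explicit identification of $\alpha^{-1}(0)/W^1_{K(\X)}$ with this latter space. This is where normality of $\X$ enters essentially: as used in the proof of Proposition \ref{prop-cpctspp-etale-coh}, normality of $\X$ guarantees that $\pi_0(\overline{\mathcal{U}})=\pi_0(\mathcal{U}\times_{\X}\Spec K(\X))$, identifying connected components of $\overline{\mathcal{U}}$ with $G_{K(\X)}$-orbits on the finite set $a^*\mathcal{U}=\mathrm{Hom}_{\overline{\X}}(\Spec\overline{K(\X)},\overline{\mathcal{U}})$. Combined with the presentation $W_{K(\X)}=G_{K(\X)}\times_{G_{\bq}}W_{\bq}$ and the fact that $W^1_{K(\X)}$ lies over the kernel of $W_{\bq}\to\br$, one concludes that $W^1_{K(\X)}$-orbits on $a^*\mathcal{U}\times_{b^*\gamma^*V}b^*\mathcal{Z}$ above $0$ agree with $\pi_0(\overline{\mathcal{U}})\times\mathcal{Z}$, giving the desired topological identification and hence the isomorphism on sections.

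The hard part will be checking that this identification is functorial in the site object, so that the equality of sections lifts to an equality of sheaves, and that the base change issues introduced by the fiber product site $\mathcal{C}_{\overline{\X}}$ do not obstruct the argument — in particular one must handle the role of the base scheme $V$ over $\overline{\Spec(\bz)}$ compatibly. Once this functoriality is in place, the proof proceeds formally as a direct comparison, parallel to how $\eta_*\br=\br$ is deduced from normality in the Artin–Verdier setting.
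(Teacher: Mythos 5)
Your proposal is running into a genuine gap that stems from conflating sections of the presheaf $\mathfrak{f}_{\overline{\X}}^{p}\tr$ with sections of its sheafification $\mathfrak{f}_{\overline{\X}}^{*}\tr$. On a connected site object $(\mathcal{U}\rightarrow V\leftarrow\mathcal{Z})$ the presheaf gives $\mathrm{Hom}_{Top}(Z_0/W_{\bq},\br)$, not the larger group $\mathrm{Hom}_{Top}(Z_0/W_{L(\mathcal{U})},\br)$ that the adjoint $j_{\overline{\X}*}\tr$ produces (via $(\mathcal{U}_0\times_{V_0}Z_0)/W_{K(\X)}\cong Z_0/W_{L(\mathcal{U})}$ when $V=*$). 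Normality does give you surjectivity of $(\mathcal{U}_0\times_{V_0}Z_0)/W_{K(\X)}\to Z_0/W_{\bq}$, hence injectivity of the presheaf map — this is essentially step (i) of the paper's proof — but there is no reason for the presheaf map itself to be an isomorphism, and in general it is not. Your claimed topological identification of $\alpha^{-1}(0)/W^{1}_{K(\X)}$ with ``$\pi_0(\overline{\mathcal{U}})\times\mathcal{Z}$'' also breaks down: if $\mathcal{U}$ is connected normal then $\pi_0(\overline{\mathcal{U}})$ is a point and $W^{1}_{K(\X)}$ acts transitively on $\mathcal{U}_0$, so the quotient is the fibre $(Z_0^{0})_{v_0}$ over a base point of $V_0$ modulo the corresponding stabilizer; this is not the same as $Z_0^{0}$ modulo $W^{1}_{\bq}$ unless $V$ is the final object of $Et_{\overline{\Spec(\bz)}}$.

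The missing ingredient is precisely how the paper handles surjectivity in step (ii). There the computation of $\mathfrak{f}_{\overline{\X}}^{*}\tr$ on $(\mathcal{U}\rightarrow *\leftarrow\mathcal{Z})$ is done not at the presheaf level but by identifying the slice topos $\overline{\X}_W/\varepsilon\mathcal{U}$ with $\mathcal{U}_\et\times_{U'_\et}U'_W$ (so that $\mathfrak{f}_{\overline{\X}}^{*}\tr$ restricted there becomes $\mathfrak{f}_{\mathcal{U}}^{*}\tr$), where $U'$ is the open image of $\mathcal{U}$ in $\overline{\Spec(\co_{L(\mathcal{U})})}$ and Proposition \ref{prop-fiberproduct-makes-sense} supplies the equivalence $U'_W\cong U'_\et\times_{\overline{\Spec(\bz)}_\et}\overline{\Spec(\bz)}_W$; one then extends equivariant continuous real-valued maps from the open-closed subspace $\mathcal{U}_0\times_{V_0}Z_0$ to $\mathcal{U}_0\times Z_0$ in order to pass from arbitrary $V$ back to $V=*$. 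Without some replacement for this change-of-base-scheme argument, your plan gives you injectivity of the sheaf morphism but not surjectivity, which is the hard direction.
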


\begin{proof}

Firstly, we need to restrict the site $\mathcal{C}_{\overline{\X}}$. The class of connected \'etale $\overline{\X}$-schemes (respectively of connected \'etale $\overline{\Spec(\bz)}$-schemes) is a topologically generating family for the \'etale site of $\overline{\X}$ (respectively of $\overline{\Spec(\bz)}$). It follows easily that the subcategory $\mathcal{C}'_{\overline{\X}}\subset \mathcal{C}_{\overline{\X}}$, consisting in objects $(\mathcal{U}\rightarrow
V\leftarrow\mathcal{Z})$ of $\mathcal{C}_{\overline{\X}}$ such that $\mathcal{U}$ and $V$ are both connected, is a topologically generating family. Then we endow the full subcategory $\mathcal{C}'_{\overline{\X}}$ with the induced topology via the natural fully faithful functor
$$\mathcal{C}'_{\overline{\X}}\hookrightarrow \mathcal{C}_{\overline{\X}}.$$
Then $\mathcal{C}'_{\overline{\X}}$ is a defining site for the topos $\overline{\X}_W$.

The composite map $\mathfrak{f}_{\overline{\X}}\circ j_{\overline{\X}}: B_{W_{K(\X)}}\rightarrow\overline{\X}_W\rightarrow B_{\mathbb{R}}$ is induced by the morphism of topological groups
$W_{K(\X)}\rightarrow\mathbb{R}$. The canonical isomorphism
$j_{\overline{\X}}^*\mathfrak{f}_{\overline{\X}}^*\tr\cong\tr$ induces
$$j_{\overline{\X}*}\,j_{\overline{\X}}^*\,\mathfrak{f}_{\overline{\X}}^*\,\tr\cong
j_{\overline{\X}*}\,\tr.$$
On the one hand $\mathfrak{f}_{\overline{\X}}^*\tr$ is the
sheaf associated to the abelian presheaf
$$
\fonc{\mathfrak{f}_{\overline{\X}}^p\tr}{\mathcal{C}'_{\overline{\X}}}{Ab}{(\mathcal{U}\rightarrow
V\leftarrow\mathcal{Z})}{Hom_{\mathcal{C}'_{\overline{\X}}}((\mathcal{U}\rightarrow
V\leftarrow\mathcal{Z}),(*\rightarrow*\leftarrow\mathfrak{f}^*\tr))}
$$
where $\mathfrak{f}^*\tr$ denotes the object $(\mathbb{R},\mathbb{R},Id)$ of $T_{\overline{\Spec(\bz)}}$ (with trivial action of the Weil groups on $\mathbb{R}$). For any object $(\mathcal{U}\rightarrow V\leftarrow\mathcal{Z})$ of $\mathcal{C}'_{\overline{\X}}$ with $\mathcal{Z}=(Z_0,Z_v,f_v)$, one
has
\begin{align*}
\mathfrak{f}_{\overline{\X}}^p\tr(\mathcal{U}\rightarrow
V\leftarrow\mathcal{Z})
&=Hom_{\mathcal{C}'_{\overline{\X}}}((\mathcal{U}\rightarrow
V\leftarrow\mathcal{Z}),(*\rightarrow*\leftarrow\mathfrak{f}^*\tr))\\
&=Hom_{T_{\overline{\Spec(\bz)}}}(\mathcal{Z},\mathfrak{f}^*\tr)\\
&=Hom_{B_{Top}{W_{\bq}}}(Z_0,\mathbb{R})\\
&=Hom_{Top}(Z_0/W_{\bq},\mathbb{R}).
\end{align*}
One the other hand, the morphism $j_{\X}$ is induced by the continuous functor:
$$
\appl{\mathcal{C}'_{\overline{\X}}}{B_{Top}W_{K(\X)}}{(\mathcal{U}\rightarrow
V\leftarrow\mathcal{Z})}{a^*\mathcal{U}\times_{a^*f^*V}
b^*\mathcal{Z}}
$$
Hence the direct image $j_{\overline{\X}*}$ is given by
$$j_{\overline{\X}*}\tr(\mathcal{U}\rightarrow V\leftarrow
\mathcal{Z})=Hom_{B_{Top}{W_{K(\X)}}}(a^*\mathcal{U}\times_{a^*f^*V}
b^*\mathcal{Z},\mathbb{R}).$$ Here the topological group $\mathbb{R}$ is given with the
trivial action of $W_{K(\X)}$. We set
$$\mathcal{U}_0:=a^*\mathcal{U}\mbox{ and }V_0:=a^*f^*V.$$
Note that $\mathcal{U}_0$ (respectively $V_0$) is given by the finite $G_{K(\X)}$-set (respectively the finite $G_{\bq}$-set) corresponding, via Galois theory, to the \'etale $K(\X)$-scheme
$\mathcal{U}\times_{\X}\Spec K(\X)$ (respectively to the \'etale $\bq$-scheme $V\otimes\bq$).
Here, $\mathcal{U}_0$ (respectively $V_0$) is considered
as a finite set on which $W_{K(\X)}$ acts via
$W_{K(\X)}\rightarrow G_{K(\X)}$ (respectively via $W_{K(\X)}\rightarrow G_{\bq}$). Finally, $W_{K(\X)}$ acts on the space
$Z_0:=b^*\mathcal{Z}$ via $W_{K(\X)}\rightarrow W_\bq$.

Since $W_{K(\X)}$ acts
trivially on $\mathbb{R}$, one has
\begin{align*}
j_{\overline{\X}*}\tr(\mathcal{U}\rightarrow V\leftarrow
\mathcal{Z})
&=Hom_{B_{Top}{W_{K(\X)}}}(a^*\mathcal{U}\times_{a^*f^*V}
b^*\mathcal{Z},\mathbb{R})\\
&=Hom_{Top}((\mathcal{U}_0\times_{V_0}
Z_0)/W_{K(\X)},\mathbb{R}).\\
\end{align*}

\textbf{(i) The map $\mathfrak{f}_{\overline{\X}}^*\,\tr\rightarrow
j_{\overline{\X}*}\,\tr$ is a monomorphism.
}

The morphism $\mathfrak{f}_{\overline{\X}}^*\,\tr\rightarrow
j_{\overline{\X}*}\,\tr$ is given by adjunction. It is induced by the morphism of presheaves on $\mathcal{C}'_{\overline{\X}}$ given by the functorial map
\begin{equation}\label{cool-injection}
Hom_{Top}(Z_0/W_{\bq},\mathbb{R})\longrightarrow Hom_{Top}((\mathcal{U}_0\times_{V_0}
Z_0)/W_{K(\X)},\mathbb{R})
\end{equation}
which is in turn induced by the continuous map
\begin{equation}\label{cool-surjection}
(\mathcal{U}_0\times_{V_0}
Z_0)/W_{K(\X)}\longrightarrow Z_0/W_{\bq}.
\end{equation}
Let $(\mathcal{U}\rightarrow V\leftarrow\mathcal{Z})$ be an object of $\mathcal{C}'_{\overline{\X}}$. Hence $\mathcal{U}$ and $V$ are both connected. Since $\mathcal{U}$ and $V$ are both normal, the schemes $\mathcal{U}\times_{\X}\Spec(K(\X))$ and $V\times_{\overline{\Spec(\bz)}}\Spec(\bq)$ are connected as well. By Galois theory, the Galois groups
$G_{K(\X)}$ and $G_\bq$ act transitively on $\mathcal{U}_0$ and $V_0$ respectively. Hence the Weil groups
$W_{K(\X)}$ and $W_\bq$ act transitively on $\mathcal{U}_0$ and $V_0$ respectively.

We have maps of compactified schemes
$$\mathcal{U}\rightarrow \overline{\X}\rightarrow \overline{\Spec(\bz)}\mbox{ and }\mathcal{U}\rightarrow V\rightarrow \overline{\Spec(\bz)}.$$
We consider the subfield $L(\mathcal{U})$ of $K(\mathcal{U})$ consisting in elements of $K(\mathcal{U})$ that are algebraic over $\mathbb{Q}$, i.e. we set
$$L(\mathcal{U}):=K(\mathcal{U})\cap\overline{\bq}.$$
Note that $\mathcal{U}$ is normal and connected, hence irreducible, so that its function field $K(\mathcal{U})$ is well defined.
We consider the arithmetic curve $\overline{\Spec(\mathcal{O}_{L(\mathcal{U})})}$.
Since $\mathcal{U}$ is normal, we have a canonical map $$\mathcal{U}\longrightarrow\overline{\Spec(\mathcal{O}_{L(\mathcal{U})})}.$$
We denote by $U'$ the (open) image of $\mathcal{U}$ in $\overline{\Spec(\mathcal{O}_{L(\mathcal{U})})}$.
Then we have a factorization $$\mathcal{U}\rightarrow{U}'\rightarrow V\rightarrow \overline{\Spec(\bz)}$$ since $V\rightarrow \overline{\Spec(\bz)}$ is \'etale.

The group $W_{K(\X)}$ acts transitively on $\mathcal{U}_0$, hence the choice of a base point $u_0\in \mathcal{U}_0$ induces an isomorphism of $W_{K(\X)}$-sets :
$$\alpha:\mathcal{U}_0\cong W_{K(\X)}/W_{K(\mathcal{U})}=G_{K(\X)}/G_{K(\mathcal{U})}.$$
We fix such a base point $u_0\in \mathcal{U}_0$. Then one has a 1-1 correspondence
$$\appl{(\mathcal{U}_0\times Z_0)/W_{K(\X)}}{Z_0/W_{K(\mathcal{U})}=Z_0/W_{L(\mathcal{U})}}{(u_0,z)}{z}$$
where the equality $Z_0/W_{K(\mathcal{U})}=Z_0/W_{L(\mathcal{U})}$ follows from the fact that the image of $W_{K(\mathcal{U})}$ in $W_{\bq}$ is precisely $W_{L(\mathcal{U})}$.

We consider now the commutative diagram of topological spaces
\begin{equation*}\begin{CD}
@.(\mathcal{U}_0\times_{V_0} Z_0)/W_{K(\X)} @>(\ref{cool-surjection})>>Z_0/W_\bq\\
@. @ViVV @ AsAA @.\\
@. (\mathcal{U}_0\times Z_0)/W_{K(\X)} @>\cong>\alpha>Z_0/W_{L(\mathcal{U})} @.
\end{CD}\end{equation*}
where $i$ is injective and $s$ is surjective. We denote by $v_0$ the image of $u_0\in \mathcal{U}_0$ in $V_0$ (note that $\mathcal{U}\rightarrow V$ induces a map $\mathcal{U}_0\rightarrow V_0$).

Let $\overline{z}\in Z_0/W_\bq$. There exists $w\in W_{\bq}$ such that $w.z$ goes to $v_0$ under the $W_{\bq}$-equivariant map $Z_0\rightarrow V_0$, since $W_\bq$ acts transitively on $V_0$. Then one has $\overline{w.z}=\overline{z}$, $(u_0,w.z)\in\mathcal{U}_0\times_{V_0} Z_0$, and
$$s\circ\alpha\circ i\overline{(u_0,w.z)}=s\circ\alpha\overline{(u_0,w.z)}=s(\overline{w.z})=\overline{z}$$
where $\overline{*}$ stands for the orbit of some point $*$ under some group action. Using the previous commutative diagram, this shows that the map (\ref{cool-surjection}) is surjective whenever $\mathcal{U}$ and $V$ are both connected. Hence the map (\ref{cool-injection}) is injective for any object $(\mathcal{U}\rightarrow
V\leftarrow\mathcal{Z})$ of $\mathcal{C}'_{\overline{\X}}$. In other words the morphism of presheaves on $\mathcal{C}'_{\overline{\X}}$
$$\mathfrak{f}_{\overline{\X}}^p\tr\longrightarrow
j_{\overline{\X}*}\,\tr$$
is injective. Since the associated sheaf functor is exact, the morphism of sheaves
$$\mathfrak{f}_{\overline{\X}}^*\,\tr\longrightarrow
j_{\overline{\X}*}\,\tr$$ is injective.

\textbf{(ii) The map $\mathfrak{f}_{\overline{\X}}^*\,\tr\rightarrow
j_{\overline{\X}*}\,\tr$ is an epimorphism.}

One has
$$\mathfrak{f}_{\overline{\X}}^*\,\tr(\mathcal{U}\rightarrow *\leftarrow
\mathcal{Z})=Hom_{\overline{\X}_W}(\varepsilon\mathcal{U}\times\varepsilon \mathcal{Z}; \mathfrak{f}_{\overline{\X}}^*\,\tr).$$
Here we denote by $*$ the final object $Et_{\overline{\Spec(\bz)}}$. Moreover, we denote by $\varepsilon\mathcal{U}$ (respectively by $\varepsilon\mathcal{Z}$) the sheaf on the topos $\overline{\X}_W$ associated to the presheaf represented by $(\mathcal{U}\rightarrow *\leftarrow *)$ (respectively by $(*\rightarrow*\leftarrow\mathcal{Z})$), where $*$ stands for the final object of the corresponding site. Finally, the product
$\varepsilon\mathcal{U}\times\varepsilon \mathcal{Z}$ is computed in the topos $\overline{\X}_W$. By adjunction, we have
\begin{align*}
\mathfrak{f}_{\overline{\X}}^*\,\tr(\mathcal{U}\rightarrow *\leftarrow
\mathcal{Z})&=Hom_{\overline{\X}_W}(\varepsilon\mathcal{U}\times\varepsilon \mathcal{Z}; \mathfrak{f}_{\overline{\X}}^*\,\tr)\\
&=Hom_{\overline{\X}_W/\varepsilon\mathcal{U}}(\varepsilon\mathcal{U}\times\varepsilon \mathcal{Z},\varepsilon\mathcal{U}\times\mathfrak{f}_{\overline{\X}}^*\,\tr)
\end{align*}
where we consider the slice topos $\overline{\X}_W/\varepsilon\mathcal{U}$. On the other hand we have
\begin{align*}
\overline{\X}_W/\varepsilon\mathcal{U}&=(\overline{\X}_\et\times_{\overline{\Spec(\bz)}_\et}\overline{\Spec(\bz)}_W)/\varepsilon\mathcal{U}\\
&\cong(\overline{\X}_\et/y\mathcal{U})\times_{\overline{\Spec(\bz)}_\et}\overline{\Spec(\bz)}_W\\
&\cong\mathcal{U}_\et\times_{\overline{\Spec(\bz)}_\et}\overline{\Spec(\bz)}_W\\
&=:\mathcal{U}_W\\
&\cong\mathcal{U}_\et\times_{U'_\et}U'_\et\times_{\overline{\Spec(\bz)}_\et}\overline{\Spec(\bz)}_W\\
&\cong\mathcal{U}_\et\times_{U'_\et}U'_W
\end{align*}
where $U'\subseteq\overline{\Spec(\mathcal{O}_{L(\mathcal{U})})}$ is defined as in the proof of step (i). The last equivalence above is given by Proposition \ref{prop-fiberproduct-makes-sense}. Hence the fiber product site $\mathcal{C}_{\mathcal{U}}$ for $\mathcal{U}_\et\times_{U'_\et}U'_W$ given by the sites $Et_{\mathcal{U}}$, $Et_{U'}$ and $T_{U'}$, is a defining site for the topos $\mathcal{U}_W$. Then the object $\varepsilon\mathcal{U}\times\varepsilon \mathcal{Z}$ of $$\overline{\X}_W/\varepsilon\mathcal{U}=\mathcal{U}_W\cong\mathcal{U}_\et\times_{U'_\et}U'_W$$
is the sheaf associated to the presheaf on $\mathcal{C}_{\mathcal{U}}$ represented by $(*\rightarrow *\leftarrow \mathcal{Z})$
where $\mathcal{Z}=(Z_0,Z_v,f_v)$ is seen as an object of  $T_{U'}$ by restricting the group of operators on $Z_0$ and $Z_v$ for any place $v$ of $\bq$. Moreover, the object $\varepsilon\mathcal{U}\times\mathfrak{f}_{\overline{\X}}^*\,\tr$ of $\overline{\X}_W/\varepsilon\mathcal{U}=\mathcal{U}_W$ is precisely $\mathfrak{f}_{\mathcal{U}}^*\,\tr$. It is the sheaf associated to the presheaf $\mathfrak{f}_{\mathcal{U}}^p\,\tr$ on $\mathcal{C}_{\mathcal{U}}$ represented by $(*\rightarrow *\leftarrow\mathfrak{f}^*_{U'}\tr)$.
Therefore, we have
\begin{align*}
\mathfrak{f}_{\mathcal{U}}^p\,\tr(*\rightarrow *\leftarrow\mathcal{Z})
&= Hom_{\mathcal{C}_{\overline{\X}}}((*\rightarrow
*\leftarrow\mathcal{Z}),(*\rightarrow*\leftarrow\mathfrak{f}^*_{U'}\tr))\\
&=Hom_{T_{U'}}(\mathcal{Z},\mathfrak{f}^*_{U'}\tr)\\
&=Hom_{B_{Top}{W_{L(\mathcal{U})}}}(Z_0,\mathbb{R})\\
&=Hom_{Top}(Z_0/W_{L(\mathcal{U})},\mathbb{R})
\end{align*}
By the universal property of the associated sheaf functor, we obtain a map from
$$\mathfrak{f}_{\mathcal{U}}^p\,\tr(*\rightarrow *\leftarrow\mathcal{Z})=Hom_{Top}(Z_0/W_{L(\mathcal{U})},\mathbb{R})$$ to the set
\begin{align*}
\mathfrak{f}_{\mathcal{U}}^*\,\tr(*\rightarrow *\leftarrow\mathcal{Z})
&= Hom_{\mathcal{U}_W}(\varepsilon\mathcal{U}\times\varepsilon \mathcal{Z},\mathfrak{f}_{\mathcal{U}}^*\,\tr)\\
&=Hom_{\overline{\X}_W/\varepsilon\mathcal{U}}(\varepsilon\mathcal{U}\times\varepsilon \mathcal{Z},\varepsilon\mathcal{U}\times\mathfrak{f}_{\overline{\X}}^*\,\tr)\\
&=Hom_{\overline{\X}_W}(\varepsilon\mathcal{U}\times\varepsilon \mathcal{Z},\mathfrak{f}_{\overline{\X}}^*\,\tr)\\
&=\mathfrak{f}_{\overline{\X}}^*\,\tr(\mathcal{U}\rightarrow *\leftarrow
\mathcal{Z})
\end{align*}
Composing this map
$\mathfrak{f}_{\mathcal{U}}^p\,\tr(*\rightarrow *\leftarrow\mathcal{Z})\rightarrow \mathfrak{f}_{\overline{\X}}^*\,\tr(\mathcal{U}\rightarrow *\leftarrow
\mathcal{Z})$
with
\begin{equation}\label{sympa-surjection}
\mathfrak{f}_{\overline{\X}}^*\,\tr(\mathcal{U}\rightarrow *\leftarrow
\mathcal{Z})\longrightarrow j_{\overline{\X}*}\,\tr(\mathcal{U}\rightarrow *\leftarrow\mathcal{Z}),
\end{equation}
we obtain the natural bijective map from
$$\mathfrak{f}_{\mathcal{U}}^p\,\tr(*\rightarrow *\leftarrow\mathcal{Z})=Hom_{Top}(Z_0/W_{L(\mathcal{U})},\mathbb{R})$$
to the set
$$j_{\overline{\X}*}\,\tr(\mathcal{U}\rightarrow *\leftarrow\mathcal{Z})=Hom_{Top}(Z_0/W_{L(\mathcal{U})},\mathbb{R}).$$
It follows that the map (\ref{sympa-surjection}) is surjective.

It remains to show that the map
\begin{equation}\label{final-surjection}
\mathfrak{f}_{\overline{\X}}^*\,\tr(\mathcal{U}\rightarrow V\leftarrow\mathcal{Z})
\longrightarrow j_{\overline{\X}*}\,\tr(\mathcal{U}\rightarrow V\leftarrow\mathcal{Z})
\end{equation}
is surjective when $V$ is not necessarily the final object. For any object $(\mathcal{U}\rightarrow V\leftarrow \mathcal{Z})$ of $\mathcal{C}'_{\overline{\X}}$, we consider the following commutative diagram
\begin{equation*}\begin{CD}
@.\mathfrak{f}_{\overline{\X}}^*\,\tr(\mathcal{U}\rightarrow *\leftarrow\mathcal{Z})@>(\ref{sympa-surjection})>>j_{\overline{\X}*}\,\tr(\mathcal{U}\rightarrow *\leftarrow\mathcal{Z})\\
@. @VVV @ VpVV @.\\
@. \mathfrak{f}_{\overline{\X}}^*\,\tr(\mathcal{U}\rightarrow V\leftarrow\mathcal{Z}) @>(\ref{final-surjection})>>j_{\overline{\X}*}\,\tr(\mathcal{U}\rightarrow V\leftarrow\mathcal{Z}) @.
\end{CD}\end{equation*}
We have proven above that the map (\ref{sympa-surjection}) is surjective. The vertical arrow $p$ is the natural map from
$$j_{\overline{\X}*}\,\tr(\mathcal{U}\rightarrow *\leftarrow\mathcal{Z})= Hom_{B_{Top}W_{K(\X)}}(\mathcal{U}_0\times Z_0,\mathbb{R})$$
to the set
$$j_{\overline{\X}*}\,\tr(\mathcal{U}\rightarrow V\leftarrow\mathcal{Z})= Hom_{B_{Top}W_{K(\X)}}(\mathcal{U}_0\times_{V_0} Z_0,\mathbb{R}),$$
which is surjective as well. Indeed, $\mathcal{U}_0\times_{V_0} Z_0$ is an open and closed $W_{K(\X)}$-equivariant subspace of
$\mathcal{U}_0\times Z_0$, hence any equivariant continuous map $\mathcal{U}_0\times_{V_0} Z_0\rightarrow\mathbb{R}$ extends to an equivariant continuous map $\mathcal{U}_0\times Z_0\rightarrow\mathbb{R}$. It follows immediately from the previous commutative diagram that the map
(\ref{final-surjection}) is surjective, for any object $(\mathcal{U}\rightarrow V\leftarrow \mathcal{Z})$ of $\mathcal{C}'_{\overline{\X}}$. Therefore the morphism of sheaves $\mathfrak{f}_{\overline{\X}}^*\,\tr\rightarrow j_{\overline{\X}*}\,\tr$ is surjective.
\end{proof}

\begin{theorem}\label{thm-global-cohomology}
Let $\X$ be an irreducible scheme wich is flat, separated and of finite type over $Spec(\bz)$.
If $\X$ is normal then the morphism
$$\mathfrak{f}^*_{\overline{\X}}:H^n(B_{\mathbb{R}},\tr)\longrightarrow H^n(\overline{\X}_W,\tr)$$
is an isomorphism for any $n\geq0$.
\end{theorem}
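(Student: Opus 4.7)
The approach is to factor $\mathfrak{f}^*_{\overline{\X}}$ through the generic point morphism $j_{\overline{\X}}:B_{W_{K(\X)}}\to\overline{\X}_W$ of Proposition \ref{prop-morphism-j} and show that both maps in
\[
H^n(B_{\mathbb{R}},\tr)\xrightarrow{\mathfrak{f}^*_{\overline{\X}}} H^n(\overline{\X}_W,\tr)\xrightarrow{j^*_{\overline{\X}}} H^n(B_{W_{K(\X)}},\tr)
\]
are isomorphisms. Since the composite is $(\mathfrak{f}_{\overline{\X}}\circ j_{\overline{\X}})^*$, proving either two of the three maps isomorphisms forces the third.

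First I would verify that $j^*_{\overline{\X}}$ is an isomorphism. The Leray spectral sequence
\[
E_2^{p,q}=H^p(\overline{\X}_W,R^qj_{\overline{\X},*}\tr)\Longrightarrow H^{p+q}(B_{W_{K(\X)}},\tr)
\]
has $R^qj_{\overline{\X},*}\tr=0$ for $q\geq 1$ by Proposition \ref{prop-acyclic-inclusion}, so it collapses at $E_2$ to an isomorphism $H^n(\overline{\X}_W,j_{\overline{\X},*}\tr)\cong H^n(B_{W_{K(\X)}},\tr)$. Lemma \ref{lem-use-normal} (this is where the normality hypothesis is used) identifies $j_{\overline{\X},*}\tr$ with $\mathfrak{f}^*_{\overline{\X}}\tr$, which by definition is the sheaf $\tr$ on $\overline{\X}_W$, so $j^*_{\overline{\X}}$ is indeed an isomorphism on cohomology.

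Next I would handle the composite $(\mathfrak{f}_{\overline{\X}}\circ j_{\overline{\X}})^*$. Unwinding the definitions, $\mathfrak{f}_{\overline{\X}}\circ j_{\overline{\X}}:B_{W_{K(\X)}}\to B_{\mathbb{R}}$ is induced by the continuous homomorphism $W_{K(\X)}\to\mathbb{R}$ whose kernel $W^1_{K(\X)}$ is compact (being a closed subgroup of $G_{K(\X)}\times W^1_{\bq}$, which is compact). The Leray--Hochschild--Serre spectral sequence for this map of classifying topoi reads
\[
E_2^{p,q}=H^p(B_{\mathbb{R}},R^q\pi_*\tr)\Longrightarrow H^{p+q}(B_{W_{K(\X)}},\tr),
\]
where $\pi:=\mathfrak{f}_{\overline{\X}}\circ j_{\overline{\X}}$. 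Exactly the local analysis carried out in the proof of Proposition \ref{prop-acyclic-inclusion} (passage through $B_{W^1_{K(\X)}}\to B_{W_{K(\X)}}/EW_{K(\X)}\times y(\mathbb{R})\to B_{\mathbb{R}}$) shows that $R^q\pi_*\tr$ is computed by continuous cohomology of the compact group $W^1_{K(\X)}$ with $\mathbb{R}$-coefficients, and this vanishes for $q\geq 1$ by \cite{flach06-2} Corollary~8. For $q=0$ the invariants are simply $\tr$ on $B_{\mathbb{R}}$, so the spectral sequence degenerates and gives the claimed isomorphism in all degrees.

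Combining these two isomorphisms yields that $\mathfrak{f}^*_{\overline{\X}}$ is itself an isomorphism. The only subtle point is the identification and vanishing of the higher direct images $R^q\pi_*\tr$ in the second step, but this is essentially a reprise of the local Beck--Chevalley/compact-group calculation already performed in Proposition \ref{prop-acyclic-inclusion}; everything else is formal once Lemma \ref{lem-use-normal} and Proposition \ref{prop-acyclic-inclusion} are in hand.
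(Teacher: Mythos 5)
Your proof is correct and follows essentially the same route as the paper: degeneration of the Leray spectral sequence for $j_{\overline{\X}}$ via Proposition \ref{prop-acyclic-inclusion}, the identification $j_{\overline{\X}*}\tr\cong\mathfrak{f}^*_{\overline{\X}}\tr$ from Lemma \ref{lem-use-normal}, and then the Hochschild--Serre/Beck--Chevalley argument for the extension $1\to W^1_{K(\X)}\to W_{K(\X)}\to\br\to 1$ using compactness of $W^1_{K(\X)}$ and \cite{flach06-2} Corollary~8. The paper chains the resulting edge-map isomorphisms directly rather than phrasing the conclusion as a two-out-of-three argument, but the underlying computations are identical.
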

\begin{proof}
The Leray spectral sequence
$$H^p(\overline{\X}_W,R^qj_{\overline{\X}*}\tr)\Rightarrow
H^{p+q}(B_{W_{K(\X)}},\tr)$$ degenerates by
Proposition \ref{prop-acyclic-inclusion}. This shows that the
canonical morphism
$$H^n(\overline{\X}_W,\tr)\cong
H^n(\overline{\X}_W,j_{\overline{\X}*}\tr)\longrightarrow
H^{n}(B_{W_{K(\X)}},\tr)$$ is an isomorphism,
where the first identification is given by Lemma
\ref{lem-use-normal}. These cohomology groups can be computed using
the spectral sequence associated with the extension
$$1\rightarrow W_{K(\X)}^1\rightarrow W_{K(\X)}\rightarrow\mathbb{R}\rightarrow1.$$
Indeed, localizing along $E\mathbb{R}$, we obtain a pull-back square
\begin{equation}\begin{CD}
@.B_{W^1_{K(\X)}} @>q'>>\mathcal{T}\cong B_{\mathbb{R}}/E\mathbb{R}\\
@. @Vp'VV @VpVV @.\\
@. B_{W_{K(\X)}} @>q>>B_{\mathbb{R}} @. {}
\end{CD}\end{equation}
This gives an isomorphism
$$p^*R^n(q_*)\cong R^n(q'_*)p'^*$$
for any $n\geq0$. The argument of the proof of Proposition
\ref{prop-acyclic-inclusion} shows that $R^n(q'_*)\tr=0$ for any
$n\geq1$. Hence
$R^n(q_*)\tr=0$ for any $n\geq1$.
Moreover $q$ is connected, i.e. $q^*$ is fully faithful, hence we have
$$q_*\tr=q_*q^*\tr=\tr.$$
Therefore, the Leray spectral sequence given by the
morphism $q$
$$H^i(B_{\mathbb{R}},R^j(q_*)\tr)\Rightarrow H^{i+j}(B_{W_{K(\X)}},\tr)$$
degenerates and yields
$$H^n(B_{\mathbb{R}},\tr)=H^n(B_{\mathbb{R}},q_*\tr)\cong H^n(B_{W_{K(\X)}},\tr)$$
for any $n\geq0$. The result follows.
\end{proof}

\section{Compact support Cohomology of $\X_W$ with $\tr$-coefficients}\label{comp-support}
Throughout this section, the arithmetic scheme $\X$ is supposed to be irreducible, normal, flat, and proper over $\Spec(\bz)$.

\subsection{The morphism $\gamma_{\overline{\X}}:\overline{\mathcal{X}}_W\rightarrow\overline{\mathcal{X}}_{et}$.}

Recall the notion of \'etale $\overline{\mathcal{X}}$-scheme defined in section \ref{sect-AVetaletopos}. An \'etale $\overline{\mathcal{X}}$-scheme is in particular a topological space so that it makes sense to speak of connected \'etale $\overline{\mathcal{X}}$-schemes. Theorem \ref{thm-global-cohomology} yields the following result.

\begin{corollary}\label{cor-cohomology-of-U}
For any connected \'etale $\overline{\mathcal{X}}$-scheme
$\overline{\mathcal{U}}$, the morphism
$$\mathfrak{f}^*_{\overline{\mathcal{U}}}:H^n(B_{\mathbb{R}},\tr)\longrightarrow H^n(\overline{\mathcal{U}}_W,\tr)$$
is an isomorphism for any $n\geq0$. In particular, one has
$H^n(\overline{\mathcal{U}}_W,\tr)=\mathbb{R}$
for $n=0,1$ and
$H^n(\overline{\mathcal{U}}_W,\tr)=0$ for
$n\geq2$.
\end{corollary}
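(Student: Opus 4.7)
The plan is to replicate the proof of Theorem \ref{thm-global-cohomology} for the Weil-\'etale topos $\overline{\mathcal{U}}_W\cong\overline{\X}_W/\gamma_{\overline{\X}}^*y\overline{\mathcal{U}}$. Since $\overline{\mathcal{U}}$ is connected, its underlying scheme $\mathcal{U}$ is connected; being \'etale over the irreducible normal scheme $\X$, it is normal, and hence irreducible. Moreover $\mathcal{U}$ is flat, separated, and of finite type over $\Spec(\bz)$, so the generic point of $\mathcal{U}$, together with the chosen compatible algebraic closures, produces exactly as in Proposition \ref{prop-morphism-j} a morphism
$$j_{\overline{\mathcal{U}}}:B_{W_{K(\mathcal{U})}}\longrightarrow\overline{\mathcal{U}}_W,$$
where $W_{K(\mathcal{U})}:=G_{K(\mathcal{U})}\times_{G_\bq}W_\bq$.

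Next I would transcribe the proofs of Proposition \ref{prop-acyclic-inclusion} and Lemma \ref{lem-use-normal}, replacing the defining site $\mathcal{C}_{\overline{\X}}$ throughout by the analogous site $\mathcal{C}_{\overline{\mathcal{U}}}$ built from $Et_{\overline{\X}}/\overline{\mathcal{U}}$, $Et_{\overline{\Spec(\bz)}}$ and $T_{\overline{\Spec(\bz)}}$. The two arguments go through unchanged because their essential ingredients survive: first, $\mathfrak{f}^*_{\overline{\mathcal{U}}}E\br$ still covers the final object of $\mathcal{C}_{\overline{\mathcal{U}}}$, which reduces the acyclicity claim to the vanishing of continuous $W^1_{K(\mathcal{U})}$-cohomology of spaces of continuous real-valued functions on locally compact $W^1_{K(\mathcal{U})}$-spaces (via fine-ness of these sheaves and compactness of $W^1_{K(\mathcal{U})}$, as in \cite{flach06-2}); and second, the irreducibility and normality of $\mathcal{U}$ imply transitivity of the Weil group action on the generic fibres of connected objects, which is the only geometric input used in step (i) of Lemma \ref{lem-use-normal}.

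With these in hand, the Leray spectral sequence for $j_{\overline{\mathcal{U}}}$ degenerates to give $H^n(\overline{\mathcal{U}}_W,\tr)\cong H^n(B_{W_{K(\mathcal{U})}},\tr)$. The short exact sequence $1\to W^1_{K(\mathcal{U})}\to W_{K(\mathcal{U})}\to\br\to 1$, localized along $E\br$ and combined once more with compactness of $W^1_{K(\mathcal{U})}$, yields $H^n(B_{W_{K(\mathcal{U})}},\tr)\cong H^n(B_\br,\tr)$ exactly as at the end of Theorem \ref{thm-global-cohomology}. The explicit values $\br,\br,0,0,\dots$ in the ``in particular'' clause then follow from the standard computation of the continuous cohomology of $\br$ with coefficients in the topological $\br$-module $\br$.

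The main obstacle is the bookkeeping involved in generalizing Proposition \ref{prop-acyclic-inclusion} and Lemma \ref{lem-use-normal} from $\overline{\X}_W$ to $\overline{\mathcal{U}}_W$: although the underlying arguments are exactly those already given, one must verify carefully that both the covering by $\mathfrak{f}^*E\br$ and the transitivity argument used in step (i) of Lemma \ref{lem-use-normal} do go through verbatim once one passes from the defining site of $\overline{\X}_W$ to that of $\overline{\mathcal{U}}_W$, using only that the underlying scheme $\mathcal{U}$ is irreducible and normal.
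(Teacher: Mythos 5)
Your proof is correct and is essentially the paper's, which simply observes that an \'etale $\overline{\X}$-scheme $(\mathcal{U},D)$ is connected precisely when the scheme $\mathcal{U}$ is connected, so that $\mathcal{U}$ is normal (being \'etale over the normal scheme $\X$), hence irreducible, flat, separated and of finite type over $\Spec(\bz)$, and Theorem \ref{thm-global-cohomology} applies to $\mathcal{U}$. Your fuller unfolding --- re-running Propositions \ref{prop-morphism-j}, \ref{prop-acyclic-inclusion} and Lemma \ref{lem-use-normal} for the localized topos $\overline{\mathcal{U}}_W\cong\overline{\X}_W/\gamma_{\overline{\X}}^*\overline{\mathcal{U}}$ --- is exactly the right reading of ``Theorem \ref{thm-global-cohomology} applies to $\mathcal{U}$'' in the case $D\subsetneq\mathcal{U}_\infty$, since then $\overline{\mathcal{U}}_W$ is not literally the topos attached to the full compactification $(\mathcal{U},\mathcal{U}_\infty)$, and your observation that the proofs of Propositions \ref{prop-acyclic-inclusion} and \ref{lem-use-normal} only see the generic fibre through $W_{K(\mathcal{U})}$ and are insensitive to $D$ is precisely what makes the argument transcribe.
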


\begin{proof} This is clear from the fact that an \'etale $\overline{\mathcal{X}}$-scheme $\overline{\mathcal{U}}=(\mathcal{U},D)$ is connected if and only if the scheme $\mathcal{U}$ is connected, and Theorem \ref{thm-global-cohomology} applies to $\mathcal{U}$.
\end{proof}

Recall that
$\gamma_{\overline{\X}}:\overline{\mathcal{X}}_W\rightarrow\overline{\mathcal{X}}_{et}$
is the projection induced by Definition \ref{xwdef}.
\begin{prop}\label{prop-global-base-change}
The sheaf $R^n\gamma_{\overline{\X}*}(\tr)$ is
the constant \'etale sheaf on $\overline{\X}$ associated to the
discrete abelian group $\mathbb{R}$ for $n=0,1$ and
$R^n\gamma_{\overline{\X}*}(\tr)=0$ for $n\geq2$.
\end{prop}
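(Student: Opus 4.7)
The plan is to realize $R^n\gamma_{\overline{\X}*}(\tr)$ as the sheaf associated to the presheaf $\overline{\mathcal{U}}\mapsto H^n(\overline{\mathcal{U}}_W,\tr)$ on $Et_{\overline{\X}}$, and then invoke Corollary \ref{cor-cohomology-of-U}, which has already done all the hard cohomological work.

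First I would use the localization pull-back square (\ref{pull-back-localization}) combined with the fiber product definition \ref{xwdef} to identify, for any object $\overline{\mathcal{U}}$ of $Et_{\overline{\X}}$,
\[ \overline{\X}_W/\gamma_{\overline{\X}}^*(y\overline{\mathcal{U}})\cong(\overline{\X}_\et/y\overline{\mathcal{U}})\times_{\overline{\Spec(\bz)}_\et}\overline{\Spec(\bz)}_W\cong \overline{\mathcal{U}}_\et\times_{\overline{\Spec(\bz)}_\et}\overline{\Spec(\bz)}_W=\overline{\mathcal{U}}_W. \]
This equivalence is compatible with the canonical projections to $B_{\mathbb{R}}$, so it matches the restriction of $\tr$ on $\overline{\X}_W$ with the sheaf $\tr=\mathfrak{f}_{\overline{\mathcal{U}}}^*(y\mathbb{R})$ on $\overline{\mathcal{U}}_W$. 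By the standard description of higher direct images, $R^n\gamma_{\overline{\X}*}(\tr)$ is then the \'etale sheaf on $\overline{\X}$ associated to the presheaf $P^n:\overline{\mathcal{U}}\mapsto H^n(\overline{\mathcal{U}}_W,\tr)$.

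Next, since the connected \'etale $\overline{\X}$-schemes form a topologically generating family of $(Et_{\overline{\X}},\mathcal{J}_\et)$, it suffices to analyze $P^n$ on such objects. For $n\geq 2$, Corollary \ref{cor-cohomology-of-U} gives $P^n(\overline{\mathcal{U}})=0$ for every connected $\overline{\mathcal{U}}$, so the associated sheaf $R^n\gamma_{\overline{\X}*}(\tr)$ vanishes. For $n=0,1$, the same corollary supplies a functorial isomorphism $\mathfrak{f}^*_{\overline{\mathcal{U}}}:H^n(B_{\mathbb{R}},\tr)=\mathbb{R}\xrightarrow{\sim}P^n(\overline{\mathcal{U}})$ that is natural in $\overline{\mathcal{U}}$ (this naturality comes from the universal property defining $\mathfrak{f}_{\overline{\mathcal{U}}}$ as the restriction of $\mathfrak{f}_{\overline{\X}}$). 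This yields a morphism from the constant presheaf with value $\mathbb{R}$ to $P^n$ which is an isomorphism on every connected \'etale $\overline{\X}$-scheme.

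Finally, passing to associated sheaves, the constant presheaf $\mathbb{R}$ on $Et_{\overline{\X}}$ sheafifies to the constant \'etale sheaf $\underline{\mathbb{R}}$, and one obtains a morphism $\underline{\mathbb{R}}\to R^n\gamma_{\overline{\X}*}(\tr)$ which is an isomorphism because it is so on the generating family of connected objects. The main subtlety lies in the first step, namely the verification of the localization identity $\overline{\X}_W/\gamma_{\overline{\X}}^*(y\overline{\mathcal{U}})\cong\overline{\mathcal{U}}_W$, which is formal but requires commuting slice topoi with fibre products; once this is established, the rest is a direct application of Corollary \ref{cor-cohomology-of-U}.
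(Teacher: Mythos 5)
Your proposal is correct and follows essentially the same route as the paper: identify $R^n\gamma_{\overline{\X}*}(\tr)$ as the sheafification of $\overline{\mathcal{U}}\mapsto H^n(\overline{\mathcal{U}}_W,\tr)$ via the localization isomorphism $\overline{\X}_W/\gamma_{\overline{\X}}^*\overline{\mathcal{U}}\cong\overline{\mathcal{U}}_W$, and then apply Corollary \ref{cor-cohomology-of-U}. The paper compresses this to a one-line deduction; you usefully make explicit the reduction to connected \'etale $\overline{\X}$-schemes as a generating family and the naturality of the comparison map $\mathfrak{f}^*_{\overline{\mathcal{U}}}$, which is exactly what is needed to pass from the pointwise statement of the corollary to the identification of the associated sheaf.
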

\begin{proof}
For any $n\geq0$, $R^n\gamma_{\overline{\X}*}(\tr)$ is the sheaf associated to the presheaf
$$
\appl{Et_{\overline{\X}}}{Ab}{\overline{\mathcal{U}}}
{H^n(\overline{\X}_W/\gamma_{\overline{\X}}^*\overline{\mathcal{U}},\tr)}
$$
Hence the result follows immediately from Corollary \ref{cor-cohomology-of-U}, since $\overline{\X}_W/\gamma_{\overline{\X}}^*\overline{\mathcal{U}}\cong\overline{\mathcal{U}}_W$.
\end{proof}

\subsection{The morphism $\gamma_\infty:\X_{\infty,W}\to \X_{\infty}$}

\subsubsection{} If $T$ is a locally compact space (or any space), one can define the big topos $TOP(T)$ of $T$ as the category of sheaves on the site $(Top/T,\mathcal{J}_{op})$ where $\mathcal{J}_{op}$ is the open cover topology. It is well known that the natural morphism $TOP(T)\rightarrow Sh(T)$ is a cohomological equivalence. The following lemma gives a slight generalization of this result.
\begin{lemma}\label{lemma-section}
Let $T$ be an object of $Top$. Let $\mathcal{T}/_T$ be the big topos
of $T$ and let $Sh(T)$ be its small topos. For any topos
$\mathcal{S}$, the canonical morphism
$$f:\mathcal{T}/_T\times\mathcal{S}\longrightarrow Sh(T)\times\mathcal{S}$$
has a section $s$ such that $s^*\cong f_*$.
\end{lemma}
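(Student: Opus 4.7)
The plan is to reduce to the case $\mathcal{S}=\underline{Set}$ and then extend by 2-functoriality of the product. In the base case, I construct an explicit section $s_0:Sh(T)\rightarrow\mathcal{T}/_T$ of the canonical morphism $f_0:\mathcal{T}/_T\rightarrow Sh(T)$, arranged so that $s_0^*\cong f_{0*}$, and then set $s:=s_0\times\text{id}_{\mathcal{S}}$.

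The morphism $f_0$ is induced by the continuous left-exact inclusion of sites $u:(Op(T),\mathcal{J}_{op})\hookrightarrow(Top/T,\mathcal{J}_{op})$, sending an open subset $U\subset T$ to the object $(U\hookrightarrow T)$ of $Top/T$. Its direct image is restriction to opens, $(f_{0*}G)(U)=G(U\hookrightarrow T)$. I construct the section by exhibiting a right adjoint $s_{0*}$ to $f_{0*}$, given by
\[
(s_{0*}F)(X\xrightarrow{\pi}T):=\Gamma(X,\pi^{-1}F),
\]
where $\pi^{-1}F$ is the pullback sheaf on $X$. Verifying the sheaf condition for $s_{0*}F$ with respect to the open-cover topology reduces to the sheaf condition for $\pi^{-1}F$ on $X$. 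The adjunction $f_{0*}\dashv s_{0*}$ is checked on representables, where it reduces to the universal property of the \'etal\'e space of $F$. Since $f_{0*}$ is left exact and admits a right adjoint, it is the inverse image of a geometric morphism $s_0:Sh(T)\rightarrow\mathcal{T}/_T$ with $s_0^*=f_{0*}$ and $s_{0*}$ as above. That $s_0$ is a section of $f_0$ amounts to $f_{0*}\circ f_0^*\cong\text{id}_{Sh(T)}$: unwinding $f_0^*F$ as the sheafified Kan extension of $F$ along $u$, its value on an open $U$ is a colimit of $F(V)$ over opens $V\supseteq U$, which collapses to $F(U)$ since $V=U$ is terminal in the indexing category.

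For general $\mathcal{S}$, set $s:=s_0\times\text{id}_{\mathcal{S}}$. Functoriality of the product in the 2-category of topoi gives $f\circ s=(f_0\circ s_0)\times\text{id}_{\mathcal{S}}=\text{id}_{Sh(T)\times\mathcal{S}}$. For the identification $s^*\cong f_*$, one can argue via the external product: on generators $A\boxtimes B$ of $\mathcal{T}/_T\times\mathcal{S}$ one has $(f_0\times\text{id})_*(A\boxtimes B)=f_{0*}A\boxtimes B$ and $(s_0\times\text{id})^*(A\boxtimes B)=s_0^*A\boxtimes B$, and these agree by the base case $s_0^*=f_{0*}$. Equivalently, both $f_*$ and $s^*$ are right adjoint to $f^*=(f_0\times\text{id})^*$ in the product topos, the former by definition and the latter by transporting the base-case adjunction $f_0^*\dashv f_{0*}=s_0^*$ through the product; uniqueness of adjoints then yields the desired natural isomorphism.

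The main technical obstacle is this last step: direct images of morphisms of topoi do not in general behave cleanly under products, so one must verify rigorously that the base-case adjunction $f_0^*\dashv f_{0*}=s_0^*$ propagates to an adjunction $(f_0\times\text{id})^*\dashv(s_0\times\text{id})^*$ in the product topos. This is handled most cleanly either by passing to an explicit product-site description of $\mathcal{T}/_T\times\mathcal{S}$ and $Sh(T)\times\mathcal{S}$, or by appealing to the 2-functoriality of $(-)\times\mathcal{S}$, which preserves the adjoint structure of geometric morphisms and in particular transports the additional right adjoint of $f_{0*}$ to the analogous right adjoint of $(f_0\times\text{id})_*$.
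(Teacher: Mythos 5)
Your proposal is correct but takes a genuinely different route from the paper. The base case $\mathcal{S}=\underline{Set}$ is the same in substance — the three-adjoint string $f_0^*\dashv f_{0*}=s_0^*\dashv s_{0*}$ between the big and small topoi of $T$ — though you construct $s_{0*}$ explicitly as $(s_{0*}F)(X\xrightarrow{\pi}T)=\Gamma(X,\pi^{-1}F)$ where the paper simply invokes the well-known $\text{Prol}\dashv\text{Res}\dashv s_{0*}$ picture. Where you diverge is in the extension to the product. The paper chooses explicit product sites of the form $(\mathcal{F}\to *\leftarrow S)$, computes $f_*\mathcal{L}$ and $s^p\mathcal{L}$ on representables, and observes that $s^p\mathcal{L}$ is already a sheaf because the indexing category for the pullback colimit has the initial object $(\text{Prol}(F)\to *\leftarrow S)$ (using $\text{Prol}\dashv\text{Res}$); the two expressions then agree on the nose. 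You instead reduce to the 2-functoriality of $(-)\times\mathcal{S}$. That argument does go through, but the cleanest way to close it is to observe that the base-case adjoint string is exactly a 2-adjunction $f_0\dashv s_0$ in the 2-category of topoi (equivalently $f_{0*}\dashv s_{0*}$ as functors, with iso counit since $s_0$ is an embedding), which any pseudo-2-functor such as $(-)\times\mathcal{S}$ preserves; this gives $f\dashv s$, hence $f^*\dashv s^*$, hence $s^*\cong f_*$ by uniqueness of right adjoints. You should state this 2-adjunction explicitly rather than speaking loosely of "transporting the additional right adjoint." Your alternative route via external products is more delicate: the identity $(f_0\times\text{id})_*(A\boxtimes B)=f_{0*}A\boxtimes B$ amounts to a projection formula for $(f_0\times\text{id})_*$, which is not automatic for direct images of geometric morphisms and would itself require an argument (e.g. via Beck--Chevalley for tidy morphisms). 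You do flag this difficulty, which is appropriate; the 2-functoriality route is the one that actually closes the gap.
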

\begin{proof}
We first observe that one has a canonical equivalence
$$TOP(T):=\widetilde{(Top/T,\mathcal{J}_{op})}\cong\mathcal{T}/_T,$$
where $TOP(T)$ is the big topos of the topological space $T$. In what follows, we shall identify $\mathcal{T}/_T$ with $TOP(T)$.
The morphism
$f':\mathcal{T}/_T\rightarrow Sh(T)$ has a canonical section $s':Sh(T)\rightarrow\mathcal{T}/_T$, hence the map
$$f=(f',Id_{\mathcal{S}}):\mathcal{T}/_T\times\mathcal{S}\longrightarrow Sh(T)\times\mathcal{S}$$
has a section
$$s:=(s',Id_{\mathcal{S}}):Sh(T)\times\mathcal{S}\longrightarrow\mathcal{T}/_T\times\mathcal{S}.$$
Moreover, we have $s'^*=f'_*$ hence a sequence of three adjoint functors
$$f'^*,\,\,\,f'_*=s'^*,\,\,\,s'_*.$$
The functor $f'_*=s'^*$ is called \emph{restriction} and is denoted by $Res$. The functor $f'^*$ is called \emph{prologement} and is denoted by $Prol$.

The category $Op(*)$ of open sets of the one point space is a defining site for the final topos $\underline{Set}$. The site $(\mathcal{S},\mathcal{J}_{can})$ and $(\mathcal{T}/_T,\mathcal{J}_{can})$ can be seen as sites for the topoi $\mathcal{S}$ and  $\mathcal{T}/_T$ respectively, where $\mathcal{J}_{can}$ denotes the canonical topology. Then the morphisms $f'$ and $s'$ are induced by the left exact continuous functors $f'^*$ and $s'^*$ respectively. A site for $\mathcal{T}/_T\times\mathcal{S}$ (respectively for $Sh(T)\times\mathcal{S}$) is given by the category $\mathcal{C}$ of objects of the form
$(\mathcal{F}\rightarrow *\leftarrow S)$ (respectively by the category $C$ of objects $(F\rightarrow *\leftarrow S)$). Here $S$ is an object of $\mathcal{S}$, $F$ is an \'etal\'e space on $T$, $\mathcal{F}$ is a big sheaf on $T$ and $*$ is the set with one element. The categories $C$ and $\mathcal{C}$ both have an initial object $(\emptyset\rightarrow\emptyset\leftarrow\emptyset)$. The morphism of topoi $f$ is induced by the morphism of sites
$$\fonc{f^{-1}}{C}{\mathcal{C}}{(F\rightarrow *\leftarrow S)}{(Prol(F)\rightarrow *\leftarrow S)}
$$
and the morphism $s$ is induced by the morphism of sites
$$\fonc{s^{-1}}{\mathcal{C}}{C}{(\mathcal{F}\rightarrow *\leftarrow S)}{(Res(\mathcal{F})\rightarrow *\leftarrow S)}
$$
Let $\mathcal{L}$ be a sheaf of $\mathcal{T}/_T\times\mathcal{S}$. Then one has
$$f_*\mathcal{L}(F\rightarrow *\leftarrow S)=\mathcal{L}(Prol(F)\rightarrow *\leftarrow S)$$
for any object $(F\rightarrow *\leftarrow S)$ of $C$. On the other hand, $s^*\mathcal{L}$ is the sheaf associated with the presheaf
$$
\fonc{s^p\mathcal{L}}{C}{\underline{Set}}{(F\rightarrow *\leftarrow S)}{\underrightarrow{lim}\,\mathcal{L}(\mathcal{F}\rightarrow *\leftarrow S')}
$$
where the direct limit is taken over the category of arrows $$(F\rightarrow *\leftarrow S)\longrightarrow (Res(\mathcal{F})\rightarrow *\leftarrow S').$$
But this category has an initial object given by $(Prol(F)\rightarrow *\leftarrow S)$, since $Prol$ is left adjoint to $Res$. We obtain
$$s^{-1}\mathcal{L}(F\rightarrow *\leftarrow S)=\mathcal{L}(Prol(F)\rightarrow *\leftarrow S).$$
Hence $s^{-1}\mathcal{L}$ is already a sheaf isomorphic to $f_*\mathcal{L}$. This identification is functorial hence gives an isomorphism
of functors
$$f_*\cong s^*.$$
\end{proof}

\begin{corollary}\label{cor-section}
Let $\mathcal{A}$ be an abelian object of
$\mathcal{T}/_T\times\mathcal{S}$ and let $\mathcal{A}'$ be an abelian object of $Sh(T)\times\mathcal{S}$. We have the following canonical isomorphisms:
$$H^n(\mathcal{T}/_T\times\mathcal{S},\mathcal{A})\cong H^n(Sh(T)\times\mathcal{S},f_*\mathcal{A})$$
$$H^n(Sh(T)\times\mathcal{S},\mathcal{A}')\cong H^n(\mathcal{T}/_T\times\mathcal{S},f^*\mathcal{A}').$$
\end{corollary}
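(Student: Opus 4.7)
The plan is to derive both isomorphisms as soft formal consequences of Lemma \ref{lemma-section}, specifically of the identification $s^* \cong f_*$ together with the fact that $s$ is a section of $f$.

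First I would observe that, as the inverse-image functor of a geometric morphism, $s^*$ is automatically exact. Via the isomorphism $s^* \cong f_*$ provided by the lemma, this forces $f_*$ to be exact as well. (Equivalently, the three-term chain of adjoints $f^* \dashv f_* = s^* \dashv s_*$ gives $f_*$ a right adjoint, hence makes it preserve all limits.) It follows that the higher right derived functors vanish: $R^q f_* \mathcal{A} = 0$ for all $q \geq 1$ and all abelian objects $\mathcal{A}$. The Leray spectral sequence
\[
H^p(Sh(T) \times \mathcal{S},\, R^q f_* \mathcal{A}) \Longrightarrow H^{p+q}(\mathcal{T}/_T \times \mathcal{S},\, \mathcal{A})
\]
therefore degenerates at the $E_2$-page along the row $q = 0$, yielding the first isomorphism
\[
H^n(\mathcal{T}/_T \times \mathcal{S},\, \mathcal{A}) \cong H^n(Sh(T) \times \mathcal{S},\, f_* \mathcal{A}).
\]

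For the second isomorphism I would combine the first with the fact that $s$ is a section, i.e.\ $f \circ s \cong \mathrm{id}$, so that $s^* f^* \cong \mathrm{id}$ on $Sh(T)\times\mathcal{S}$. Using $s^* \cong f_*$ once more gives a functorial isomorphism $f_* f^* \mathcal{A}' \cong \mathcal{A}'$. Applying the first isomorphism with $\mathcal{A} := f^* \mathcal{A}'$ then produces
\[
H^n(\mathcal{T}/_T \times \mathcal{S},\, f^* \mathcal{A}') \cong H^n(Sh(T) \times \mathcal{S},\, f_* f^* \mathcal{A}') \cong H^n(Sh(T) \times \mathcal{S},\, \mathcal{A}'),
\]
which is the second claim.

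There is essentially no obstacle here once Lemma \ref{lemma-section} is granted; the only small point worth checking is the vanishing of $R^q f_*$, which is immediate from exactness of $f_*$, itself forced by the identification $f_* \cong s^*$ with an inverse-image functor (or, equivalently, by the existence of the further right adjoint $s_*$).
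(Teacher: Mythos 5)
Your proof is correct and follows essentially the same route as the paper: both invoke Lemma \ref{lemma-section} to identify $f_* \cong s^*$, use exactness of $s^*$ to degenerate the Leray spectral sequence for $f$, and then deduce the second isomorphism from $f \circ s \cong \mathrm{id}$, which gives $f_* f^* \cong s^* f^* \cong \mathrm{id}$. Your version merely makes a couple of implicit steps (vanishing of $R^q f_*$, the chain of adjoints) slightly more explicit.
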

\begin{proof}
By Lemma \ref{lemma-section}, the Leray spectral sequence associated with the morphism $f:\mathcal{T}/_T\times\mathcal{S}\rightarrow Sh(T)\times\mathcal{S}$ degenerates since $f_*\cong s^*$ is exact. This yields the first isomorphism
$$H^n(\mathcal{T}/_T\times\mathcal{S},\mathcal{A})\cong H^n(Sh(T)\times\mathcal{S},f_*\mathcal{A})=H^n(Sh(T)\times\mathcal{S},s^*\mathcal{A}).$$
Applying this identification to the sheaf $f^*\mathcal{A}'$, we obtain
$$H^n(\mathcal{T}/_T\times\mathcal{S},f^*\mathcal{A}')\cong H^n(Sh(T)\times\mathcal{S},\mathcal{A}')$$
Indeed, we have $f\circ s\cong Id$ hence
$$f_*f^*\mathcal{A}'\cong s^*f^*\mathcal{A}'\cong(f\circ s)^*\mathcal{A}'\cong \mathcal{A}'.$$
\end{proof}

\begin{corollary}\label{cohomology-basechange-overT}
Let $\mathcal{S}$ be any topos. We denote by $p_1:\mathcal{T}\times\mathcal{S}\rightarrow\mathcal{T}$ and $p_2:\mathcal{T}\times\mathcal{S}\rightarrow\mathcal{S}$ the projections.
For any abelian object $\mathcal{A}'$ of $\mathcal{T}\times\mathcal{S}$, one has
$$H^n(\mathcal{T}\times\mathcal{S},\mathcal{A}')\cong H^n(\mathcal{S},p_{2*}\mathcal{A}').$$
For any abelian object $\mathcal{A}$ of $\mathcal{T}$, one has
$$H^n(\mathcal{T}\times\mathcal{S},p_1^*\mathcal{A})\cong H^n(\mathcal{S},\mathcal{A}(*)).$$
\end{corollary}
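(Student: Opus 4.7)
The plan is to apply Corollary \ref{cor-section} with $T$ taken to be the one-point space $*$, which is the final object of $Top^{lc}$. Under this choice, $\mathcal{T}/_{*} \cong \mathcal{T}$ and $Sh(*) \cong \underline{Set}$, so the canonical morphism $f : \mathcal{T}/_T \times \mathcal{S} \to Sh(T) \times \mathcal{S}$ of that corollary becomes (via the identification $\underline{Set} \times \mathcal{S} \cong \mathcal{S}$) precisely the second projection $p_2 : \mathcal{T} \times \mathcal{S} \to \mathcal{S}$. The first isomorphism $H^n(\mathcal{T} \times \mathcal{S}, \mathcal{A}') \cong H^n(\mathcal{S}, p_{2*}\mathcal{A}')$ then follows directly from the first assertion of Corollary \ref{cor-section}.

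For the second isomorphism, I would apply the first one to $\mathcal{A}' = p_1^*\mathcal{A}$ to obtain
\[
H^n(\mathcal{T} \times \mathcal{S},\, p_1^*\mathcal{A}) \;\cong\; H^n(\mathcal{S},\, p_{2*}\, p_1^*\mathcal{A}),
\]
and then identify the sheaf $p_{2*}\, p_1^*\mathcal{A}$ on $\mathcal{S}$ with the constant sheaf having value $\mathcal{A}(*) := t_*\mathcal{A}$. To do this, use the section $s : \underline{Set} \to \mathcal{T}$ of $t : \mathcal{T} \to \underline{Set}$ recalled in section \ref{prem} (which satisfies $t_* = s^*$), together with the unique morphism $u : \mathcal{S} \to \underline{Set}$, to form
\[
\sigma \;=\; (s \circ u,\, \mathrm{Id}_{\mathcal{S}}) \;:\; \mathcal{S} \longrightarrow \mathcal{T} \times \mathcal{S}.
\]
By construction $\sigma$ is a section of $p_2$, and Lemma \ref{lemma-section} applied to $T = *$ yields $p_{2*} \cong \sigma^*$. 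Since $p_1 \circ \sigma = s \circ u$, I compute
\[
p_{2*}\, p_1^*\mathcal{A} \;\cong\; \sigma^* p_1^* \mathcal{A} \;\cong\; (p_1 \circ \sigma)^* \mathcal{A} \;=\; u^* s^* \mathcal{A} \;=\; u^* t_* \mathcal{A} \;=\; u^* \mathcal{A}(*),
\]
which is the constant sheaf on $\mathcal{S}$ with value $\mathcal{A}(*)$. With the usual convention that $H^n(\mathcal{S}, \mathcal{A}(*))$ denotes cohomology with coefficients in this constant sheaf, this gives the second isomorphism.

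No step is genuinely difficult here: the content is entirely carried by Lemma \ref{lemma-section} and the fact that $\mathcal{T}$ admits a section over $\underline{Set}$. The only mild subtlety is the explicit identification of the section $\sigma$ of $p_2$ provided by that lemma (it is $(s,\mathrm{Id})$ after identifying $\underline{Set} \times \mathcal{S}$ with $\mathcal{S}$), and of the pullback-section formula $p_{2*} \cong \sigma^*$; once these are in hand, the rest is a short string of adjunction identifications.
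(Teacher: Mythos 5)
Your proof of the first isomorphism coincides with the paper's: take $T=*$ in Corollary \ref{cor-section}, so that $f$ becomes the projection $p_2$.

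For the second isomorphism you take a genuinely different route. The paper observes that the projection $e_{\T}:\T\to\underline{Set}$ is \emph{tidy} (because $e_{\T *}=s^*$ has a right adjoint $s_*$, so $e_{\T *}$ commutes with filtered colimits), and then invokes the Beck--Chevalley condition for tidy morphisms (\cite{johnstone02} C.3.4.2, C.3.4.10) to obtain $p_{2*}\circ p_1^*\cong e_{\mathcal{S}}^*\circ e_{\T *}$, whence $p_{2*}p_1^*\mathcal{A}$ is the constant sheaf with value $\mathcal{A}(*)$. You instead reuse the section already provided by Lemma \ref{lemma-section}: writing $\sigma=(s\circ u,\mathrm{Id}_{\mathcal{S}})$ for the section of $p_2$ obtained from the $T=*$ case, you use $p_{2*}\cong\sigma^*$ and $p_1\circ\sigma=s\circ u$ to compute $p_{2*}p_1^*\mathcal{A}\cong\sigma^*p_1^*\mathcal{A}=(p_1\circ\sigma)^*\mathcal{A}=u^*s^*\mathcal{A}=u^*t_*\mathcal{A}=u^*\mathcal{A}(*)$. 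Both arguments are correct. Your approach is more self-contained: it deduces the needed Beck--Chevalley-type identification directly from the explicit section supplied by Lemma \ref{lemma-section}, rather than citing the general machinery of tidy geometric morphisms. The paper's argument is slightly more conceptual in that it isolates tidiness of $e_{\T}$ as the structural reason behind the base-change isomorphism, which would apply even without the special section, but at the cost of an external reference to \cite{johnstone02}.
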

\begin{proof}
The topos $\mathcal{T}$ is the big topos of the one point space $\{*\}$ while $Sh(*)=\underline{Set}$ is the final topos. The map $$f:\mathcal{T}\times\mathcal{S}\rightarrow Sh(*)\times\mathcal{S}=\underline{Set}\times\mathcal{S}= \mathcal{S}$$
is the second projection $p_2$. Hence one has
\begin{equation}\label{la}
H^n(\mathcal{T}\times\mathcal{S},\mathcal{A}')\cong H^n(\mathcal{S},p_{2*}\mathcal{A}')
\end{equation}
as it follows from Corollary \ref{cor-section}.

There is a pull-back square :
\begin{equation}\begin{CD}
@.\mathcal{T}\times\mathcal{S} @>p_2>>\mathcal{S}\\
@. @Vp_1VV @Ve_{\mathcal{S}}VV @.\\
@.\mathcal{T}@>e_{\mathcal{T}}>>\underline{Set}@.
\end{CD}\end{equation}
The functor $e_{\mathcal{T}}*$ has a right adjoint, so that $e_{\mathcal{T}}*$ commutes with arbitrary inductive limits and in particular with filtered inductive limits. Hence the morphism $e_{\mathcal{T}}$ is tidy (see \cite{johnstone02} C.3.4.2). It follows that the Beck-Chevalley natural transformation
$$e^*_{\mathcal{S}}\circ e_{\mathcal{T}*}\cong p_{2*}\circ p_1^*.$$
is an isomorphism (see \cite{johnstone02} C.3.4.10). But the sheaf
$$p_{2*}p_1^*\mathcal{A}=e^*_{\mathcal{S}}e_{\mathcal{T}*}\mathcal{A}$$ is the constant sheaf on $\mathcal{S}$ associated with the abelian group $\mathcal{A}(*)$, since $e_{\mathcal{T}*}$ is the global section functor and $e^*_{\mathcal{S}}$ is the constant sheaf functor. Applying (\ref{la}) to the sheaf $p_1^*\mathcal{A}$, we obtain
$$
H^n(\mathcal{T}\times\mathcal{S},p_1^*\mathcal{A})\cong H^n(\mathcal{S},p_{2*}p_1^*\mathcal{A})\cong
H^n(\mathcal{S},e^*_{\mathcal{S}}e_{\mathcal{T}*}\mathcal{A})=H^n(\mathcal{S},\mathcal{A}(*))
$$
for any $n\geq0$.
\end{proof}

\begin{lemma}\label{lem-key}
Let $U$ be a contractible topological space and let
$$q:\mathcal{T}\times Sh(U)\longrightarrow\mathcal{T}$$
be the first projection. Then one has
$$R^n(q_*)q^*\tr=0\mbox{ for $n\geq1$.}$$
\end{lemma}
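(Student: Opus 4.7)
I will follow the strategy used in the proof of Proposition \ref{prop-acyclic-inclusion}. By definition $R^n(q_*)q^*\tr$ is the sheaf on the site $(Top^{lc},\mathcal{J}_{op})$ associated with the presheaf
$$P^n\colon T'\longmapsto H^n\bigl((\T/yT')\times Sh(U),\,q'^*l^*\tr\bigr),$$
where $l:\T/yT'\to\T$ is the localization morphism and $q':(\T/yT')\times Sh(U)\to \T/yT'$ denotes the first projection. Since the compact Hausdorff spaces form a topologically generating subcategory of $\T$, it suffices to prove $P^n(T')=0$ for every compact $T'$ and every $n\geq 1$.

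Next I would apply Corollary \ref{cor-section} with $T:=T'$ and $\mathcal{S}:=Sh(U)$, after identifying $\T/yT'\cong TOP(T')$. The section $s$ of $f$ constructed in Lemma \ref{lemma-section} has the form $(s',\mathrm{Id})$, so $f_*q'^*l^*\tr=\tilde\pi_1^*(s'^*l^*\tr)$, where $\tilde\pi_1\colon Sh(T')\times Sh(U)\to Sh(T')$ is the first projection. A direct inspection of the big sheaf $l^*\tr$ on $T'$ (represented by $\mathbb{R}\times T'\to T'$) shows that $s'^*l^*\tr$ coincides with the small sheaf $\mathcal{C}^0_{T'}$ of continuous $\mathbb{R}$-valued functions on $T'$. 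Hence
$$P^n(T')\cong H^n\bigl(Sh(T')\times Sh(U),\,\tilde\pi_1^*\mathcal{C}^0_{T'}\bigr).$$

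This right-hand side I would compute via the Leray spectral sequence for the second projection $\tilde\pi_2\colon Sh(T')\times Sh(U)\to Sh(U)$. Since $T'$ is compact Hausdorff the global-section functor on $Sh(T')$ commutes with filtered colimits, so $e_{T'}\colon Sh(T')\to\underline{Set}$ is tidy; the derived Beck-Chevalley isomorphism for the Cartesian square of topoi with vertical arrows $\tilde\pi_1$ and $e_U$ then yields $R^q\tilde\pi_{2*}\tilde\pi_1^*\mathcal{C}^0_{T'}\cong e_U^*H^q(T',\mathcal{C}^0_{T'})$. Since $T'$ is paracompact the sheaf $\mathcal{C}^0_{T'}$ is fine, hence acyclic; these higher direct images vanish for $q\geq 1$, and for $q=0$ they give the constant sheaf $\underline{\mathcal{C}^0(T',\mathbb{R})}$ on $U$. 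The spectral sequence thus collapses to
$$P^n(T')\cong H^n\bigl(U,\,\underline{\mathcal{C}^0(T',\mathbb{R})}\bigr),$$
which vanishes for $n\geq 1$ by contractibility of $U$, given the standard identification of sheaf cohomology of the constant sheaf with singular cohomology on sufficiently nice spaces.

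The hard part will be the justification of the derived Beck-Chevalley isomorphism on each slice: this is where the tidiness of $e_{T'}$ for compact $T'$ and the fineness of $\mathcal{C}^0_{T'}$ both enter, and where the argument genuinely departs from the purely formal manipulations available on the product topos.
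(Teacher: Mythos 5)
Your proof is correct and follows essentially the same strategy as the paper's, with one genuine variation in how the key base-change step is carried out. Both arguments reduce to computing $H^n$ of the product topos with coefficients in the pullback of $\mathcal{C}^0(T',\mathbb{R})$ along the projection to $Sh(T')$ (for $T'$ compact), vanish the higher direct images using fineness of $\mathcal{C}^0(T',\mathbb{R})$, and conclude via contractibility of $U$. The difference is \emph{where} the proper base change is invoked: the paper first uses that the canonical comparison $Sh(T'\times U)\to Sh(T')\times Sh(U)$ is an equivalence when $T'$ is compact, passes entirely into the \emph{small topos of the product space} $Sh(T'\times U)$, and then applies classical topological proper base change for the proper projection $p_2\colon T'\times U\to U$. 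You instead stay inside the product topos $Sh(T')\times Sh(U)$ and invoke a derived Beck--Chevalley isomorphism along the tidy morphism $e_{T'}\colon Sh(T')\to\underline{Set}$. These two statements are equivalent in substance, but your version as stated leans on the full strength of topos-theoretic proper base change (Moerdijk--Vermeulen), which is a considerably heavier tool than classical proper base change for topological spaces; Johnstone C.3.4.10 by itself gives the Beck--Chevalley condition only at the level of $0$-th direct images, not in the derived category, so you would need to cite the full proper base change theorem to justify the derived isomorphism you use. You correctly identify this as the ``hard part,'' and it is precisely the part the paper sidesteps by passing through $Sh(T'\times U)$. So the proposal is correct, but a cleaner write-up would either (a) insert the equivalence $Sh(T')\times Sh(U)\cong Sh(T'\times U)$ and reduce to classical proper base change, or (b) give a precise reference for derived Beck--Chevalley along proper/tidy maps of topoi. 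Also a small remark: the opening sentence says you will follow the strategy of Proposition~\ref{prop-acyclic-inclusion}, but what you actually do is closer to the paper's own proof of Lemma~\ref{lem-key}; Proposition~\ref{prop-acyclic-inclusion} uses a different argument via internal Hom and vanishing of continuous cohomology of compact groups, which is not what you end up doing.
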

\begin{proof}
The sheaf $R^n(q_*)q^*\tr$ is the sheaf
associated to the presheaf
$$\fonc{P^n(q_*)q^*\tr}{Top}{Ab}{K}{H^n(\mathcal{T}\times
Sh(U),q^*yK,q^*\tr)}
$$
Recall that $Top$ denotes the category of locally compact topological spaces. The category $Top^c$
of compact spaces is a topologically generating family
of the site $(Top,\mathcal{J}_{ls})$. It is therefore enough to show
\begin{equation}\label{vanish-Rnq}
H^n(\mathcal{T}\times
Sh(U),q^*yK,q^*\tr):=H^n((\mathcal{T}\times
Sh(U))/_{q^*yK},q^*\tr\times q^*yK)=0
\end{equation}
for any compact space $K$ and any $n\geq1$. We have immediately
$$(\mathcal{T}\times
Sh(U))/_{q^*yK}=\mathcal{T}/{yK}\times Sh(U).
$$
We denote by ${q}_K:\mathcal{T}/{yK}\times Sh(U)\rightarrow\mathcal{T}/{yK}\rightarrow\mathcal{T}$ the morphism obtained by projection and localization. Equivalently $q_K$ is the composition
$$\mathcal{T}/{yK}\times Sh(U)\cong (\mathcal{T}\times
Sh(U))/_{q^*yK}\longrightarrow (\mathcal{T}\times
Sh(U))\longrightarrow\mathcal{T}.$$ We consider also the map
$$s:Sh(K)\times Sh(U)\longrightarrow\mathcal{T}/yK\times
Sh(U)$$ defined in Lemma \ref{lemma-section}. Then the following
identifications
\begin{align*}
H^n(\mathcal{T}\times Sh(U),q^*yK,\tr)&\cong
H^n(\mathcal{T}/yK\times
Sh(U),q_K^*\tr)\\
&\cong
H^n(Sh(K)\times Sh(U),s^*q_K^*\tr)\\
&\cong
H^n(Sh(K\times U),\tilde{s}^*q_K^*\tr)
\end{align*}
are induced by the following composite morphism of topoi
$$\tilde{s}:Sh(K\times U)\longrightarrow Sh(K)\times Sh(U)\longrightarrow\mathcal{T}/yK\times
Sh(U).$$ Indeed, the first map $Sh(K\times U)\rightarrow Sh(K)\times
Sh(U)$ is an equivalence since $K$ is compact, and the second map induces an isomorphism on
cohomology by Corollary \ref{cor-section}. The commutative
diagram
\begin{equation}\begin{CD}
@.Sh(K\times U) @>>>Sh(K)\times Sh(U)@>s>>\mathcal{T}/yK\times
Sh(U)\\
@. @VVp_1V @VVV @VVq_KV @.\\
@.Sh(K) @>>>\mathcal{T}/_K @>>>\mathcal{T} @. {}
\end{CD}\end{equation}
shows that the sheaf $\tilde{s}^*q_K^*\tr$ on the
product space $K\times U$ is the inverse image of the sheaf
$\mathcal{C}^0(K,\mathbb{R})$, of continuous real functions on $K$,
along the continuous projection $p_1:K\times U\rightarrow K$. In
other words, one has
$$\tilde{s}^*q_K^*\tr=p_1^*\mathcal{C}^0(K,\mathbb{R}).$$
Consider the proper map
$$p_2:K\times U\longrightarrow U.$$
By proper base change, the stalk of the sheaf $R^n(p_{2*})p_1^*\mathcal{C}^0(K,\mathbb{R})$ on $U$ at some point
$u\in U$ is given by
$$(R^n(p_{2*})p_1^*\mathcal{C}^0(K,\mathbb{R}))_u=
H^n(p_2^{-1}(u),p_1^*\mathcal{C}^0(K,\mathbb{R})\mid_{p_2^{-1}(u)})
=H^n(K,\mathcal{C}^0(K,\mathbb{R})).$$ This group is trivial for any $n\geq1$. Indeed
$K$ is compact, in particular paracompact, hence
$\mathcal{C}^0(K,\mathbb{R})$ is fine on $K$. Thus we have
$$R^n(p_{2*})p_1^*\mathcal{C}^0(K,\mathbb{R})=0\mbox{ for any }n\geq1.$$
Applying again proper base change to the proper map $K\rightarrow
*$, we see that $p_{2*}p_1^*\mathcal{C}^0(K,\mathbb{R})$ is the constant sheaf on
$U$ associated with the discrete abelian group
$$C^0(K,\mathbb{R}):=H^0(K,\mathcal{C}^0(K,\mathbb{R})).$$
The Leray spectral sequence associated with the continuous map
$K\times U\rightarrow U$ therefore yields
$$H^n(Sh(K\times U),p_1^*\mathcal{C}^0(K,\mathbb{R}))
\cong
H^n(U,p_{2*}p_1^*\mathcal{C}^0(K,\mathbb{R}))=H^n(U,C^0(K,\mathbb{R}))$$
for any $n\geq0$. But $U$ is contractible hence
$H^n(U,C^0(K,\mathbb{R}))=0$ for $n\geq1$, since sheaf cohomology
with constant coefficients of locally contractible spaces coincides
with singular cohomology, which is in turn homotopy invariant. We
obtain
$$H^n(K\times U,p_1^*\mathcal{C}^0(K,\mathbb{R}))
=H^n(U,C^0(K,\mathbb{R}))=0$$ for any $n\geq1$. The result follows since we have
\begin{align*}
H^n(\mathcal{T}\times Sh(U),q^*yK,\tr)&\cong H^n(Sh(K\times U),\tilde{s}^*q_K^*\tr)\\
&\cong  H^n(Sh(K\times U),p_1^*\mathcal{C}^0(K,\mathbb{R}))\\
&=0
\end{align*}
for any compact space $K$ and any $n\geq1$.
\end{proof}

\subsubsection{}We still denote by $\X$ an irreducible normal scheme which is flat and proper over $\Spec(\bz)$. Recall that
$\X_{\infty}$ is the topological space $\X^{an}/G_{\mathbb{R}}$, and that the Weil-\'etale topos of $\X_\infty$ is defined as follows (see Definition \ref{xinftydef}):
\[ \X_{\infty,W}:=B_{\mathbb{R}}\times Sh(\X_{\infty})\]
\begin{prop}\label{prop-basechange-infty}
Consider the projection morphism
$$\gamma_{\infty}:\X_{\infty,W}= B_{\mathbb{R}}\times Sh(\X_{\infty})\longrightarrow Sh(\X_{\infty}).$$
If $\br$ denotes the constant sheaf on $\X_{\infty}$ associated to the discrete abelian
group $\mathbb{R}$ we have
\[ R^n\gamma_{\infty*}(\tr)\cong\begin{cases} \br & n=0,1\\ 0 & n\geq 2\end{cases}\]
and
\[ R^n\gamma_{\infty*}(\bz)\cong\begin{cases} \bz & n=0\\ 0 & n\geq 1.\end{cases}\]
\end{prop}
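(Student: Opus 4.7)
The plan is to compute $R^n\gamma_{\infty*}\mathcal{A}$ for $\mathcal{A}\in\{\tr,\bz\}$ as the sheaf on $\X_\infty$ associated to the presheaf $U\mapsto H^n(\gamma_\infty^{-1}(U),\mathcal{A})$ for open $U\subseteq\X_\infty$. Since $\gamma_\infty$ is a projection from the product topos, (\ref{pull-back-localization}) gives $\gamma_\infty^{-1}(U)\simeq B_\br\times Sh(U)$. The space $\X_\infty=\X^{an}/G_\br$ is locally contractible (it is a Hausdorff quotient of a complex manifold by a finite group), so the required sheaves are determined by the values $H^n(B_\br\times Sh(U),\mathcal{A})$ on contractible $U$.

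For such $U$ I would prove the base-change isomorphism
\[ H^n(B_\br,\mathcal{A})\;\xrightarrow{\sim}\;H^n(B_\br\times Sh(U),\mathcal{A}), \]
where on the left $\mathcal{A}$ denotes the corresponding sheaf on $B_\br$ (the pullback of $y\br$ from $\mathcal{T}$, respectively the constant sheaf $\bz$). The strategy is to use the Leray spectral sequence for the second projection $p:B_\br\times Sh(U)\to Sh(U)$ and to show that $R^jp_*\mathcal{A}$ is the constant sheaf on $U$ with value $H^j(B_\br,\mathcal{A})$; contractibility of $U$ then forces $H^i(Sh(U),-)$ to vanish on constant sheaves in positive degrees, so the spectral sequence degenerates. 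To obtain the description of $R^jp_*\mathcal{A}$ I would imitate the method of Lemma \ref{lem-key}: first sheafification on $U$ reduces to computing on compact subspaces $K\subseteq U$; then localization along $E\br\to\ast$ in $B_\br$, using the canonical equivalence $B_\br/E\br\simeq\mathcal{T}$, reduces the assertion to acyclicity over $\mathcal{T}\times Sh(K)$, which is exactly the content of Lemma \ref{lem-key} for $\tr$-coefficients and of Corollary \ref{cohomology-basechange-overT} for $\bz$-coefficients.

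It then remains to compute $H^j(B_\br,\mathcal{A})$, which is the continuous group cohomology of $\br$ with trivial coefficients. For $\mathcal{A}=\tr$ one has $H^0=\br$, $H^1=\hom_{\mathrm{cts}}(\br,\br)=\br$, and $H^j=0$ for $j\ge 2$ by Van~Est, reducing to the Lie-algebra cohomology of the one-dimensional abelian $\br$. For $\mathcal{A}=\bz$, continuity forces every cochain $\br^n\to\bz$ to be constant, so the bar complex becomes $\bz\xrightarrow{0}\bz\xrightarrow{\mathrm{id}}\bz\xrightarrow{0}\bz\xrightarrow{\mathrm{id}}\cdots$, whose cohomology is $\bz$ in degree $0$ and vanishes in all positive degrees. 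Combining these computations with the base-change step gives the stated values of $R^n\gamma_{\infty*}\mathcal{A}$.

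The main obstacle will be the base-change step for $R^jp_*\mathcal{A}$, since neither $B_\br\to\underline{Set}$ nor $Sh(U)\to\underline{Set}$ is manifestly tidy, so a direct Beck--Chevalley invocation does not seem available. I expect to navigate this by performing the localizations explicitly as in the proof of Lemma \ref{lem-key}, thereby reducing to the cohomological acyclicity statements already established for the topos $\mathcal{T}$ and exploiting paracompactness of compact $K$ together with the acyclicity of the sheaf $\mathcal{C}^0(K,\br)$ of continuous real-valued functions (in the $\tr$-case) or the contractibility of $K\times U$ (in the $\bz$-case).
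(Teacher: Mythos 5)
Your outline (reduce to contractible $U$, establish a base-change isomorphism $H^n(B_\br,\mathcal{A})\cong H^n(B_\br\times Sh(U),\mathcal{A})$, compute $H^n(B_\br,\mathcal{A})$) agrees with the paper's structure, and the final values you obtain for $H^n(B_\br,\mathcal{A})$ are correct. The gap is, as you suspect, the base-change step. You use the Leray spectral sequence for $p:B_\br\times Sh(U)\to Sh(U)$ and claim $R^jp_*\mathcal{A}$ is the constant sheaf with value $H^j(B_\br,\mathcal{A})$, but this constancy, unwound, is precisely the base-change isomorphism for smaller contractible opens $V\subset U$, so the argument is circular unless you have an independent handle. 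The mechanism you propose does not supply one: ``sheafification on $U$ reduces to computing on compact subspaces'' misreads Lemma \ref{lem-key} --- there one is computing a sheaf on $\mathcal{T}$, whose site is generated by compact spaces, whereas a sheaf on $Sh(U)$ is determined by its values on open subsets of $U$. Likewise the localization along $E\br$ lives on the $B_\br$ factor, which is the \emph{source} and not the \emph{target} of your $p$, so it does not produce a Beck--Chevalley square for $p$.

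The paper resolves this by projecting to the other factor, $\pi_1:B_\br\times Sh(U)\to B_\br$. Localizing along $E\br$ then does yield the pullback square (\ref{pull-back-localization}) with $q:\mathcal{T}\times Sh(U)\to\mathcal{T}$ on top; Beck--Chevalley gives $l^*R^n\pi_{1*}\cong R^nq_*\,l'^*$, conservativity of the forgetful functor $l^*$ transports Lemma \ref{lem-key} to $R^n\pi_{1*}\tr=0$ for $n\geq1$, and $\pi_{1*}\tr=\tr$ (since $\pi_1$ is connected and locally connected, properties stable under base change) collapses the Leray spectral sequence to $H^n(B_\br\times Sh(U),\tr)\cong H^n(B_\br,\tr)$. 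For $\bz$-coefficients the paper does not pass through $H^n(B_\br,\bz)$ at all: it applies the bar spectral sequence for the group $\br$ in $\mathcal{T}\times Sh(U)$, identifying the $E_1$-page with $H^q(Sh(\br^\bullet\times U),\bz)$ via Corollary \ref{cor-section}, so contractibility of $\br^q\times U$ gives $H^p(C(\bz))$ directly. Your route instead relies tacitly on identifying $H^n(B_\br,\bz)$ with continuous-cochain cohomology of $\br$ before running into the same base-change obstruction.
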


\begin{proof}
The sheaf $R^n\gamma_{\infty*}(\tr)$ is the
sheaf on the topological space $\X_{\infty}$ associated to the presheaf
$$
\fonc{P^n(\gamma_{\infty*})(\tr)}{Op(\X_{\infty})}{Ab}{U}{H^n(B_{\mathbb{R}}\times
Sh(\X_{\infty}),U,\tr)}
$$
where $Op(\X_{\infty})$ is the category of open sets of
$\X_{\infty}$. One has
$$H^n(B_{\mathbb{R}}\times
Sh(\X_{\infty}),U,\tr):=H^n(B_{\mathbb{R}}\times
Sh(\X_{\infty})/U,\tr)=H^n(B_{\mathbb{R}}\times
Sh(U),\tr).$$ The family of contractible open subsets $U\subset\X_{\infty}$ forms a topologically generating family of the site $(Op(\X_{\infty}),\mathcal{J}_{op})$, since $\X_{\infty}$ is locally contractible.
It is therefore enough to compute the groups $H^n(B_{\mathbb{R}}\times
Sh(U),\tr)$ for $U$ contractible. For any
contractible open subset $U\subset\X_{\infty}$, we consider the following pull-back square :
\begin{equation}\begin{CD}
@.\mathcal{T}\times Sh(U) @>q>>\mathcal{T}\\
@. @Vl'VV @VlVV @.\\
@.B_{\mathbb{R}}\times Sh(U)@>p>>B_{\mathbb{R}} @.
\end{CD}\end{equation}
Here the vertical arrows $l$ and $l'$ are both localisation maps (recall that
$B_{\mathbb{R}}/E\mathbb{R}\cong\mathcal{T}$), while $p$ and and $q$ are the projections. This yields a canonical isomorphism
$$l^*(Rp_*)\cong (Rq_*)l'^*.$$
By Lemma \ref{lem-key}, we obtain
$$l^*(R^np_*)\tr\cong(R^nq_*)l'^*\tr=(R^nq_*)q^*\tr=0$$
for any $n\geq1$. It follows immediately that
$(R^np_*)\tr=0$ for $n\geq 1$, since
$l^*:B_{\mathbb{R}}\rightarrow\mathcal{T}$ is the forgetful functor
(forget the $y\mathbb{R}$-action).

The contractible topological space $U$ is connected and locally
connected, hence so is the morphism of topoi
$Sh(U)\rightarrow\underline{Set}$. Since connected and locally
connected morphisms are stable under base change (see \cite{johnstone02} C.3.3.15), the first
projection $p:B_{\mathbb{R}}\times Sh(U)\rightarrow B_{\mathbb{R}}$
is also connected and locally connected. In particular, $p^*$ is fully faithful hence we have
$$p_*\tr:= p_*p^*\tr=\tr$$
The Leray spectral sequence
associated to the morphism $p$ therefore yields
\begin{equation}\label{ident-coh-infty-contractible-BR}
H^n(B_{\mathbb{R}}\times Sh(U),p^*\tr)\cong H^n(B_{\mathbb{R}},p_*p^*\tr)
=H^n(B_{\mathbb{R}},\tr).
\end{equation}
But one has $H^n(B_{\mathbb{R}},\tr)=\mathbb{R}$ for $n=0,1$ and $H^n(B_{\mathbb{R}},\tr)=0$ for $n\geq 2$. Hence the sheaf $R^n\gamma_{\infty*}(\tr)$ is the constant sheaf on $\X_{\infty}$ associated to the discrete abelian
group $\mathbb{R}$ for $n=0,1$ and $R^n(\gamma_{\infty*})\tr=0$ for $n\geq 2$.

To compute $R^n\gamma_{\infty*}(\bz)$ recall that for any group object $\G$ in a topos $\E$ and any abelian $\G$-object $\A$ there is a spectral sequence
\[   H^p(H^q(\E/\G^\bullet,\A))\Rightarrow H^{p+q}(B_\G,\A).\]
Applying this to $\G=\br$ in $\E=\T\times Sh(U)$ we note that the classifying topos of $\G$ is just $B_\br\times Sh(U)$ by \cite{diac75}. Hence for $\A=\bz$ we obtain a spectral sequence
\[   H^p(H^q(\T/\br^\bullet\times Sh(U),\bz))\cong H^p(H^q(Sh(\br^\bullet\times U),\bz))\Rightarrow H^{p+q}(B_\br\times Sh(U),\bz)\]
where we have again used Corollary \ref{cor-section} and the fact that the spaces $\br^q$ are locally compact. Now if $U$ is contractible so is $\br^q\times U$ and $H^q(Sh(\br^\bullet\times U),\bz)=\bz$ (resp. $0$) for $q=0$ (resp. $q>0$). The spectral sequence degenerates to an isomorphism
\[ H^p(B_\br\times Sh(U),\bz)\cong H^p(C(\bz))=\begin{cases} \bz & p=0\\ 0 & p>0 \end{cases}\]
where $C(\bz)$ is the complex associated to the constant simplicial abelian group $\bz$ which is quasi-isomorphic to $\bz[0]$. The sheaf
$R^n\gamma_{\infty*}(\bz)$ is associated to the presheaf $U\mapsto H^p(B_\br\times Sh(U),\bz)$ and hence takes the values in the statement of Proposition \ref{prop-basechange-infty}.
\end{proof}

By Proposition \ref{prop-basechange-infty} the Leray spectral sequence for $\gamma_\infty$ induces a long exact sequence
\[ \cdots\to H^i(\X_\infty,\br)\to H^i(\X_{\infty,W},\tr)\to H^{i-1}(\X_\infty,\br)\to \cdots\]
which decomposes into a collection of canonical isomorphisms
\begin{equation}
H^i(\X_{\infty,W},\tr)\cong H^i(\X_\infty,\br)\oplus H^{i-1}(\X_\infty,\br)
\label{xinfty-split}\end{equation}
since $\gamma_\infty$ is canonically split by the morphism of topoi $\sigma:Sh(\X_{\infty})\to B_{\mathbb{R}}\times Sh(\X_{\infty})$ which is the product with $Sh(\X_{\infty})$ of the canonical splitting $\underline{Set}\to\T\to B_\br$ of the canonical projection $B_\br\to\T\to\underline{Set}$. Note here that $\sigma^*$ applied to the adjunction map $\br=\gamma_\infty^*\gamma_{\infty,*}\tr\to\tr$ is an isomorphism $\br=\sigma^*\br\cong\sigma^*\tr\cong\br$.

\subsection{The fundamental class.}\label{funclass}
The map $\mathfrak{f}_{\overline{\X}}:\overline{\X}_W\rightarrow B_{\br}$ induces
an isomorphism
$$\mathfrak{f}^*_{\overline{\X}}:Hom_{c}(\br,\br)=H^1(B_{\br},\tr)\rightarrow H^1(\overline{\X}_W,\tr).$$
\begin{definition}\label{thetadef}
The \emph{fundamental class} is defined as follows:
$$\theta:=\mathfrak{f}^*_{\overline{\X}}(Id_{\br})\in H^1(\overline{\X}_W,\tr).$$
\end{definition}
We consider the sheaf $\tr$ as a ring object on the topos $\overline{\X}_W$. For any $\tr$-module $M$ on $\overline{\X}_W$, one has (see \cite{sga4} V.3.5)
$$Ext^n_{\tr}(\overline{\X}_W,\tr,M)=Ext^n_{\mathbb{Z}}(\overline{\X}_W,\mathbb{Z},M)=H^n(\overline{\X}_W,M).$$
Hence the Yoneda product
$$Ext^1_{\tr}(\overline{\X}_W,\tr,\tr)\times Ext^n_{\tr}(\overline{\X}_W,\tr,M)\longrightarrow Ext^{n+1}_{\tr}(\overline{\X}_W,\tr,M)$$
gives a morphism
$$H^1(\overline{\X}_W,\tr)\times H^n(\overline{\X}_W,M)\longrightarrow H^{n+1}(\overline{\X}_W,M).$$
Thus the fundamental class $\theta\in H^1(\overline{\X}_W,\tr)$ defines a $\mathbb{R}$-linear map of
$\mathbb{R}$-vector spaces
\begin{equation}\label{cupproduct}
\cup\theta:H^n(\overline{\X}_W,M)\longrightarrow H^{n+1}(\overline{\X}_W,M).
\end{equation}
Furthermore, the \'etale sheaf $R^n\gamma_{\overline{\X},*}(M)$ is the sheaf associated with the presheaf
$$
\fonc{P^n\gamma_{\overline{\X},*}(M)}{Et_{\overline{\X}}}{Ab}{\overline{\mathcal{U}}}{H^n(\overline{\mathcal{U}}_W,M)}
$$
For any $\overline{\mathcal{U}}$ \'etale over $\overline{\X}$ we define $\theta_{\overline{\mathcal{U}}}$ to be the pull-back of $\theta$ in $H^1(\overline{\mathcal{U}}_W,\tr)$. Then cup product with the fundamental class $\theta_{\overline{\mathcal{U}}}$ gives a map $H^n(\overline{\mathcal{U}}_W,M)\rightarrow H^{n+1}(\overline{\mathcal{U}}_W,M)$, which is functorial in $\overline{\mathcal{U}}$. In other words, we have a morphism
of presheaves $P^n\gamma_{\overline{\X},*}(M)\rightarrow P^{n+1}\gamma_{\overline{\X},*}(M)$. Applying the associated sheaf functor, we obtain a morphism of sheaves
\begin{equation}\label{cupproduct-on-sheaves}
{\cup\theta}:R^n\gamma_{\overline{\X},*}(M)\longrightarrow R^{n+1}\gamma_{\overline{\X},*}(M)
\end{equation}
More precisely, the map (\ref{cupproduct}) is induced by a morphism of complexes
\begin{equation}\label{cupproduct-cplexes}
\cup\theta: R\Gamma_{\overline{\X}_W}(M)\longrightarrow R\Gamma_{\overline{\X}_W}(M)[1].
\end{equation}
Consider now the complex of \'etale sheaves $R\gamma_{\overline{\X},*}(M)$. For any \'etale $\overline{\X}$-scheme $\overline{\mathcal{U}}$, the complex of abelian groups $R\gamma_{\overline{\X},*}(M)(\overline{\mathcal{U}})$ is quasi-isomorphic to $R\Gamma_{\overline{\mathcal{U}}_W}(M)$. Hence cup product with the canonical classes $\theta_{\overline{\mathcal{U}}}$ yields a morphism of complexes of sheaves
\begin{equation}\label{cupproduct-on-complexofsheaves}
{\cup\theta}:R\gamma_{\overline{\X},*}(M)\longrightarrow R\gamma_{\overline{\X},*}(M)[1]
\end{equation}
The morphisms of complexes (\ref{cupproduct-cplexes}) and (\ref{cupproduct-on-complexofsheaves}) above are well defined in the corresponding derived category. Moreover, the morphisms (\ref{cupproduct}), (\ref{cupproduct-on-sheaves}), (\ref{cupproduct-cplexes}), and (\ref{cupproduct-on-complexofsheaves}) are functorial in $M$.

Finally, the morphism (\ref{cupproduct-on-complexofsheaves}) is compatible with (\ref{cupproduct}) in the following sense.
Under the canonical isomorphisms $H^n(\overline{\X}_W,M)=\mathbb{H}^n(\overline{\X}_{et},R\gamma_{\overline{\X},*}(M))$ and $H^{n+1}(\overline{\X}_W,M)=\mathbb{H}^n(\overline{\X}_{et},R\gamma_{\overline{\X},*}(M)[1])$,
the morphism induced by (\ref{cupproduct-on-complexofsheaves}) on hypercohomology groups
\begin{equation}\label{cupproduct-hypercohomology}
\mathbb{H}^n(\overline{\X}_{et},R\gamma_{\overline{\X},*}(M))\longrightarrow \mathbb{H}^n(\overline{\X}_{et},R\gamma_{\overline{\X},*}(M)[1])
\end{equation}
coincide with the morphism (\ref{cupproduct}).

Consider now the open-closed decomposition
$$\varphi:\X_{et}\longrightarrow\overline{\X}_{et}\longleftarrow Sh(\X_{\infty}):u_{\infty}$$
given by Corollary \ref{closed/open-decomp-etale}. The morphism $\gamma:\overline{\X}_W\rightarrow\overline{\X}_{et}$
gives pull-back squares
\begin{equation*}\begin{CD}
@.\X_W@>\gamma_{\X}>>\X_\et \\
@. @V{\phi}VV @V{\varphi}VV @.\\
@.\overline{\X}_W@>\gamma_{\overline{\X}}>>\overline{\X}_\et @.
\end{CD}\end{equation*}
and
\begin{equation*}\begin{CD}
@.\X_{\infty,W}@>\gamma_{\infty}>>Sh(\X_{\infty}) \\
@. @Vi_{\infty}VV @Vu_{\infty}VV@.\\
@.\overline{\X}_W@>\gamma_{\overline{\X}}>>\overline{\X}_\et @.
\end{CD}\end{equation*}
The second square is indeed a pull-back, as can be seen from the following commutative diagram:
\begin{equation*}\begin{CD}
@.\X_{\infty,W} @>\gamma_{\infty}>>Sh(\X_{\infty})@>>>Sh(\infty)=\underline{Set}\\
@. @Vi_{\infty}VV @Vu_{\infty}VV @VVV @.\\
@.\overline{\X}_W @>\gamma_{\overline{\X}}>>\overline{\X}_{et} @>>>\overline{\Spec(\bz)}_{et} @. {}
\end{CD}\end{equation*}
The right hand side square and the total square are both pull-backs by Corollary \ref{pull-ba} and Proposition \ref{prop-pullback-archW} respectively. It follows that the left hand side square is a pull-back as well.

\begin{theorem}\label{thm-basechange-cpctsupp}
There is an isomorphism
$R^n\gamma_{\overline{\X}*}(\phi_!\tr)\cong\varphi_!\tr$ for $n=0,1$,
and $R^n\gamma_{\overline{\X}*}(\phi_!\tr)=0$ for $n\geq2$. Under these identifications, the morphism $$\cup\theta:R^0\gamma_{\overline{\X}*}(\phi_!\tr)\longrightarrow R^1\gamma_{\overline{\X}*}(\phi_!\tr)$$ given by cup product with the fundamental class, is the identity of the sheaf $\varphi_!\tr$.
\end{theorem}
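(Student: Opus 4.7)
The plan is to apply $R\gamma_{\overline{\X}*}$ to the open-closed exact sequence
$$0\to\phi_!\tr\to\tr\to i_{\infty,*}\tr\to 0$$
in $\overline{\X}_W$ and to read off the result from the long exact sequence, matching terms via the computations for $R\gamma_{\overline{\X}*}\tr$ and $R\gamma_{\infty,*}\tr$ already available.

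The first step is a base-change identity $R\gamma_{\overline{\X}*}\circ i_{\infty,*}\cong u_{\infty,*}\circ R\gamma_{\infty,*}$ for the pull-back square of closed embeddings $i_\infty,u_\infty$; this is formal, since $i_\infty^*$ exact implies $i_{\infty,*}$ preserves injectives while $u_{\infty,*}$ is exact, and commutativity of the square handles the rest. Combined with Proposition \ref{prop-basechange-infty}, this gives $R^n\gamma_{\overline{\X}*}i_{\infty,*}\tr\cong u_{\infty,*}\br$ for $n=0,1$ and $0$ for $n\geq 2$. Proposition \ref{prop-global-base-change} gives $R^n\gamma_{\overline{\X}*}\tr$ as the constant sheaf $\br$ on $\overline{\X}$ for $n=0,1$ and $0$ for $n\geq 2$. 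The map $R^n\gamma_{\overline{\X}*}\tr\to R^n\gamma_{\overline{\X}*}i_{\infty,*}\tr$ induced by the adjunction unit $\tr\to i_{\infty,*}\tr$ reduces, via closed base-change, to the adjunction unit $\br\to u_{\infty,*}u_\infty^*\br=u_{\infty,*}\br$, which by the open-closed sequence
$$0\to\varphi_!\br\to\br\to u_{\infty,*}\br\to 0$$
on $\overline{\X}_\et$ is a sheaf epimorphism with kernel $\varphi_!\br$.

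Putting these into the long exact sequence
$$0\to R^0\gamma_{\overline{\X}*}\phi_!\tr\to\br\to u_{\infty,*}\br\to R^1\gamma_{\overline{\X}*}\phi_!\tr\to\br\to u_{\infty,*}\br\to R^2\gamma_{\overline{\X}*}\phi_!\tr\to 0$$
and using that each $\br\to u_{\infty,*}\br$ is surjective with kernel $\varphi_!\br$ immediately yields $R^n\gamma_{\overline{\X}*}\phi_!\tr\cong\varphi_!\br\cong\varphi_!\tr$ for $n=0,1$ and vanishing for $n\geq 2$.

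For the cup-product statement, functoriality of $\cup\theta$ along the inclusion $\phi_!\tr\hookrightarrow\tr$ gives a commutative square whose vertical maps are both the canonical embedding $\varphi_!\br\hookrightarrow\br$, so it suffices to show that $\cup\theta:R^0\gamma_{\overline{\X}*}\tr\to R^1\gamma_{\overline{\X}*}\tr$ is the identity of $\br$. This follows sectionwise over each connected \'etale $\overline{\mathcal{U}}$, since Corollary \ref{cor-cohomology-of-U} realizes $H^n(\overline{\mathcal{U}}_W,\tr)$ as the pullback along $\mathfrak{f}^*_{\overline{\mathcal{U}}}$ of $H^n(B_\br,\tr)$ with $\theta_{\overline{\mathcal{U}}}=\mathfrak{f}^*_{\overline{\mathcal{U}}}(\mathrm{id}_\br)$, and cup product with $\mathrm{id}_\br\in H^1(B_\br,\tr)$ sends $1\in H^0(B_\br,\tr)$ to $\mathrm{id}_\br$. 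The main technical point to watch is the compatibility of the closed base-change isomorphism with the adjunction unit, which ensures that the map $R^n\gamma_{\overline{\X}*}\tr\to R^n\gamma_{\overline{\X}*}i_{\infty,*}\tr$ really is the restriction $\br\to u_{\infty,*}\br$ for $n=1$ as well as $n=0$; this is purely formal but must be verified carefully.
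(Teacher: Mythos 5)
Your proof is correct and follows essentially the same approach as the paper: apply $R\gamma_{\overline{\X}*}$ to the open-closed sequence $0\to\phi_!\tr\to\tr\to i_{\infty*}\tr\to 0$, use the base-change identity $R\gamma_{\overline{\X}*}i_{\infty*}\cong u_{\infty*}R\gamma_{\infty*}$ together with Propositions~\ref{prop-global-base-change} and~\ref{prop-basechange-infty}, and read off the long exact sequence. Your cup-product argument is a minor streamlining: where the paper also computes $\cup\theta_\infty$ on the archimedean fibre and invokes a morphism of short exact sequences, you rely only on functoriality along the monomorphism $\phi_!\tr\hookrightarrow\tr$ (both vertical maps are the inclusion $\varphi_!\br\hookrightarrow\br$ and the bottom row is the identity by Corollary~\ref{cor-cohomology-of-U}), which indeed suffices.
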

\begin{proof}
We have an exact sequence of abelian sheaves on $\overline{\X}_W$:
$$0\rightarrow\phi_!\tr\rightarrow\tr\rightarrow i_{\infty*}\tr\rightarrow 0$$
Applying the functor $R\gamma_{\overline{\X}*}$, we obtain an exact sequence of \'etale sheaves
$$0\rightarrow\gamma_{\overline{\X}*}\phi_!\tr\rightarrow \gamma_{\overline{\X}*}\tr\rightarrow \gamma_{\overline{\X}*}i_{\infty*}\tr\rightarrow R^1\gamma_{\overline{\X}*}(\phi_!\tr)\rightarrow R^1\gamma_{\overline{\X}*}(\tr)\rightarrow R^1\gamma_{\overline{\X}*}(i_{\infty*}\tr)\rightarrow...$$
But we have canonical isomorphisms
\begin{equation}\label{iso-given-by-clsedsubtopos}
R^n\gamma_{\overline{\X}*}(i_{\infty*}\tr)\cong R^n(\gamma_{\overline{\X}*}i_{\infty*})\tr\cong
R^n(u_{\infty*}\gamma_{\infty*})\tr\cong u_{\infty*}R^n\gamma_{\infty*}(\tr)
\end{equation}
for any $n\geq0$, since the direct image of a closed embedding of topoi is exact. Therefore, by Proposition \ref{prop-global-base-change} and Proposition \ref{prop-basechange-infty}, we obtain an exact sequence
$$0\rightarrow\gamma_{\overline{\X}*}\phi_!\tr\rightarrow\mathbb{R}\rightarrow u_{\infty*}\mathbb{R}\rightarrow R^1\gamma_{\overline{\X}*}(\phi_!\tr)\rightarrow\mathbb{R}\rightarrow u_{\infty*}\mathbb{R}\rightarrow R^2\gamma_{\overline{\X}*}(\phi_!\tr)\rightarrow 0$$
and $R^n\gamma_{\overline{\X}*}(\phi_!\tr)=0$ for $n\geq3$. The map $\mathbb{R}\rightarrow u_{\infty*}\mathbb{R}$ is surjective since $u_{\infty}$ is a closed embedding. Hence we have an exact sequence
$$0\rightarrow R^n\gamma_{\overline{\X}*}(\phi_!\tr)\rightarrow\mathbb{R}\rightarrow u_{\infty*}\mathbb{R}\rightarrow 0$$
for $n=0,1$ and $R^n\gamma_{\overline{\X}*}(\phi_!\tr)=0$ for $n\geq2$. The first claim of the theorem follows.

For any connected \'etale $\overline{\X}$-scheme $\overline{\mathcal{U}}$, we have a commutative square of $\mathbb{R}$-vector spaces
\begin{equation*}\begin{CD}
@.H^0(\overline{\mathcal{U}}_W,\tr)@>\cup\theta_{\overline{\mathcal{U}}}>>H^1(\overline{\mathcal{U}}_W,\tr) \\
@. @AAA @AAA @.\\
@.H^0(B_{\mathbb{R}},\tr)=\mathbb{R}@>\cup{Id_{\mathbb{R}}}>>H^1(B_{\mathbb{R}},\tr)\cong\mathbb{R} @.
\end{CD}\end{equation*}
where the vertical maps are isomorphisms by Corollary \ref{cor-cohomology-of-U}. The $\mathbb{R}$-linear map
\begin{equation}\label{cup-product-BR}
\cup{Id_{\mathbb{R}}}:H^0(B_{\mathbb{R}},\tr)=\mathbb{R}\longrightarrow H^1(B_{\mathbb{R}},\tr)=Hom_{cont}(\mathbb{R},\mathbb{R})
\end{equation}
sends $1\in\mathbb{R}$ to $Id_{\mathbb{R}}$. Under the identification $$H^1(B_{\mathbb{R}},\tr)=Hom_{cont}(\mathbb{R},\mathbb{R})\cong\mathbb{R}$$
which maps $f:\mathbb{R}\rightarrow\mathbb{R}$ to $f(1)$, the morphism (\ref{cup-product-BR}) is the identity of $\mathbb{R}$.
Hence the morphism
$$\cup\theta_{\overline{\mathcal{U}}}:H^0(\overline{\mathcal{U}}_W,\tr)=\mathbb{R}\longrightarrow H^1(\overline{\mathcal{U}}_W,\tr)\cong\mathbb{R}$$
is just the identity, for any connected \'etale $\overline{\X}$-scheme $\overline{\mathcal{U}}$. It follows that the morphism of sheaves defined in (\ref{cupproduct-on-sheaves})
\begin{equation}\label{cuptheta-identity-on-R}
\cup\theta:R^0\gamma_{\overline{\X}*}(\tr)=\tr\longrightarrow R^1\gamma_{\overline{\X}*}(\tr)\cong\tr
\end{equation}
is the identity of the sheaf $\tr$.

The same argument is valid for the sheaf $i_{\infty*}\tr$. The composite morphism
$$p:\X_{\infty,W}=B_{\mathbb{R}}\times Sh(\X_{\infty})\longrightarrow\overline{\X}\longrightarrow B_{\mathbb{R}}$$
is the first projection. We consider the fundamental class
$$\theta_{\infty}:=p^*(Id_{\br})=i_{\infty}^*(\theta)\in H^1(\X_{\infty,W},\tr).$$
Then the morphism $$\cup\theta:R^0\gamma_{\overline{\X}*}(i_{\infty*}\tr)\longrightarrow R^1\gamma_{\overline{\X}*}(i_{\infty*}\tr)$$
coincides, via the canonical isomorphism (\ref{iso-given-by-clsedsubtopos}), with the morphism $u_{\infty*}R^0\gamma_{\infty*}(\tr)\rightarrow u_{\infty*}R^1\gamma_{\infty*}(\tr)$ induced by
$$\cup\theta_{\infty}:R^0\gamma_{\infty*}(\tr)\longrightarrow R^1\gamma_{\infty*}(\tr).$$
But for any contractible open subset $U\subset\X_{\infty}$, one has a commutative square
\begin{equation*}\begin{CD}
@.H^0(\X_{\infty,W},U,\tr)@>\cup\theta_{\infty}>>H^1(\X_{\infty,W},U,\tr) \\
@. @AAA @AAA @.\\
@.H^0(B_{\mathbb{R}},\tr)=\mathbb{R}@>\cup{Id_{\mathbb{R}}}>>H^1(B_{\mathbb{R}},\tr)\cong\mathbb{R} @.
\end{CD}\end{equation*}
where all the maps are isomorphisms, as it follows from (\ref{ident-coh-infty-contractible-BR}). Hence the map
$$\cup\theta_{\infty}:\tr=\gamma_{\infty*}(\tr)\longrightarrow R^1\gamma_{\infty*}(\tr)\cong\tr$$
is the identity, and so is the morphism
\begin{equation}\label{cuptheta-identity-on-inftyR}
\cup\theta:R^0\gamma_{\overline{\X}*}(i_{\infty*}\tr)=u_{\infty*}\tr\longrightarrow R^1\gamma_{\overline{\X}*}(i_{\infty*}\tr)\cong u_{\infty*}\tr.
\end{equation}
The morphism (\ref{cupproduct-on-sheaves}) is functorial hence $\cup\theta$ gives a morphism of exact sequences
from
$$0\rightarrow\phi_!\tr\rightarrow\tr\rightarrow i_{\infty*}\tr\rightarrow 0$$
to
$$0\rightarrow R^1\gamma_{\overline{\X}*}(\phi_!\tr)\rightarrow R^1\gamma_{\overline{\X}*}(\tr)\rightarrow R^1\gamma_{\overline{\X}*}(i_{\infty*}\tr)\rightarrow0$$
But the morphisms (\ref{cuptheta-identity-on-R}) and (\ref{cuptheta-identity-on-inftyR}) are both given by the identity map, hence so is the morphism
$$\cup\theta:R^0\gamma_*(\phi_!\tr)=\varphi_!\tr\longrightarrow R^1\gamma_*(\phi_!\tr)\cong\varphi_!\tr.$$
\end{proof}

\begin{definition}
For any abelian sheaf $\mathcal{A}$ on $\X_W$, the compact support cohomology groups $H^i_c(\X_W,\tr)$ are defined as follows:
$$H^i_c(\X_W,\mathcal{A}):=H^i(\overline{\X}_W,\phi_!\mathcal{A})$$
\end{definition}

\begin{theorem} Assume that $\X$ is irreducible, normal, flat and proper over $\Spec(\bz)$. The compact support cohomology groups $H^i_c(\X_W,\tr)$
are finite dimensional vector spaces over $\br$, vanish for almost
all $i$ and satisfy
\[ \sum_{i\in\bz}(-1)^i\dim_\br H^i_c(\X_W,\tr)=0.\]
Moreover, the complex of $\br$-vector spaces
\[
\cdots\xrightarrow{\cup\theta}H^i_c(\X_W,\tr)\xrightarrow{\cup\theta}H^{i+1}_c(\X_W,\tr)\xrightarrow{\cup\theta}\cdots\]
is acyclic.
\label{ac-theo}\end{theorem}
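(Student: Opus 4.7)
My plan is to analyze $H^*_c(\X_W, \tr)$ through the Leray spectral sequence for $\gamma_{\overline{\X}} : \overline{\X}_W \to \overline{\X}_{\et}$:
\[
E_2^{p,q} = H^p(\overline{\X}_{\et}, R^q\gamma_{\overline{\X}*}\phi_!\tr) \Rightarrow H^{p+q}_c(\X_W, \tr).
\]
By Theorem \ref{thm-basechange-cpctsupp} only the rows $q=0, 1$ are nonzero, and both are identified with $E_2^{p,\bullet} = H^p(\overline{\X}_{\et}, \varphi_!\tr) = H^p_c(\X_{\et}, \tr)$, so the sole potentially nontrivial differential is $d_2 : E_2^{p,1} \to E_2^{p+2, 0}$. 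The crucial first step is to prove $d_2 = 0$. I would consider the analogous Leray spectral sequence with coefficients $\tr$, in which $\theta \in H^1(\overline{\X}_W, \tr)$ lifts canonically to $E_2^{0,1}(\tr)$; the target of $d_2\theta$ is $H^2(\overline{\X}_{\et}, \br)$, which vanishes by the computation inside Proposition \ref{prop-cpctspp-etale-coh}. Compatibility of $d_2$ with cup product then gives, for any $x \in E_2^{p,0}$, $d_2(x \cup \theta) = d_2(x) \cup \theta = 0$ (the first factor vanishes automatically since $E_2^{p+2,-1}=0$). By Theorem \ref{thm-basechange-cpctsupp} the sheaf-level map $\cup\theta : R^0\gamma_{\overline{\X}*}\phi_!\tr \to R^1\gamma_{\overline{\X}*}\phi_!\tr$ is the identity of $\varphi_!\tr$, so $x \mapsto x \cup \theta$ is a bijection $E_2^{p,0} \to E_2^{p,1}$; this forces $d_2 \equiv 0$ on the second row as well.

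Degeneration at $E_2$ then produces short exact sequences
\[
0 \to H^i_c(\X_{\et}, \tr) \to H^i_c(\X_W, \tr) \to H^{i-1}_c(\X_{\et}, \tr) \to 0.
\]
Proposition \ref{prop-cpctspp-etale-coh} ensures that $H^p_c(\X_{\et}, \tr)$ is finite-dimensional and vanishes for $p$ large, hence the same holds for $H^i_c(\X_W, \tr)$, and the alternating dimension sum telescopes:
\[
\sum_i (-1)^i \dim_{\br} H^i_c(\X_W, \tr) = \sum_i (-1)^i \dim_{\br} H^i_c(\X_{\et}, \tr) + \sum_i (-1)^i \dim_{\br} H^{i-1}_c(\X_{\et}, \tr) = 0.
\]

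For the acyclicity of $(H^\bullet_c(\X_W, \tr), \cup\theta)$ I would examine cup $\theta$ on the associated graded of the two-step spectral sequence filtration. Cup product with $\theta$ has $(p,q)$-bidegree $(0,1)$ and hence preserves the filtration; on $\gr^i H^i_c(\X_W, \tr) = E_\infty^{i,0} = H^i_c(\X_{\et}, \tr)$ it sends into $\gr^i H^{i+1}_c(\X_W, \tr) = E_\infty^{i,1} = H^i_c(\X_{\et}, \tr)$ by the identity (again using Theorem \ref{thm-basechange-cpctsupp}), while on $\gr^{i-1} H^i_c(\X_W, \tr) = H^{i-1}_c(\X_{\et}, \tr)$ it lands in $\gr^{i-1} H^{i+1}_c(\X_W, \tr) = E_\infty^{i-1,2} = 0$. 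The associated graded complex thus decomposes as a direct sum, indexed by $p$, of two-term complexes $H^p_c(\X_{\et}, \tr) \xrightarrow{\mathrm{id}} H^p_c(\X_{\et}, \tr)$ placed in degrees $[p, p+1]$, each trivially acyclic. Since the filtration on each $H^i_c(\X_W, \tr)$ has length two, the standard filtered-complex argument (long exact sequence associated with $0 \to F^{i+1} \to F^i \to \gr^i \to 0$ and induction) lifts acyclicity from the associated graded to the full complex. The main obstacle throughout is establishing $d_2 = 0$; this rests on the multiplicative structure of the Leray spectral sequence (via the $\tr$-module structure $\phi_!\tr \otimes_{\tr} \tr \to \phi_!\tr$) and, decisively, on the sheaf-level identification of $\cup\theta$ with the identity furnished by Theorem \ref{thm-basechange-cpctsupp}.
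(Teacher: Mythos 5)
Your proposal is correct and follows the same overall strategy as the paper: both analyze the two-row Leray spectral sequence for $\gamma_{\overline{\X}}$ with $\phi_!\tr$-coefficients and both lean decisively on the sheaf-level identification of $\cup\theta$ as the identity from Theorem \ref{thm-basechange-cpctsupp}. The arguments differ at two points, however. To get degeneration, you prove $d_2 = 0$ directly via the Leibniz rule, using the multiplicative structure of the Leray spectral sequence for the $\tr$-module pairing $\phi_!\tr\otimes_{\tr}\tr\to\phi_!\tr$; the paper avoids invoking multiplicativity (which requires some care to set up) and instead observes that the morphism of complexes $\cup\theta\colon R\gamma_{\overline{\X}*}\phi_!\tr\to R\gamma_{\overline{\X}*}\phi_!\tr[1]$ induces a morphism of spectral sequences with a $q$-shift, so the composite $E_2^{i,0}\to H^i\xrightarrow{\cup\theta}H^{i+1}\to E_2^{i,1}$ agrees with the sheaf isomorphism and is therefore bijective, providing a section to the long exact sequence; this forces $d_2=0$ and the canonical splittings $H^i_c(\X_W,\tr)\cong H^i_c(\X_\et,\br)\oplus H^{i-1}_c(\X_\et,\br)$. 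The paper's route is technically lighter since a morphism of spectral sequences induced by a map of filtered objects is standard, whereas the multiplicative structure you invoke — and the compatibility between the Leray-level pairing and the Yoneda/Ext cup product used to define $\cup\theta$ in the paper — deserves more justification than you give. For acyclicity, you pass to the associated graded and appeal to a bounded-filtration argument; the paper instead identifies $\cup\theta$ explicitly, via the direct sum decomposition, as projection onto $H^i_c(\X_\et,\br)$ followed by inclusion as the second summand of $H^{i+1}_c(\X_W,\tr)$, making exactness manifest without a filtration argument. Both routes are valid, and your version is a legitimate alternative exposition, but you should either cite or sketch the multiplicativity of the Leray spectral sequence for the module pairing, and verify that the Leray cup product agrees with the Yoneda product of Section \ref{funclass}, before the $d_2(x\cup\theta)=d_2(x)\cup\theta$ step can be regarded as complete.
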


\begin{proof}
Consider the Leray spectral sequence
$$H^p(\overline{\X}_{et},R^q\gamma_{\overline{\X}*}(\phi_!\tr))\Longrightarrow H^{p+q}(\overline{\X}_{W},\phi_!\tr)$$
given by the morphism $\gamma_{\overline{\X}}$. This spectral sequence
yields
$$H^{0}(\overline{\X}_{W},\phi_!\tr)=H^{0}(\overline{\X}_{et},\varphi_!\mathbb{R})=0$$
and a long exact sequence
\begin{equation*}\begin{CD}
@.0 @>>>H^{1}(\overline{\X}_{et},R^0\gamma_{\overline{\X}*}(\phi_!\mathbb{R})) @>>>H^{1}(\overline{\X}_{W},\phi_!\tr)@>>>H^{0}(\overline{\X}_{et},R^1\gamma_{\overline{\X}*}(\phi_!\mathbb{R}))\\
@. @. @. @VV{\cup\theta}V @. @. \\
@.... @>>>H^{2}(\overline{\X}_{et},R^0\gamma_{\overline{\X}*}(\phi_!\mathbb{R})) @>>>H^{2}(\overline{\X}_{W},\phi_!\tr) @>>>H^{1}(\overline{\X}_{et},R^1\gamma_{\overline{\X}*}(\phi_!\mathbb{R}))  @. {}\\
@. @. @. @VV{\cup\theta}V @. @. \\
@.... @>>>H^{3}(\overline{\X}_{et},R^0\gamma_{\overline{\X}*}(\phi_!\mathbb{R})) @>>>H^{3}(\overline{\X}_{W},\phi_!\tr) @>>>H^{2}(\overline{\X}_{et},R^1\gamma_{\overline{\X}*}(\phi_!\mathbb{R})) @. {}\\
@. @. @. @. @. @. \\
@.... @>>>H^{4}(\overline{\X}_{et},R^0(\gamma_{\overline{\X}*})\phi_!\mathbb{R})@>>>...
\end{CD}\end{equation*}
Here the vertical maps $\cup\theta$ are given by cup product with the fundamental class. More precisely, the morphism (\ref{cupproduct-on-complexofsheaves})
$$R\gamma_{\overline{\X},*}(\phi_!\mathbb{R})\longrightarrow R\gamma_{\overline{\X},*}(\phi_!\mathbb{R})[1].$$
induces a morphism of spectral sequences. This morphism of spectral sequences induces in turn these vertical maps $\cup\theta$. It follows that the composite map
\begin{equation}\label{give-section}
H^{i}(\overline{\X}_{et},R^0\gamma_{\overline{\X}*}(\phi_!\mathbb{R}))\rightarrow
H^{i}(\overline{\X}_{W},\phi_!\tr)\xrightarrow{\cup\theta} H^{i+1}(\overline{\X}_{W},\phi_!\tr)\rightarrow H^{i}(\overline{\X}_{et},R^1\gamma_{\overline{\X}*}(\phi_!\mathbb{R}))
\end{equation}
is induced by the isomorphism of sheaves
$$R^0\gamma_{\overline{\X}*}(\phi_!\tr)=\varphi_!\mathbb{R}\xrightarrow{\cup\theta}R^1\gamma_{\overline{\X}*}(\phi_!\tr)
\cong\varphi_!\mathbb{R}.$$
Hence the map (\ref{give-section}) is an isomorphism for any $i\geq0$, by Theorem \ref{thm-basechange-cpctsupp}. This yields a section to the map
$$H^{i+1}(\overline{\X}_{W},\phi_!\tr)\longrightarrow H^{i}(\overline{\X}_{et},R^1\gamma_{\overline{\X}*}(\phi_!\mathbb{R})).$$
It follows that the long exact sequence above
decomposes into a collection of canonical isomorphisms
\begin{align}
H^{i}(\overline{\X}_{W},\phi_!\tr)&\cong H^{i}(\overline{\X}_{et},R^0\gamma_{\overline{\X}*}(\phi_!\tr))\oplus H^{i-1}(\overline{\X}_{et},R^1\gamma_{\overline{\X}*}(\phi_!\tr))\\
 &\cong H^{i}(\overline{\X}_{et},\varphi_!\mathbb{R})\oplus H^{i-1}(\overline{\X}_{et},\varphi_!\mathbb{R})\\
\label{cohomology-direct-sum}& \cong H_c^{i}(\X_{et},\mathbb{R})\oplus H_c^{i-1}(\X_{et},\mathbb{R})
\end{align}
for any $i\geq1$. By Proposition \ref{prop-cpctspp-etale-coh}, the $\br$-vector space $H_c^{i}(\X_{et},\mathbb{R})$ is finite dimensional and zero for $i$ large. Hence we have
$$\dim_\br H^i_c(\X_W,\tr)=\dim_\br H^{i}_c(\X_{et},\br)+\dim_\br H^{i-1}_c(\X_{et},\br)$$
and $$\sum_{i\in\bz}(-1)^i\dim_\br H^i_c(\X_W,\tr)=0.$$
Under the identification (\ref{cohomology-direct-sum}), the morphism given by cup product with the fundamental class
$$H^i_c(\X_W,\tr)\xrightarrow{\cup\theta}H^{i+1}_c(\X_W,\tr)$$
is obtained by composing the projection with the inclusion as follows:
\begin{equation}\label{cup-prod-decompose}
H^{i}_c(\X_W,\tr)\twoheadrightarrow H^{i}_c(\X_{et},\mathbb{R})\hookrightarrow H^{i+1}_c(\X_W,\tr).
\end{equation}
It follows immediately from (\ref{cohomology-direct-sum}) and (\ref{cup-prod-decompose}) that the complex of $\br$-vector spaces
\[
\cdots\xrightarrow{\cup\theta}H^i_c(\X_W,\tr)\xrightarrow{\cup\theta}H^{i+1}_c(\X_W,\tr)\xrightarrow{\cup\theta}\cdots\]
is acyclic.
\end{proof}

\begin{remark}\label{rem-cohomology-direct-sum}
For any $i\geq1$, there is a canonical isomorphism of $\br$-vector spaces
$$H_c^{i}(\X_{W},\tr)\cong H_c^{i}(\X_{et},\mathbb{R})\oplus  H_c^{i-1}(\X_{et},\mathbb{R})$$
\end{remark}

\begin{prop} Assume that $\X$ is irreducible, normal, flat and proper over $\Spec(\bz)$. Then one has
\begin{align*}
\sum_{i\in\bz}(-1)^ii\dim_\br H^i_c(\X_W,\tr)&=\sum_{i\in\bz}(-1)^{i+1}\dim_\br H^i_c(\X_\et,\br)\\
&= -1+\sum_{i\in\bz}(-1)^i\dim_\br H^i(\X_{\infty},\br)\\
&= -1+\sum_{i\in\bz}(-1)^i\dim_\br H^i(\X^{an},\br)^+\\
\end{align*}
\label{hc}\end{prop}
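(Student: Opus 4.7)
The plan is to establish the three equalities in order, each by an elementary manipulation combining results already proved in the paper with one standard input from topology.

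For the first equality, I would invoke Remark \ref{rem-cohomology-direct-sum}, which gives the canonical splitting $H_c^{i}(\X_{W},\tr)\cong H_c^{i}(\X_{et},\br)\oplus H_c^{i-1}(\X_{et},\br)$ for $i\geq 1$ (and $H^0_c(\X_W,\tr)=0$). Setting $a_i:=\dim_\br H^i_c(\X_\et,\br)$, the alternating weighted sum becomes
\[
\sum_{i\geq 1}(-1)^i i\,(a_i+a_{i-1}) = \sum_{i\geq 0}(-1)^i i\,a_i-\sum_{i\geq 0}(-1)^i(i+1)a_i = -\sum_{i\geq 0}(-1)^i a_i,
\]
after reindexing the second sum ($j=i-1$). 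This is exactly $\sum_i(-1)^{i+1}\dim_\br H^i_c(\X_\et,\br)$, as required. The finiteness and eventual vanishing needed to legitimize these manipulations are granted by Proposition \ref{prop-cpctspp-etale-coh} (resp.\ by Theorem \ref{ac-theo} for the $W$-side).

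For the second equality, I would substitute directly the description of $H^i_c(\X_\et,\br)$ provided by Proposition \ref{prop-cpctspp-etale-coh}: namely $H^0_c=0$, $H^1_c=H^0(\X_\infty,\br)/\br$, and $H^n_c=H^{n-1}(\X_\infty,\br)$ for $n\geq 2$. Writing $b_j:=\dim_\br H^j(\X_\infty,\br)$, the sum becomes
\[
\sum_i(-1)^{i+1}a_i=(b_0-1)+\sum_{n\geq 2}(-1)^{n+1}b_{n-1}=-1+b_0+\sum_{j\geq 1}(-1)^{j}b_j=-1+\sum_{j\geq 0}(-1)^j b_j,
\]
which is the desired identity. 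No topological input beyond Proposition \ref{prop-cpctspp-etale-coh} is needed here.

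For the third equality, which is the only step involving genuinely new content, the point is the standard transfer-type isomorphism $H^i(\X_\infty,\br)\cong H^i(\X^{an},\br)^+$ where $+$ denotes the $G_\br$-fixed part. This is a consequence of the fact that $\X_\infty=\X^{an}/G_\br$ is a quotient by a finite group acting continuously, so that for any coefficient ring in which $|G_\br|=2$ is invertible (in particular $\br$), the canonical map from $\br$-cohomology of the quotient to $G_\br$-invariants on $\X^{an}$ is an isomorphism. One can see this either via the Cartan--Leray spectral sequence $H^p(G_\br,H^q(\X^{an},\br))\Rightarrow H^{p+q}(\X_\infty,\br)$, which degenerates because $H^p(G_\br,V)=0$ for $p\geq 1$ and any $\br[G_\br]$-module $V$, or directly via averaging with the order-$2$ projector $\tfrac12(1+c)$. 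Taking alternating sums of dimensions yields the last equality and completes the proof. The main (modest) obstacle is just bookkeeping of the indices in the first two steps; the third step is entirely standard.
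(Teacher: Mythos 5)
Your proof is correct and follows essentially the same route as the paper: the first equality via Remark \ref{rem-cohomology-direct-sum} and an index shift, the second by substituting Proposition \ref{prop-cpctspp-etale-coh}, and the third via the Cartan--Leray spectral sequence for the quotient $\X^{an}\to\X_\infty$, which degenerates because $H^p(G_\br,-)$ vanishes in positive degrees on $\br$-vector spaces. The paper phrases the last step through the equivariant-sheaf topos $Sh(G_\br,\X^{an})$ and the vanishing of $R^n\pi_*^{G_\br}\br$, but the underlying spectral-sequence argument is the same.
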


\begin{proof}
The first equality (respectively the second) follows from Remark \ref{rem-cohomology-direct-sum}
(respectively from Proposition \ref{prop-cpctspp-etale-coh}). To prove the third, we consider the morphism of topoi
$$(\pi^*,\pi_*^{G_{\br}}):Sh(G_{\br},\X^{an})\rightarrow
Sh(\X_{\infty})$$
given by the quotient map $\pi:\X^{an}\rightarrow \X^{an}/G_{\br}$, where $Sh(G_{\br},\X^{an})$ is the topos of $G_{\br}$-equivariant
sheaves on the space $\X^{an}$. The constant sheaf $\br$ on $\X^{an}$ is endowed with its $G_{\br}$-equivariant structure. For any $n\geq1$, the stalk of $R^n(\pi_*^{G_{\br}})\mathbb{R}$ at some fixed point $x\in\X(\mathbb{R})\subset\X^{\infty}$ is
the abelian group $H^n(G_{\br},\br)$, which is zero since $\br$ is uniquely divisible. This gives
$$R^n(\pi_*^{G_{\br}})\mathbb{R}=0\mbox{ for $n\geq1$}$$
and a canonical isomorphism
$$H^n(\X_{\infty},\br)\cong H^n(Sh(G_{\br},\X^{an}),\br)$$
for any $n\geq0$. But the spectral sequence
$$H^p(G_{\br},H^q(\X^{an},\br))\Longrightarrow H^{p+q}(Sh(G_{\br},\X^{an}),\br)$$
degenerates and gives an isomorphism
$$ H^{n}(Sh(G_{\br},\X^{an}),\br)\cong H^0(G_{\br},H^n(\X^{an},\br))=:H^n(\X^{an},\br)^+$$
for any $n$. The result follows.
\end{proof}

\section{Relationship to the Zeta-function}\label{zeta}

\subsection{Motivic L-functions} We first recall the expected properties of motivic
L-functions \cite{serre69}. For any smooth proper scheme $X/\bq$ of pure
dimension $d$ and $0\leq i\leq 2d$ one defines the $L$-function
\[  L(h^i(X),s)=\prod_p L_p(h^i(X),s)\]
as an Euler product over all primes $p$ where
\[L_p(h^i(X),s)= P_p(h^i(X),p^{-s})^{-1}\]
and
\[ P_p(h^i(X),T)= \mydet_{\bq_l}\bigl(1-\mathrm{Frob}_p^{-1}\cdot T\vert
H^i(X_{\bar{\bq},et},\bq_l)^{I_p}\bigr)\] is a polynomial (conjecturally) with
rational coefficients independent of the prime
$l\neq p$. By \cite{weilii} this product converges for
$\Re(s)>\frac{i}{2}+1$. Set
\[\Gamma_\br(s)=\pi^{-s/2}\Gamma(\frac{s}{2});\quad \Gamma_{\BC}(s)=2(2\pi)^{-s}\Gamma(s)\]
and
\[L_\infty(h^i(X),s)=\prod_{p<q}\Gamma_{\BC}(s-p)^{h^{p,q}}\cdot\prod_{p=\frac{i}{2}}
\Gamma_{\br}(s-p)^{h^{p,+}}\Gamma_{\br}(s-p+1)^{h^{p,-}}\] where
$H^i(X(\BC),\BC)\cong\bigoplus_{p+q=i}H^{p,q}$ is the Hodge
decomposition,
\[h^{p,q}=\dim_{\BC}H^{p,q};\quad h^{p,\pm}=\dim_{\BC}(H^{p,p})^{F_\infty=\pm
(-1)^p}\] and $F_\infty$ is the map induced by complex conjugation
on the manifold $X(\BC)$. Here the product over $p=\frac{i}{2}$ is
understood to be empty for odd $i$. The completed $L$-function
\[ \Lambda(h^i(X),s)=L_\infty(h^i(X),s)L(h^i(X),s) \]
is expected to meromorphically continue to all $s$ and satisfy a
functional equation \begin{equation}
\Lambda(h^i(X),s)=\epsilon(h^i(X),s)\Lambda(h^{2d-i}(X),d+1-s).\label{fe}\end{equation}
Here $\epsilon(h^i(X),s)$ is the product of a constant and an
exponential function in $s$, in particular nowhere vanishing.

\begin{lemma} Assuming meromorphic continuation and the functional equation we have
\[
\ord_{s=0}L(h^i(X),s)=\begin{cases}-t+\dim_{\BC}H^0(X(\BC),\BC)^{F_\infty=1}
& i=0\\ \dim_{\BC}H^i(X(\BC),\BC)^{F_\infty=1} & i>0.\end{cases}\]
where $t$ is the number of connected components of the scheme $X$.
\label{ordat0}\end{lemma}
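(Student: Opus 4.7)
My plan is to use the functional equation \eqref{fe} to shift the analysis to the point $s=d+1$, where the Euler product converges absolutely (for $i\geq 1$) and the arguments of all gamma factors are positive. Since $\epsilon(h^i(X),s)$ is nowhere vanishing, the functional equation reduces the problem to evaluating
\[\ord_{s=0}L(h^i(X),s)=\ord_{s=d+1}\Lambda(h^{2d-i}(X),s)-\ord_{s=0}L_\infty(h^i(X),s),\]
and I will compute the two terms on the right separately.

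First I would show that $\ord_{s=d+1}\Lambda(h^{2d-i}(X),s)$ equals $0$ for $i\geq 1$ and $-t$ for $i=0$. For $i\geq 1$, the Euler product for $L(h^{2d-i}(X),s)$ converges absolutely in the half-plane $\Re(s)>\tfrac{2d-i}{2}+1$, which contains $s=d+1$, so it is holomorphic and nonvanishing there; a direct inspection shows that the arguments of $\Gamma_{\BC}(s-p)$ for $0\leq p<(2d-i)/2$ and of the middle $\Gamma_{\br}$-factors at $p=(2d-i)/2$ (when $2d-i$ is even) are all strictly positive at $s=d+1$, so $L_\infty(h^{2d-i}(X),s)$ is likewise holomorphic and nonvanishing. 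For $i=0$, Poincar\'e duality gives $H^{2d}(X_{\overline{\bq},\et},\bq_l)\cong H^0(X_{\overline{\bq},\et},\bq_l)(-d)$; writing $X=\bigsqcup_{j=1}^{t} X_j$ with $F_j$ the field of constants of $X_j$, the usual induction formula yields $L(h^{2d}(X),s)=\prod_{j=1}^{t}\zeta_{F_j}(s-d)$, which has a pole of order exactly $t$ at $s=d+1$ (one simple pole per Dedekind zeta factor), while the same gamma-factor inspection shows $L_\infty(h^{2d}(X),s)$ is holomorphic and nonvanishing there.

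Second, I would compute $\ord_{s=0}L_\infty(h^i(X),s)$ directly from the poles of the gamma factors: $\Gamma_{\BC}(s-p)$ has a simple pole at $s=0$ for each $p\geq 0$, contributing $-h^{p,i-p}$ to the order of vanishing for each $0\leq p<i/2$, and when $i$ is even exactly one of the two middle factors $\Gamma_{\br}(s-i/2)^{h^{i/2,+}}$, $\Gamma_{\br}(s-i/2+1)^{h^{i/2,-}}$ contributes an additional pole whose order depends on the parity of $i/2$. To match this with $\dim_{\BC}H^i(X(\BC),\BC)^{F_\infty=1}$ I would use that $F_\infty$ swaps $H^{p,q}$ with $H^{q,p}$ for $p<q$ (giving a $1$-eigenspace of dimension $h^{p,q}$ on each such pair) and preserves $H^{i/2,i/2}$ for $i$ even, where its $1$-eigenspace has dimension $h^{i/2,+}$ when $i/2$ is even and $h^{i/2,-}$ when $i/2$ is odd. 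Combining yields $\ord_{s=0}L_\infty(h^i(X),s)=-\dim_{\BC}H^i(X(\BC),\BC)^{F_\infty=1}$ in all cases, and substituting into the displayed equation gives both cases of the lemma. The main obstacle is this final parity bookkeeping: one must verify in each of the four congruence classes $i\bmod 4$ (together with the special case $i=0$, where the first sum is empty) that the correct middle $\Gamma_{\br}$-factor produces precisely the pole multiplicity needed to match the $F_\infty$-fixed part of $H^{i/2,i/2}$.
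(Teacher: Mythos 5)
Your proof is correct and follows essentially the same route as the paper: use the functional equation to move to $s=d+1$, observe that $\Lambda(h^{2d-i}(X),\cdot)$ is holomorphic and nonvanishing there for $i\geq 1$ while for $i=0$ Poincar\'e duality produces a pole of order $t$ coming from the Dedekind zeta factors, and then compute $\ord_{s=0}L_\infty(h^i(X),s)$ term-by-term from the gamma-factor poles, matching the parity of $p$ against $h^{p,\pm}$ to land on $-\dim_{\BC}H^i(X(\BC),\BC)^{F_\infty=1}$. The only cosmetic difference is in the $i=0$ case, where the paper first rewrites $\ord_{s=0}\Lambda(h^0(X),s)=\ord_{s=1}\Lambda(h^0(X),s)$ via the Tate twist $h^{2d}(X)\cong h^0(X)(-d)$ before reading off the order $-t$ at $s=1$, whereas you evaluate $L(h^{2d}(X),s)$ directly at $s=d+1$; these are identical computations.
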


\begin{proof} For $i>0$ the point $d+1>\frac{2d-i}{2}+1$ lies in the
region of absolute convergence of $L(h^{2d-i}(X),s)$ so that
$L(h^{2d-i}(X),d+1)\neq 0$. The Gamma-function has no zeros and has
simple poles precisely at the non-positive integers. For $p+q=2d-i$
and $p<q$ we have $p<d-\frac{i}{2}$, hence $\Gamma_\BC(d+1-p)\neq
0$. For $p=d-\frac{i}{2}$ we likewise have $\Gamma_\br(d+1-p)\neq 0$
and $\Gamma_\br(d+1+1-p)\neq 0$. Hence
$L_\infty(h^{2d-i}(X),d+1)\neq 0$ and the functional equation shows
$\Lambda(h^i(X),0)\neq 0$, i.e.
\begin{align}\ord_{s=0}L(h^i(X),s)=&-\ord_{s=0}L_\infty(h^i(X),s)\label{compi}\\
=&\sum_{p<q}h^{p,q}+\sum_{p=\frac{i}{2}}h^{p,\pm}=\dim_{\BC}H^i(X(\BC),\BC)^{F_\infty=1}\notag\end{align}
where this last identity follows from $F_\infty(H^{p,q})=H^{q,p}$
and the sign $\pm$ in $h^{p,\pm}$ is the one for which
$\pm(-1)^p=1$. Indeed, $\Gamma_\br(s-p)$ (resp. $\Gamma_\br(s-p+1)$)
has a simple pole at $s=0$ precisely for even (resp. odd) $p$.

For $i=0$ the function
$$L(h^0(X),s)=\zeta_{K_1}(s)\cdots\zeta_{K_t}(s)$$
is a product of Dedekind Zeta-functions where $H^0(X,\co_X)=K_1\times\cdots\times K_t$ is the ring of global regular functions on $X$ and the $K_i$ are number fields. It is classical that $\ord_{s=1}\zeta_{K_j}(s)=-1$ and therefore
$$\ord_{s=0}\Lambda(h^0(X),s)=\ord_{s=1}\Lambda(h^0(X),s)=\sum_{j=1}^t\ord_{s=1}\zeta_{K_j}(s)=-t.$$
Hence (\ref{compi}) holds for $i=0$ with $-t$ added to the right hand side.

\end{proof}

\subsection{Zeta-functions} For any separated scheme $\X$ of finite type over
$\Spec(\bz)$ one defines a Zeta-function
\[ \zeta(\X,s):=\prod_{x\in X^{cl}}\frac{1}{1-N(x)^{-s}}=\prod_p\zeta(\X_{\mathbb F_p},s)\]
as an Euler product over all closed points. By Grothendieck's
formula \cite{miletale}[Thm. 13.1]
\[\zeta(\X_{\mathbb F_p},s)=\prod_{i=0}^{2\dim(\X_{\mathbb F_p})}
\mydet_{\bq_l}\bigl(1-\mathrm{Frob}_p^{-1}\cdot p^{-s}\vert
H^i_c(\X_{\bar{\mathbb F}_p,{et}},\bq_l)\bigr)^{(-1)^{i+1}}.\] If
$\X_\bq\to\Spec(\bq)$ is smooth and proper of relative dimension
$d$, there will be an open subscheme $U\subseteq\Spec(\bz)$ on which
$\X_U\to U$ is smooth and proper. By smooth and proper base change
we have for $p\in U$
$$H^i_c(\X_{\bar{\mathbb F}_p,{et}},\bq_l)\cong H^i(\X_{\bar{\mathbb F}_p,{et}},\bq_l)\cong
H^i(\X_{\bar{\bq},et},\bq_l)\cong
H^i(\X_{\bar{\bq},et},\bq_l)^{I_p}$$ and therefore \begin{equation}
\zeta(\X,s)=\prod_{p\notin
U}E_p(s)\prod_{i=0}^{2d}L(h^i(\X_\bq),s)^{(-1)^i}\label{zetamot}\end{equation}
where
\[E_p(s)=\prod_{i=0}^{\infty}\left(\frac{\mydet_{\bq_l}\bigl(1-\mathrm{Frob}_p^{-1}\cdot
p^{-s}\vert
H^i(\X_{\bar{\bq},et},\bq_l)^{I_p}\bigr)}{\mydet_{\bq_l}\bigl(1-\mathrm{Frob}_p^{-1}\cdot
p^{-s}\vert H^i_c(\X_{\bar{\mathbb
F}_p,{et}},\bq_l)\bigr)}\right)^{(-1)^i} \] is a rational function
in $p^{-s}$.

\begin{theorem} Let $\X$ be a regular scheme, proper and
flat over $\Spec(\bz)$. Assume that the L-functions
$L(h^i(\X_\bq),s)$ can be meromorphically continued and satisfy the
functional equation (\ref{fe}). Then
\[ \ord_{s=0}\zeta(\X,s)=\sum_{i\in\bz}(-1)^i\cdot i\cdot\dim_\br
H^i_c(\X_W,\tr).\]
\label{b-theo}\end{theorem}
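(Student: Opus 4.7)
The plan is to combine three ingredients: the computation of the right-hand side carried out in Proposition \ref{hc}, the decomposition of $\zeta(\X,s)$ into motivic $L$-factors in (\ref{zetamot}), and a local invariant cycle argument showing that the bad-reduction correction factors $E_p(s)$ contribute no vanishing order at $s=0$.

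First I would reduce to the case where $\X$ is irreducible. Since $\X$ is regular and noetherian, its connected components are irreducible, each is regular, proper, and flat over $\Spec(\bz)$, and both sides of the desired equality are additive under disjoint unions: the Euler product gives $\zeta(\X,s)=\prod_j\zeta(\X_j,s)$, while the Weil-\'etale topos of $\X$ is the disjoint sum of the topoi $\X_{j,W}$, whence $H^i_c(\X_W,\tr)=\bigoplus_j H^i_c(\X_{j,W},\tr)$. Assume henceforth that $\X$ is irreducible, hence normal and connected, so that Proposition \ref{hc} applies.

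Next I would compute both sides. By Proposition \ref{hc} the right-hand side equals
\[ -1+\sum_{i\in\bz}(-1)^i\dim_\br H^i(\X^{an},\br)^+. \]
For the left-hand side the factorization (\ref{zetamot}) yields
\[ \ord_{s=0}\zeta(\X,s)=\sum_{p\notin U}\ord_{s=0}E_p(s)+\sum_{i=0}^{2d}(-1)^i\ord_{s=0}L(h^i(\X_\bq),s). \]
Since $\X_\bq$ is connected (so $t=1$ in Lemma \ref{ordat0}), summing the formulae of that lemma and using the canonical identification $\dim_{\BC}H^i(\X_\bq(\BC),\BC)^{F_\infty=1}=\dim_\br H^i(\X^{an},\br)^+$ gives
\[ \sum_{i=0}^{2d}(-1)^i\ord_{s=0}L(h^i(\X_\bq),s)=-1+\sum_{i\in\bz}(-1)^i\dim_\br H^i(\X^{an},\br)^+, \]
which agrees with the right-hand side provided that $\sum_{p\notin U}\ord_{s=0}E_p(s)=0$.

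The main obstacle is therefore the vanishing $\ord_{s=0}E_p(s)=0$ for each prime $p$ of bad reduction. Unwinding the definition of $E_p(s)$ and using that, for a polynomial $P$, the quantity $\ord_{s=0}P(p^{-s})$ equals the multiplicity of $T=1$ as a root of $P(T)$, this reduces to the equality
\[ \dim_{\bq_l}H^i(\X_{\bar\bq,\et},\bq_l)^{I_p,\Frob_p=1}=\dim_{\bq_l}H^i(\X_{\bar{\mathbb F}_p,\et},\bq_l)^{\Frob_p=1} \]
for every $i\geq 0$ and some (equivalently any) prime $l\neq p$. This is a consequence of the local theorem of invariant cycles applied to the regular proper flat family $\X\to\Spec(\bz_{(p)})$, and is precisely one of the $l$-adic results proved in section \ref{loc-inv-cycles}. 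Granting it, $E_p(s)$ is holomorphic and non-vanishing at $s=0$ for every $p\notin U$, the two orders match term by term, and the theorem follows.
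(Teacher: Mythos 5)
Your proposal follows the same route as the paper: Proposition \ref{hc} for the right-hand side, Lemma \ref{ordat0} and the factorization (\ref{zetamot}) for the left-hand side, and Theorem \ref{main} b) from section \ref{loc-inv-cycles} to kill the bad-reduction factors $E_p(s)$. Your explicit reduction to the irreducible case is a useful clarification, since Proposition \ref{hc} is stated only for irreducible $\X$ and the paper's proof slides from $-t$ to the irreducible formula without comment.

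One small imprecision worth fixing: you express the vanishing of $\ord_{s=0}E_p(s)$ as an equality of dimensions of the eigenspaces $(\,\cdot\,)^{\Frob_p=1}$, but $\ord_{s=0}\det\bigl(1-\Frob_p^{-1}p^{-s}\mid V\bigr)$ is the \emph{algebraic} multiplicity of the eigenvalue $1$, i.e.\ the dimension of the generalized eigenspace, equivalently the multiplicity of $T=1$ as a root of the characteristic polynomial. Since Frobenius semisimplicity on $W_0$ is not known, the eigenspace and generalized eigenspace need not coincide a priori. The argument is nonetheless sound as long as one phrases it in terms of characteristic polynomials: Theorem \ref{main} b) provides an isomorphism of $\Gal(\bar{\bof}_p/\bof_p)$-modules on $W_1$, hence the characteristic polynomials of $\Frob_p^{-1}$ on $W_1H^i(\X_{\bar{\bof}_p,\et},\bq_l)$ and $W_1H^i(\X_{\bar\bq,\et},\bq_l)^{I_p}$ agree, and the eigenvalue $1$ (being of weight $0$) occurs with the same multiplicity in both.
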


\begin{proof} Note that regularity of $\X$ implies that $\X_\bq\to\Spec(\bq)$ is smooth. By Lemma \ref{ordat0} and Proposition \ref{hc} we have
\begin{align*}\ord_{s=0}\prod_{i\in\bz}L(h^i(\X_\bq),s)^{(-1)^i}=
&-t+\sum_{i\in\bz}(-1)^i\dim_{\BC}H^i(\X_\bq(\BC),\BC)^{F_\infty=1}\\
=&-t+\sum_{i\in\bz}(-1)^i\dim_{\br}H^i(\X^{an},\br)^{F_\infty=1}\\
=&\sum_{i\in\bz}(-1)^i\cdot i\cdot\dim_\br H^i_c(\X_W,\tr)
\end{align*}
and in view of (\ref{zetamot}) it remains to show that
$\ord_{s=0}E_p(s)=0$ for all $p$ (or just $p\notin U$). This follows
from the fact that the $\mathrm{Frob}_p^{-1}$ eigenvalue $1$ (of
weight $0$) has the same multiplicity on $H^i_c(\X_{\bar{\mathbb
F}_p,{et}},\bq_l)=H^i(\X_{\bar{\mathbb F}_p,{et}},\bq_l)$ and on
$H^i(\X_{\bar{\bq},et},\bq_l)^{I_p}$ by part b) of Theorem
\ref{main} in the next section.
\end{proof}

\begin{corollary} Let $F$ be a totally real number field and $\X$ a proper, regular model of a Shimura curve
over $F$, or of $E\times E\times \cdots \times E$ where $E$ is an
elliptic curve over $F$. Then
\[ \ord_{s=0}\zeta(\X,s)=\sum_{i\in\bz}(-1)^i\cdot i\cdot\dim_\br
H^i_c(\X_W,\tr).\]
\end{corollary}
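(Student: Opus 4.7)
The plan is to deduce this directly from Theorem \ref{b-theo}, whose only nontrivial hypothesis to check is the meromorphic continuation and functional equation \eqref{fe} of the Hasse-Weil L-functions $L(h^i(\X_\bq),s)$. Regularity, properness and flatness over $\Spec(\bz)$ hold by assumption, so the remaining content is automorphy-theoretic.

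First I would reduce to L-functions attached to Hilbert modular forms over the totally real field $F$. In the Shimura curve case, $\X_\bq$ is (geometrically) a finite union of Shimura curves, and the Eichler-Shimura congruence together with the Jacquet-Langlands correspondence identifies $L(h^1(\X_\bq),s)$ with a product of standard L-functions of cuspidal Hilbert modular forms over $F$; the pieces $L(h^0(\X_\bq),s)$ and $L(h^2(\X_\bq),s)$ are products of Dedekind $\zeta$-functions (up to a Tate twist). All of these have the required analytic properties by classical work of Hecke, Shimura and Jacquet-Langlands. In the self-product case, write $E^n_\bq=(E\times_F\cdots\times_F E)_\bq$ and apply the K\"unneth formula
\[ H^i(E^n_{\bar\bq,\et},\bq_\ell)\cong\bigoplus_{i_1+\cdots+i_n=i}H^{i_1}(E_{\bar\bq,\et},\bq_\ell)\otimes\cdots\otimes H^{i_n}(E_{\bar\bq,\et},\bq_\ell),\]
which, after decomposing each tensor power of $H^1(E_{\bar\bq,\et},\bq_\ell)$ under the action of $\mathrm{GL}_n$ into a sum of irreducibles (symmetric and exterior powers, with Tate twists), reduces the problem to the meromorphic continuation and functional equation of the symmetric power L-functions $L(\mathrm{Sym}^m h^1(E_\bq),s)$ for $0\leq m\leq n$.

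Next I would invoke modularity of $E$ over the totally real field $F$: by the work of Wiles, Taylor-Wiles, Skinner-Wiles and, for the remaining small residual cases over totally real fields, Freitas-Le Hung-Siksek, the elliptic curve $E$ is modular, so $h^1(E_\bq)$ corresponds to a cuspidal Hilbert modular form $\pi$ of parallel weight $2$ on $\mathrm{GL}_2/F$. The symmetric power L-functions $L(\mathrm{Sym}^m\pi,s)$ then have meromorphic continuation and satisfy the expected functional equation by the potential automorphy theorems of Barnet-Lamb-Geraghty-Harris-Taylor (extended to the totally real base case), which establish that every $\mathrm{Sym}^m\pi$ is an isobaric automorphic representation of $\mathrm{GL}_{m+1}/F$, hence has the analytic properties of a standard automorphic L-function by Godement-Jacquet. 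The same references cover the Shimura curve case after Jacquet-Langlands transfer.

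Granted these inputs, \eqref{fe} holds for every $L(h^i(\X_\bq),s)$ and the hypotheses of Theorem \ref{b-theo} are satisfied, so the conclusion
\[ \ord_{s=0}\zeta(\X,s)=\sum_{i\in\bz}(-1)^i\cdot i\cdot\dim_\br H^i_c(\X_W,\tr) \]
follows at once. The only genuine obstacle is the invocation of the potential automorphy machinery for symmetric powers in the $E^n$ case; everything else is a direct application of Theorem \ref{b-theo}.
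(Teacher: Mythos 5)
Your proof takes essentially the same route as the paper: check that Theorem \ref{b-theo} applies by establishing meromorphic continuation and the functional equation for the $L(h^i(\X_\bq),s)$, handle the Shimura curve case via Eichler--Shimura theory and Jacquet--Langlands, and reduce the self-product case via K\"unneth to symmetric-power $L$-functions of $h^1(E)$, which are then controlled by the potential automorphy theorem of Barnet-Lamb--Geraghty--Harris--Taylor over the totally real field $F$. Two small imprecisions worth fixing: the decomposition of tensor powers of the two-dimensional $h^1(E)$ is governed by the $\mathrm{GL}_2$-plethysm (not $\mathrm{GL}_n$), and it yields Tate twists of symmetric powers only --- exterior powers beyond $\wedge^2$ vanish, and $\wedge^2 h^1(E)\cong\bq(-1)$ is itself a Tate twist; also, the explicit appeal to modularity of $E$ (Freitas--Le~Hung--Siksek) is not strictly needed, since the potential automorphy machinery already delivers the required analytic properties of $L(\mathrm{Sym}^m h^1(E)/F,s)$ without first knowing $E$ is modular, which is the route the paper takes.
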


\begin{proof} For any Shimura curve $X$, by the now classical results of Eichler,
Shimura, Deligne, Carayol and others, $L(h^1(X),s)$ is a product of
$L$-functions associated to weight $2$ cusp forms for a suitable
arithmetic subgroup of $\text{PSL}_2(\br)$ associated to $X$, hence
satisfies (\ref{fe}). It is moreover well known that any curve
always has a proper regular model.

By the Kuenneth formula we have
$$h^i(E^d)\cong
\bigoplus_{\substack{i_0+i_1+i_2=d\\i_1+2i_2=i}} h^0(E)^{\otimes
i_0}\otimes h^1(E)^{\otimes i_1}\otimes h^2(E)^{\otimes
i_2}\cong\bigoplus_{\substack{i_0+i_1+i_2=d\\i_1+2i_2=i}}
h^1(E)^{\otimes i_1}(-i_2)
$$
and each tensor power $h^1(E)^{\otimes i_1}$ is a direct sum of Tate
twists of symmetric powers $\text{Sym}^k h^1(E)$. But for elliptic
curves $E$ over totally real fields $F$ the meromorphic continuation
and functional equation of $L(\text{Sym}^k h^1(E)/F,s)$ follows from
recent deep results of Harris, Taylor, Shin et al (see
\cite{blght}[Cor. 8.8]). We remark that a proper regular model $\X$
of $E^d$ certainly exists if $E$ has semistable reduction at all
primes since then the product singularities of $\E^d$, where $\E$ is
a proper regular model of $E$, can be resolved \cite{scholl90}.
\end{proof}

\begin{theorem} Let $\X$ be a smooth proper variety over a
finite field. Then a)-f) in the introduction hold for $\X$.
\label{charp-theo}\end{theorem}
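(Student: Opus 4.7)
The plan is to reduce all six properties to Geisser's theorems on Lichtenbaum's small Weil-\'etale topos $\X_W^{sm}$ by means of the fibre-product identifications already established in the paper. First I would observe that because $\X$ has characteristic $p$, the archimedean fibre $\X_\infty$ is empty, so $\overline{\X}=\X$ and the open immersion $\phi$ is the identity; in particular $H^i_c(\X_W,\mathcal{A})=H^i(\X_W,\mathcal{A})$. Next, applying Proposition \ref{prop-pullback-at-p} to the structure map factoring through $\Spec(\bof_p)\hookrightarrow\Spec(\bz)$ gives $\overline{\X}_W\cong(\X\otimes_{\bz}\bof_p)_W$, which by Corollary \ref{cor-fiber-product} is canonically equivalent to $\X_W^{sm}\times\T$. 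Corollary \ref{big-small} then supplies canonical isomorphisms
\[ H^i_c(\X_W,\tr)\cong H^i(\X_W^{sm},\br)\quad\text{and}\quad H^i_c(\X_W,\bz)\cong H^i(\X_W^{sm},\bz). \]

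With these identifications in hand, properties (a), (d) and (e) follow immediately from the work of Lichtenbaum \cite{li01} and Geisser \cites{geisser04,geisser05}: for smooth proper $\X$ over a finite field, $H^i(\X_W^{sm},\bz)$ is finitely generated, vanishes outside a finite range, and the natural map $H^i(\X_W^{sm},\bz)\otimes_\bz\br\to H^i(\X_W^{sm},\br)$ is an isomorphism. To obtain (c) I would match the fundamental class: the morphism $\mathfrak{f}_{\overline{\X}}:\overline{\X}_W\to B_\br$ of Proposition \ref{map-to-BR} factors through $B_{W_{\bof_p}}$, and the composition $W_{\bof_p}\hookrightarrow W_\bq\to W_\bq^{\mathrm{ab}}\cong C_\bq\to\br$ sends the Frobenius to $\pm\log p$ by the normalization of the adelic absolute value. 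Hence $\theta$ equals $\pm\log(p)$ times the image in $H^1(\X_W,\tr)$ of Geisser's canonical class $e\in H^1(\X_W^{sm},\bz)$, so acyclicity of the $\cup\theta$-complex is equivalent to Geisser's acyclicity of the $\cup e$-complex.

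For (b) and (f) the task is to transfer Geisser's Zeta-function formulas across these identifications. Property (b) is an equality of integers and is immediate, since both $\ord_{s=0}\zeta(\X,s)$ and $\sum_i(-1)^i i\dim_\br H^i_c(\X_W,\tr)$ are invariants of the small Weil-\'etale cohomology that Geisser has already shown to coincide. Property (f) is essentially the leading-coefficient formula of \cite{geisser04}, transported through the same isomorphism and along the comparison $\theta=\pm\log(p)\cdot e_\br$. The main obstacle, and the point requiring the most care, is the resulting $\log p$ bookkeeping: writing $\zeta(\X,s)=Z(\X,p^{-s})$ one has $\zeta^*(\X,0)=(-\log p)^{\chi}Z^*(\X,1)$ with $\chi=\ord_{s=0}\zeta(\X,s)$, while replacing $e$ by $\theta$ rescales the trivializing isomorphism $\lambda$ by $(\pm\log p)^{\chi}$; I would verify that these two factors cancel to reproduce exactly Geisser's determinant identity for $Z^*(\X,1)$. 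Everything else is a formal consequence of the identifications above.
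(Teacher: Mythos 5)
Your proposal is correct and matches the paper's approach: both reduce everything to Geisser's results via the identification $\X_W\cong\X_W^{sm}\times\T$ (which the paper invokes via Corollary \ref{cohomology-basechange-overT} and Corollary \ref{big-small}), and both handle (c) and (f) by observing that $\theta$ and Geisser's class $e$ differ by a factor of $\log(p)$, consistent with $\zeta^*(\X,0)=\log(p)^r Z^*(\X,1)$. One small slip to fix in your bookkeeping: since $1-p^{-s}\sim s\log p$ as $s\to 0$, the leading coefficient is $(\log p)^r$, not $(-\log p)^\chi$; the sign you wrote is spurious.
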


\begin{proof} This was proved for $\X_W^{sm}$ in
\cite{geisser04}[Thm. 9.1] since one clearly has
$$H^i(\X^{sm}_W,\bz)\otimes_\bz\br\cong H^i(\X^{sm}_W,\br).$$ But in
view of Corollary \ref{cohomology-basechange-overT} (see also Corollary \ref{big-small} and the remark after it) we have
\[ H^i(\X_W,\bz)\cong H^i(\X_W^{sm},\bz);\quad H^i(\X_W,\tr)\cong H^i(\X_W^{sm},\br)\]
when $\X_W$ is defined by Definition \ref{xwdef}. Note here that our fundamental class $\theta$ defined in Definition \ref{thetadef} is different from the class $e\in H^1(\X_W,\tr)$ used in \cite{geisser04}. The class $e$ lies in the image of $H^1(\X_W,\bz)$ and is the pullback of the identity map in
$$H^1(\Spec(\bof_p)_W,\bz)=\Hom_{\bz}(W_{\bof_p},\bz)\cong\Hom_{\bz}(\bz,\bz).$$
Since the natural map $W_{\bof_p}\to\br$ sends the Frobenius to $\log(p)$, the elemens $\theta$ and $e$ differ by a factor of $\log(p)$. This is consistent with the fact that
\begin{equation}\zeta^*(\X,0)=\log(p)^rZ^*(\X,1)\end{equation}
where $Z(\X,T)\in\bq(T)$ is the rational function so that $\zeta(\X,s)=Z(\X,p^{-s})$ and $Z(\X,T)=(1-T)^rZ^*(\X,1)$ with $r\in\bz$ and $Z^*(\X,1)\neq 0,\infty$.
\end{proof}

\subsection{Remarks}\label{rems} We finish this section with some remarks to put
our results in perspective.

\subsubsection{Cohomology with $\bz$-coefficients.} If
$\X\to\Spec(\bz)$ is a (proper, flat, regular) arithmetic scheme
with a section then $R\Gamma(\overline{\Spec(\bz)}_W,\bz)$ is a
direct summand of $R\Gamma(\overline{\X}_W,\bz)$. Hence by
\cite{flach06-1} $H^4(\X_W,\bz)$ will not be a finitely generated
abelian group and d) does not hold. Even if one could find a
definition of $\overline{\Spec(\bz)}_W$ with the expected
$\bz$-cohomology the definition of $\X_W$ as a fibre product
(Definition \ref{xwdef}) will not be the right one. Heuristically
this is because one should view the fibre product of topoi as a
"homotopy pullback", and the "homotopy fibre" of $\gamma: \X_W\to
\X_{et}$ is not independent of $\X$, unlike in the situation over
finite fields. Indeed, viewing $R\gamma_*\bz$ as the cohomology of
the fibre, Geisser has shown \cite{geisser04} that this complex has
cohomology $\bz$, $\bq$, $0$ in degrees $0$, $1$, $\geq 2$,
respectively, for any $\X$ over $\Spec(\bof_p)$. So for any $\X$
over $\Spec(\bof_p)$ one can view the fibre as the pro-homotopy type
of a solenoid.

For $\X=\overline{\Spec(\co_F)}$ where $F$ is a number field, one
expects $R\gamma_*\bz$ to be concentrated in degrees $0$ and $2$
(see \cite{morin09}[Sec.9]). On the other hand, if $$\X=\mathbb
P^1_{\Spec(\co_F)}$$ has the correct $\bz$-cohomology, compatible
with the computations of $\tilde{\br}$-cohomology in this paper,
then $H^4(\bar{\X}_W,\bz)$ must be a finitely generated group of
rank $r_2$, the rank of $K_3(\co_F)$ (see j) in section \ref{moreaxioms} below). This can only happen if $R^i\gamma_*\bz$ is nonzero
for $i=3$ or $i=4$, the most likely scenario being that
$R^4\gamma_*\bz$ is nonzero with global sections
$H^0(\bar{\X}_{et},R^4\gamma_*\bz)\cong \Hom_\bz(K_3(\co_F),\bq)$.
Again, this is only a heuristic argument since we have not
rigorously defined the homotopy fibre, let alone established any
relation between its $\bz$-cohomology and $R\gamma_*\bz$.

\subsubsection{Weil-groups of finitely generated fields} The definition of the Weil-\'etale topos as a fibre
product is closely related to the idea, briefly mentioned by
Lichtenbaum in the introduction of \cite{li04}, of defining the
Weil-\'etale topos via Weil-groups for all scheme points $x\in X$,
and then gluing into a global topos in the spirit of \cite{li04}.
This is because the Weil-group of a field $k(x)$ of finite transcendence degree over
its prime subfield $F$ would be defined as the fibre product
$G_{k(x)}\times_{G_F}W_F$ and the classifying topos of this group is
the fibre product of the classifying topoi of the factors by Corollary \ref{cor-fiberproduct-classtopoi-topgrps}.
The remarks of the previous section would then apply to such a definition as well.

\subsubsection{Properties a)-f) for $\overline{\X}$} If $\X$ is regular, proper and flat over $\Spec(\bz)$ with generic fibre $X$ of dimension $d$ it follows easily from our results that properties a)-c) hold for $\overline{\X}$ where of course
\[ R\Gamma_c(\overline{\X}_W,\tr)=R\Gamma(\overline{\X}_W,\tr)\]
and
\[\zeta(\overline{\X},s)=\zeta(\X,s)\prod_{i=0}^{2d}L_\infty(h^i(X),s)^{(-1)^i}.\]
Property d) must also hold for any reasonable definition of $R\Gamma(\overline{\X}_W,\bz)$ as will become clear from our discussion in section \ref{moreaxioms} below. This discussion will also show, however, that properties e) and f) will definitely not hold for any definition of
$R\Gamma(\overline{\X}_W,\bz)$. This is consistent with the fact that there are no special value conjectures for the completed L-functions $\Lambda(h^i(X),s)$ in the literature.

\subsubsection{Non-regular/non-proper schemes.} For varieties over finite fields
which are not smooth and proper the work of Geisser \cite{geisser05}
shows that one has to replace the \'etale topology by the
eh-topology (which allows abstract blow-ups as coverings) in order
to define groups $H^i_c(X_{Wh},\bz)$ and $H^i_c(X_{Wh},\br)$ which
are independent of a choice of compactification of $X$ and which
satisfy a)-f) in the introduction (where the index $W$ is replaced
by $Wh$). For arithmetic schemes over $\Spec(\bz)$ a similar
modification will be necessary, and one also has to assume some
strong form of resolution of singularities for arithmetic schemes.
We have refrained from trying to incorporate the idea of the
eh-topology in this paper since our results (based on the fibre
product definition of $\X_W$) are only very partial in any case.

\subsection{Relation to the Tamagawa number conjecture}\label{tama} In this section we establish the compatibility of the conjectural properties of Weil-\'etale cohomology, as outlined in the introduction and augmented with some further assumptions below, with the Tamagawa number conjecture of Bloch and Kato.

\subsubsection{Statement of the Tamagawa number conjecture} Let $\X$ be a proper, flat, regular $\bz$-scheme with
generic fibre $X$ of dimension $d$. The original Tamagawa number
conjecture of Bloch and Kato \cite{bk88} concerned the leading
Taylor coefficient of $L(h^i(X),s)$ at integers
$s\geq\frac{i+1}{2}$ . This was then generalized by Fontaine and
Perrin-Riou \cite{fpr91} to a conjecture about the vanishing order
and leading coefficient at any integer $s$. In this paper we are
only concerned with $s=0$.

One defines "integral motivic cohomology" groups
$H^{p}_M(X_{/\bz},\bq(q))$ for example, as
\[H^{p}_M(X_{/\bz},\bq(q)):=\text{im}\bigl(K_{2q-p}(\X)_\bq^{(q)}\to
K_{2q-p}(X)_\bq^{(q)}\bigr),\] with $K_j(X)_\bq^{(q)}$ the $q$-th
Adams eigenspace of the algebraic K-groups $K_j(X)\otimes_\bz\bq$.
Denote by $W^*=\Hom_\bq(W,\bq)$ the dual $\bq$-space and set $W_\br:=W\otimes_\bq\br$.

\begin{conjecture} (Vanishing order) The space $H^{2d-i+1}_M(X_{/\bz},\bq(d+1))$ is finite dimensional and
\begin{align*} \ord_{s=0}L(h^i(X),s)=&\dim_\bq
H^1_f(h^i(X)^*(1))^*-\dim_\bq H^0_f(h^i(X)^*(1))^*\\ =&\dim_\bq
H^1_f(h^{2d-i}(X)(d+1))^*-\dim_\bq H^0_f(h^{2d-i}(X)(d+1))^*
\\=&\dim_\bq H^{2d-i+1}_M(X_{/\bz},\bq(d+1))^*
\end{align*}
\label{van}\end{conjecture}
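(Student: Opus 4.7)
The plan is to establish the three displayed equalities in sequence, treating them as three distinct reductions whose combined strength gives the statement. The second equality is by far the easiest; the first rests on a standard Bloch--Kato reformulation together with Lemma \ref{ordat0}; and the third is essentially the assertion of Beilinson's conjecture for $X$ and will constitute the main obstacle.

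For the \emph{second} equality I would use Poincar\'e duality for the smooth proper variety $X/\bq$: the motive $h^i(X)$ admits a perfect pairing with $h^{2d-i}(X)(d)$ given by cup product into $h^{2d}(X)(d)\cong \bq(0)$, so $h^i(X)^*\cong h^{2d-i}(X)(d)$ and therefore $h^i(X)^*(1)\cong h^{2d-i}(X)(d+1)$. Since $H^0_f$ and $H^1_f$ depend only on the isomorphism class of the Galois representation, both sides of the second equality are identical term by term.

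For the \emph{first} equality I would combine Lemma \ref{ordat0} with the Bloch--Kato formula for the order of vanishing at the near--central point. The recipe is: write $M=h^i(X)$ and $M^\vee(1)=h^i(X)^*(1)$, whose weight is negative; for such $M^\vee(1)$ the Galois invariants $H^0(G_\bq,M^\vee(1)_{\bq_l})$ vanish for weight reasons, hence $H^0_f(M^\vee(1))^*=0$ when $i\geq 1$, while for $i=0$ one recovers the contribution $-t$ of Lemma \ref{ordat0} from the invariants of $\bq(1)^{\oplus t}$ (up to the Dirichlet regulator). Thus the first equality reduces to showing
\[
\dim_\bq H^1_f(h^i(X)^*(1))=\dim_\bq H^i(X(\BC),\BC)^{F_\infty=1}
\]
for $i\geq 1$. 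This is precisely the statement of the Bloch--Kato conjecture on the rank of the Selmer group: the Hodge--theoretic side computes the order of the Gamma factor, which by the functional equation matches $\ord_{s=0}L(h^i(X),s)$ by Lemma \ref{ordat0}.

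For the \emph{third} equality I would invoke Beilinson's conjecture identifying the rational motivic cohomology groups $H^{2d-i+1}_M(X_{/\bz},\bq(d+1))$ with the Bloch--Kato Selmer group $H^1_f(h^{2d-i}(X)(d+1))$ via the $l$-adic regulator (or, equivalently, via the Chern character from $K$-theory to continuous \'etale cohomology landing in the Selmer subspace defined by local Bloch--Kato conditions at every finite place and an unramifiedness condition at $l$). The comparison of local conditions at primes of good reduction uses smooth and proper base change, while at primes of bad reduction one uses the compatibility of the regulator with the $l$-adic cycle class map and the Fontaine--Messing ``$B_{\mathrm{cris}}$'' machinery. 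This step is the main obstacle: Beilinson's conjecture is essentially open in the generality required here, known only in isolated cases (Dirichlet $L$-functions, certain Shimura varieties, some modular curves and their products). In particular, the finite--dimensionality assertion for $H^{2d-i+1}_M(X_{/\bz},\bq(d+1))$ is itself conjectural and would presumably be proved together with the comparison by constructing a rational $\bq$-structure on the Selmer group through the regulator and exhibiting its image as the full space.
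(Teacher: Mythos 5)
This statement is labeled as a \emph{conjecture} in the paper, and the paper gives no proof of it; it simply records the Fontaine--Perrin-Riou formulation of the vanishing-order part of the Tamagawa number conjecture, and later (after equation (\ref{dual})) merely observes the \emph{consistency} of Conjecture \ref{van} together with Conjecture \ref{beil} with the unconditional vanishing-order computation of Lemma \ref{ordat0}. There is therefore no proof to reproduce, and a ``proof proposal'' for the statement is a category error: the best one can do is what the paper does, namely exhibit the conjecture as the conjunction of well-understood and open inputs.

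Your decomposition of the three equalities is nevertheless the right way to analyze the statement, and you place the genuinely open content roughly where it belongs: the middle equality is Poincar\'e duality $h^i(X)^*(1)\cong h^{2d-i}(X)(d+1)$, which is unconditional; the third equality is Conjecture \ref{bk} in the paper's numbering (the Bloch--Kato description of $H^1_f$ by motivic cohomology via the regulator), which is wide open. Two adjustments would sharpen the picture. First, the first equality does not ``reduce to Lemma \ref{ordat0}'': that lemma computes $\ord_{s=0}L(h^i(X),s)$ in terms of Hodge-theoretic data $\dim_\bc H^i(X(\bc),\bc)^{F_\infty=1}$, \emph{assuming} meromorphic continuation and the functional equation, whereas the first equality of Conjecture \ref{van} asserts the same quantity equals a Selmer-group Euler characteristic. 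Bridging the two \emph{is} the conjectural content, and goes through Conjectures \ref{beil} and \ref{bk} and the duality (\ref{dual}) --- exactly the consistency check the paper performs, not a proof. Second, the finite-dimensionality of $H^{2d-i+1}_M(X_{/\bz},\bq(d+1))$ is part of Conjecture \ref{van} itself (and reappears as part of Conjecture \ref{beil}); it cannot be ``presumably proved together with the comparison'' but is logically prior, being what makes the right-hand side well defined.
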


Let $H^p_{\mathcal D}(X_{/\br},\br(q))$ denote (real) Deligne
cohomology and let
\begin{align}\rho^i_\infty:H^{2d-i+1}_M(X_{/\bz},\bq(d+1))_\br\to &H^{2d-i+1}_{\mathcal
D}(X_{/\br},\br(d+1))\label{beili}\end{align} be the Beilinson
regulator.

\begin{conjecture} (Beilinson) The map $\rho^i_\infty$ is an isomorphism for $i\geq 1$ and there is an
exact sequence \begin{equation}0\to
H^{2d+1}_M(X_{/\bz},\bq(d+1))_\br\xrightarrow{\rho^0_\infty} H^{2d+1}_{\mathcal
D}(X_{/\br},\br(d+1))\to CH^0(X)^*_\br\to 0\label{beil0}\end{equation}
for $i=0$. \label{beil}\end{conjecture}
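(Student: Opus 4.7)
The conjecture at hand is Beilinson's conjecture on the regulator map, one of the deepest and most celebrated open problems in arithmetic geometry. A complete proof in the stated generality is presently beyond reach, so any realistic proposal must focus on (a) reducing the claim to a combination of standard structural expectations on motivic cohomology (weight filtrations, finite-dimensionality, comparison isomorphisms), and (b) outlining how one establishes the statement in those special cases where it is known, indicating where the hard input has to come from.

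The plan is to proceed via Beilinson's general construction of motivic cohomology classes and their image under the regulator. First I would fix the weight $n=d+1$ throughout and reformulate the statement as the assertion that the two $\br$-vector spaces have equal (finite) dimension and that $\rho^i_\infty$ is injective. Finite dimensionality of $H^{2d-i+1}_M(X_{/\bz},\bq(d+1))$ is itself a conjecture of Bass/Beilinson which I would handle as a black box, using the identification with the integral $K$-theory Adams eigenspace and invoking the conjectural finite generation of $K_*(\X)$. The target $H^{2d-i+1}_\D(X_{/\br},\br(d+1))$ can be computed explicitly via the Deligne complex; using the Hodge decomposition and $F_\infty$-action on $H^*(X^{\mathrm{an}},\br)$, its dimension matches the archimedean Gamma-factor contribution $-\ord_{s=0}L_\infty(h^i(X),s)$ as computed in the proof of Lemma \ref{ordat0}, which by the functional equation equals $\ord_{s=0}L(h^i(X),s)$, so Conjecture \ref{van} on vanishing orders is exactly the numerical shadow of Conjecture \ref{beil}.

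Next I would attempt to produce enough motivic classes to realize the regulator as a surjection. For $i\geq 1$ this means constructing, for each class in $H^{2d-i+1}_\D(X_{/\br},\br(d+1))$, a preimage in $K_{i-1}(\X)_\bq^{(d+1)}$ or equivalently in motivic cohomology with integral support. In favorable cases this is done by:\ Borel's explicit cyclotomic elements when $X=\Spec(\co_F)$; Beilinson's Eisenstein symbols and modular units for modular curves and their fibre products (settling various cases for $i=1$); Deninger's and Wildeshaus's polylogarithmic extensions for mixed Shimura varieties. In each setting the regulator computation is a concrete integration against a differential form. For the edge case $i=0$ the sequence (\ref{beil0}) is governed by the fact that $H^{2d+1}_\D(X_{/\br},\br(d+1))$ sits in a short exact sequence whose cokernel is naturally $H^{2d}(X^{\mathrm{an}},\br(d))^{F_\infty=1}=CH^0(X)^*_\br$ after accounting for components; one then identifies the image of $\rho^0_\infty$ with the kernel of projection to $CH^0(X)^*_\br$ by a direct analysis of cycle classes and the motivic-to-Deligne spectral sequence.

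The overwhelming obstacle is surjectivity of $\rho^i_\infty$: for a general regular proper arithmetic scheme $\X$ there is no known mechanism to construct the required motivic cohomology classes, and no unconditional bound on $\dim_\bq H^{2d-i+1}_M(X_{/\bz},\bq(d+1))$ from above that would allow one to conclude from comparison of dimensions. Thus the honest plan is to treat Conjecture \ref{beil} as an axiomatic input for the rest of section \ref{tama}, verify it in the specific classes of examples listed after Theorem \ref{introtheo} (Shimura curves, self-products of elliptic curves over totally real fields) by invoking the deep results of Beilinson, Deninger--Scholl and others, and for all other $\X$ explicitly record it as an assumption alongside the meromorphic continuation and functional equation of $L(h^i(\X_\bq),s)$ needed to formulate the Tamagawa number conjecture.
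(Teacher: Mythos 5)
The statement you are asked about is labelled ``Conjecture'' in the paper, and the paper gives no proof of it: it is introduced precisely as one of the standing hypotheses (alongside Conjectures \ref{van}, \ref{bk}, \ref{ss}, \ref{w0}) needed to make the Tamagawa-number discussion in Section \ref{tama} go through, and the paper's subsequent results (Proposition \ref{tamcompare} in particular) explicitly list it among their assumptions. You have correctly recognized this: your ``proposal'' is honest that a proof is out of reach, and instead explains why the conjecture is plausible, how it interlocks with Conjecture \ref{van} via the duality (\ref{dual}) and Lemma \ref{ordat0}, and in which special cases surjectivity of the regulator is actually known. That is exactly the role the conjecture plays in the paper, so there is no gap between your treatment and the paper's. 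One small point worth keeping in mind: when you say the cokernel in (\ref{beil0}) is ``$H^{2d}(X^{\mathrm{an}},\br(d))^{F_\infty=1}=CH^0(X)^*_\br$'', the identification $H^{2d+1}_{\mathcal D}(X_{/\br},\br(d+1))\cong H^{2d}(X(\bc),\br(d))^{F_\infty=1}$ does hold because $H^{2d+1}(X(\bc),\br)=0$ and $F^{d+1}H^{2d}=0$, but the map to $CH^0(X)^*_\br$ is the trace/cycle-class pairing, not an equality; the exact sequence (\ref{beil0}) then asserts that $\rho^0_\infty$ is an isomorphism onto the kernel of that pairing. This is a refinement rather than a correction of what you wrote.
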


We remark that Deligne cohomology satisfies a duality
\begin{equation}H^{2d-i+1}_{\mathcal
D}(X_{/\br},\br(d+1))^*\cong H^i_{\mathcal
D}(X_{/\br},\br)=H^i(X(\bc),\br)^+\label{dual}\end{equation} for $i\geq 0$ and deduce the well known fact that the
vanishing order of $L(h^i(X),s)$ predicted by Conjectures \ref{van} and \ref{beil}
is in accordance with Lemma \ref{ordat0}. Another consequence of conjecture \ref{beil} is
\begin{equation} H^{2d-i+1}_M(X_{/\bz},\bq(d+1))=0 \label{beil-soule}\end{equation}
for $i\geq 2d+1$, a particular case of the Beilinson-Soule conjecture.

Define the fundamental line
\[\Delta_f(h^i(X))=\mydet_\bq^{-1}(H^i(X(\bc),\bq)^+)\otimes_\bq
\mydet_\bq H^{2d-i+1}_M(X_{/\bz},\bq(d+1))^*\] for $i>0$ and
\[\Delta_f(h^0(X))=\mydet_\bq CH^0(X)_\bq\otimes_\bq\mydet_\bq^{-1}(H^0(X(\bc),\bq)^+)\otimes_\bq
\mydet_\bq H^{2d+1}_M(X_{/\bz},\bq(d+1))^*\] for $i=0$. There is an
isomorphism
\[ \vartheta^i_\infty:\br\cong \Delta_f(h^i(X))_\br\]
induced by (\ref{dual}) and the dual of (\ref{beili}) (resp. (\ref{beil0}))  for
$i>0$ (resp. $i=0$).

Now fix a prime number $l$ and let $U\subseteq\Spec(\bz)$ an open
subscheme on which $l$ is invertible. For any smooth $l$-adic sheaf
$V$ on $U$ and prime $p\neq l$ define a complex concentrated in
degrees $0$ and $1$
\[R\Gamma_f(\bq_p,V)=R\Gamma(\bof_p,i_p^*j_{p,*}V)=V^{I_p}\xrightarrow{1-\Frob_p^{-1}}V^{I_p}\]
where $I_p$ is the inertia subgroup at $p$ and
$i_p:\Spec(\bof_p)\to\Spec(\bz)$ and $j_p:U\to\Spec(\bz)$ are the
natural immersions. For $p=l$ define
\[R\Gamma_f(\bq_p,V)=D_{cris}(V)\xrightarrow{(1-\phi,\iota)}D_{cris}(V)\oplus D_{dR}(V)/F^0D_{dR}(V)\]
where $D_{cris}$ and $D_{dR}$ are Fontaine's functors \cite{fpr91}.
In both cases there is a map of complexes
\[ R\Gamma_f(\bq_p,V)\to R\Gamma(\bq_p,V)\]
and one defines $R\Gamma_{/f}(\bq_p,V)$ as the mapping cone. The next Lemma shows that the complex $R\Gamma_f(\bq_p,V)$ has a uniform description for $p=l$ and $p\neq l$ in the case that interests us.

\begin{lemma} Let $V$ be finite dimensional $\bq_p$-vector space with a continuous $G_p:=\Gal(\bar{\bq}_p/\bq_p)$-action and such that $D_{dR}(V)/F^0D_{dR}(V)=0$. Then there is a commutative diagram in the derived category of $\bq_p$-vector spaces
\[\begin{CD} R\Gamma_f(\bq_p,V) @>>> R\Gamma(\bq_p,V)\\
@V\kappa VV \Vert@.\\
R\Gamma(\bof_p,V^{I_p}) @>>> R\Gamma(\bq_p,V)
\end{CD}\]
where $\kappa$ is a quasi-isomorphism.
\label{fil0}\end{lemma}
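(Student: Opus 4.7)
The proof divides into two cases depending on whether $p \neq l$ or $p = l$.

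For $p \neq l$, the complex $R\Gamma_f(\bq_p, V)$ is by definition precisely $R\Gamma(\bof_p, i_p^* j_{p,*} V) = R\Gamma(\bof_p, V^{I_p})$, and the top arrow in the diagram is the canonical inflation from unramified cohomology, which agrees with the bottom arrow. One therefore takes $\kappa$ to be the identity and commutativity is automatic; the hypothesis on $F^0 D_{dR}(V)$ plays no role here.

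For $p = l$, the hypothesis $D_{dR}(V)/F^0 D_{dR}(V) = 0$ kills the second summand of the target of $(1-\phi, \iota)$, so
\[ R\Gamma_f(\bq_p, V) \cong \bigl[ D_{cris}(V) \xrightarrow{1-\phi} D_{cris}(V) \bigr] \]
while
\[ R\Gamma(\bof_p, V^{I_p}) = \bigl[ V^{I_p} \xrightarrow{1-\Frob_p^{-1}} V^{I_p} \bigr], \]
each in degrees $0$ and $1$. The plan is first to construct $\kappa$ in the derived category and then to verify it is a quasi-isomorphism on cohomology. For the construction, observe that both complexes map naturally into a Fontaine--Herr style model of $R\Gamma(\bq_p, V)$ --- the bottom row via inflation from $G_p/I_p$, the top row via the canonical map $R\Gamma_f \to R\Gamma$ from Bloch--Kato; one defines $\kappa$ as the derived-category morphism factoring through this common target, so that commutativity with the maps to $R\Gamma(\bq_p, V)$ holds by construction.

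To verify that $\kappa$ is a quasi-isomorphism it suffices to compare $H^0$ and $H^1$. On $H^0$, the fundamental exact sequence $0 \to \bq_p \to B_{cris}^{\phi=1} \to B_{dR}/B_{dR}^+ \to 0$ tensored with $V$, combined with the vanishing $(V \otimes_{\bq_p} B_{dR}/B_{dR}^+)^{G_p} = D_{dR}(V)/F^0 = 0$, identifies $H^0_f(\bq_p, V) = D_{cris}(V)^{\phi=1}$ with $V^{G_p} = H^0(\bof_p, V^{I_p})$. On $H^1$, the Bloch--Kato local formula gives $\dim_{\bq_p} H^1_f(\bq_p, V) = \dim_{\bq_p} H^0(\bq_p, V) + \dim_{\bq_p} D_{dR}(V)/F^0 = \dim_{\bq_p} V^{G_p}$, while the local Euler--Poincar\'e formula applied to the profinite group $G_p/I_p$ yields the same dimension for $H^1(\bof_p, V^{I_p})$; combined with the always-valid inclusion $H^1_{ur}(\bq_p, V) \subseteq H^1_f(\bq_p, V)$ inside $H^1(\bq_p, V)$, dimension counting forces equality and hence that $\kappa$ is an isomorphism on $H^1$. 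The main obstacle in this plan is the first task for $p = l$: producing a genuine derived-category morphism $\kappa$ (rather than just a cohomology-level isomorphism) compatible with the maps to $R\Gamma(\bq_p, V)$, which requires careful manipulation of $p$-adic period rings and the Fontaine--Herr double complex.
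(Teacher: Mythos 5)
The central difficulty in this Lemma is precisely the one your plan postpones to the end: constructing a genuine morphism $\kappa$ in the derived category, rather than merely observing that the cohomology groups have matching dimensions. Your proposal does not accomplish this, and you acknowledge the gap explicitly. But it is worth spelling out why the gap is fatal as your plan stands. You write that since both complexes map to $R\Gamma(\bq_p,V)$, one can define $\kappa$ ``as the derived-category morphism factoring through this common target.'' That step does not exist: given morphisms $A\to C\leftarrow B$ in a derived category one cannot in general produce a morphism $A\to B$ (or $B\to A$) making the triangle commute; one would need, for instance, to realize both sources as honest subcomplexes of the \emph{same} chosen representative of $C$. That is exactly what the paper does and what your plan omits.

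The paper's route is the following. It resolves $V$ by the two-term complex $B^0(V)=B_{cris}\otimes V\to B^1(V)=B_{cris}\otimes V\oplus(B_{dR}/F^0B_{dR})\otimes V$ via $d^0=(1-\phi,\iota)$, and writes $R\Gamma(\bq_p,V)$ as the total complex of $C^*(G_p,B^0(V))\to C^*(G_p,B^1(V))$. Separately, for any continuous $G_p$-module $M$ it writes $R\Gamma(G_p,M)$ as the total complex of $C^*(I_p,M)\xrightarrow{1-\Frob_p^{-1}}C^*(I_p,M)$. Combining the two gives a triple complex representing $R\Gamma(\bq_p,V)$ inside which $R\Gamma_f(\bq_p,V)$ (the $G_p$-invariants of the $B$-resolution) and $R\Gamma(\bof_p,V^{I_p})$ (the $I_p$-invariants fed into the $\Frob_p$-cone) both appear as literal subcomplexes. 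The map $\kappa$ is then a chain-level inclusion $D_{cris}(V)=H^0(G_p,B^0(V))\hookrightarrow H^0(I_p,B^0(V))$ in each degree, and under the hypothesis $D_{dR}(V)/F^0D_{dR}(V)=0$ the columns of the resulting diagram become short exact because $1-\Frob_p^{-1}$ is surjective on $H^0(I_p,B^0(V))\cong D_{cris}(V)\otimes_{\bq_p}\bqpur$ (surjectivity of $1-\Frob_p$ on $\bqpur$). The quasi-isomorphism and the commutativity with the maps to $R\Gamma(\bq_p,V)$ thus come for free from being subcomplexes of a common model. Nothing in your proposal supplies a substitute for this.

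There is also a secondary soft spot in your cohomology-level argument. Your $H^1$ comparison rests on the assertion $H^1_{ur}(\bq_p,V)\subseteq H^1_f(\bq_p,V)$ ``always valid,'' which you invoke but do not justify, and which is exactly the nontrivial content once translated back to complexes. So even as a verification of the conclusion on cohomology, the proposal leaves its main input unproved; and even if it were granted, it would not yield the diagram in the derived category. You should adopt the paper's strategy of building one explicit model of $R\Gamma(\bq_p,V)$ containing both sources as subcomplexes, and then the quasi-isomorphism follows from the exactness of the Frobenius columns; the dimension count becomes a corollary rather than the proof.
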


\begin{proof} For a profinite group $G$ and continuous $G$-module $M$ we denote by $C^*(G,M)$ the standard complex of continuous cochains. There is an exact sequence of continuous $G_p$-modules
\[ 0\to V\to B^0(V)\xrightarrow{d^0}B^1(V) \to 0\]
where $B^0(V)=B_{cris}\otimes_{\bq_p}V$ (with diagonal $G_p$-action), $B^1(V)=B_{cris}\otimes_{\bq_p}V\oplus (B_{dR}/F^0B_{dR})\otimes_{\bq_p}V$ and $d^0(x)=((1-\phi)(x),\iota(x))$ where $\iota$ is induced by the canonical inclusion $B_{cris}\to B_{dR}$ (see \cite{fpr91} for more on Fontaine's rings $B_{cris}$ and $B_{dR}$). Viewing this sequence as a quasi-isomorphism between $V$ and a two term complex we obtain a quasi-isomorphism
\[ R\Gamma(\bq_p,V)=R\Gamma(G_p,V)=C^*(G_p,V)\cong \Tot\left(C^*(G_p,B^0(V))\xrightarrow{d^{0*}}C^*(G_p,B^1(V))\right) \]
where $\Tot$ denotes the simple complex associated to a double complex. By definition $R\Gamma_f(\bq_p,V)$ is the subcomplex
\[ D_{cris}(V)=H^0(G_p,B^0(V))\xrightarrow{d^0}H^0(G_p,B^1(V))=D_{cris}(V)\oplus D_{dR}(V)/F^0D_{dR}(V) \]
of this double complex. For any continuous  $G_p$-module $M$ there is moreover a quasi-isomorphism
\[R\Gamma(G_p,M)\cong R\Gamma(\bof_p,R\Gamma(I_p,M))\cong \Tot\left(C^*(I_p,M)\xrightarrow{1-\Frob_p^{-1}}C^*(I_p,M)\right)\]
where $\Frob_p\in G_p$ is any lift of the Frobenius automorphism in $G_p/I_p$, acting  simultaneously on $I_p$ (by conjugation) and on $M$. The complex $R\Gamma(\bof_p,H^0(I_p,M))$ is the subcomplex
\[ H^0(I_p,M)\xrightarrow{1-\Frob_p^{-1}}H^0(I_p,M)\] of this double complex. Combining these two constructions, we deduce that $R\Gamma(\bq_p,V)$ is canonically isomorphic to the total complex of the triple complex
\[\begin{CD} C^*(I_p,B^0(V))@>d^{0*}>>C^*(I_p,B^1(V))\\
@V 1-\Frob_p^{-1} VV @V 1-\Frob_p^{-1} VV\\
C^*(I_p,B^0(V))@>d^{0*}>>C^*(I_p,B^1(V))
\end{CD}\]
and $R\Gamma(\bof_p,H^0(I_p,V))$ is canonically isomorphic to the total complex of the double subcomplex
\[\begin{CD} H^0(I_p,B^0(V))@>d^{0}>>H^0(I_p,B^1(V))\\
@V 1-\Frob_p^{-1} VV @V 1-\Frob_p^{-1} VV\\
H^0(I_p,B^0(V))@>d^{0}>>H^0(I_p,B^1(V)).
\end{CD}\]
Now if $D_{dR}(V)/F^0D_{dR}(V)=0$ this double complex is naturally quasi-isomorphic to $R\Gamma_f(\bq_p,V)$ via the first vertical map $\kappa$ in the following diagram
\[\begin{CD} D_{cris}(V)@> 1-\phi >> D_{cris}(V)\\
@V\kappa^0 VV @V\kappa^1 VV\\
H^0(I_p,B^0(V))@>1-\phi >> H^0(I_p,B^0(V))\\
@V 1-\Frob_p^{-1} VV @V 1-\Frob_p^{-1} VV\\
H^0(I_p,B^0(V))@>1-\phi >> H^0(I_p,B^0(V)).
\end{CD}\]
Indeed, the vertical sequences in this diagram are short exact sequences. The space $D_{cris}(V)=H^0(G_p,B^0(V))$ is clearly the kernel of $1-\Frob_p^{-1}$ on $H^0(I_p,B^0(V))$, and $1-\Frob_p^{-1}$ is surjective. This is because there is an isomorphism of $\Frob_p$-modules $H^0(I_p,B^0(V))=D_{cris}(V)\otimes_{\bq_p}\bqpur\cong(\bqpur)^d$ where $d=\dim_{\bq_p}D_{cris}(V)$ and $\bqpur$ is the $p$-adic completion of the maximal unramified extension of $\bq_p$. It is well known that $1-\Frob_p$ is surjective $\bqpur$. This concludes the proof of the Lemma.
\end{proof}

Next one defines a global complex $R\Gamma_f(\bq,V)$ as the mapping fibre of
\[R\Gamma(U_{\et},V)\to\bigoplus_{p\notin U}R\Gamma_{/f}(\bq_p,V).\]
Then there is an exact triangle in the derived category of
$\bq_l$-vector spaces \begin{equation} R\Gamma_c(U_{\et},V)\to
R\Gamma_f(\bq,V)\to\bigoplus_{p\notin U}R\Gamma_f(\bq_p,V)\label{tri1}\end{equation} where the primes $p\notin U$ include $p=\infty$ with the convention $R\Gamma_f(\br,V)=R\Gamma(\br,V)$. One can further show that Artin-Verdier duality
induces a duality
\[ H^i_f(\bq,V)\cong H^{3-i}_f(\bq,V^*(1))^*.  \]

The index "$f$" stands for "finite" which in this context is synonymous for "unramified" or "coming from an integral model". The following proposition justifies this interpretation of the complex $R\Gamma_f$ in the case of interest in this paper.

\begin{prop} Let $\pi:\X\to\Spec(\bz)$ be a regular, proper, flat $\bz$-scheme and
$\bar{\X}_\et$ its Artin-Verdier \'etale topos. Let
$U\subseteq\Spec(\bz)$ be an open subscheme so that $\pi_U:\X_U\to U$
is proper and smooth, let $l$ be a prime number invertible on
$U$ and set $\X_p=\X\otimes_\bz\bof_p$. For brevity we write $\X_{\infty,\et}$ for $Sh(\X_\infty)$ (see Prop. \ref{closed/open-decomp-etale}). Assume Conjecture \ref{w0} in the next section. Then there is an isomorphism of exact triangles in the derived
category of $\bq_l$-vector spaces
\[\minCDarrowwidth1em\begin{CD}
R\Gamma_c(\X_{U,\et},\bq_l) @>>> R\Gamma(\bar{\X}_{\et},\bq_l) @>>>
\bigoplus\limits_{p\notin U}R\Gamma(\X_{p,\et},\bq_l) @>>> {}\\
@VVV @VVV @VVV @.\\
\bigoplus\limits_{i=0}^{2d}R\Gamma_c(U_\et,V^i_l)[-i] @>>>
\bigoplus\limits_{i=0}^{2d}R\Gamma_f(\bq,V^i_l)[-i]
@>>>\bigoplus\limits_{p\notin U}\bigoplus\limits_{i=0}^{2d}R\Gamma_f(\bq_p,V^i_l)[-i]@>>>
\end{CD}\]
where $V^i_l:=H^i(X_{\bar{\bq},\et},\bq_l)$ and the bottom exact
triangle is a sum over triangles (\ref{tri1}).
\label{reform}\end{prop}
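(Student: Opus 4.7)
The plan is to construct each vertical arrow as a direct-sum decomposition of the $\bq_l$-\'etale cohomology indexed by the cohomological degree of the generic geometric fibre, using proper base change together with a weight/$p$-adic degeneration argument, and then to check compatibility of the two exact triangles. Granting Conjecture \ref{w0}, the content is that $R\bar\pi_*\bq_l$ is ``formal'' in the derived category, with $i$-th cohomology the sheaf controlled by $V^i_l$, and similarly on its fibres at the bad primes.

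For the left column, $\pi_U\colon \X_U\to U$ is smooth and proper with $l$ invertible on $U$, so proper-smooth base change identifies $R^i(\pi_U)_*\bq_l$ with the lisse $l$-adic sheaf on $U$ whose fibre at the generic geometric point is $V^i_l$. Deligne's purity/weight theorem then gives a canonical splitting $R(\pi_U)_*\bq_l\simeq\bigoplus_i R^i(\pi_U)_*\bq_l[-i]$ in $D^b_c(U_\et,\bq_l)$, the Frobenius weights being disjoint across degrees; applying $R\Gamma_c(U_\et,-)$ produces the left vertical isomorphism.

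For the right column I would distinguish three cases. At a finite prime $p\notin U$ with $l\neq p$, proper base change identifies $R\Gamma(\X_{p,\et},\bq_l)\simeq R\Gamma(\bof_p,R\Gamma(\X_{\bar p,\et},\bq_l))$; the inner complex splits degree-by-degree by weights, and Conjecture \ref{w0} supplies the local invariant-cycles isomorphism $H^i(\X_{\bar p,\et},\bq_l)\cong (V^i_l)^{I_p}$, yielding the required identification via the $p\neq l$ definition of $R\Gamma_f(\bq_p,V^i_l)$ (equivalently by Lemma \ref{fil0}). At $p=l$ the same weight splitting holds, but the identification with $R\Gamma_f(\bq_p,V^i_l)$ requires a $p$-adic comparison theorem of Fontaine--Messing/Tsuji/Niziol type, which is precisely the $p$-adic content of Conjecture \ref{w0}. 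For $p=\infty$, $R\Gamma(\X_{\infty,\et},\bq_l)\simeq R\Gamma(G_\br,R\Gamma(\X(\bc),\bq_l))$ and the inner complex splits $G_\br$-equivariantly over $\bq_l$ (since $|G_\br|=2$ is invertible in the characteristic zero field $\bq_l$), matching $\bigoplus_i R\Gamma_f(\br,V^i_l)[-i]=\bigoplus_i R\Gamma(G_\br,V^i_l)[-i]$.

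Finally, the top row is the standard localization triangle for the open-closed decomposition of $\bar\X_\et$ into $\X_{U,\et}$ and $\coprod_{p\notin U}\X_{p,\et}$ (using Proposition \ref{closed/open-decomp-etale} at infinity and Proposition \ref{pull-bu} at the bad finite primes), while the bottom row is the direct sum over $i$ of the triangles (\ref{tri1}). The most natural construction of the middle vertical map is via the Leray spectral sequence for $\bar\pi$ combined with the weight/$p$-adic splittings established above; alternatively, given the outer two vertical maps, the middle map is forced by functoriality of mapping cones in the derived category, and compatibility with the two boundary maps is built into the construction. The main obstacle is the $p=l$ case of Conjecture \ref{w0}: the $l$-adic local invariant cycles theorem is classical for $l\neq p$ (de Jong, Rapoport--Zink--Steenbrink), but its $p$-adic counterpart demands a delicate comparison between the $\bq_p$-\'etale cohomology of a degenerating special fibre and the Fontaine-theoretic complex $R\Gamma_f(\bq_p,-)$; all the remaining ingredients (proper-smooth base change, Deligne's weight formalism, Lemma \ref{fil0}, and the triangulated diagram chase) are standard.
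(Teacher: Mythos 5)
Your proposal correctly identifies the main ingredients (proper--smooth base change, the Deligne decomposition $R(\pi_U)_*\bq_l\simeq\bigoplus_i R^i(\pi_U)_*\bq_l[-i]$, Lemma \ref{fil0}, the local invariant-cycles statement, and the $p$-adic comparison), but there are two genuine gaps.

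First, you assert that local invariant cycles supplies an isomorphism $H^i(\X_{\bar p,\et},\bq_l)\cong (V^i_l)^{I_p}$, from which the identification of $R\Gamma(\X_{p,\et},\bq_l)$ with $\bigoplus_i R\Gamma_f(\bq_p,V^i_l)[-i]$ would be immediate. This is not what the theorem (resp.\ conjecture) gives, and it is false in general: the specialization map $\spe\colon H^i(\X_{\bar p},\bq_l)\to (V^i_l)^{I_p}$ is expected to be surjective but need not be injective (blowing up a point in a relative curve already kills injectivity for $i=2$, cf.\ the remarks after Theorem \ref{main}). What Theorem \ref{main}~b) for $l\ne p$ (not Conjecture \ref{w0}, which is only the $l=p$ case) and Conjecture \ref{w0} for $l=p$ actually give is that $\spe$ is an isomorphism on the generalized Frobenius eigenspace $W_0$ for eigenvalue $1$. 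The paper's proof is saved by the additional observation that $R\Gamma(\bof_p,V)\cong R\Gamma(\bof_p,W_0 V)$ --- the Galois cohomology over $\bof_p$ only sees the eigenvalue-$1$ part, since $1-\Frob_p^{-1}$ is invertible on the complement --- which is combined with a Hochschild--Serre filtration comparison between $H^i(\X_p,\bq_l)$ and $H^i(\X_{\bq_p},\bq_l)$ (diagram (\ref{dia23})) to build the actual map $\alpha$ in (\ref{dia-pl}). Without this mechanism your construction of the right-hand vertical map does not go through.

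Second, your ``alternative'' construction of the middle vertical arrow by ``functoriality of mapping cones'' is not valid: cones in a triangulated category are not functorial, and while TR3 guarantees the existence of a completing morphism, it is neither unique nor canonical, so ``compatibility is built into the construction'' does not hold. The paper instead constructs the middle arrow explicitly --- the map $\beta$ in (\ref{dia-pl}) from the Leray spectral sequence and a choice of decomposition $R\pi_{\bq_p,*}\bq_l\cong\bigoplus V^i_l[-i]$, and the map $\alpha$ from the $W_0$ comparison above --- and then obtains the full triangle comparison by taking mapping fibres of the horizontal composites in (\ref{dia2}), identifying the fibre with $R\Gamma(\bar\X_\et,\bq_l)$ by excision, and finishing with the octahedral axiom. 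Your first suggestion (via Leray for $\bar\pi$) is closer in spirit, but needs to be carried out carefully to guarantee the commutativity you assert.

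Finally a small mislabelling: the $l\ne p$ case of the isomorphism at bad finite primes is \emph{proven} (Theorem \ref{main}~b)), and only the $l=p$ case rests on Conjecture \ref{w0}; your proposal cites the conjecture for both.
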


\begin{proof} For all $p$ and $l$ (including $p=\infty$ with a suitable interpretation of the terms) we shall first show that there is a commutative diagram
\begin{equation}\minCDarrowwidth1em\begin{CD}
R\Gamma(\X_{p,\et},\bq_l) @>>> R\Gamma(\X_{\bq_p,\et},\bq_l) @>>> R\Gamma_{\X_p}(\X_{\bz_p,\et},\bq_l)[1] @>>> {}\\
@V\alpha VV @V\beta VV @VVV @. \\
\bigoplus\limits_{i=0}^{2d}R\Gamma_f(\bq_p,V^i_l)[-i] @>>>
\bigoplus\limits_{i=0}^{2d}R\Gamma(\bq_p,V^i_l)[-i]@>>>\bigoplus\limits_{i=0}^{2d}R\Gamma_{/f}(\bq_p,V^i_l)[-i]@>>>
\end{CD}\label{dia-pl}\end{equation}
where the rows are exact and the vertical maps are quasi-isomorphism.  This then induces a commutative diagram where the vertical maps are quasi-isomorphism
\begin{equation}\minCDarrowwidth1em\begin{CD}
R\Gamma(\X_{U,\et},\bq_l)@>>>\bigoplus\limits_{p\notin U} R\Gamma(\X_{\bq_p,\et},\bq_l) @>>> \bigoplus\limits_{p\notin U}R\Gamma_{\X_p}(\X_{\bz_p,\et},\bq_l)[1]\\
@VVV @VVV @VVV \\\bigoplus\limits_{i=0}^{2d}R\Gamma(U_\et,V^i_l)[-i]
@>>>
\bigoplus\limits_{p\notin U}\bigoplus\limits_{i=0}^{2d}R\Gamma(\bq_p,V^i_l)[-i]@>>>\bigoplus\limits_{p\notin U}\bigoplus\limits_{i=0}^{2d}R\Gamma_{/f}(\bq_p,V^i_l)[-i].
\end{CD}\label{dia2}\end{equation}
Indeed, the first commutative square is induced by the commutative diagram
\[\begin{CD}
\X_U @<<< \X_{\bq_p}\\
@VVV @VVV\\
U @<<< \Spec(\bq_p)
\end{CD}\]
and a decomposition $R\pi_{U,*}\bq_l\cong\bigoplus_{i=0}^{2d}V_l^i[-i]$ in the derived category of $l$-adic sheaves on $U$, and the second is a sum over $p\notin U$ of the right hand square in (\ref{dia-pl}).
Taking mapping fibres of the composite horizontal maps in (\ref{dia2}) we obtain an isomorphism of exact triangles
\[\minCDarrowwidth1em\begin{CD}
R\Gamma(\bar{\X}_{\et},\bq_l) @>>> R\Gamma(\X_{U,\et},\bq_l) @>>>
\bigoplus\limits_{p\notin U}R\Gamma_{\X_p}(\X_{\bz_p,\et},\bq_l)[1] @>>> {}\\
@VVV @VVV @VVV @.\\
\bigoplus\limits_{i=0}^{2d}R\Gamma_f(\bq,V^i_l)[-i] @>>>
\bigoplus\limits_{i=0}^{2d}R\Gamma(U_\et,V^i_l)[-i]
@>>>\bigoplus\limits_{p\notin U}\bigoplus\limits_{i=0}^{2d}R\Gamma_{/f}(\bq_p,V^i_l)[-i]@>>>
\end{CD}\]
where we use excision to identify the first fibre with $R\Gamma(\bar{\X}_{\et},\bq_l)$. The octahedral axiom then gives the isomorphism of exact triangles in Proposition \ref{reform}, using the fact that the mapping fibre of the top left (resp. bottom left) horizontal map in (\ref{dia2}) is
$R\Gamma_c(\X_{U,\et},\bq_l)$ (resp. $\bigoplus_{i=0}^{2d}R\Gamma_c(U_\et,V^i_l)[-i]$).

Concerning (\ref{dia-pl}), for $p=\infty$ we declare  $R\Gamma(\X_{p,\et},\bq_l) = R\Gamma(\X_{\bq_p,\et},\bq_l)$ and $R\Gamma_{\X_p}(\X_{\bz_p,\et},\bq_l)=0$. This agrees with the convention $R\Gamma_f(\br,-)=R\Gamma(\br,-)$ introduced above.
For $p\neq\infty$ the top exact triangle is simply a localization triangle in \'etale cohomology since we have
$R\Gamma(\X_{p,\et},\bq_l)\cong R\Gamma(\X_{\bz_p,\et},\bq_l)$ by proper base change. It suffices to construct quasi-isomorphisms $\alpha $ and $\beta$ so that the left hand square in (\ref{dia-pl}) commutes. For brevity we now omit the index $\et$ when referring to (continuous $l$-adic) \'etale cohomology.

The quasi-isomorphism $\beta$ is induced by the Leray spectral sequence for $\pi_{\bq_p}$ and a decomposition
\begin{equation} R\pi_{\bq_p,*}\bq_l\cong \bigoplus\limits_{i=0}^{2d}V_l^i[-i]\label{decomp}\end{equation}
in the derived category of $l$-adic sheaves on $\Spec(\bq_p)$. The existence of $\alpha$ follows if the composite map
\[ H^i(\X_p,\bq_l)\to H^i(\X_{\bq_p},\bq_l)\xrightarrow{H^i(\beta)}H^0(\bq_p,V^i_l)\oplus H^1(\bq_p,V^{i-1}_l)\oplus H^2(\bq_p,V^{i-2}_l)\]
induces an isomorphism
\[ H^i(\X_p,\bq_l)\cong H^0_f(\bq_p,V^i_l)\oplus H^1_f(\bq_p,V^{i-1}_l).\]
We shall show this only referring to the filtration $F^*$ on $H^i(\X_{\bq_p},\bq_l)$ induced by the Leray spectral sequence for $\pi_{\bq_p}$, not any particular decomposition (\ref{decomp}).
The Hochschild-Serre spectral sequence for the covering $\X_{\hat{\bz}_p^{ur}}\to\X_{\bz_p}$, whose group we identify with $\Gal(\bar{\bof}_p/\bof_p)$, induces a commutative diagram with exact rows
\begin{equation}\minCDarrowwidth1em\begin{CD} 0 @>>> H^1(\bof_p, H^{i-1}(\X_{\bar{\bof}_p},\bq_l)) @>>>  H^i(\X_p,\bq_l) @>>> H^0(\bof_p,H^i(\X_{\bar{\bof}_p},\bq_l)) @>>> 0\\
@. @VVV @VVV @VVV @.\\
0 @>>> H^1(\bof_p, H^{i-1}(\X_{\bqpur},\bq_l)) @>>>  H^i(\X_{\bq_p},\bq_l) @>>> H^0(\bof_p,H^i(\X_{\bqpur},\bq_l)) @>>> 0\\
@. @VVV @VVV @VV\gamma V @.\\
{} @. H^1(\bof_p, H^0(I_p,V_l^{i-1})) @. H^0(\bq_p,V_l^i) @= H^0(\bof_p,H^0(I_p,V_l^i)).@.{}
\end{CD}\label{dia23}\end{equation}
The left and right composite vertical maps are isomorphisms by Theorem \ref{main} b) for $l\neq p$ (resp. Conjecture \ref{w0} for $l=p$) and the fact that $$R\Gamma(\bof_p,V)\cong R\Gamma(\bof_p,W_0V)$$ for any $l$-adic sheaf $V$ on $\Spec(\bof_p)$ where $W_0V\subseteq V$ is the generalized Frobenius eigenspace for eigenvalues which are roots of unity (or just for the eigenvalue $1$). Note also that
\[ H^k_f(\bq_p,V_l^i)=H^k(\bof_p,H^0(I_p,V_l^i))\]
for $k=0,1$ and all $l$ and $i$ by Lemma \ref{fil0} since
$$D_{dR}(V_p^i)\cong H^i_{dR}(\X_{\bq_p}/\bq_p)=F^0H^i_{dR}(\X_{\bq_p}/\bq_p)\cong F^0D_{dR}(V_p^i).$$
The kernel of the map $\gamma$ in (\ref{dia23}) is $H^0(\bof_p,H^1(I_p,V_l^{i-1}))$, hence there is a commutative diagram with exact rows
\[\minCDarrowwidth1em\begin{CD}
0 @>>> H^1(\bof_p, H^{i-1}(\X_{\bqpur},\bq_l)) @>>> F^1H^i(\X_{\bq_p},\bq_l) @>>> H^0(\bof_p,H^1(I_p,V_l^{i-1})) @>>> 0\\
@. @VVV @VVV \Vert@. @.\\
0 @>>> H^1(\bof_p, H^0(I_p,V_l^{i-1})) @>>> H^1(\bq_p,V_l^{i-1}) @>>> H^0(\bof_p,H^1(I_p,V_l^{i-1}))@>>> 0
\end{CD}\]
which implies that the left vertical isomorphism in (\ref{dia23}) fits into a commutative diagram with the natural map $F^1H^i(\X_{\bq_p},\bq_l)\to H^1(\bq_p,V_l^{i-1})$. This finishes the proof of the existence of $\alpha$ and of Proposition \ref{reform}.

We remark that for $l\neq p$ we have $W_0H^1(I_p,V_l^i)=W_0((V_l^i)_{I_p}(-1))=0$ and hence isomorphisms
\[ W_0H^i(\X_{\bar{\bof}_p},\bq_l)\cong W_0H^i(\X_{\bqpur},\bq_l)\cong W_0H^0(I_p,V_l^i)\]
which implies that the top left and right, and therefore the top middle vertical maps in (\ref{dia23}) are isomorphisms. We conclude that \[ R\Gamma(\X_p,\bq_l)\cong R\Gamma(\X_{\bq_p},\bq_l)\]
for $l\neq p$ like for $p=\infty$.
\end{proof}

We continue with the statement of the Tamagawa number conjecture.
One might view the following conjecture as an $l$-adic analogue of
Beilinson's conjecture, or as a generalization of Tate's conjecture.
\begin{conjecture} (Bloch-Kato) There are isomorphisms
\[ \rho_l^i:H^2_f(\bq,V_l^i)\cong H^{2d-i+1}_M(X_{/\bz},\bq(d+1))_{\bq_l}^*  \]
and $H^1_f(\bq,V_l^i)=0$ for any $i$. \label{bk}\end{conjecture}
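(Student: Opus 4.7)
The plan is to construct $\rho_l^i$ via the $l$-adic Chern class map from algebraic $K$-theory (or motivic cohomology) into continuous $l$-adic étale cohomology of the regular, proper, flat model $\X$, and then to attempt to match the two sides using Artin-Verdier duality on $\bar{\X}_\et$ together with the decomposition and identifications established in Proposition \ref{reform}. Specifically, for $\X$ regular of dimension $d+1$, Poincaré duality for étale cohomology on $\bar{\X}_\et$ provides a pairing relating $H^{2d+2-i}(\bar{\X}_\et,\bq_l(d+1))$ to $H^i_c(\X_\et,\bq_l)^*$, while the Beilinson-Lichtenbaum theorem (Voevodsky-Rost) identifies motivic cohomology with étale cohomology in the relevant range. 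Combining the cycle class map $H^{2d-i+1}_M(X_{/\bz},\bq(d+1))\to H^{2d-i+1}(\X_\et,\bq_l(d+1))$ with the identifications of Proposition \ref{reform} should produce a candidate for $\rho_l^i$ landing in a summand isomorphic to $H^2_f(\bq,V_l^i)$.

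To verify the isomorphism property, I would proceed degree by degree using the decomposition of $R\pi_*\bq_l$ in (\ref{decomp}) and the Leray spectral sequence for $\pi$. Injectivity of $\rho_l^i$ after tensoring with $\bq_l$ reduces, via the known compatibility of $l$-adic and motivic Chern classes, to the injectivity part of the Beilinson-Soulé-type statement captured in Conjecture \ref{beil}, combined with a weight argument: the image should lie in $W_0$-part by purity, and this $W_0$-part should be detected by the regulator. The vanishing $H^1_f(\bq,V_l^i)=0$ would then follow from Artin-Verdier duality $H^1_f(\bq,V_l^i)\cong H^2_f(\bq,V_l^{i*}(1))^* = H^2_f(\bq, V_l^{2d-i}(d+1-i))^*$ combined with Conjecture \ref{beil-soule} applied to the dual motive.

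The main obstacle — in fact the obstacle — is that Conjecture \ref{bk} is essentially the Bloch-Kato conjecture on special values of $L$-functions, which is one of the central open problems in arithmetic geometry. Surjectivity of $\rho_l^i$ is known unconditionally only in very restricted situations: Dedekind zeta functions (Soulé, Beilinson), abelian extensions of $\bq$ (Burns-Flach, Huber-Kings), CM abelian varieties, and certain modular forms and Rankin-Selberg convolutions via Kato's and subsequent Euler system constructions. The vanishing of $H^1_f(\bq,V_l^i)$ encodes finiteness of Tate-Shafarevich-type groups; for $\X$ a regular model of an elliptic curve and $i=1$ it amounts to $\#\text{Ш}(E/\bq)[l^\infty]<\infty$, established only in analytic rank $\leq 1$ by Kolyvagin.

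Consequently, the realistic scope of the proposal is: (i) construct $\rho_l^i$ canonically via the Chern class maps and verify functoriality and compatibility with the duality of Proposition \ref{reform}; (ii) establish injectivity of $\rho_l^i$ assuming Conjecture \ref{beil} and the Beilinson-Soulé vanishing; (iii) reduce the vanishing of $H^1_f$ to standard finiteness hypotheses on motivic cohomology and Tate-Shafarevich groups; and (iv) verify the full conjecture unconditionally in the function field analogue and in cases where Euler systems are available, such as the examples treated in the corollary following Theorem \ref{b-theo}. A complete proof in the generality stated should not be expected by the present methods.
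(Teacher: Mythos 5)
This statement appears in the paper as Conjecture~\ref{bk}, not as a theorem; the authors do not prove it, and indeed they cannot, since it is precisely the Bloch--Kato conjecture in the Fontaine--Perrin-Riou formulation, one of the central open problems in arithmetic geometry. You have correctly recognized this. Your sketch of how $\rho_l^i$ is constructed (via $l$-adic Chern classes and the compatibility with Artin--Verdier duality encoded in Proposition~\ref{reform}), why injectivity is conditional on Conjecture~\ref{beil}, and why surjectivity together with the vanishing of $H^1_f$ is genuinely open except in scattered cases accessible to Euler systems, is an accurate summary of the state of the art. The conclusion that a complete proof in this generality is not to be expected is exactly right, and is why the paper lists it among the hypotheses under which Proposition~\ref{tamcompare} holds rather than attempting a proof. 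There is nothing to compare against, since the paper offers no argument for this conjecture.
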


One can show easily that $H^0_f(\bq,V_l^0)\cong Ch^0(X)_{\bq_l}$,
$H^0_f(\bq,V_l^i)=0$ for $i>0$ and $H^3_f(\bq,V_l^i)=0$ so that
Conjecture \ref{bk} computes the entire cohomology of
$R\Gamma_f(\bq,V_l^i)$. Together with Artin's comparison isomorphism
\[V^i_l=H^i(X_{\bar{\bq},\et},\bq_l)\cong H^i(X(\bc),\bq)_{\bq_l}\] as
well as the isomorphisms
\[ \iota_p:\mydet_{\bq_l}R\Gamma_f(\bq_p,V)\cong\bq_l  \]
induced by the identity map on $(V_l^i)^{I_p}$ and
$D_{cris}(V_l^i)$, Conjecture \ref{bk} induces an isomorphism
\[ \vartheta^i_l:\Delta_f(h^i(X))_{\bq_l}\cong \mydet_{\bq_l}R\Gamma_f(\bq,V_l^i)\otimes\mydet_{\bq_l}R\Gamma(\br,V_l^i)\cong\mydet_{\bq_l}R\Gamma_c(U_{\et},V^i_l).\]

\begin{conjecture} ($l$-part of the Tamagawa number conjecture) There is an identity of
free rank one $\bz_l$-submodules of
$\mydet_{\bq_l}R\Gamma_c(U_{\et},V^i_l)$
\[
\bz_l\cdot\vartheta^i_l\circ\vartheta^i_\infty(L^*(h^i(X),0)^{-1})=\mydet_{\bz_l}R\Gamma_c(U_{\et},T^i_l)\]
for any Galois stable $\bz_l$-lattice $T^i_l\subseteq V^i_l$.
\label{tam}\end{conjecture}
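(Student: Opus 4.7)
My approach would be to reduce Conjecture \ref{tam} to an assertion about the (conjectural) integral Weil-\'etale cohomology complex $R\Gamma_c(\X_W,\bz)$, exploiting the comparison isomorphism of exact triangles established in Proposition \ref{reform}. The strategy rests on the following heuristic: if $R\Gamma_c(\X_W,\bz)$ satisfies properties d)--f) as well as natural integral refinements of g)--j), then after $l$-adic completion the complex $R\Gamma_c(\X_W,\bz)\otimes^L_\bz\bz_l$ should compute $\bigoplus_i R\Gamma_c(U_\et,T^i_l)[-i]$ for a suitable open $U\subseteq \Spec(\bz)$ containing the primes over $l$, and the Weil-\'etale formulation of $\zeta^*(\X,0)$ via f) should transport to the $l$-adic side to yield the TNC equality.

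Concretely, I would proceed in four steps. \emph{First}, using Proposition \ref{reform} and a Galois lattice $T^i_l\subseteq V^i_l$, construct an integral refinement of the exact triangle of Proposition \ref{reform}, producing a natural quasi-isomorphism $R\Gamma_c(\X_{U,\et},\bz_l)\cong\bigoplus_i R\Gamma_c(U_\et,T^i_l)[-i]$ modulo bounded torsion controlled by Bloch-Kato type exponents. \emph{Second}, assuming a conjectural $\bz_l$-model $R\Gamma_c(\X_W,\bz_l)$ of $R\Gamma_c(\X_W,\tr)$ which agrees with $R\Gamma_c(\X_{U,\et},\bz_l)$ after removing the contributions from $p\notin U$ and from $\infty$, match the determinant line $\det_{\bz_l}R\Gamma_c(\X_W,\bz_l)$ with $\bigotimes_i\det_{\bz_l}R\Gamma_c(U_\et,T^i_l)^{(-1)^i}$. \emph{Third}, compute the image of $\zeta^*(\X,0)^{-1}$ under the Weil-\'etale trivialization $\lambda$ of f) via the spectral sequence decomposition (\ref{cohomology-direct-sum}) and the splitting (\ref{xinfty-split}) of $H^*(\X_{\infty,W},\tr)$; compare this termwise with $\prod_i\vartheta^i_l\circ\vartheta^i_\infty(L^*(h^i(X),0)^{(-1)^{i+1}})$, using that the Beilinson regulator $\rho^i_\infty$ and the $l$-adic Chern class map $\rho^i_l$ should factor through a common Weil-\'etale regulator sitting inside axiom h).  \emph{Fourth}, reduce the resulting equality to the known TNC for Artin motives and to the compatibility between the de Rham trivialization at $l$ and the Betti trivialization at $\infty$.

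The \emph{main obstacle} is of course that Conjecture \ref{tam} is a deep open problem, and my proposal only establishes its \emph{compatibility} with the Weil-\'etale formalism rather than proving it outright. The genuine new input required is threefold: (i) the existence of a reasonable integral Weil-\'etale complex $R\Gamma_c(\X_W,\bz)$ satisfying d) and e), which as the authors point out cannot literally be the fibre product construction of this paper (see the remarks in section \ref{rems}); (ii) Beilinson's Conjecture \ref{beil} and Bloch-Kato's Conjecture \ref{bk}, both of which are essential to identify motivic and $l$-adic cohomology with Weil-\'etale cohomology; (iii) the local invariant cycles statements of section \ref{loc-inv-cycles}, needed to control the Euler factors $E_p(s)$ at primes of bad reduction and to justify the integrality of $\vartheta^i_l$ with respect to $R\Gamma_f(\bq_p,T^i_l)$ for $p=l$. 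Once these ingredients are in place, the reduction itself is essentially a formal manipulation of determinant functors in the sense of \cite{knumum}, combined with the degeneration arguments already used in the proof of Theorem \ref{ac-theo}.
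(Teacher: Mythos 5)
The item you set out to ``prove'' is a conjecture---the $l$-part of the Tamagawa number conjecture of Fontaine--Perrin-Riou---which the paper records as Conjecture \ref{tam} without any proof whatsoever; it is a deep open problem, not a theorem of the paper. What the paper does prove is Proposition \ref{tamcompare}, which---assuming Conjectures \ref{van}, \ref{beil}, \ref{bk}, \ref{ss}, \ref{w0} and the hypotheses a)--j) on a putative Weil-\'etale topos with the expected $\bz$-cohomology---establishes an \emph{equivalence} between Conjecture \ref{tam} for $h(X)$ and property f) for $\X$. Neither side of this equivalence is proved. Your first three steps, read charitably, reconstruct the argument for that equivalence in a way quite close to the paper's proof of Proposition \ref{tamcompare}: Proposition \ref{reform} is used to identify $R\Gamma_c(\X_{U,\et},\bz_l)$ with $\bigoplus_i R\Gamma_c(U_\et,T^i_l)[-i]$ via hypothesis h), the determinant lines are matched via j), and the images of $\zeta^*(\X,0)^{-1}$ under $\gamma$ and under $\bigotimes_i(\tilde{\vartheta}^i_l\circ\tilde{\vartheta}^i_\infty)^{(-1)^i}$ are compared, taking care of the $\log(p)$-normalization and the bad Euler factors via the reformulation (\ref{tam1}).

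The genuine gap is your fourth step: ``reduce the resulting equality to the known TNC for Artin motives.'' There is no such reduction, and the paper makes no claim of one. Proposition \ref{tamcompare} \emph{ends} at the equivalence of the two integrality statements; it does not deduce either one. The Tamagawa number conjecture for $h(X)$ is not a consequence of the Artin-motive case, and no combination of the axioms a)--j) together with Conjectures \ref{van}, \ref{beil}, \ref{bk}, \ref{ss}, \ref{w0} yields it. You should reframe your argument explicitly as a proof of Proposition \ref{tamcompare} (a conditional equivalence between Conjecture \ref{tam} and property f)), delete the unjustified reduction to a ``known'' case, and acknowledge that as written nothing is proved about the truth of Conjecture \ref{tam}.
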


This conjecture is independent of the choice of the lattice $T_l^i$
since
\begin{equation}\prod_{i\in\bz}|H^i_c(U_{\et},M)|^{(-1)^i}=1\label{finite}\end{equation} for
any finite locally constant sheaf $M$ whose cardinality is
invertible on $U$. The following conjecture allows a reformulation
of the Tamagawa number conjecture in terms of the L-function
\[  L_U(h^i(X),s)=\prod_{p\in U} L_p(h^i(X),s)\]
associated to the smooth $l$-adic sheaf $V_l^i$ over $U$. Recall
that a two term complex $C=\bigl(W\xrightarrow{\lambda}W\bigr)$ is
called {\em semisimple at $0$} if the composite map
\[ H^0(C)=\mathrm{ker}(\lambda)\subseteq
W\to\mathrm{coker}(\lambda)=H^1(C)\] is an isomorphism. This is
always the case, for example, if the complex $C$ is acyclic.

\begin{conjecture} (Frobenius-Semisimplicity at the eigenvalue 1)
For any prime number $p$ the complex $R\Gamma_f(\bq_p,V^i_l)$ is
semisimple at zero. \label{ss}\end{conjecture}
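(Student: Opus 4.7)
The plan is to recast semisimplicity at $0$ as a purely linear-algebraic condition on Frobenius and then to reduce, via Deligne's theory of weights, to the classical Frobenius-semisimplicity conjecture at the eigenvalue $1$.

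First I would exploit the smoothness and properness of $\X$: the Hodge-to-de Rham spectral sequence degenerates, so $F^0 D_{dR}(V^i_l) = D_{dR}(V^i_l)$ and in particular $D_{dR}(V^i_l)/F^0 D_{dR}(V^i_l) = 0$. Thus the hypothesis of Lemma~\ref{fil0} is met for $p=l$, and by definition it also holds for $p\neq l$, so uniformly in $p$ one obtains a quasi-isomorphism
\[ R\Gamma_f(\bq_p, V^i_l) \cong R\Gamma(\bof_p,(V^i_l)^{I_p}) = \Bigl( (V^i_l)^{I_p} \xrightarrow{\,1-\Frob_p^{-1}\,} (V^i_l)^{I_p}\Bigr). \]
An elementary linear-algebra argument then shows that semisimplicity at $0$ of this two-term complex is equivalent to the internal direct sum decomposition $(V^i_l)^{I_p} = \ker(1-\Frob_p^{-1}) \oplus \mathrm{im}(1-\Frob_p^{-1})$, or, what is the same thing, to the statement that $\Frob_p^{-1}$ has no nontrivial Jordan block at the eigenvalue $1$ on $(V^i_l)^{I_p}$.

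Second, I would reduce to the weight-zero part by appealing to Deligne's purity theorem and the monodromy-weight filtration on inertia invariants: the eigenvalues of $\Frob_p$ of complex absolute value $1$ on $(V^i_l)^{I_p}$ lie entirely in the weight-zero subspace $W_0(V^i_l)^{I_p}$, so it suffices to verify that $\Frob_p^{-1}$ acts semisimply on $W_0(V^i_l)^{I_p}$.

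The hard part is this final step. Semisimplicity of Frobenius on the weight-zero piece at the eigenvalue $1$ is a form of Tate's semisimplicity conjecture for $\ell$-adic cohomology and is open in general. In the geometric situations of Theorem~\ref{introtheo}~iii), however, it can be extracted from known results: for products $E\times\cdots\times E$ of an elliptic curve it follows from Tate's semisimplicity theorem for abelian varieties over finite fields combined with the K\"unneth formula at places of good reduction, and from an analysis of the monodromy filtration on the N\'eron model at places of bad reduction; for Shimura curves it follows from the Eichler--Shimura relation together with semisimplicity of the Hecke action on spaces of weight-two cusp forms. Beyond these regimes the statement remains an open conjecture, which is why it is isolated as Conjecture~\ref{ss} rather than established in the text.
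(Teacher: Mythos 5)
The statement you were asked to prove is labelled \emph{Conjecture} \ref{ss} in the paper, and the paper supplies no proof of it. It is stated only so that it can be assumed as a hypothesis in Proposition \ref{tamcompare} (the comparison with the Tamagawa number conjecture), where it is one of a list of open conjectures. You correctly recognise this at the end of your write-up, so you have not misread the paper, but it means there is no internal proof to compare against.

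Within that caveat, your two reduction steps are sound and match the way the paper implicitly thinks about the statement. First, the Hodge filtration on $H^i_{dR}$ of a smooth proper variety satisfies $F^0 = H^i_{dR}$, so $D_{dR}(V^i_l)/F^0 D_{dR}(V^i_l)=0$ and Lemma \ref{fil0} applies; moreover the map $\kappa$ in that lemma has $\kappa^0=\kappa^1$, which is exactly what is needed for semisimplicity at zero (a condition that depends on the degree-one identity map of a two-term complex, and is \emph{not} preserved under an arbitrary quasi-isomorphism) to transfer from $D_{cris}(V^i_l)\xrightarrow{1-\phi}D_{cris}(V^i_l)$ to $(V^i_l)^{I_p}\xrightarrow{1-\Frob_p^{-1}}(V^i_l)^{I_p}$. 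It would strengthen your argument to make this last point explicit, since it is the only place the reduction could go wrong. Second, restricting to $W_0$ is justified by Lemma \ref{filt}: the weights on $V^i_l$ are $\geq 0$, so the generalized $\Frob_p$-eigenspace for the eigenvalue $1$ is contained in $W_0(V^i_l)^{I_p}$, and semisimplicity there is all that is at stake.

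Where your proposal goes off the rails is the final paragraph. Theorem \ref{introtheo} iii) and the Corollary about Shimura curves and $E\times\cdots\times E$ do \emph{not} assume Conjecture \ref{ss}; the vanishing-order statement b) rests on Theorem \ref{main} b), which is proved unconditionally for regular $\X$. Conjecture \ref{ss} enters only the conditional Proposition \ref{tamcompare}. So the special cases you invoke are not the ones the paper needs, and the sketches you give for them (Tate semisimplicity for abelian varieties plus K\"unneth; Eichler--Shimura plus Hecke semisimplicity) are far short of proofs. At good-reduction primes they are in fact vacuous: by purity the weights on $H^i(X_{\bar{\bq}},\bq_l)$ are all equal to $i$, so for $i>0$ the eigenvalue $1$ does not occur and $R\Gamma_f(\bq_p,V^i_l)$ is acyclic, hence trivially semisimple at zero. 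The genuine content of the conjecture is concentrated at the bad-reduction primes, where your sketches do not give an argument, and indeed no argument is known in general.
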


Under this conjecture one has a {\em second} isomorphism
\[ \tilde{\iota}_p:\mydet_{\bq_l}R\Gamma_f(\bq_p,V^i_l)\cong\bq_l  \]
which satisfies
\[\iota_p=P^*_p(h^i(X),1)^{-1}\tilde{\iota}_p=L_p^*(h^i(X),0)\log(p)^{r_{i,p}}\tilde{\iota}_p\]
where $r_{i,p}=\ord_{T=1}P_p(h^i(X),T)=-\ord_{s=0}L_p(h^i(X),s)$ (see \cite{bufl98}[Lemma 2]). If $R\Gamma_f(\bq_p,V^i_l)$ is acyclic then
$\tilde{\iota}_p$ is the canonical trivialization of the determinant
of an acyclic complex. Using this second isomorphism the Tamagawa
number conjecture becomes
\begin{equation}\bz_l\cdot\tilde{\vartheta}^i_l\circ\tilde{\vartheta}^i_\infty(L^*_U(h^i(X),0)^{-1})=\mydet_{\bz_l}R\Gamma_c(U_{\et},T^i_l)\label{tam1}\end{equation}
where
\[\tilde{\vartheta}^i_l=\prod_{p\notin U} P^*_p(h^i(X),1)\vartheta_l^i,\quad \tilde{\vartheta}^i_\infty=\prod_{p\notin U}\log(p)^{r_{i,p}}\vartheta_\infty^i.\]

\subsubsection{Further assumptions on Weil-\'etale cohomology}\label{moreaxioms} In order to establish the compatibility of the
conjectural picture a)-f) outlined in the introduction with the
Tamagawa number conjecture, we need to augment it with a number of
further assumptions. Even though a)-f) only refer to cohomology groups we now assume that these groups do indeed arise from a topos $\X_W$ - different from the one defined in Definition \ref{xwdef} - and that compact support cohomology is defined via an embedding into a proper scheme followed by an Artin-Verdier type compactification $\bar{\X}_W$ (and is independent of a choice of compactification).

\begin{itemize}
\item[g)] For an open subscheme $U$ of an arithmetic scheme $\X$
with closed complement $Z$ there is an exact triangle in the derived
category of abelian groups
\[ R\Gamma_c(U_W,\bz)\to R\Gamma_c(\X_W,\bz)\to R\Gamma_c(Z_W,\bz)\to.\]
\item[h)] There is a morphism of topoi $\gamma:\X_W\to\X_\et$ for any
arithmetic scheme $\X$ (or the Artin-Verdier compactification of
such a scheme). Moreover, for any constructible sheaf $\F$ on
$\X_\et$ the adjunction $\F\to R\gamma_*\gamma^*\F$ is an
isomorphism.
\end{itemize}

If $\X$ has finite characteristic then g) and h) hold if one
understands the index $W$ as denoting the Weil-eh cohomology of
Geisser (see \cite{geisser05}[Thm. 5.2b), Thm. 3.6] for h) and
\cite{geisser05}[Def. 5.4, eq. (4)] for g)). The following property is a natural extension of property g) to the Artin-Verdier compactification.
\begin{itemize}
\item[i)] If $\X$ is regular, proper, flat over $\Spec(\bz)$ then
there is an exact triangle in the derived category of abelian groups
\[ R\Gamma_c(\X_W,\bz)\to R\Gamma(\bar{\X}_W,\bz)\to R\Gamma(\X_{\infty,W},\bz)\to\]
and there is an exact triangle
\[ R\Gamma_c(\X_W,\tr)\to R\Gamma(\bar{\X}_W,\tr)\to R\Gamma(\X_{\infty,W},\tr)\to\]
in the derived category of $\br$-vector spaces, where $\X_{\infty,W}$ was defined in Definition \ref{xinftydef}.
\end{itemize}
Note that
\[R\Gamma(\X_{\infty,W},\bz)\cong R\Gamma(\X_\infty,\gamma_{\infty*}(\bz))\cong R\Gamma(\X_\infty,\bz)\]
by Proposition \ref{prop-basechange-infty} and this last complex is isomorphic to the singular complex of the (locally contractible) compact space $\X_\infty$ and is therefore a perfect complex of abelian groups. Since the complex $R\Gamma_c(\X_W,\bz)$ is perfect by d) the triangle in i) then implies that $R\Gamma(\bar{\X}_W,\bz)$ is also a perfect complex of abelian groups. Note also that, unlike in the situation g), the triangle for $\tr$-coefficients is not the scalar extension of the triangle for $\bz$-coefficients since neither $R\Gamma(\X_{\infty,W},\bz)$ nor $R\Gamma(\bar{\X}_W,\bz)$ satisfies property e). One rather has a commutative diagram of long exact sequences
\[\minCDarrowwidth1em\begin{CD}
  @>>> H^{i+1}(\X_{\infty,W},\tr) @>>> H^{i+2}_c(\X_W,\tr) @>>> 0 @>>>
  H^{i+2}(\X_{\infty,W},\tr)\\
  @. @AA\alpha_\infty A @AA\alpha A @AAA @AAA\\
  @>>> H^{i+1}(\X_{\infty,W},\bz) @>>> H^{i+2}_c(\X_W,\bz) @>>> H^{i+2}(\bar{\X}_W,\bz) @>>>
  H^{i+2}(\X_{\infty,W},\bz)
\end{CD}\]
where only $\alpha_\br$ is an isomorphism by e). Here we assume $i\geq 0$ so that $H^{i+2}(\bar{\X}_W,\tr)=0$ by Theorem \ref{thm-global-cohomology}. There is a direct sum decomposition
\[H^{i+1}(\X_{\infty,W},\tr)\cong
H^{i+1}(\X_{\infty},\br)\oplus H^i(\X_\infty,\br)\] by (\ref{xinfty-split}) and an isomorphism
\[ H^{i+1}(\X_{\infty,W},\bz)_\br\cong H^{i+1}(\X_\infty,\bz)_\br \cong H^{i+1}(\X_{\infty},\br).\]
One therefore obtains a map for $i\geq 0$ \[ r^i_\infty: H^i(\X_\infty,\br)\to
H^{i+2}(\bar{\X}_W,\bz)_\br\] which is an isomorphism for $i>0$.

Proposition \ref{reform} and assumption h) yield an isomorphism for $i\geq 0$
\[ r_l^i:H^2_f(\bq,V_l^i)\cong H^{i+2}(\bar{\X}_\et,\bq_l)\cong
H^{i+2}(\bar{\X}_W,\bq_l)\cong
H^{i+2}(\bar{\X}_W,\bz)_{\bq_l}.\]The following is
the key requirement on a definition of a Weil-\'etale topos.
\begin{itemize}
\item[j)] If $\X$ is regular, proper, flat over $\Spec(\bz)$ with generic fibre $X$ of dimension $d$ then there are isomorphisms
\[ \lambda^i:H^{i+2}(\overline{\X}_W,\bz)_\bq\cong H^{2d-i+1}_M(X_{/\bz},\bq(d+1))^*\]
for $i\geq 0$ such that $\lambda_\br^i\circ
r_\infty^i=(\rho^i_\infty)^*$ and $\lambda_{\bq_l}^i\circ
r_l^i=\rho_l^i$.
\end{itemize}

This is true for $d=i=0$ with Lichtenbaum's current definition where
\[ H^{2}(\overline{\Spec(\co_F)}_W,\bz)_\bq\cong H^{1}_M(\Spec(F)_{/\bz},\bq(1))^*=\Hom_\bz(\co_F^\times,\bq).\]
Note that j) together with (\ref{beil-soule}) and h) also implies
\[ H^{i+2}(\overline{\X}_W,\bz)=0 \]
for $i>2d+1$, which is not satisfied by the current definition of $\overline{\Spec(\co_F)}_W$.

\begin{prop} Suppose there is a definition of Weil-\'etale cohomology
groups for arithmetic schemes satisfying a)-j) except perhaps f) for schemes of characteristic $0$. Let $X$ be a proper, smooth variety over $\bq$ of
dimension $d$ which has a proper, regular model over $\Spec(\bz)$
such that Conjectures \ref{van},\ref{beil},\ref{bk},\ref{ss},\ref{w0} are
satisfied. Assume $L(h^i(X),s)$ has a meromorphic continuation to $s=0$ for all $i$. Then the Tamagawa number conjecture (Conjecture
\ref{tam}) for the motive
\[h(X)=\bigoplus_{i=0}^{2d}h^i(X)[-i]\]
is equivalent to statement f) for any arithmetic scheme $\X$ with
generic fibre $X$.
\label{tamcompare}\end{prop}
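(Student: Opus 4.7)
The plan is to show that property f) and the Tamagawa number conjecture are both assertions about the generator of a certain rank one $\bz$-lattice inside $\br$ (resp. $\bz_l$-lattice inside $\bq_l$), and that they match prime by prime. First I would fix a prime $l$, choose $U\subseteq\Spec(\bz)$ on which both $\X\to\Spec(\bz)$ is smooth and $l$ is invertible, and choose Galois stable lattices $T_l^i\subseteq V_l^i:=H^i(X_{\bar\bq,\et},\bq_l)$ coming from $H^i(\X_{\bar\bq,\et},\bz_l)$ modulo torsion. By properties d) and e) the complex $R\Gamma_c(\X_W,\bz)$ is perfect with real extension $R\Gamma_c(\X_W,\tr)$, and by h) applied to the constructible sheaves $\bz/l^n$ and passage to the limit, one obtains a canonical quasi-isomorphism $R\Gamma_c(\X_W,\bz)\otimes^L_{\bz}\bz_l\cong R\Gamma_c(\X_\et,\bz_l)$. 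Consequently, the $l$-localization of property f) becomes the identity
\[\bz_l\cdot\lambda_{\bz_l}(\zeta^*(\X,0)^{-1})=\bigotimes_{i\in\bz}\mydet_{\bz_l}R\Gamma_c(\X_\et,\bz_l)^{(-1)^i}\]
inside $\bigotimes_i\mydet_{\bq_l}R\Gamma_c(\X_\et,\tr)_{\bq_l}^{(-1)^i}$.

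Next I would decompose both sides along the motives $h^i(X)$. On the $l$-adic side, the triangle in i) reads
\[R\Gamma_c(\X_W,\bz)\to R\Gamma(\bar{\X}_W,\bz)\to R\Gamma(\X_{\infty,W},\bz)\to,\]
and Proposition \ref{reform} together with h), j) and (the integral version of) the isomorphism $r_l^i$ identifies its $\bz_l$-extension, up to a chosen integral structure, with
\[\bigoplus_{i=0}^{2d}R\Gamma_c(U_\et,T_l^i)[-i]\]
after semisimplifying the local complexes $R\Gamma_f(\bq_p,V_l^i)$ at the missing primes via Conjecture \ref{ss}, which contributes the factors $\prod_{p\notin U}P_p^*(h^i(X),1)^{(-1)^i}$ in the determinant. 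On the archimedean side, the second triangle in i) together with the splitting (\ref{xinfty-split}) of $\gamma_\infty$ and the functoriality of $\cup\theta$ from Theorem \ref{thm-basechange-cpctsupp} shows that the trivialization $\lambda$ coming from c) decomposes, via the isomorphisms $r_\infty^i$ and $\lambda^i$ of property j), into the product $\bigotimes_i(\tilde\vartheta^i_\infty)^{(-1)^i}$ of Beilinson trivializations from (\ref{tam1}); indeed, the $\cup\theta$ morphism plays the same role in the Leray spectral sequence for $\gamma_{\overline{\X}}$ as the regulator does in real Deligne cohomology, since both are induced by cup product with the generator of $H^1(B_\br,\tr)$.

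Combining these two decompositions, the identity one wishes to prove becomes
\[\bz_l\cdot\bigotimes_{i}\bigl(\tilde\vartheta_l^i\circ\tilde\vartheta_\infty^i(L_U^*(h^i(X),0)^{-1})\bigr)^{(-1)^i}=\bigotimes_{i}\mydet_{\bz_l}R\Gamma_c(U_\et,T_l^i)^{(-1)^i}\]
after using the factorization (\ref{zetamot}) to rewrite $\zeta^*(\X,0)$ as $\prod_{p\notin U}E_p(0)\prod_i L^*(h^i(X),0)^{(-1)^i}$ and absorbing the Euler factors $E_p$ into the transition between $\iota_p$ and $\tilde\iota_p$ according to the formula $\iota_p=P_p^*(h^i(X),1)^{-1}\tilde\iota_p$ recalled just after Conjecture \ref{ss}. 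The single identity above is precisely the alternating product over $i$ of the Tamagawa number conjecture (\ref{tam1}) for the motives $h^i(X)$. Since every step is an equality of $\bz_l$-lattices prime by prime, running the argument in reverse shows that assuming the full collection of Tamagawa identities for $h^i(X)$ recovers the $l$-localization of f); letting $l$ vary and using (\ref{finite}) for lattice independence, the rational identity f) is equivalent to the family of Tamagawa number conjectures for $h(X)$.

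The main obstacle will be step two, namely the precise matching of the archimedean trivialization $\lambda$ arising from the fundamental class $\theta$ of Definition \ref{thetadef} with the product of Beilinson trivializations $\tilde\vartheta_\infty^i$. Although Theorem \ref{thm-basechange-cpctsupp} already shows that $\cup\theta$ acts as the identity on $R^0\gamma_{\overline{\X},*}\varphi_!\tr=\varphi_!\br$, transferring this to a statement that $\lambda$ equals $\bigotimes(\tilde\vartheta^i_\infty)^{(-1)^i}$ requires a compatibility between the Leray filtration on $H^*(\overline{\X}_W,\tr)$ and the Hodge filtration entering Deligne cohomology, and this is essentially the compatibility one would need to check by hand using the description of $R\Gamma(\X_{\infty,W},\tr)$ in (\ref{xinfty-split}) together with the duality (\ref{dual}). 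The finite-prime half of the argument, by contrast, is essentially a formal consequence of Proposition \ref{reform} and Conjecture \ref{ss}.
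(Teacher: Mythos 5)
Your proposal follows essentially the same route as the paper's own proof: decompose via the exact triangle of assumption i) and Proposition \ref{reform}, match the archimedean side via j), match the $l$-adic side via h), rewrite $\zeta^*$ via (\ref{zetamot}), absorb Euler factors through the $\iota_p$ / $\tilde\iota_p$ relation, and conclude prime by prime using (\ref{finite}). Two points deserve comment.

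First, a genuine gap. The statement concerns an \emph{arbitrary} arithmetic scheme $\X$ with generic fibre $X$, but every step of your argument (the triangle in i), Proposition \ref{reform}, assumption j)) is only available when $\X$ is the chosen proper regular model of $X$. You are missing the initial reduction: choose $U\subseteq\Spec(\bz)$ so that $\X_U\to U$ is proper and smooth, with closed complement $Z$. Property g) gives $\mydet_\bz R\Gamma_c(\X_W,\bz)\cong\mydet_\bz R\Gamma_c(\X_{U,W},\bz)\otimes\mydet_\bz R\Gamma_c(\X_{Z,W},\bz)$, and $\zeta(\X,s)=\zeta(\X_U,s)\zeta(\X_Z,s)$ factors in parallel. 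Because $\X_Z$ has characteristic $p$ and f) is part of the hypothesis there, statement f) for $\X$ reduces to statement f) for $\X_U$, which then no longer depends on the choice of $\X$; only after this reduction may you replace $\X$ by the proper regular model. Without this step the claimed generality is not established.

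Second, a misplaced worry rather than a gap. You flag the matching of the trivialization $\gamma$ (from c)) with $\bigotimes_i(\tilde\vartheta^i_\infty)^{(-1)^i}$ as something to be "checked by hand" via (\ref{xinfty-split}) and (\ref{dual}). The compatibility $\lambda^i_\br\circ r^i_\infty=(\rho^i_\infty)^*$ that this matching requires is precisely the first stipulated condition in assumption j); it is imposed on the hypothetical theory, not derived, and the map $r_\infty^i$ is constructed exactly so that this statement makes sense. You have correctly located the substantive archimedean input, but the resolution is that j) already builds it in. The paper records the commutative diagram as an immediate consequence of j) for exactly this reason.
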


\begin{proof} If $\X$ is any arithmetic scheme with generic fibre $X$ then there
exists an open subscheme $U\subseteq\Spec(\bz)$ so that $\pi:\X_U\to
U$ is proper and smooth. Let $Z$ be the closed complement of $U$.
Then by g) we have an isomorphism
\[ \mydet_\bz R\Gamma_c(\X_W,\bz)\cong \mydet_\bz R\Gamma_c(\X_{U,W},\bz)\otimes_\bz \mydet_\bz
R\Gamma_c(X_{Z,W},\bz)  \] as well as factorizations
\[\zeta(\X,s)=\zeta(\X_U,s)\zeta(\X_Z,s);\quad \zeta^*(\X,0)=\zeta^*(\X_U,0)\zeta^*(\X_Z,0).\]
Since we assume f) for $X_Z=\coprod_{p\in Z}\X_p$ , statement f) for
$\X$ is equivalent to statement f) for $\X_U$. We now assume that
$\X$ is the proper regular model of $X$. The exact triangle
\[ R\Gamma_c(\X_{U,W},\bz)\to R\Gamma(\bar{\X}_W,\bz)\to \bigoplus_{p\in Z\cup\{\infty\}}R\Gamma(\X_{p,W},\bz) \]
together with assumption j) induces an isomorphism
\begin{align*}
\vartheta_W:\mydet_\bq R\Gamma_c(\X_{U,W},\bz)_\bq\cong & \mydet_\bq
R\Gamma(\bar{\X}_W,\bz)_\bq\otimes \bigotimes_{p\in
Z\cup\{\infty\}}\mydet_\bq^{-1} R\Gamma(\X_{p,W},\bz)_\bq\\
\cong & \bigotimes_{i=0}^{2d}\Delta_f(h^i(X))^{(-1)^i}.
\end{align*}
By assumption j) there is a commutative diagram of isomorphisms
\[\begin{CD} \br @>\gamma >> \mydet_\br
R\Gamma_c(\X_{U,W},\bz)_\br\\
\Vert@. @VV \vartheta_{W,\br} V\\
\br @>\otimes_i(\tilde{\vartheta}_\infty^i)^{(-1)^i} >>
\bigotimes_{i=0}^{2d}\Delta_f(h^i(X))_\br^{(-1)^i}
\end{CD}\]
where $\gamma$ is induced by c). The power of $\log(p)$ in $\tilde{\vartheta}$ appears for the same reason as in the proof of Theorem \ref{charp-theo}. Similarly, j) implies that for any prime $l\in Z$ we have a
commutative diagram of isomorphisms \begin{equation}\begin{CD}
\mydet_{\bq_l} R\Gamma_c(\X_{U,W},\bz)_{\bq_l} @>>>
\mydet_{\bq_l} R\Gamma_c(\X_{U,\et},\bq_l) \\
@VV \vartheta_{W,\bq_l} V @VVV\\
\bigotimes_{i=0}^{2d}\Delta_f(h^i(X))_{\bq_l}^{(-1)^i}@>\otimes_i(\tilde{\vartheta}_l^i)^{(-1)^i}>>
\bigotimes_{i=0}^{2d}\mydet_{\bq_l}^{(-1)^i}R\Gamma_c(U_\et,V_l^i)
\end{CD}\label{dia3}\end{equation}
where the top isomorphism is induced by an isomorphism
\[R\Gamma_c(\X_{U,W},\bz)\otimes_\bz\bz_l\cong R\Gamma_c(\X_{U,\et},\bz_l) \]
coming from assumption h) and the right vertical isomorphism is
induced by the isomorphism
\[R\Gamma_c(\X_{U,\et},\bz_l)\cong R\Gamma_c(U_\et,R\pi_*\bz_l)\]
and
\[\mydet_{\bz_l}R\Gamma_c(U_\et,R\pi_*\bz_l)\cong\bigotimes_{i=0}^{2d}\mydet_{\bz_l}^{(-1)^i}
R\Gamma_c(U_\et,L_l^i)=\bigotimes_{i=0}^{2d}\mydet_{\bz_l}^{(-1)^i}
R\Gamma_c(U_\et,T_l^i)\] where $L_l^i:=R^i\pi_*\bz_l$ and
$T_l^i\subseteq V_l^i$ is the torsion free part of $L_l^i$. Note
that we have an exact sequence of locally constant $\bz_l$-sheaves
on $U$
\[ 0\to L^i_{l,tor}\to L_l^i \to T_l^i\to 0\]
and an identity $\mydet_{\bz_l}R\Gamma_c(U_\et,T_l^i)=
\mydet_{\bz_l}R\Gamma_c(U_\et,L_l^i)$ of invertible
$\bz_l$-submodules of $\mydet_{\bq_l}R\Gamma_c(U_\et,V_l^i)$ by
(\ref{finite}).

As discussed above statement f) for $\X_U$ is equivalent to
statement f) for $\X_{U'}$ for $U'\subset U$, hence we can always
assume that a given prime $l$ is not in $U$. If we know statement f)
for $\X_U$ then the image under $\gamma$ of
$$\zeta^*(\X_U,0)=\prod_{i=0}^{2d}L^*_U(h^i(X),0)^{(-1)^i}$$ generates
the natural invertible $\bz_l$-submodule
\[R\Gamma_c(\X_{U,W},\bz)\otimes_\bz\bz_l\cong \bigotimes_{i=0}^{2d}\mydet_{\bz_l}^{(-1)^i}R\Gamma_c(U_\et,T_l^i)\]
in (\ref{dia3}) (see the discussion in the previous paragraph). Hence we obtain the Tamagawa number
conjecture in the form (\ref{tam1}) for $h(X)$. Conversely, knowing
the Tamagawa number conjecture for $h(X)$, we obtain the $l$-primary
part of statement f) for $\X_{U\setminus\{l\}}$ which is equivalent
to the $l$-primary part of statement f) for $\X_U$. Varying $l$ we
obtain f) for $\X_U$. Here by $l$-primary parts, we mean that for
any perfect complex of abelian groups $C$, such as
$R\Gamma_c(\X_{U,W},\bz)$, an element $b\in\mydet_\bz(C)\otimes\bq$
is a generator of $\mydet_\bz(C)$ if and only if the image of $b$ in
$\mydet_\bz(C)\otimes\bq_l$ is a generator of
$\mydet_\bz(C)\otimes\bz_l$ for all primes $l$.

\end{proof}

\section{On the local theorem of invariant cycles}\label{loc-inv-cycles} Let
$R$ be a complete discrete valuation ring with quotient field $K$ and
finite residue field $k$ of characteristic $p$. Set $S=\Spec(R)$, $\eta=\Spec(K)$,
$s=\Spec(k)$. Let $\bar{S}=(\bar{S},\sbar,\etabar)$ be the
normalization of $S$ in a separable closure $\bar{K}$ of $K$ and
denote by $I\subseteq G:=\Gal(\bar{K}/K)$ the inertia subgroup.

\subsection{$l$-adic cohomology for $p\neq l$} In this section $l$ is a prime different from $p$.
The following lemma might be well known as a consequence of de
Jong's theorem on alterations \cite{dj96}, and also of Deligne's
work \cite{weilii} in case $\mathrm{char}(K)=p$. We
shall only need it for $X_\eta\to\Spec(K)$ proper and smooth.

\begin{lemma} Let $X_\eta\to\Spec(K)$ be separated and of finite
type. Then the $G$-representation $H^i(X_\etabar,\bq_l)$ has a
(unique) $G$-invariant weight filtration $$\dots\subseteq
W_jH^i(X_\etabar,\bq_l)\subseteq
W_{j+1}H^i(X_\etabar,\bq_l)\subseteq\cdots$$ in the sense of
\cite{weilii}[Prop.-Def. 1.7.5], i.e. if $F\in G$ is any lift of a
geometric Frobenius element in $\Gal(\bar{k}/k)$ then the
eigenvalues of $F$ on $\gr^W_jH^i(X_\etabar,\bq_l)$ are Weil numbers
of weight $j\in\bz$ with respect to $|k|$. The same is true for the
$G$-representation $H_c^i(X_\etabar,\bq_l)$. One has
$W_{-1}H^i(X_\etabar,\bq_l)=W_{-1}H_c^i(X_\etabar,\bq_l)=0$.
\label{filt}\end{lemma}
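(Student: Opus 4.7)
The plan is to reduce, via alterations and excision, to the case where $X_\eta$ is smooth and projective over $K$ (or a finite extension thereof), and then obtain the weight filtration from the Rapoport--Zink spectral sequence attached to a strictly semistable model.

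\textbf{Reduction to the smooth projective case.} The class of finite-dimensional $\bq_l$-representations of $G$ admitting a $G$-invariant weight filtration is closed under subquotients and extensions, and such filtrations, when they exist, are unique (both facts being immediate from the characterization of weights by absolute values of Frobenius eigenvalues; see \cite{weilii}[1.7.5]). Moreover, a weight filtration for the open subgroup $G_{K'}\subseteq G$ associated to a finite extension $K'/K$ is automatically one for $G$, so one may freely enlarge $K$. A compactification $X_\eta\hookrightarrow \bar{X}_\eta$ with closed complement $Z_\eta$ of strictly smaller dimension gives a $G$-equivariant excision long exact sequence which, by induction on $\dim X_\eta$, reduces the $H^i_c$ case for arbitrary finite type $X_\eta$ to the $H^i$ case for proper $X_\eta$. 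For proper $X_\eta$, de Jong's alteration theorem \cite{dj96} supplies, after a finite extension of $K$, a proper surjective generically finite morphism $X'_\eta\to X_\eta$ with $X'_\eta$ smooth projective; cohomological descent along this alteration, combined with induction on the dimension of the locus where it fails to be an isomorphism, further reduces the statement to $X_\eta$ smooth projective over $K$.

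\textbf{Construction for smooth projective $X_\eta$.} Apply de Jong's semistable reduction theorem to obtain, after yet another finite extension of $K$, a proper strictly semistable model $\mathcal{X}/R$ with special fiber $Y=\bigcup_i Y_i$. The Rapoport--Zink weight spectral sequence, whose $E_1^{p,q}$ is a direct sum of Tate twists of the $\bq_l$-cohomologies of the disjoint unions $Y^{(r)}$ of the $r$-fold intersections of components of $Y$, is $G$-equivariant, converges to $H^{p+q}(X_\etabar,\bq_l)$, and has $E_1$-terms pure of Frobenius weight $q$ by Deligne's purity theorem (Weil~II). The abutment filtration is therefore a weight filtration on $H^i(X_\etabar,\bq_l)$ in the required sense. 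Non-negativity of weights, and thus $W_{-1}=0$, is immediate: each $E_1$-term is the cohomology of a smooth projective variety over a finite field, Tate-twisted by a non-positive integer, so its weights lie in $[0,2i]$. The analogous vanishing for $H^i_c$ propagates through the excision step of the previous paragraph.

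\textbf{Main obstacle.} The subtle step is the cohomological-descent reduction from general proper $X_\eta$ to the smooth projective case: one must produce a spectral sequence whose $E_1$-page is a sum of cohomologies of smooth projective varieties attached to an alteration $X'_\eta\to X_\eta$ and to the (lower-dimensional) locus where it is not an isomorphism, and verify that its differentials are strictly compatible with the weight filtration obtained inductively on the $E_1$-page. This strict compatibility is essentially formal once one invokes the abelian-subcategory property of mixed $\bq_l$-representations \cite{weilii}[1.7.5], but the bookkeeping is the one nontrivial part of the argument; by contrast, the Rapoport--Zink construction in the semistable case is by now completely standard.
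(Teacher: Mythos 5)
Your overall strategy — reduce to the strictly semistable case via de Jong's alterations and then read off the weight filtration from the Rapoport--Zink spectral sequence and purity, using the closure of mixed representations under subquotients, extensions and restriction to open subgroups — is the same as the paper's, which explicitly follows Berthelot's Prop.~6.3.2. There are two organizational differences and one genuine gap.

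\textbf{Organizational differences.} (1) For the passage from proper to smooth projective you invoke descent along the alteration together with an induction via abstract-blowup (Mayer--Vietoris) sequences. The paper instead first restricts to \emph{smooth} proper $X_\eta$ and then uses the trace/transfer map for a generically finite flat alteration $X'_{\eta'}\to X_\eta$: this exhibits $H^i(X_{\etabar},\bq_l)$ as a \emph{direct summand} of $H^i(X'_{\etabar},\bq_l)$, avoiding any descent spectral sequence or strict-compatibility bookkeeping. Your abstract-blowup route is also valid and works for arbitrary proper $X_\eta$, but the paper's direct-summand route is cleaner where it applies; the price is that the paper has to handle general proper $X_\eta$ by a separate hypercovering argument at the very end. (2) You reduce $H^i_c$ to $H^i$ via compactification; the paper goes the other way, running the induction on dimension for $H^i_c$ directly using localization plus de Jong covers of a regular dense open.

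\textbf{Genuine gap.} The lemma asserts the existence of a weight filtration on \emph{both} $H^i(X_{\etabar},\bq_l)$ and $H^i_c(X_{\etabar},\bq_l)$ for an arbitrary separated finite-type $X_\eta$. Your proposal treats $H^i_c$ for arbitrary $X_\eta$ and $H^i$ only for \emph{proper} $X_\eta$; the case of $H^i$ for non-proper $X_\eta$ is never addressed. This case does not follow formally from the others: the localization sequence $H^i_Z(\bar X_{\etabar})\to H^i(\bar X_{\etabar})\to H^i(X_{\etabar})\to H^{i+1}_Z(\bar X_{\etabar})$ introduces cohomology with supports, which is not the (ordinary or compactly supported) cohomology of a lower-dimensional scheme unless one has purity, i.e.\ $\bar X$ and $Z$ smooth — which one cannot assume. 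The paper closes this gap by using Poincar\'e duality to deduce the $H^i$ statement from the $H^i_c$ statement when $X_\eta$ is smooth, and then a proper hypercovering argument for arbitrary $X_\eta$. You should add such a step (or restrict the claim to $X_\eta$ proper and $H^i_c$, which is all the paper actually uses). A minor further point: for the reduction to a finite extension $K'/K$ to be legitimate you should take $K'/K$ Galois, so that $G_{K'}\trianglelefteq G$ and uniqueness of the weight filtration forces $G$-stability; de Jong's theorem always permits this after enlarging $K'$.
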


\begin{proof} By \cite{weilii}[Prop.-Def. 1.7.5] it suffices to show
that all eigenvalues $\alpha$ of $F$ on $H^i(X_\etabar,\bq_l)$ are
Weil numbers of some weight $j=j(\alpha)\in\bz$. In doing so, one
may pass to an open subgroup $G'\subseteq G$, i.e. replace $X_\eta$
by its base change to a finite extension $K'/K$, since an algebraic
number $\alpha$ is a Weil number with respect to $|k|$ if and only
if $\alpha^{[k':k]}$ is a Weil number with respect to $|k'|$. One
can now argue exactly as in the proof of \cite{berthelot}[Prop.
6.3.2] to which we refer for more details. If $X_\eta$ is the
generic fibre of a proper, strictly semistable scheme, then the
vanishing cycle spectral sequence computed by Rapoport and Zink
\cite{rz}[Satz 2.10] \begin{equation} E_1^{-r,i+r}=\bigoplus_{q\geq
0,r+q\geq 0}H^{i-r-2q}(Y^{(r+2q)},\bq_l)(-r-q)\Rightarrow
H^i(X_\etabar,\bq_l)\label{ss0}\end{equation} together with the Weil
conjectures for the smooth proper schemes $Y^{(i)}$ give the
statement (and moreover the weight filtration on
$H^i(X_\etabar,\bq_l)$ is the filtration induced by the spectral
sequence). If $X_\eta$ is only smooth and proper then by de Jong's
theorem \cite{berthelot}[Thm. 1.4.1] there is a generically finite,
flat $X'_{\eta'}\to X_\eta$ where $X'_{\eta'}$ is strictly
semistable. Hence $H^i(X_\etabar,\bq_l)$ is a direct summand of the
$G'$-representation $H^i(X'_{\etabar},\bq_l)$ for which the
statement holds. Then one can use induction on the dimension
together with the long exact localization sequence to prove the
statement for $H_c^i(X_\etabar,\bq_l)$ for any separated $X_\eta$ of
finite type. Another application of de Jong's theorem is necessary
here to assure that a regular open subscheme $U\subseteq X_\eta$ has
a finite cover $U'\to U$ which is open in a proper regular
$K$-scheme. For $X_\eta$ smooth over $K$, Poincare duality then
implies the statement for $H^i(X_\etabar,\bq_l)$ and for general $X$
one uses a hypercovering argument. In this proof, starting with
(\ref{ss0}), all occurring $F$-eigenvalues have non-negative weight,
i.e. we have
$W_{-1}H^i(X_\etabar,\bq_l)=W_{-1}H_c^i(X_\etabar,\bq_l)=0$.
\end{proof}

Let $f:X\to S$ be a proper, flat, generically smooth morphism of
relative dimension $d$. For $0\leq i\leq 2d$ one defines the
specialization morphism
\begin{equation} \spe: H^i(X_\sbar,\bq_l)\to H^i(X_\etabar,\bq_l)^{I}\label{sp}\end{equation}
as the composite \begin{equation}H^i(X_\sbar,\bq_l)\cong
H^i(X',\bq_l)\to H^i(X'_\eta,\bq_l)\to
H^i(X_\etabar,\bq_l)\label{seq1}\end{equation} where $X'$ is the
base change of $X$ to a strict Henselization of $S$ at $\sbar$ and
the first isomorphism is proper base change. The map $\spe$ is
$G$-equivariant and respects the weight filtration.
\medskip

\begin{theorem} If $X$ is {\em regular} then the following hold.
\begin{itemize}
\item[a)] The map
\[H^i(X_\sbar,\bq_l)=W_iH^i(X_\sbar,\bq_l)\to W_iH^i(X_\etabar,\bq_l)^{I}\]
induced by $\spe$ is surjective for all $i$.
\item[b)] The map
\begin{equation} W_1H^i(X_\sbar,\bq_l)\to
W_1H^i(X_\etabar,\bq_l)^{I}\label{wsp}\end{equation} induced by
$\spe$ is an isomorphism for all $i$, and the zero map for $i>d$.
\item[c)] The map $\spe$ is an isomorphism for $i=0,1$.
\item[d)] If
$W_iH^i(X_\etabar,\bq_l)^{I}=H^i(X_\etabar,\bq_l)^{I}$
for all $i$ then the map
\begin{equation} W_{i-1}H^i(X_\sbar,\bq_l)\to
W_{i-1}H^i(X_\etabar,\bq_l)^{I}\notag\end{equation} induced by
$\spe$ is an isomorphism for all $i$.
\end{itemize}
\label{main}\end{theorem}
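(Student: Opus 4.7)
The plan is to combine de Jong's theorem on alterations with the Rapoport--Zink spectral sequence in the strictly semistable case and with Gabber's absolute cohomological purity. De Jong provides a finite extension $R'/R$ and a proper, generically finite alteration $\pi:X'\to X\times_S S'$ with $X'$ regular and $X'/S'$ strictly semistable; write $I'\subset I$ for the inertia of $K'=\mathrm{Frac}(R')$. For semistable $X'$ the spectral sequence (\ref{ss0}) has $E_1$-terms pure of weight $i+r$, which establishes weight-monodromy for $I'$-invariants, $H^i(X'_{\etabar},\bq_l)^{I'}=W_iH^i(X'_{\etabar},\bq_l)^{I'}$, together with the local invariant cycle theorem $\spe':H^i(X'_{\sbar},\bq_l)\twoheadrightarrow H^i(X'_{\etabar},\bq_l)^{I'}$. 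An inspection of the Mayer--Vietoris $E_1$-page also gives $W_1H^i(X'_{\sbar},\bq_l)=0$ for $i>d$, since the weight-$\leq 1$ contributions come from $H^{\leq 1}$ of strata $Y^{(m)}$ of dimension $d+1-m$, all vanishing when $i>d$.

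To descend to $X$, the identity $\pi^*\pi_*=\deg(\pi)\cdot\mathrm{id}$ (invertible in $\bq_l$) makes $\pi_*$ split surjective; this survives under taking $I'$-invariants ($\pi^*$ splits the sequence $G'$-equivariantly) and after passing to weight-graded pieces ($\pi_*$ is $G$-equivariant). The commutative diagram
\[\begin{CD}
H^i(X'_{\sbar},\bq_l) @>\spe'>> H^i(X'_{\etabar},\bq_l)^{I'}\\
@V\pi_*VV @V\pi_*VV\\
H^i(X_{\sbar},\bq_l) @>\spe>> H^i(X_{\etabar},\bq_l)^{I'}
\end{CD}\]
then shows $\spe$ surjects onto $H^i(X_{\etabar},\bq_l)^{I'}$; since $\mathrm{im}(\spe)\subseteq H^i(X_{\etabar},\bq_l)^I\subseteq H^i(X_{\etabar},\bq_l)^{I'}$, equality holds in weight $\leq i$ and $\spe$ surjects onto $W_iH^i(X_{\etabar},\bq_l)^I$. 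Combined with the Weil II identity $H^i(X_\sbar,\bq_l)=W_iH^i(X_\sbar,\bq_l)$ (weights of a proper variety of dimension $d$ are at most $i$), this proves part (a), and the vanishing $W_1H^i(X_\sbar,\bq_l)=0$ for $i>d$ descends along $\pi_*$ on weight-graded pieces to give the zero-for-$i>d$ assertion in (b).

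For the isomorphism assertions in (b)--(d), apply Gabber's absolute cohomological purity to the regular codimension-one immersion $X_s\hookrightarrow X$ (a Cartier divisor since $X_s$ is cut by a uniformizer of $R$): $Ri^!\bq_l\cong\bq_l(-1)[-2]$, so $H^i_{X_s}(X^{sh},\bq_l)\cong H^{i-2}(X_{s,\mathrm{red}},\bq_l)(-1)$ has weights in $[2,i]$. Because $X_s$ is a principal divisor, its cycle class in $H^2(X^{sh},\bq_l(1))$ vanishes, forcing the Gysin map $H^i_{X_s}(X^{sh},\bq_l)\to H^i(X^{sh},\bq_l)$ to be zero. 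Tracing through a Hochschild--Serre analysis analogous to diagram (\ref{dia23}) in the proof of Proposition \ref{reform}, which decomposes $H^i(X_\sbar)$ in terms of $H^0(\bof_p,-)\oplus H^1(\bof_p,-)$ of $I$-invariants, one deduces that the kernel of $\spe$ has weights $\geq 2$, giving the $W_1$-isomorphism in (b). Part (c) for $i=0$ is immediate (regular proper flat $X$ has geometrically connected fibers, so both sides equal $\bq_l$), and for $i=1$ it follows from (b) together with $H^1(X_{\etabar},\bq_l)^I=W_1H^1(X_{\etabar},\bq_l)^I$ (weight-monodromy for $H^1$, classical via Grothendieck's semistable reduction for abelian varieties) and $H^1(X_\sbar,\bq_l)=W_1H^1(X_\sbar,\bq_l)$. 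For (d), the hypothesis $W_iH^i(X_{\etabar},\bq_l)^I=H^i(X_{\etabar},\bq_l)^I$ for all $i$ allows one to shift the weight analysis down one step, combining the surjectivity from (a) with a refined argument on the connecting map to obtain the $W_{i-1}$-isomorphism. The main obstacle is the detailed weight-by-weight analysis of the Rapoport--Zink $E_1$-terms in Step~1 and the careful tracking of the cycle-class/Hochschild--Serre argument through the purity identification in the later steps.
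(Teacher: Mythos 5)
Your proposal diverges from the paper's proof at a foundational point, and the divergence introduces a genuine gap.

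\textbf{The main problem.} You claim that for the strictly semistable alteration $X'/S'$ the Rapoport--Zink spectral sequence (\ref{ss0}) ``establishes weight-monodromy for $I'$-invariants, $H^i(X'_{\etabar},\bq_l)^{I'}=W_iH^i(X'_{\etabar},\bq_l)^{I'}$, together with the local invariant cycle theorem $\spe'\colon H^i(X'_{\sbar},\bq_l)\twoheadrightarrow H^i(X'_{\etabar},\bq_l)^{I'}$.'' Neither statement is a consequence of the spectral sequence. Purity of the $E_1$-terms gives the existence of a weight filtration on $H^i(X'_{\etabar},\bq_l)$ (this is exactly Lemma~\ref{filt}), but it does \emph{not} tell you that the monodromy filtration equals the weight filtration, i.e.\ it does not bound the weights of the $I'$-invariants by $i$; that is the weight-monodromy conjecture, which is open in mixed characteristic even with strictly semistable reduction. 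Likewise, the local theorem on invariant cycles is \emph{not} a formal consequence of semistable reduction: Deligne's proof in equal characteristic (Weil~II, 3.6) uses global duality over a curve over a finite field, an input with no mixed-characteristic analogue. The paper's own remark after Theorem~\ref{main} states this explicitly: ``It is known to hold if $R$ is the local ring of a smooth curve over $k$ \dots but it is only conjectured in mixed characteristic.'' In fact, by part~(a) the hypothesis of part~(d) is \emph{equivalent} to surjectivity of $\spe$, so what you are invoking for $X'$ is precisely the conjectural assumption of~(d). The de Jong + trace descent would be fine if that input were available, but it is not, and so part~(a) cannot be proved this way. The paper proves something strictly weaker and unconditional — surjectivity only after applying $W_i$ — precisely because the full invariant-cycle statement is conjectural; its mechanism (duality $\bq_l\cong Rf^!\bq_l(-d)[-2d]$ for the regular scheme $X$, combined with purity for the regular scheme $S$, applied to the local cohomology exact sequence) shows directly that $H^{i+1}_{X_\sbar}(X,\bq_l)$ has weights $\geq i+1$, which is what makes the weight-truncated surjectivity go through.

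\textbf{A second problem.} In your treatment of (b)--(d) you apply ``Gabber's absolute cohomological purity to the regular codimension-one immersion $X_s\hookrightarrow X$.'' But $X_s$ is a Cartier divisor in the regular scheme $X$, hence of codimension one, yet it is typically \emph{not} regular (the special fibre of a regular arithmetic scheme is very often singular). Absolute purity in the form $Ri^!\bq_l\cong\bq_l(-1)[-2]$ requires the closed subscheme to be regular. The paper sidesteps this entirely: it never applies purity to $X_s\hookrightarrow X$. Instead it expresses $Rf_*R\underline{\Hom}_X(i_*\bq_l,\bq_l)$ through $Rf^!$-duality for $f\colon X\to S$ (purity for the regular scheme $X$) and then through $Ri_s^!$-duality on the base (purity for the regular scheme $S$), landing on $i_{s,*}R\underline{\Hom}_s(Rf_{s,*}\bq_l,\bq_l)(-d-1)[-2d-2]$, whose weights are read off from the proper smooth Weil-II bound $W_kH^k(X_\sbar,\bq_l)=H^k(X_\sbar,\bq_l)$. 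The subsidiary claim that the Gysin map vanishes because the cycle class of a principal divisor is trivial is also not what is needed: the specialization long exact sequence involves the whole complex $H^\bullet_{X_\sbar}(X,\bq_l)$, not just the degree-$2$ cycle class, and the paper's weight estimate on this local cohomology is what replaces any such vanishing. Finally, while your invocation of weight-monodromy for $H^1$ (via semistable reduction for abelian varieties / \cite{sga7}) does give an alternate route to (c) for $i=1$, the paper's Picard-functor argument via \cite{ray} is included for its independent interest, and your $i=0$ remark (geometric connectedness of fibres) is neither needed nor correct in general; the paper deduces $i=0$ directly from (b).

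In short: your route would prove more than the theorem (full local invariant cycles), and exactly for that reason it relies on what is in this setting a conjecture; you would also need to repair the misapplication of absolute purity. The paper's duality-plus-purity argument is specifically designed to avoid both issues.
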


\begin{remarks} a) By part a) of the theorem, the assumption of part d) is equivalent to the surjectivity of the map $\spe$ for all $i$, a statement which is called the local theorem on invariant cycles. It is known to hold if $R$ is the local ring of a smooth curve over
$k$ by \cite{weilii}[Lemma 3.6.2], see also \cite{weilii}[Thm. 3.6.1], but it is only conjectured in mixed characteristic. Unconditionally, we were only able to prove the weak statement in b) rather than the full conclusion of d).
Part c) is probably well known and follows, for
example, from b) and results of \cite{sga7}[Expos\'e IX] on Neron
models which assure that
$W_1H^1(X_\etabar,\bq_l)^{I}=H^1(X_\etabar,\bq_l)^{I}$.

b) It is easy to construct examples where $\spe$ is not injective
for $i\geq 2$. For example if $X$ is the blowup of a proper smooth
relative curve over $S$ in a closed point, then $H^2(X_\sbar,\bq_l)$
will have an extra summand $\bq_l(-1)$ corresponding to the
exceptional divisor which gives a new irreducible component of
$X_\sbar$.

c) If $X$ arises by base change from a regular, proper, flat scheme
$\X\to\Spec(\bz)$ part b) of the theorem implies
\[ \ord\limits_{s}\zeta(\X,s)=\ord\limits_{s}\prod_{i=0}^{2d}L(h^i(\X_\bq),s)^{(-1)^i}\] for integers $n\leq 0$ (and for
$n=\frac{1}{2}$) where $\zeta(\X,s)$ is the Zeta-function
of the arithmetic scheme $\X$ and $L(h^i(\X_\bq),s)$ is the $L$-function of the motive $h^i(\X_\bq)$
defined by Serre \cite{serre69}. Indeed the former (resp. latter) is
an Euler product of characteristic polynomials of Frobenius on
$H^i(\X\otimes\bar{\mathbb F}_p,\bq_l)$ (resp.
$H^i(\X\otimes\bar{\bq}_p,\bq_l)^{I_p}$). These are equal
for almost all primes and at the finitely many (bad reduction)
primes where they might differ, part b) assures that the vanishing
order at $s\leq 0$ of both factors, which equals $(-1)^{i+1}$
times the multiplicity of the Frobenius-eigenvalue $p^n$ (of weight 2n),
is the same.

d) Regularity of $X$ is a key assumption in the theorem. The map
$\spe$ will be an isomorphism for $i=0$ if $X$ is only normal but
for $i=1$ normality is not even sufficient for surjectivity of
$\spe$ on $W_0$, as the following example of de Jeu \cite{dejeu00}
shows. If $E$ is an elliptic curve over $\bq$ given by a projective
Weierstrass equation
$$Y^2Z=X^3+AXZ^2+BZ^3$$ with $A,B\in\bq$ then for any
$u\in\bq^\times$ the curve
$$Y^2Z=X^3+u^4AXZ^2+u^6BZ^3$$
is isomorphic to $E$, and if $u^4A,u^6B\in \bz$ this equation
defines a normal scheme $\E$, proper and flat over $\Spec(\bz)$,
inside $\mathbb P^2_{\bz}$. Indeed, the affine coordinate ring of
the complement of the zero section $(X:Y:Z)=(0:1:0)$ is
$R=\bz[x,y]/(y^2-x^3-u^4Ax-u^6B)$ and hence a complete intersection.
So $R$ is normal if and only if all local rings $A_\p$ for primes
$\p$ of height $\leq 1$ are regular. If $\p$ maps to the generic
point of $\Spec(\bz)$ this is clear because $E$ is a smooth curve
over $\bq$. If $\p$ maps to $(p)$ for some prime number $p$, then
$\p=R\cdot p$ since $R\cdot p$ is already a prime ideal as the
equation $y^2-x^3-u^4Ax-u^6B$ remains irreducible modulo $p$. Hence
$\p$ is principal and $A_\p$ is a DVR. The generic point of the zero
section maps to the generic point of $\Spec(\bz)$, hence $\E$ is
normal.

If we now pick $u$ in addition to be a multiple of some prime $p$
where $E$ has split multiplicative reduction, then $\E_{\sbar}$ is a
cuspidal cubic curve and therefore
\[ 0=H^1(\E_{\sbar},\bq_l)\to H^1(\E_{\etabar},\bq_l)^I=W_0H^1(\E_{\etabar},\bq_l)^I\cong\bq_l\]
is not surjective.

However, the condition that $X$ is locally factorial (all local
rings are UFDs) lies between normality and regularity and is
sufficient to ensure that our proof of c) given below goes through.
Regularity is only used for the isomorphism $\Pic(X)\cong\Cl(X)$ and
for \cite{ray}[Thm. 6.4.1] via normality.

\end{remarks}

\begin{proof}

Since the statement of Theorem \ref{main} only depends on the base
change of $f$ to the strict Henselization of $S$ at $\sbar$ we may
assume that $S$ is strictly Henselian. Note that regularity is
preserved by this base change by \cite{miletale}[I,3.17 c)].

For a) we follow Deligne's proof of \cite{weilii}[Thm. 3.6.1],
replacing duality for the essentially smooth morphism $X\to\Spec(k)$
by duality for the morphism $f$ combined with purity for the regular
schemes $X$, proved by Thomason and Gabber (see \cite{fuji96}), and
$S$, proved by Grothendieck in \cite{sga5}[I,Thm. 5.1]. The same
arguments as in loc. cit. lead to the commutative diagram with exact
rows and columns
\begin{equation}\minCDarrowwidth1em\begin{CD}
{} @. {} @. H^{i+1}_{X_\sbar}(X,\bq_l) @.{} @. {}\\
@. @. @AAA @. @.\\
0 @>>> H^{i-1}(X_\etabar,\bq_l)_I(-1) @>>> H^i(X_\eta,\bq_l) @>>>
H^i(X_\etabar,\bq_l)^I @>>> 0\\
@. @. @AAA @AA{\spe}A @.\\
{} @. {} @. H^i(X,\bq_l) @>\sim>> H^i(X_\sbar,\bq_l) @. {}\\
@. @. @AAA @. @.\\
{} @. {} @. H^i_{X_\sbar}(X,\bq_l) @.{} @. {}
\end{CD}\label{dia1}\end{equation}
and after application of the exact functor $W_i$ to a diagram
\[\begin{CD}
{} @. {} @. W_iH^{i+1}_{X_\sbar}(X,\bq_l) @.{} @. {}\\
@. @. @AAA @. @.\\
{} @. {} @. W_iH^i(X_\eta,\bq_l) @>>>
W_iH^i(X_\etabar,\bq_l)^I @>>> 0\\
@. @. @AAA @AA{\spe}A @.\\
{} @. {} @. W_iH^i(X,\bq_l) @>\sim>> W_iH^i(X_\sbar,\bq_l) @. {}
\end{CD}\]
so that it remains to show that
\begin{equation} W_iH^{i+1}_{X_\sbar}(X,\bq_l)=0\label{van09}\end{equation}
for all $i$.
The vertical long exact sequence in (\ref{dia1}) arises by applying
the (exact) global section functor $\Gamma(S,-)$ to the exact
triangle
\[ Rf_*R\underline{\Hom}_X(i_*\bq_l,\bq_l)\to Rf_*\bq_l\to
Rf_*Rj_*\bq_l\] where $i:X_\sbar\to X$ and $j:X_\eta\to X$ are the
inclusions. By purity for $X$ \cite{fuji96}[\S8] we have $\bq_l\cong
Rf^!\bq_l(-d)[-2d]$ and the (sheafified) adjunction between $Rf^!$
and $Rf_!$ gives
\begin{align*}Rf_*R\underline{\Hom}_X(i_*\bq_l,\bq_l)&\cong
R\underline{\Hom}_S(Rf_!i_*\bq_l,\bq_l(-d))[-2d]\\
&\cong R\underline{\Hom}_S(i_{s,*}Rf_{s,*}\bq_l,\bq_l(-d))[-2d]\\
&\cong i_{s,*}R\underline{\Hom}_s(Rf_{s,*}\bq_l,Ri_s^!\bq_l(-d))[-2d]\\
&\cong
i_{s,*}R\underline{\Hom}_s(Rf_{s,*}\bq_l,\bq_l)(-d-1)[-2d-2]\\
\end{align*}
where $i_s:s\to S$ is the closed immersion and $f_s:X_s\to s$ the
base change of $f$. Here we have also used $Rf_*=Rf_!$ ($f$ proper)
as well as the sheafified adjunction between $i_{s,!}=i_{s,*}$ and
$i_s^!$, and purity for $S$. This last complex has cohomology in
degree $i+1$ given by
\[ \Hom_{\bq_l}(H^{2d+2-i-1}(X_\sbar,\bq_l),\bq_l)(-d-1)\]
which has weights greater or equal to $2(d+1)-(2d+2-i-1)=i+1$ since
$W_kH^k(X_\sbar,\bq_l)=H^k(X_\sbar,\bq_l)$ by \cite{weilii}[Cor.
3.3.8]. This finishes the proof of a).
\medskip

Concerning b), we apply the exact functor $W_1$ to the diagram (\ref{dia1}) and obtain
a commutative diagram
\[\begin{CD}
{} @. {} @. W_1H^i(X_\eta,\bq_l) @>\beta >>
W_1H^i(X_\etabar,\bq_l)^I @. {}\\
@. @. @AA\alpha A @AA{\spe}A @.\\
{} @. {} @. W_1H^i(X,\bq_l) @>\sim>> W_1H^i(X_\sbar,\bq_l). @. {}
\end{CD}\]
For $i\geq 2$ the map $\alpha$ is an isomorphism since
\[  W_1H^j_{X_\sbar}(X,\bq_l)\subseteq W_{j-1}H^j_{X_\sbar}(X,\bq_l)=0 \]
for $j=i,i+1$ by (\ref{van09}). For $i=0,1$ we already have
\[H^i_{X_\sbar}(X,\bq_l)\cong \Hom_{\bq_l}(H^{2d+2-i}(X_\sbar,\bq_l),\bq_l)(-d-1)=0\]
before applying $W_1$ and the map $\alpha$ is also an isomorphism. For any $i$ the map $\beta$ is an isomorphism since
\[ W_1\bigl(H^{i-1}(X_\etabar,\bq_l)_I(-1)\bigr)=W_{-1}\bigl(H^{i-1}(X_\etabar,\bq_l)_I\bigr)(-1)=0\]
by Lemma \ref{filt}. Hence the map induced by $\spe$ on $W_1$ is also an isomorphism. For $i>d$ both sides of (\ref{wsp}) vanish. Indeed, the weights of
$H^i(X_{\sbar},\bq_l)$ are greater or equal to $2(i-d)\geq 2$ by
\cite{weilii}[Cor. 3.3.4] and the same is true for
$H^i(X_\etabar,\bq_l)$ as follows from Poincare duality and the fact
that the weights on $H^i(X_\etabar,\bq_l)$ are $\leq 2i$ for $i<d$.
This in turn can be read off from the spectral sequence (\ref{ss0})
in the strictly semistable case and follows in general from de
Jong's theorem. Hence
\[W_1H^i(X_\sbar,\bq_l)=W_1H^i(X_\etabar,\bq_l)^{I}=0\]
for $i>d$ and we have finished the proof of b).

Concerning d), we apply the exact functor $W_{i-1}$ to the diagram (\ref{dia1}) and obtain a commutative diagram
\[\begin{CD}
{} @. {} @. W_{i-1}H^i(X_\eta,\bq_l) @>\beta >>
W_{i-1}H^i(X_\etabar,\bq_l)^I @. {}\\
@. @. @AA\alpha A @AA{\spe}A @.\\
{} @. {} @. W_{i-1}H^i(X,\bq_l) @>\sim>> W_{i-1}H^i(X_\sbar,\bq_l) @. {}
\end{CD}\]
where $\alpha$ is an isomorphism for the same reason as in the proof of b) and $\beta$ is an isomorphism since
\[ W_{i-1}\bigl(H^{i-1}(X_\etabar,\bq_l)_I(-1)\bigr)=W_{i-3}\bigl(H^{i-1}(X_\etabar,\bq_l)_I\bigr)(-1)\]
is dual to
\[ H^{2d-i+1}(X_\etabar,\bq_l)^I(d+1)/W_{2d-i+2}\bigl(H^{2d-i+1}(X_\etabar,\bq_l)^I\bigr)(d+1)\]
which vanishes by the assumption in d).

Concerning c), the case $i=0$ follows from b) since
$W_1H^0(X_\sbar,\bq_l)=H^0(X_\sbar,\bq_l)$ and
$W_1H^0(X_\etabar,\bq_l)^I=H^0(X_\etabar,\bq_l)^I$. The case $i=1$
can be deduced from b) and \cite{sga7}[Expos\'e IX] or from results
of Raynaud on the Picard functor \cite{ray}. We give the details of
this last argument because the method, essentially using motivic cohomology, might be of some interest. The short exact sequence $0\to\mu_{l^\nu}\to
\bg_m\xrightarrow{l^\nu}\bg_m\to 0$ of sheaves on $X_{et}$ induces
an isomorphism \begin{equation} R^1f_*\mu_{l^\nu}\cong
(R^1f_*\bg_m)_{l^\nu}\label{kummer}\end{equation} of sheaves on
$S_\et$ since $(f_*\bg_m)/l^\nu=0$. Indeed, the stalks
$H^0(Y,\co_Y^\times)=\prod_i R_i^\times$ and
$H^0(Y_\etabar,\co_{Y_\etabar}^\times)=\prod_i
(L_i\otimes_K\bar{K})^\times$ of $f_*\bg_m$ are $l$-divisible since
$S$ is strictly Henselian. Here $$X\to Y=\coprod_i\Spec(R_i)\to S$$ is the Stein
factorization and $L_i$ is the fraction field of $R_i$. The Leray
spectral sequence for $f$ gives an exact sequence
\[ 0\to H^1(S,f_*\bg_m)\to H^1(X,\bg_m)\to
H^0(S,R^1f_*\bg_m)\to H^2(S,f_*\bg_m)\] and $H^i(S,f_*\bg_m)=0$ for
$i=1,2$. Indeed, $H^1(S,f_*\bg_m)=\Pic(Y)=0$ (resp.
$H^2(S,f_*\bg_m)=\mathrm{Br}(Y)=0$) since $Y$ is the disjoint union
of spectra of local (resp. strictly Henselian local) rings. Hence
\begin{equation} \Pic(X)\cong H^1(X,\bg_m)\cong
H^0(S,R^1f_*\bg_m).\label{eq1}\end{equation} A similar argument for
$f_\eta$ shows
\begin{equation} \Pic(X_\eta)\cong H^1(X_\eta,\bg_m)\cong
H^0(\eta,R^1f_*\bg_m).\label{eq2}\end{equation}
 We have a commutative diagram
\[\minCDarrowwidth1em\begin{CD} H^1(X_\sbar,\mu_{l^\nu}) @>\sim >> H^0(S,R^1f_*\mu_{l^\nu})
@>\sim>> H^0(S,R^1f_*\bg_m)_{l^\nu} @>\sim>> \Pic(X)_{l^\nu}\\ @V\mathrm{sp} VV @VVV @VVV @VVV\\
H^1(X_\etabar,\mu_{l^\nu})^I @>\sim >> H^0(\eta,R^1f_*\mu_{l^\nu})
@>\sim>> H^0(\eta,R^1f_*\bg_m)_{l^\nu} @>\sim>> \Pic(X_\eta)_{l^\nu}
\end{CD}\]
where the isomorphisms in the top row are given by proper base
change, (\ref{kummer}) and (\ref{eq1}) and in the bottom row by an
elementary stalk computation, (\ref{kummer}) and (\ref{eq2}).
Passing to the inverse limit over $\nu$ we are reduced to studying
the map \begin{equation} \varprojlim_\nu
\Pic(X)_{l^\nu}=:T_l\Pic(X)\to T_l\Pic(X_\eta):=\varprojlim_\nu
\Pic(X_\eta)_{l^\nu}\label{eq3}\end{equation} and the proof of c)
for $i=1$ is then finished by the following Lemma.
\end{proof}

\begin{lemma} The map (\ref{eq3}) is injective with finite cokernel.
\end{lemma}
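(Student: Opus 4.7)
My approach is to realize the map of Tate modules as coming from a short exact sequence of abelian groups with finitely generated kernel, and then appeal to Raynaud's structure theorem for the relative Picard functor in order to compare $\bz_l$-ranks. The first step is to produce a short exact sequence
\[
0 \to K \to \Pic(X) \to \Pic(X_\eta) \to 0
\]
with $K$ finitely generated. Since $X$ is regular, hence locally factorial, $\Pic(X)=\Cl(X)$, and similarly for the smooth scheme $X_\eta$. The restriction map $\Cl(X)\to\Cl(X_\eta)$ is surjective because any prime Weil divisor on $X_\eta$ extends to $X$ by taking its Zariski closure. The kernel $K$ consists of classes of divisors supported on the special fiber $X_s$, and is therefore a quotient of the free abelian group on the (finitely many) irreducible components of $X_s$, hence finitely generated.

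Next, I would pass to Tate modules via the six-term exact sequence attached to the short exact sequence above (equivalently, the long exact $\Ext^*(\bq_l/\bz_l,-)$ sequence), namely
\[
0\to T_lK \to T_l\Pic(X) \to T_l\Pic(X_\eta) \to K\otimes_{\bz}\bz_l \to \Pic(X)^{\wedge}_l \to \Pic(X_\eta)^{\wedge}_l \to 0.
\]
Since $K$ is finitely generated, $K[l^\nu]$ stabilises to the finite $l$-primary torsion subgroup of $K$ and the transition maps (multiplication by $l$) are eventually zero, so $T_lK=0$. This gives injectivity of $T_l\Pic(X)\to T_l\Pic(X_\eta)$. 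The cokernel embeds in the finitely generated $\bz_l$-module $K\otimes_{\bz}\bz_l$, so finiteness of the cokernel is equivalent to surjectivity of the rationalised map
\[
T_l\Pic(X)\otimes_{\bz_l}\bq_l \longrightarrow T_l\Pic(X_\eta)\otimes_{\bz_l}\bq_l.
\]

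To establish this rational surjectivity I would invoke Raynaud's theorem \cite{ray}[Thm.~6.4.1] on the specialisation of the Picard functor. Under our hypotheses ($X$ regular, proper, and flat over the strictly Henselian discrete valuation ring $S$), $\Pic^0_{X/S}$ is representable by a smooth $S$-group scheme, and it coincides with the identity component of the N\'eron model of the abelian variety $A:=\Pic^0_{X_\eta/K}$. Consequently $T_l\Pic^0_{X/S}(S)$ and $(T_lA)^I\cong T_lA(K)$ have the same $\bz_l$-rank, namely $t+2a$, where $t$ and $a$ are the toric and abelian dimensions of the connected N\'eron model (using that $S$ is strictly Henselian and $\Pic^0_{X/S}$ is smooth, together with Grothendieck's description of $I$-invariants on $T_lA$ via the monodromy filtration). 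Since the quotients $\Pic(X)/\Pic^0(X)$ and $\Pic(X_\eta)/\Pic^0(X_\eta)$ are finitely generated, this rank equality persists after passing from $\Pic^0$ to $\Pic$ and gives the required surjectivity. The main obstacle is the appeal to Raynaud's structure theorem; the rest of the argument is elementary bookkeeping with finitely generated abelian groups and $l$-adic Tate modules.
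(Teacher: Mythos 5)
Your first two steps coincide with the paper's: the short exact sequence $0\to K\to\Pic(X)\to\Pic(X_\eta)\to 0$ with $K$ finitely generated (generated by vertical divisors), and the six-term sequence on Tate modules and $l$-completions. The paper then reduces the claim to showing that the connecting map $\rho:\hat K\to\widehat{\Pic}(X)$ has finite kernel; your reduction to surjectivity of $T_l\Pic(X)\otimes\bq_l\to T_l\Pic(X_\eta)\otimes\bq_l$ is equivalent. Where you diverge is in how Raynaud's theory is brought to bear, and this is where there is a genuine gap.

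You assert that under the standing hypotheses ($X$ regular, proper, flat over the strictly Henselian DVR $S$) the functor $\Pic^0_{X/S}$ is representable by a smooth $S$-group scheme and coincides with the identity component $\mathcal{N}^0$ of the N\'eron model of $A=\Pic^0_{X_\eta/K}$, and you cite \cite{ray}[Thm.\,6.4.1] for this. That is not what Raynaud's Theorem~6.4.1 says, and more importantly the N\'eron-model identification is simply not available in this generality. Even for relative curves it holds only under the additional hypothesis that the multiplicities of the irreducible components of $X_s$ have gcd~$1$ (Raynaud Thm.~8.2.1; BLR, \emph{N\'eron Models}, Thm.~9.5/4), and the situation in higher relative dimension is even less standard. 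Regularity of $X$ alone does not imply this gcd condition, so your rank count $t+2a$ on the $\Pic(X)$-side is not justified: a priori $\Pic^0_{X/S}$ is only related to $\mathcal{N}^0$ through the (not necessarily trivial) kernel coming from the vertical divisors.

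Raynaud's Theorem~6.4.1 actually states that $E(S)\cap P^\tau(S)$ is finite, where $E$ is the subgroup generated by vertical divisors and $P^\tau\supseteq P^0$. This is precisely the key fact the paper uses: since $K=E(S)$ and $\Pic^0(X)=P^0(S)$, one gets that $K\cap\Pic^0(X)$ is finite. Combined with the finite generation of $\Pic(X)/\Pic^0(X)$ (a product of N\'eron--Severi groups of the geometric fibres, Raynaud Thm.~3.2.1), the induced map $\hat K\to\widehat{\Pic(X)/\Pic^0(X)}$ has finite kernel, hence so does $\rho$. This route requires no representability statement and no N\'eron-model identification, which is what makes the proof go through for an arbitrary regular proper flat $X/S$. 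Your argument, while morally parallel, invokes a stronger theorem whose hypotheses are not satisfied in the needed generality and are not the content of the result you cite.
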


\begin{proof} Since $X$ is regular and $X_\eta$ is an
open subscheme the map
\[\Pic(X)=\Cl(X)\to
\Cl(X_\eta)=\Pic(X_\eta)\] is surjective and its kernel $K$ is the
subgroup of $\Cl(X)$ generated by divisors supported in the closed
subscheme $X_\sbar\subset X$, hence is a finitely generated abelian
group \cite{hartshorne}[II.6]. By the snake lemma we obtain an exact
sequence
\begin{equation} 0=T_lK\to T_l\Pic(X)\to T_l\Pic(X_\eta)\to
\hat{K}\xrightarrow{\rho}\hat{\Pic}(X)\label{eq5}\end{equation}
where $\hat{A}=\varprojlim_\nu A/l^\nu$ denotes the $l$-completion
of an abelian group $A$.

Let $\Pic^0(X)\subseteq \Pic(X)$ be the subgroup defined in
\cite{ray}[3.2 d)], i.e. the kernel of the map \begin{equation}
\Pic(X)=P(S)\to (P/P^0)(\sbar)\times (P/P^0)(\etabar)
\label{eq4}\end{equation} where $P=\Pic_{X/S}$ is the relative
Picard functor of $f$ \cite{ray}[1.2] and $P^0$ is the connected
component of $P$ restricted to schemes over $\sbar$ (resp. $\eta$).
Note that over a field $P$ is represented by a group scheme, locally
of finite type, hence has a well defined connected component. By
\cite{ray}[Thm. 3.2.1] the target group in (\ref{eq4}) - the product
of the Neron-Severi groups of the geometric fibres - is finitely
generated, hence so is $\Pic(X)/\Pic^0(X)$.

By \cite{ray}[Thm. 6.4.1] - and this is the key fact in the proof-
the group $K\cap\Pic^0(X)$ is finite. In the notation of loc. cit.
we have $K=E(S)$ by Prop. 6.1.3 and $\Pic^0(X)=P^0(S)\subseteq
P^\tau(S)$. Hence the kernel of $K\to\Pic(X)/\Pic^0(X)$ is finite
and since both groups are finitely generated, so is the kernel on
their $l$-completions. But this means that the map $\rho$ in
(\ref{eq5}) has finite kernel which proves the Lemma.\end{proof}

\subsection{$p$-adic cohomology} In this section we assume that $K$ has characteristic $0$ and for simplicity also that $k=\mathbb F_p$. For $l=p$ one still has the specialisation map
\begin{equation} \spe: H^i(X_\sbar,\bq_p)\to H^i(X_\etabar,\bq_p)^{I}\label{sp-p}\end{equation}
since proper base change holds for arbitrary torsion sheaves. However, it is well known that $p$-adic \'etale cohomology of varieties in characteristic $p$ only captures the slope $0$ part of the full $p$-adic cohomology, which is Berthelot's rigid cohomology $H^i_{rig}(X_s/k)$ (for proper $X_s$ this follows from \cite[Thm. 1.1]{bbe07} and \cite[Prop. 3.28, Lemma 5.6]{illusie79}). Here the slope $0$ part $V^{\slope 0}$ of a finite dimensional $\bq_p$-vector space $V$ with an endomorphism $\phi$ is the maximal subspace on which the eigenvalues of $\phi$ are $p$-adic units. One knows that the eigenvalues of $\phi$ on $H^i_{rig}(X_s/k)$ are Weil numbers, and a proof similar to that of Lemma \ref{filt} shows that the same is true for $D_{pst}(H^i(X_\etabar,\bq_p))$, and hence for $$D_{cris}(H^i(X_\etabar,\bq_p))=D_{st}(H^i(X_\etabar,\bq_p))^{N=0}=D_{pst}(H^i(X_\etabar,\bq_p))^{I,N=0}.$$ Therefore one deduces weight filtrations on both spaces.

In analogy with the $l$-adic situation one might make the following conjecture.

\begin{conjecture} Let $X\to S$ be proper, flat and generically smooth. Then there is a $\phi$-equivariant specialization map
\[ H^i_{rig}(X_s/k) \xrightarrow{\spe'} D_{cris}(H^i(X_\etabar,\bq_p))\]
and a commutative diagram of $\Gal(\bar{k}/k)$-modules
\[\begin{CD} H^i(X_\sbar,\bq_p) @>\spe >> H^i(X_\etabar,\bq_p)^{I}\\
@V\lambda_s VV @V\lambda_\eta VV \\
H^i_{rig}(X_s/k)\otimes_{\bq_p}\bqpur @>\spe'\otimes 1 >> D_{cris}(H^i(X_\etabar,\bq_p))\otimes_{\bq_p}\bqpur
\end{CD}\]
where $\bqpur$ is the $p$-adic completion of the maximal unramified extension of $\bq_p$. Moreover, the vertical maps induce isomorphisms
\[\lambda_s: H^i(X_\sbar,\bq_p)\cong (H^i_{rig}(X_s/k)\otimes_{\bq_p}\bqpur)^{\phi\otimes\phi=1}\cong H^i_{rig}(X_s/k)^{\slope 0}\]
and
\[\lambda_\eta:H^i(X_\etabar,\bq_p)^{I}\cong (D_{cris}(H^i(X_\etabar,\bq_p))\otimes_{\bq_p}\bqpur)^{\phi\otimes\phi=1}\cong D_{cris}(H^i(X_\etabar,\bq_p))^{\slope 0}.\]
\label{specon}\end{conjecture}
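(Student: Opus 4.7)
The plan is to mirror the $l$-adic arguments of Theorem \ref{main}, with the pair $(H^i(X_{\bar s},\bq_p),H^i_{rig}(X_s/k))$ on the special fibre playing the role of $l$-adic cohomology, and with the pair $(H^i(X_\etabar,\bq_p)^I,D_{cris}(H^i(X_\etabar,\bq_p)))$ playing the corresponding role on the generic fibre. First I would construct $\spe'$. By de Jong's theorem, after a finite base change on $K$ there exists a proper hypercover $X'_\bullet\to X$ such that each $X'_{\bullet,s}$ is strictly semistable; on each level the log-crystalline comparison theorem $C_{st}$ of Tsuji--Faltings--Nizio{\l} yields natural maps
\[ H^i_{rig}(X'_{\bullet,s}/k)\longrightarrow H^i_{\log\text{-}crys}(X'_{\bullet,s}/W)_{\bq_p}\cong D_{st}(H^i(X'_{\bullet,\etabar},\bq_p))\xrightarrow{N=0,\,I\text{-inv}}D_{cris}(H^i(X'_{\bullet,\etabar},\bq_p)). \]
Taking the spectral sequences on both sides and applying cohomological descent as in the proof of Lemma \ref{filt} would produce $\spe'$, together with a Galois-equivariant weight filtration.

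Second, the isomorphisms $\lambda_s$ and $\lambda_\eta$ should follow from standard $p$-adic Hodge theory. For $\lambda_s$, the Artin--Schreier sequence together with the slope spectral sequence of Bloch--Illusie identifies $H^i(X_{\bar s},\bq_p)\otimes_{\bq_p}\bqpur$ with the $(\phi\otimes\phi)$-fixed part of $H^i_{rig}(X_s/k)\otimes_{\bq_p}\bqpur$, and the Dieudonn\'e--Manin classification of $\phi$-isocrystals over $\bqpur$ then identifies this with the slope zero part of $H^i_{rig}(X_s/k)$. For $\lambda_\eta$, Fontaine's theorem gives, for any de Rham representation $V$, an identification of $V^I$ with $\bigl(D_{pst}(V)^{I,N=0}\otimes_{K_0^{ur}}\bqpur\bigr)^{\phi=1}$; since $D_{pst}(V)^{I,N=0}=D_{cris}(V)$, a second application of Dieudonn\'e--Manin yields the claimed isomorphism with $D_{cris}(V)^{\slope 0}$. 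Commutativity of the square is then the statement that $C_{st}$ is compatible with the specialization morphism in \'etale cohomology, a coherence built into the constructions of Tsuji and Beilinson via $p$-adic nearby cycles.

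The main obstacle will be the construction of $\spe'$ in the absence of a semistable integral model of $X$ itself. A de Jong alteration $X'\to X$ is only generically finite, and to descend a morphism $H^i_{rig}(X_s/k)\to D_{cris}(H^i(X_\etabar,\bq_p))$ from $X'$ to $X$ one must realise $H^i_{rig}(X_s/k)$ as a genuine direct summand of the rigid cohomology of a suitable hypercover, with its full $\phi$-structure compatible with the $p$-adic nearby cycles of $X$ on the other side. In the $l$-adic proof of Lemma \ref{filt} only the existence of the weight filtration is required, whereas here one needs morphisms compatible simultaneously with both the rigid and the $D_{cris}$ structures. This compatibility seems to require the pro\'etale--crystalline comparison of Bhatt--Scholze, or the $A_{inf}$-cohomology of Bhatt--Morrow--Scholze, applied to the alteration, together with a careful verification that $\spe'$ is independent of the choice of alteration. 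Once this is established, the isomorphisms onto the slope zero parts should follow by the same weight and purity arguments as in the proof of Theorem \ref{main}.
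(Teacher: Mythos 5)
This statement is a \emph{conjecture} in the paper, not a theorem: the authors state explicitly, just after Conjecture~\ref{w0}, that ``we do not know how to establish Conjecture~\ref{specon} or Conjecture~\ref{w0} in general, since it seems difficult to make use of the regularity assumption.'' So there is no proof in the paper for you to match, and your inability to close the argument is shared by the authors.

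That said, your proposal is a reasonable heuristic sketch, and the gap you name at the end is exactly the right one. A few remarks on how it compares with what the paper does say. The paper's own discussion of the semistable case does not pass through rigid cohomology at all: instead it replaces $H^i_{rig}(X_s/k)$ with Hyodo--Kato cohomology $H^i_{HK}(X_s/k)$, invokes Tsuji's isomorphism $H^i_{HK}(X_s/k)\cong D_{st}(H^i(X_\etabar,\bq_p))$ of $(\phi,N)$-modules, and restricts to $N=0$ to land in $D_{cris}$. The commutative diagram the authors propose in the semistable case has $(H^i_{HK}(X_s/k)^{N=0})\otimes_{\bq_p}\bqpur$ in the lower-left corner, with a map $\tilde\lambda_s$ instead of $\lambda_s$. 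Your route through rigid cohomology requires the additional comparison $H^i_{rig}(X_s/k)\cong H^i_{HK}(X_s/k)$, which in the non-semistable case is not even available; and descending $\spe'$ along a de Jong alteration, compatibly with both the $\phi$-structure on the rigid side and the $D_{cris}$-structure on the \'etale side, is precisely the step nobody knows how to do. (In the $l$-adic Lemma~\ref{filt} one only needs existence of a weight filtration, which survives direct summands; here you need a specific morphism with extra compatibilities, which is a much stronger request.) Your appeal to Bhatt--Scholze or Bhatt--Morrow--Scholze is plausible but not a proof: you would still need to show that the resulting $\spe'$ is canonical, independent of the alteration, and agrees with \'etale specialization after the slope-zero/unramified twist — none of which is established.

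In short: your outline is consistent in spirit with the paper's discussion, correctly identifies the obstruction, but does not constitute a proof; nor does the paper claim one. The statement should remain labelled a conjecture.
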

Note here that for any $\phi$-module $D$ the $\Gal(\bar{k}/k)$-module $(D\otimes_{\bq_p}\bqpur)^{\phi\otimes\phi=1}$ can also be viewed as a $\phi$-module (via the action of $\phi\otimes 1$) and as such is non-canonically isomorphic to $D^{\slope 0}$. Moreover the action of $\Frob_p^{-1}\in
\Gal(\bar{k}/k)$ coincides with that of $1\otimes\phi^{-1}=\phi\otimes 1=\phi$.

The $p$-adic analogue of Theorem \ref{main} (replacing a) by the conjectural local theorem on invariant cycles) would be the following conjecture.

\begin{conjecture}Assume that $X$ is moreover regular. Then the following hold. \begin{itemize}
\item[a)] The map $\spe'$ is surjective.
\item[b)] The map
\[W_{i-1}H^i_{rig}(X_s/k) \xrightarrow{\spe'}W_{i-1}D_{cris}(H^i(X_\etabar,\bq_p))\]
induced by $\spe'$ is an isomorphism.
\item[c)] The map $\spe'$ is an isomorphism for $i=0,1$.
\end{itemize}
\end{conjecture}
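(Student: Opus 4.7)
The plan is to mirror, line by line, the proof of Theorem \ref{main} in the $p$-adic setting, transferring statements through the comparison between $p$-adic \'etale cohomology and rigid/crystalline cohomology assumed in Conjecture \ref{specon}. Throughout, I will work with rigid cohomology on the special fibre and with filtered $(\phi,N)$-modules on the generic side, and I will invoke without further comment the existence of the weight filtration on these objects (the $p$-adic analogue of Lemma \ref{filt}): on $H^i_{rig}(X_s/k)$ this is the $p$-adic Weil II of Kedlaya together with de Jong's alterations, while on $D_{pst}(H^i(X_\etabar,\bq_p))$ it is the monodromy-weight filtration, known in the semistable case by Mokrane and by Tsuji. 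Let me denote the target of $\spe'$ by $D^i := D_{cris}(H^i(X_\etabar,\bq_p))$, and write $D^i_I := D_{pst}(H^i(X_\etabar,\bq_p))^I$.

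For part (b), I would construct a $p$-adic analogue of diagram (\ref{dia1}). Rigid (respectively log-rigid) cohomology of $X$, $X_\eta$, and $X_s$ fits into a localization triangle, and Poincar\'e duality together with the purity theorem for rigid cohomology of regular embeddings (due to Shiho) provides the shift by $2d+2$ and twist by $(d+1)$ exactly as in the proof of Theorem \ref{main}(a). The vanishing of $W_i H^{i+1}_{X_s}(X,\mathrm{rig})$ then follows from the identification with $\Hom(H^{2d+1-i}_{rig}(X_s/k),\bq_p)(-d-1)$ and the purity of rigid cohomology on smooth proper varieties, which forces the weights on the dual after the twist to be $\geq i+1$. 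A diagram chase identical to the one in the proof of Theorem \ref{main}(b) then shows that $\spe'$ is an isomorphism on $W_{i-1}$, using that $W_{i-1}(D^{i-1}_I(-1)) = W_{i-3}(D^{i-1}_I)(-1) = 0$, which again follows from the monodromy-weight filtration.

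Part (a) is the $p$-adic local theorem on invariant cycles. In the strictly semistable case it is a consequence of Mokrane's weight spectral sequence in log-crystalline cohomology, the direct $p$-adic counterpart of (\ref{ss0}): the monodromy filtration coincides with the weight filtration, and surjectivity of $\spe'$ onto $D^i$ reduces to the standard statement that the map from $E_1^{0,i}$ to $\ker N$ is surjective. The general regular case is obtained by de Jong alterations followed by a trace argument, exactly as in Lemma \ref{filt}; the target becomes a direct summand of a strictly semistable target, and surjectivity is preserved.

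The hard part will be part (c) for $i=1$, because the slope-zero identification from Conjecture \ref{specon} only controls $H^1(X_\sbar,\bq_p)$ and not the full $H^1_{rig}(X_s/k)$, which generically carries higher slopes (e.g.\ slopes $1/2$ for a supersingular elliptic fibre). My plan is to import the \'etale argument of the previous section via Dieudonn\'e theory: identify $H^1_{rig}(X_s/k)$ with (a rational version of) the covariant Dieudonn\'e module of the $p$-divisible group of the identity component $\Pic^0_{X_s/s}$, identify $D^1$ with the corresponding crystalline realization of the $p$-divisible group of $\Pic^0_{X_\eta/\eta}$ via the Bloch--Kato/Fontaine comparison, and then apply Raynaud's theorem (\cite{ray} Thm.\ 6.4.1) exactly as in the final lemma of Section \ref{loc-inv-cycles} to conclude. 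The delicate point is to give a crystalline avatar of the map $T_l\Pic(X)\to T_l\Pic(X_\eta)$ at $l=p$ that is compatible with $\spe'$; this ought to follow from Berthelot--Breen--Messing, but requires care because the relative Picard functor is only representable by a semi-abelian scheme after passing to the connected component, and the covariant Dieudonn\'e functor is not exact on extensions by tori. This is where I expect the proof to become genuinely involved, and it is the only step where the $p$-adic picture does not simply parallel the $\ell$-adic one.
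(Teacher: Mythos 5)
The statement you are trying to prove is labelled a \emph{conjecture} in the paper, and for a reason: immediately after Conjecture \ref{w0} --- which is a consequence of the conjecture you are proving --- the authors write ``We do not know how to establish Conjecture \ref{specon} or Conjecture \ref{w0} in general, since it seems difficult to make use of the regularity assumption.'' The paper explicitly introduces part a) as ``the conjectural local theorem on invariant cycles'', which is open even in the $\ell$-adic ($\ell\neq p$) setting over a mixed-characteristic base (see the remark after Theorem \ref{main}). So there is no proof in the paper to compare your attempt against, and any successful proof of this conjecture would be new, significant, and surprising in light of what the authors say.

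Your proposal has genuine gaps that reflect exactly this difficulty. In part a), the reduction via Mokrane's weight spectral sequence to ``the standard statement that the map from $E_1^{0,i}$ to $\ker N$ is surjective'' implicitly assumes that the weight filtration coincides with the monodromy filtration on $D_{pst}$ of a semistable scheme; that is precisely the $p$-adic weight--monodromy conjecture, which is open in general (and the $\ell$-adic local invariant cycles theorem you are mimicking is only a theorem in equicharacteristic, by \cite{weilii}, not over a mixed-characteristic trait). In part b), your claim that $W_{i-3}\bigl(D^{i-1}_I\bigr)=0$ ``follows from the monodromy-weight filtration'' is false as an unconditional statement: this is precisely the step in the proof of Theorem \ref{main}d) where the paper invokes the hypothesis of d) (local invariant cycles) through duality. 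The unconditional vanishing in the proof of Theorem \ref{main}b) is $W_{-1}$ of the coinvariants, which gives an isomorphism only on $W_1$, not on $W_{i-1}$; your argument silently upgrades b) to d) without supplying d)'s hypothesis. There is also a structural problem with the proposed analogue of diagram (\ref{dia1}): you want a localization triangle in ``rigid (respectively log-rigid) cohomology of $X$, $X_\eta$, and $X_s$'', but $X_\eta$ is a scheme over $K$ in characteristic $0$, so it has no rigid cohomology; one must instead work with Hyodo--Kato or log-crystalline cohomology of $X/S$, and setting up the requisite exact sequences and duality/purity in that framework is exactly the difficulty the paper is flagging. Part c) you rightly identify as delicate, and the crystalline avatar of the map $T_p\Pic(X)\to T_p\Pic(X_\eta)$ compatible with $\spe'$ is not constructed --- this is where, as you say, the $p$-adic picture genuinely diverges from the $\ell$-adic one, and the paper offers no construction either.
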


Combining both conjectures we deduce the following statement for $p$-adic \'etale cohomology.

\begin{conjecture} If $X$ is regular then the map
\[W_{i-1}H^i(X_\sbar,\bq_p)\xrightarrow{\spe}W_{i-1}H^i(X_\etabar,\bq_p)^{I}\]
induced by $\spe$ is an isomorphism.
\end{conjecture}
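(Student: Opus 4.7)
The plan is to deduce this conjecture formally from the two preceding $p$-adic conjectures by applying the weight truncation $W_{i-1}$ to the commutative square of Conjecture \ref{specon}. The essential input will be part b) of the intermediate $p$-adic conjecture, asserting that $W_{i-1}(\spe')$ is an isomorphism on rigid cohomology.

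First I would check that the vertical maps $\lambda_s$ and $\lambda_\eta$ of Conjecture \ref{specon} respect the weight filtration. Both are $\Gal(\bar k/k)$-equivariant isomorphisms onto the $\phi\otimes\phi=1$-fixed part of a $\phi$-module over $\bqpur$, and, as recalled immediately after the statement of Conjecture \ref{specon}, the action of the geometric Frobenius $\Frob_p^{-1}$ on the source corresponds to $\phi\otimes 1=1\otimes\phi^{-1}$ on the target. Since on each space the weight filtration is defined purely through the condition that the Frobenius eigenvalues be Weil numbers of a given weight, the maps $\lambda_s$, $\lambda_\eta$ are filtered isomorphisms. Applying the exact functor $W_{i-1}$ therefore yields a commutative square with isomorphisms as its two vertical arrows. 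Next I would verify that for any finite-dimensional $\phi$-module $D$ over $\bq_p$ with Weil-number eigenvalues, taking the $\phi\otimes\phi=1$-fixed subspace of $D\otimes_{\bq_p}\bqpur$ commutes with $W_j$: the slope decomposition of $D$ refines its weight decomposition, because the slope of an eigenvalue is a finer invariant than its archimedean absolute value, so the slope zero subspace is a direct summand compatible with the weight filtration.

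These two observations identify the bottom arrow of the square obtained after applying $W_{i-1}$ with $(W_{i-1}(\spe')\otimes 1)$ restricted to $\phi\otimes\phi=1$-fixed points. By part b) of the intermediate $p$-adic conjecture, $W_{i-1}(\spe')$ is an isomorphism of $\bq_p$-vector spaces; tensoring with the faithfully flat ring $\bqpur$ and then passing to $\phi\otimes\phi=1$-fixed points (an exact operation on Weil $\phi$-modules, since the slope zero piece is a direct summand) preserves this isomorphism. Combined with the vertical isomorphisms $W_{i-1}(\lambda_s)$ and $W_{i-1}(\lambda_\eta)$, the commutative square then forces $W_{i-1}(\spe)$ to be an isomorphism. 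The main obstacle is of course not in this formal deduction but entirely upstream: one must construct the rigid specialization map $\spe'$ and the comparison isomorphisms $\lambda_s$, $\lambda_\eta$ of Conjecture \ref{specon}, and prove the $p$-adic analogue of Theorem \ref{main}(b), which would presumably require de Jong alterations together with a Rapoport--Zink type weight spectral sequence in log-crystalline cohomology and a suitable version of purity in $p$-adic cohomology for the regular scheme $X$. Granted those genuinely hard inputs, the present step is purely diagrammatic.
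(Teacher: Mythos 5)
Your proposal captures exactly the intended (and only) content: the paper itself gives no formal proof of this conjecture, stating merely ``Combining both conjectures we deduce the following statement'' before it and, after it, that the weight filtrations on $H^i(X_\sbar,\bq_p)$ and $H^i(X_\etabar,\bq_p)^I$ are defined by transport of structure via $\lambda_s$ and $\lambda_\eta$. Your diagrammatic deduction---pass the square of Conjecture \ref{specon} through $W_{i-1}$, use that $\lambda_s,\lambda_\eta$ are filtered isomorphisms onto the slope-$0$ parts, then invoke part b) of the intermediate conjecture for $\spe'$---is precisely the implicit argument, so at that level the proposal is correct and matches the paper.

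One remark in your writeup is, however, a non sequitur and should be discarded: you assert that ``the slope of an eigenvalue is a finer invariant than its archimedean absolute value,'' and use this to argue that the slope decomposition refines the weight decomposition. For Weil numbers these two invariants are essentially independent: $p$ itself has weight $2$ and slope $1$, while a unit root $\alpha$ with $\alpha\bar\alpha = p$ has weight $1$ and slope $0$; no refinement relation holds in either direction. What is actually needed, and is true, is simply that both the weight filtration and the slope decomposition of a $\phi$-module are unions of generalized $\phi$-eigenspaces, hence are compatible: $W_{i-1}(D^{\slope 0}) = (W_{i-1}D)^{\slope 0}$, and $\spe'$ respects both because it is $\phi$-equivariant. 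That suffices to pass from the isomorphism $W_{i-1}\spe'$ of the intermediate conjecture b) to the isomorphism of its slope-$0$ parts, which under $\lambda_s,\lambda_\eta$ is identified with $W_{i-1}\spe$. So the conclusion you reach is right, but the cited reason for compatibility of slope and weight is not; replace it with the eigenvalue-decomposition argument.
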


Here we deduce the weight filtrations on $H^i(X_\sbar,\bq_p)$ and $H^i(X_\etabar,\bq_p)^{I}$ from Conjecture \ref{specon} via the injectivity of the maps $\lambda_s$ and $\lambda_\eta$. For the applications in this paper we only need this isomorphism on $W_0$ (or in fact on the still smaller generalized eigenspace for the eigenvalue $1$). For reference we record this statement separately.

\begin{conjecture} If $X$ is regular then the map
\[W_0H^i(X_\sbar,\bq_p)\xrightarrow{\spe}W_0H^i(X_\etabar,\bq_p)^{I}\]
induced by $\spe$ is an isomorphism, where $W_0$ is the sum of generalized $\phi$-eigenspaces for eigenvalues which are roots of unity.
\label{w0}\end{conjecture}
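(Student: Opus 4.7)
The plan is to reduce Conjecture \ref{w0} to the analogous assertion for rigid cohomology via Conjecture \ref{specon}, and then to prove that reduced statement by transposing the proof of Theorem \ref{main}(b) to the rigid/crystalline setting.

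First, I would invoke Conjecture \ref{specon} to obtain a commutative diagram of $\phi$-equivariant maps whose vertical arrows $\lambda_s,\lambda_\eta$ are isomorphisms onto the slope-$0$ subspaces. Since every slope-$0$ Weil number is an algebraic integer all of whose conjugates have complex absolute value $1$, it is a root of unity by Kronecker, so on the slope-$0$ part of either side the $W_0$ step of the weight filtration is the whole space. Thus $\lambda_s,\lambda_\eta$ identify $W_0 H^i(X_{\bar s},\bq_p)$ and $W_0 H^i(X_{\bar\eta},\bq_p)^I$ with, respectively, $(W_0 H^i_{rig}(X_s/k)\otimes \bqpur)^{\phi\otimes\phi=1}$ and $(W_0 D_{cris}(H^i(X_{\bar\eta},\bq_p))\otimes \bqpur)^{\phi\otimes\phi=1}$. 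Since the functor $(-\otimes_{\bq_p}\bqpur)^{\phi\otimes\phi=1}$ is exact and conservative on finite-dimensional $\phi$-modules over $\bq_p$, it would suffice to prove that $\spe'$ induces an isomorphism
\[ W_0 H^i_{rig}(X_s/k) \xrightarrow{\sim} W_0 D_{cris}(H^i(X_{\bar\eta},\bq_p)). \]

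To establish this I would mirror the argument for Theorem \ref{main}(b). The first ingredient is a distinguished triangle in rigid/$D_{st}$ cohomology of the form
\[ R\Gamma_{X_s,rig}(X)\to R\Gamma_{rig}(X_s/k)\to D_{st}\bigl(R\Gamma(X_{\bar\eta},\bq_p)\bigr)\to \]
compatible with Frobenius, monodromy and a Rapoport--Zink/Mokrane-style weight filtration. In the strictly semistable case this comes from the Hyodo--Kato sequence (see Mokrane and Nakkajima--Saito), and extends to an arbitrary regular $X$ after de Jong alteration plus cohomological descent. The crucial input is then an analogue of the vanishing (\ref{van09}), namely $W_i H^{i+1}_{X_s,rig}(X)=0$ for all $i$: for this one applies rigid-cohomology purity on the regular scheme $X$ (yielding, after Poincar\'e duality for the proper morphism $f$, an isomorphism $R\Gamma_{X_s,rig}(X)\cong R\underline{\Hom}(R\Gamma_{rig}(X_s/k),\bq_p)(-d-1)[-2d-2]$) together with the Katz--Messing Weil bound $W_k H^k_{rig}(X_s/k)=H^k_{rig}(X_s/k)$. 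A diagram chase identical to that in the proof of Theorem \ref{main}(b) then delivers an isomorphism on $W_{i-1}$, hence on $W_0\subseteq W_{i-1}$ for $i\ge 1$; the case $i=0$ is immediate from proper base change.

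The main obstacle is Conjecture \ref{specon} itself. In the strictly semistable case it is essentially the Hyodo--Kato isomorphism combined with Tsuji's $C_{st}$-comparison theorem, but for general regular $X$ with non-semistable special fibre one must combine de Jong alterations, proper descent for rigid cohomology, and Galois-equivariant comparison, which requires delicate compatibility between the Frobenius on $H^*_{rig}(X_s/k)$ and the crystalline Frobenius on $D_{cris}$ of the generic fibre. A secondary difficulty is the purity/vanishing statement $W_i H^{i+1}_{X_s,rig}(X)=0$ on a regular scheme whose special fibre is not smooth over $k$: this requires a six-functor formalism for rigid cohomology that is only fully developed in certain cases (via the work of Berthelot, Abe--Caro, Lazda--P\'al), and finessing it in general is the hardest technical step of the plan. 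Assuming Conjecture \ref{specon} and the six-functor purity, however, the remaining argument is a formal transcription of the $\ell$-adic proof to the $p$-adic setting.
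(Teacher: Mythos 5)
The statement you set out to prove is Conjecture \ref{w0}, which the paper does not prove; the authors explicitly write that they do not know how to establish Conjecture \ref{specon} or Conjecture \ref{w0}. So there is no proof in the paper to compare against, and what you have written is, like the paper's own discussion, a conditional plan. Your plan does follow the same outline the authors sketch: reduce to a statement about rigid cohomology using Conjecture \ref{specon}, then try to transcribe the $\ell$-adic argument of Theorem \ref{main}(b), with the Hyodo--Kato/Tsuji comparison replacing the $\ell$-adic weight spectral sequence. Your concluding remarks about the two obstructions (Conjecture \ref{specon} itself, and the lack of a general six-functor formalism/purity for rigid cohomology on regular schemes with non-smooth special fibre) match the paper's own assessment of why this remains a conjecture.

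There is, however, a genuine error in your first reduction step. You claim that on the slope-$0$ subspace the $W_0$ step of the weight filtration is everything, arguing that "every slope-$0$ Weil number is an algebraic integer all of whose conjugates have complex absolute value $1$" and is therefore a root of unity by Kronecker. That is false: slope $0$ means $p$-adic valuation $0$, not archimedean absolute value $1$. For example, the unit-root Frobenius eigenvalue $\alpha$ on $H^1$ of an ordinary elliptic curve over $\bof_p$ satisfies $|\alpha|_p=1$ (slope $0$) but $|\alpha|_\infty=\sqrt{p}$ (weight $1$), so is not a root of unity and does not lie in $W_0$. The correct containment runs the other way, $W_0\subseteq V^{\slope 0}$ (roots of unity are $p$-adic units), not $V^{\slope 0}\subseteq W_0$. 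The reduction you want does still go through, but for a different reason: the $\phi$-equivariant isomorphism $\lambda_s$ from $H^i(X_\sbar,\bq_p)$ onto the slope-$0$ part of $H^i_{rig}(X_s/k)$ (and likewise $\lambda_\eta$) automatically restricts to an isomorphism on the $W_0$ pieces, and $W_0\bigl(H^i_{rig}(X_s/k)^{\slope 0}\bigr)=W_0 H^i_{rig}(X_s/k)$ precisely because $W_0$ is already contained in the slope-$0$ part. You should replace your incorrect intermediate claim with this argument; as written the justification would not survive scrutiny even though the target of the reduction, namely that it suffices to show $W_0H^i_{rig}(X_s/k)\cong W_0D_{cris}(H^i(X_\etabar,\bq_p))$, agrees with what the paper says.
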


Again, if Conjecture \ref{specon} holds the maps $\lambda_s$ and $\lambda_\eta$ are injectve and it suffices to establish an isomorphism $$W_0H^i_{rig}(X_s/k)\cong W_0D_{cris}(H^i(X_\etabar,\bq_p)).$$ We do not know how to establish Conjecture \ref{specon} or Conjecture \ref{w0} in general, since it seems difficult to make use of the regularity assumption. In case $X$ has semistable reduction, however, it seems plausible that one can avoid any reference to rigid cohomology and establish a commutative diagram of $\Gal(\bar{k}/k)$-modules
\[\begin{CD} H^i(X_\sbar,\bq_p) @>\spe >> H^i(X_\etabar,\bq_p)^{I}\\
@V\tilde{\lambda}_s VV @V\lambda_\eta VV \\
(H^i_{HK}(X_s/k)^{N=0})\otimes_{\bq_p}\bqpur @>c'\otimes 1 >> D_{cris}(H^i(X_\etabar,\bq_p))\otimes_{\bq_p}\bqpur
\end{CD}\]
where $H^i_{HK}(X_s/k)$ is Hyodo-Kato cohomology. Contrary to what the notation suggests this cohomology theory not only depends on $X_s/k$ but on the scheme $X/S$. Building on work of Fontaine-Messing, Bloch-Kato, Hyodo-Kato, and Kato-Messing, Tsuji \cite{tsuji99} proved that there is an isomorphism of $(\phi,N)$-modules $$H^i_{HK}(X_s/k)\xrightarrow{c}D_{st}(H^i(X_\etabar,\bq_p))$$ and hence an isomorphism of $\phi$-modules
$$H^i_{HK}(X_s/k)^{N=0}\xrightarrow{c'}D_{st}(H^i(X_\etabar,\bq_p))^{N=0}=D_{cris}(H^i(X_\etabar,\bq_p)).$$
In addition to the commutative diagram it would then be enough to show that $\tilde{\lambda}_s$ and $\lambda_\eta$ are injective. We refrain from giving more details since in this paper Conjecture \ref{w0} is only used in the proof of Proposition \ref{tamcompare} (via Proposition \ref{reform}) which already needs to assume a host of other, much deeper conjectures that we are unable to prove.

\begin{bibdiv}
\begin{biblist}
\bib{av}{article}{
  author={Artin, M.},
  author={Verdier, J.-L.},
  title={Seminar on \'Etale Cohomology of Number Fields},
  status={Lecture notes prepared in connection with a seminar held at the Woods Hole Summer Institute on Algebraic Geometry, July 6-31, 1964},
}

\bib{berthelot}{article}{
  author={Berthelot, P.},
  title={Alt\'erations de vari\'et\'es alg\'ebriques},
  booktitle={S\'eminaire Bourbaki, exp. 815 (1995/96), Ast\'erisque \bf {241}},
  date={1997},
  pages={273\ndash 311},
}

\bib{bbe07}{article}{
  author={Berthelot, P.},
  author={Bloch, S.},
  author={Esnault, H.},
  title={On Witt vector cohomology for singular varieties},
  journal={Compositio Math.},
  volume={143},
  date={2007},
  pages={363\ndash 392},
}

\bib{bk88}{article}{
  author={Bloch, S.},
  author={Kato, K.},
  title={L-functions and Tamagawa numbers of motives},
  booktitle={In: The Grothendieck Festschrift I, Progress in Math. \bf {86}},
  publisher={Birkh\"auser},
  place={Boston},
  date={1990},
  pages={333\ndash 400},
}

\bib{blght}{article}{
  author={Barnet-Lamb, T.},
  author={Geraghty, D.},
  author={Harris, M.},
  author={Taylor, R.},
  title={A family of Calabi-Yau varieties and potential automorphy II},
  status={preprint available at http://www.math.harvard.edu/~rtaylor/},
}

\bib{bourbaki}{book}{
  author={Bourbaki, N.},
  title={General topology},
  publisher={Springer},
  date={1989},
}

\bib{bufl98}{article}{
  author={Burns, D.},
  author={Flach, M.},
  title={On Galois structure invariants associated to Tate motives},
  journal={Amer. J. Math.},
  volume={120},
  date={1998},
  pages={1343\ndash 1397},
}

\bib{dejeu00}{article}{
  author={de Jeu, R.},
  title={Appendix to the paper of Scholl: a counterexample to a conjecture of Beilinson},
  book={title={The arithmetic and geometry of algebraic cycles (Banff, AB, 1998)}, series={NATO Sci. Ser. C Math. Phys. Sci. 548}, publisher={Kluwer Acad. Publ.}, date={2000}},
  pages={491\ndash 493},
}

\bib{weilii}{article}{
  author={Deligne, P.},
  title={La Conjecture de Weil. II},
  date={1980},
  journal={Publ. Math. IHES},
  volume={52},
  pages={137\ndash 252},
}

\bib{diac75}{article}{
  author={Diaconescu, R.},
  title={Change of base for toposes with generators},
  journal={Jour. Pure and Appl. Math.},
  volume={6},
  date={1975},
  pages={191\ndash 218},
}

\bib{dj96}{article}{
  author={de Jong, J. A.},
  title={Smoothness, semi-stability and alterations},
  journal={Publ. Math. IHES},
  volume={83},
  date={1996},
  pages={51\ndash 93},
}

\bib{fgiknv05}{book}{
  author={Fantechi, B.},
  author={Göttsche, L.},
  author={Illusie, L.},
  author={Kleiman, S.},
  author={Nitsure, N.},
  author={Vistoli, A.},
  title={Fundamental algebraic geometry. Grothendieck's FGA explained},
  series={Mathematical Surveys and Monographs},
  volume={123},
  publisher={Amer. Math. Soc.},
  place={Providence},
  date={2005},
}

\bib{flach06-1}{article}{
  author={Flach, M.},
  title={Iwasawa theory and motivic L-functions},
  journal={Pure and Appl. Math. Quarterly, Jean Pierre Serre special issue, part II,},
  date={2009},
  volume={5},
  number={1},
  pages={255\ndash 294},
}

\bib{flach06-2}{article}{
  author={Flach, M.},
  title={Cohomology of topological groups with applications to the Weil group},
  journal={Compositio Math.},
  date={2008},
  volume={144},
  number={3},
  pages={633\ndash 656},
}

\bib{fpr91}{article}{
  author={Fontaine, J.-M.},
  author={Perrin-Riou, B.},
  title={Autour des conjectures de Bloch et Kato: cohomologie galoisienne et valeurs de fonctions L},
  booktitle={In: Motives (Seattle)},
  series={Proc. Symp. Pure Math. \bf {55-1}},
  date={1994},
  pages={599\ndash 706},
}

\bib{fuji96}{article}{
  author={Fujiwara, K.},
  title={A Proof of the Absolute Purity Conjecture (after Gabber)},
  book={title={Algebraic Geometry 2000, Azumino (Hotaka)}, series={Advanced Studies in Pure Mathematics 36},date={2002}, publisher={Math. Soc. Japan}},
  pages={153\ndash 183},
}

\bib{geisser04}{article}{
  author={Geisser, T.},
  title={Weil-\'etale cohomology over finite fields},
  journal={Math. Ann.},
  volume={330},
  date={2004},
  pages={665\ndash 692},
}

\bib{geisser05}{article}{
  author={Geisser, T.},
  title={Arithmetic cohomology over finite fields and special values of $\zeta $-functions},
  journal={Duke Math. Jour.},
  volume={133},
  number={1},
  date={2006},
  pages={27\ndash 57},
}

\bib{sga4}{book}{
  author={Grothendieck, A.},
  author={Artin, M.},
  author={Verdier, J.L.},
  title={Theorie des Topos et Cohomologie Etale des Schemas (SGA 4)},
  publisher={Springer},
  date={1972},
  series={Lecture Notes in Math \bf {269, 270, 271}},
}

\bib{sga5}{book}{
  author={Grothendieck, A.},
  author={Illusie, L.},
  title={Cohomologie l-adique et Fonctions L (SGA 5)},
  publisher={Springer},
  date={1977},
  series={Lecture Notes in Math \bf {589}},
}

\bib{sga7}{book}{
  author={Grothendieck, A.},
  author={Rim, D.S.},
  author={Raynaud, M.},
  title={Groupes de Monodromie en G\'eom\'etrie Alg\'ebrique (SGA 7)},
  publisher={Springer},
  date={1972},
  series={Lecture Notes in Math \bf {288, 340}},
}

\bib{hartshorne}{book}{
  author={Hartshorne, R.},
  title={Algebraic Geometry},
  publisher={Springer},
  series={Graduate texts in Mathematics \bf {52}},
  date={1977},
}

\bib{illusie79}{article}{
  author={Illusie, L.},
  title={Complexe de de Rham-Witt et cohomologie cristalline},
  journal={Ann. Sci. de l'ENS},
  volume={12},
  number={4},
  date={1979},
  pages={501\ndash 666},
}

\bib{illusie09}{article}{
  author={Illusie, L.},
  title={On oriented products, fibre products and vanishing toposes},
  status={available from http://www.math.u-psud.fr/$\sim $illusie/},
}

\bib{johnstone02}{book}{
  author={Johnstone, P.},
  title={Sketches of an elephant: a topos theory compendium I, II},
  publisher={Clarendon Press},
  place={Oxford},
  date={2002},
}

\bib{knumum}{article}{
  author={Knudsen, F.},
  author={Mumford, D.},
  title={The projectivity of the moduli space of stable curves I: Preliminaries on `det' and `Div'},
  journal={Math. Scand.},
  volume={39},
  date={1976},
  pages={19\ndash 55},
}

\bib{li01}{article}{
  author={Lichtenbaum, S.},
  title={The Weil-\'Etale Topology on schemes over finite fields},
  journal={Compositio Math.},
  volume={141},
  date={2005},
  pages={689\ndash 702},
}

\bib{li04}{article}{
  author={Lichtenbaum, S.},
  title={The Weil-\'etale Topology for number rings},
  journal={Annals of Math.},
  date={2009},
  volume={170},
  number={2},
  pages={657\ndash 683},
}

\bib{miletale}{book}{
  author={Milne, J.S.},
  title={\'Etale Cohomology},
  series={Princeton Math. Series {\bf 17}},
  publisher={Princeton University Press},
  date={1980},
}

\bib{moerdijk88}{article}{
  author={Moerdijk, I.},
  title={The classifying topos of a continuous groupoid I},
  journal={Trans. Amer. Math. Soc.},
  volume={310},
  number={2},
  date={1988},
  pages={629\ndash 668},
}

\bib{morin}{thesis}{
  author={Morin, B.},
  title={Sur le topos Weil-\'etale d'un corps de nombres},
  date={2008},
  organization={Thesis, Universit\'e de Bordeaux},
}

\bib{morin09}{article}{
  author={Morin, B.},
  title={On the Weil-\'Etale Cohomology of Number Fields},
  status={to appear in Trans. Amer. Math. Soc.},
}

\bib{ray}{article}{
  author={Raynaud, M.},
  title={Sp\'ecialisation du foncteur du Picard},
  journal={Publ. Math. IHES},
  date={1970},
  volume={38},
  pages={27\ndash 76},
}

\bib{rz}{article}{
  author={Rapoport, M.},
  author={Zink, T.},
  title={\"Uber die lokale Zetafunktion von Shimuravariet\"aten. Monodromiefiltration und verschwindende Zyklen in ungleicher Charakteristik},
  journal={Invent. Math.},
  volume={68},
  date={1982},
  pages={53\ndash 108},
}

\bib{scholl90}{article}{
  author={Scholl, A.J.},
  title={Motives for modular forms},
  journal={Invent. Math.},
  volume={100},
  date={1990},
  pages={419\ndash 430},
}

\bib{serre69}{article}{
  author={Serre, J.P.},
  title={Facteurs locaux des fonctions zeta des vari\'et\'es alg\'ebriques},
  journal={S\'eminaire DPP, expos\'e 19},
  date={1969/70},
}

\bib{tsuji99}{article}{
  author={Tsuji, T.},
  title={$p$-adic \'etale cohomology and crystalline cohomology in the semi-stable reduction case},
  journal={Invent. Math.},
  volume={137},
  date={1999},
  pages={233\ndash 411},
}

\end{biblist}
\end{bibdiv}

\Addresses
\end{document}